\documentclass[11pt,oneside,reqno]{amsart}
\usepackage{mathpazo} % math & rm
\normalfont
\usepackage[T1]{fontenc}

\usepackage[utf8]{inputenc}
\usepackage{amsmath}
\usepackage{amsfonts}
\usepackage{amssymb}
\usepackage{amsthm}
\usepackage{mathtools}%gives the arrow typeset for inclusion
\usepackage{stmaryrd}%Including this for the "double-bracket" \llbracket and \rrbracket
\usepackage{tikz}
\usepackage{tikz-cd}
\usepackage[all]{xy}
\usepackage{enumitem}%This package allows me to change the formatting of the numbers for an enumerate environment.
\usepackage[margin=1.2in]{geometry} % to set the margins

\usepackage{skak}
\usepackage{accents}

\usepackage{cjhebrew}%To use hebrew letters, because I ran out of letters
\usepackage{wasysym}%To use \gemini, because I ran out of letters

\usepackage{hyperref}

\newtheorem{thm}{Theorem}[section]
\newtheorem{theorem}[thm]{Theorem}
\newtheorem{lemma}[thm]{Lemma}

\newtheorem{coro}[thm]{Corollary}

\newtheorem{prop}[thm]{Proposition}
\newtheorem{proposition}[thm]{Proposition}

\theoremstyle{definition}
\newtheorem{defi}[thm]{Definition}
\newtheorem{definition}[thm]{Definition}
\newtheorem{remark}[thm]{Remark}
\newtheorem{example}[thm]{Example}
\newtheorem{notation}[thm]{Notation}
\newtheorem*{conjecture*}{Conjecture}

\newcommand\xqed[1]{%
  \leavevmode\unskip\penalty9999 \hbox{}\nobreak\hfill
  \quad\hbox{#1}}
\newcommand\exEnd{\xqed{$\diamondsuit$}}

\newcommand{\B}{\mathbb{B}}

\newcommand{\R}{\mathbb{R}}
\newcommand{\T}{\mathbb{T}}
\newcommand{\TT}{\T}
\newcommand{\Z}{\mathbb{Z}}
\newcommand{\ZZ}{\Z}
\newcommand{\N}{\mathbb{N}}
\newcommand{\Q}{\mathbb{Q}}

\newcommand{\calB}{\mathcal{B}}
\newcommand{\calb}{\calB}
\newcommand{\calC}{\mathcal{C}}
\newcommand{\calc}{\calC}
\newcommand{\calD}{\mathcal{D}}
\newcommand{\cald}{\calD}
\newcommand{\calF}{\mathcal{F}}
\newcommand{\calf}{\calF}
\newcommand{\calH}{\mathcal{H}}

\newcommand{\calN}{\mathcal{N}}

\newcommand{\calV}{\mathcal{V}}

\newcommand{\frakN}{\mathfrak{N}}
\newcommand{\frakP}{\mathfrak{P}}
\newcommand{\frakQ}{\mathfrak{Q}}
\newcommand{\frakV}{\mathfrak{V}}
\newcommand{\frakW}{\mathfrak{W}}

\newcommand{\inj}[0]{\ar@{^{(}->}}
\newcommand{\surj}[0]{\ar@{->>}}
\newcommand{\bij}[0]{\ar@{^{(}->>}}
\newcommand{\lbij}[0]{\ar@{_{(}->>}}
\newcommand{\linj}[0]{\ar@{_{(}->}}
\newcommand{\parr}[0]{\ar@{.>}}
\newcommand{\pinj}[0]{\ar@{^{(}.>}}
\newcommand{\psurj}[0]{\ar@{.>>}}
\newcommand{\pbij}[0]{\ar@{^{(}.>>}}
\newcommand{\lin}[0]{\ar@{-}}
\newcommand{\llin}[0]{\ar@{=}}
\newcommand{\usu}[0]{\ar@{->}}%The usual, unadorned, arrow. You can also just use \ar
\newcommand{\toup}[1]{\stackrel{#1}{\longrightarrow}}%An arrow to the right, taking an argument that is the name of the function, to be placed over the arrow
\newcommand{\onto}{\twoheadrightarrow}

\newcommand{\id}{\operatorname{id}}
\newcommand{\dsum}{\displaystyle\sum}
\newcommand{\dprod}{\displaystyle\prod}

\newcommand{\sdrop}{\backslash}
\newcommand{\into}{\hookrightarrow}
\newcommand{\eps}{\varepsilon}
\newcommand{\ph}{\varphi}
\newcommand{\supp}{\operatorname{supp}}%Support of, well, anything.
\newcommand{\Frac}{\operatorname{Frac}}%Fraction field of a integral domain
\newcommand{\Spec}{\operatorname{Spec}}%The spectrum of a ring.
\newcommand{\Terms}[1]{\operatorname{Terms}(#1)}%\Terms{S[\Mon]} is the set of terms in $S[\Mon]$
\newcommand{\trop}{\operatorname{trop}}
\newcommand{\Trop}{\operatorname{Trop}}

\def\Mon{\mathcal{M}}
\def\Lattice{M}
\def\base{S}%The base-semifield that we are working over. For now, it is S.
\newcommand{\Hom}{\operatorname{Hom}} 
\newcommand{\dlim}{\displaystyle\lim}
\newcommand{\dcap}{\displaystyle\bigcap}

\newcommand{\angbra}[1]{\left\langle #1\right\rangle}%Angle brackets, to be used for a pairing or a module being generated by a list of things.

\def\J-int{{J-integral}} 
\def\D-int{{D-integral}} 
\def\Q-int{{quasi-integral}}

\newcommand{\wt}[1]{\widetilde{#1}}
\newcommand{\what}[1]{\widehat{#1}}%A wide hat. Also, WHAT?
\newcommand{\conoidSet}{fan support set}

\newcommand{\conoidSets}{fan support sets}
\newcommand{\ConoidSets}{\conoidSets}

\newcommand{\ConoidLarry}{lattice of $\Gamma$-admissible \ConoidSets}

\newcommand{\wtcalf}{\wt{\calf}}%The white calf. That is, a prime filter of conoids 

\newcommand{\relint}{\operatorname{rel.int.}}
\newcommand{\rec}{\operatorname{rec}}%The recession cone of a polyhedron/recession of a fan/recession of a polyhedral set.
\newcommand{\Bend}{\operatorname{Bend}}%The bend congruence of an ideal.
\newcommand{\bend}{\operatorname{bend}}%The bend relations of a polynomial.
\newcommand{\vspan}{\operatorname{span}}%The span of a set of vectors. \span was already taken.

\newcommand{\cl}{\operatorname{cl}}%The closure of a set
\newcommand{\init}{\operatorname{in}}%Initial forms.
\newcommand{\whin}{\what{\init}}%The initial hat/duck of a polynomial with respect to a prime conrguence of a point.
\newcommand{\largeNum}{\calN}%A large number, possibly in a limit to infinity. We need to choose another name for it because we already have $N$ in $N_{\R}$.
\newcommand{\largenum}{\largeNum}
\newcommand{\idker}{\operatorname{ideal-kernel}}%The ideal-kernel of a congruence.

\newcommand{\dcup}{\displaystyle\bigcup}
\newcommand{\nbar}[1]{\overline{#1}}%A new bar. Better than the usual \bar.
\newcommand{\Carl}{\theta}%Carl is a cone contained in a boundary part of the closure of a fan support set in $\R_{\geq0}\times N_{\R}(\sigma)$ (in particular, a congruence variety).
\newcommand{\carl}{\Carl}

\DeclareFontFamily{U}{min}{}
\DeclareFontShape{U}{min}{m}{n}{<-> dmjhira}{}
\newcommand{\Simon}{\mathfrak{S}}%Simon is a simplex in a parameter space of $(\eps_1,\ldots,\eps_k)$.
\newcommand{\simon}{\Simon}
\newcommand{\Rachel}{\mathfrak{R}}%Simon's cousin
\newcommand{\rachel}{\Rachel}
\newcommand{\Paula}{\mathfrak{P}}%A polyhedron/polyhedral cone in a complete polyhedral complex/fan
\newcommand{\paula}{\Paula}
\newcommand{\Olive}{\mathfrak{O}}%Another polyhedron in a polyhedral complex, Paula's cousin
\newcommand{\olive}{\Olive}
\newcommand{\Tara}{\mathfrak{t}}%A set of cones
\newcommand{\tara}{\Tara}
\newcommand{\Valerie}{\zeta}%The image of a vertex under a linear map
\newcommand{\valerie}{\Valerie}
\newcommand{\Opal}{\Omega}%The image of an open simplex under a linear map

\newcommand{\CoordinateFunctionSemiring}{\operatorname{CPL}}
\newcommand{\CPL}{\CoordinateFunctionSemiring}
\newcommand{\CPA}{\operatorname{CPA}}

\title{Geometric classification of primes modulo a (bend) congruence% and a tropical Nullstellensatz-type result
}
\author{Netanel Friedenberg }
\address{Department of Mathematics, Tulane University, New Orleans, LA 70118, USA}
\email{nfriedenberg@tulane.edu}

\author[K.~Mincheva]{Kalina~Mincheva}
\address{Department of Mathematics, Tulane University, New Orleans, LA 70118, USA}
\email{kmincheva@tulane.edu}
\date{}

\begin{document}

\begin{abstract}

% \orange{
% In this paper we continue the program to develop the algebraic foundations of tropical (algebraic) geometry. We prove a Nullstellensatz-type statement for congruences with finite tropical basis, which are not necessarily finitely generated. \pinky{We show that, if $I$ is the ideal of an affine variety not contained in the coordinate hyperplanes, then $\T[x_1, \dots, x_n]/\sqrt{\Bend(\trop I)}$ is the tropical function semiring on $\trop V(I)$. Moreover, we prove a general result that implies that this semiring is cancellative by explicitly describing the minimal prime congruences containing $\Bend(I)$.} These results are consequences of a stronger structure result classifying and constructing minimal primes congruences over certain congruences satisfying geometric conditions. 

% We give three applications of this result: (1) It allows the study of total semiring of fractions, which has applications to integral closure in \cite{To16} and normalization in a forthcoming paper \cite{FM26}; (2) This creates a bridge between the bend relations (embedded) approach \cite{GG13} \cite{MR18} and the algebraic approach to non-embedded tropicalization in the work in \cite{Son23a}, \cite{Son23b} by relating the corresponding coordinate semirings; (3) We can describe the closure of a polyhedron in a tropical toric variety where the polyhedron is not necessarily compatible with the fan defining the tropical toric variety.
% }

In this paper we continue the program to develop the algebraic foundations of tropical (algebraic) geometry. We give strong characterizations of prime congruences containing a given congruence on a toric semiring. We give four applications of this result. (1) We prove an analogue of the strong Nullstellensatz for congruences with finite tropical basis. This extends the existing result of \cite{JM17} to cases, such as the bend congruence of a tropical(ized) ideal, where the congruence is not finitely generated. (2) We show that, if $I$ is the ideal of an affine variety not contained in the coordinate hyperplanes, then $\T[x_1, \dots, x_n]/\sqrt{\Bend(\trop I)}$ is cancellative. %This is a consequence of a stronger structure result which constructs minimal primes over congruences satisfying mild geometric conditions. 
This result has applications to the integral closure (as defined in \cite{To16}) of $\T[x_1, \dots, x_n]/\Bend(\trop I)$ which we explore in a forthcoming paper \cite{FM26}. (3) We show that $\T[x_1, \dots, x_n]/\sqrt{\Bend(\trop I)}$ is the tropical function semiring on $\trop V(I)$, which creates a bridge between the algebraic approach to non-embedded tropicalization in the work in \cite{Son23a} and \cite{Son23b} and the bend congruence approach of \cite{GG13} and \cite{MR18}. (4) As a consequence of our construction we describe the closure of a polyhedron in a tropical toric variety even when the polyhedron is not compatible with the fan defining the tropical toric variety.

\end{abstract}

\maketitle

%=======================================
% Notes (done with): 62
% Notes (left): 63-66
%=======================================

%\section{Conventions}
\section{Introduction}

Tropical geometry replaces algebraic varieties (the vanishing loci of polynomial equations) with tropical varieties (tropical vanishing loci of tropical polynomials), thus allowing for the use of a broader range of (combinatorial) tools in the study of algebraic varieties. While tropical geometry has been very successful in providing a new approach to classical problems, those applications have been focused on using the combinatorics and geometry and have ignored the underlying (semiring) algebra.

One way to endow tropical varieties (which are not necessarily tropicalizations of algebraic varieties) with algebraic structure is introduced in \cite{GG13} wherein the authors define a semiring analogue of the coordinate ring of a variety. As shown in \cite{MR14}, these coordinate semirings carry the same information as that of particular ideals, which the authors call tropical ideals, that are almost always not finitely generated. More precisely, let $\base$ be a sub-semifield of the tropical semifield $\T$, let $\Mon$ be a toric monoid, and let $I$ be any subset of $S[\Mon]$. The variety of $I$ is denoted $V(I)$ and is defined in terms of a ``tropical vanishing'' condition. Towards forming a corresponding coordinate semiring, we can form the \textit{bend congruence} $\Bend(I)$ of $I$; see Definition~\ref{def:bend_rel}. The coordinate semiring is then defined to be $\base[\Mon]/\Bend(I)$. We can then recover the variety of $I$ as $\Hom_{S-\mathrm{alg}}(\base[\Mon]/\Bend(I),\T)\cong V(I)$.

Coming from algebraic geometry, one may wonder why these congruences are necessary~\-- maybe we should just be quotienting by ideals, i.e., setting everything in the ideal equal to 0. If we do this, however, then affine space would only have finitely many subvarieties. Moreover, different ideals would have the same quotients.

%\orange{Motivation}
In analogue with classical algebraic geometry one would like to more deeply study the connection between tropical varieties and their coordinate semirings. 
Our main tool for doing so is the notion of a \emph{prime congruence}. A congruence on an additively idempotent semiring is called prime if the corresponding quotient is totally ordered and cancellative; see Definition~\ref{def: prime_domain}. In the context of polynomial or Laurent polynomial semirings, prime congruences are very concrete - they can be given explicitly by matrices.

A natural first question concerns a strong Nullstellensatz statement for congruences. At the moment one only exists for finitely generated congruences; see \cite{JM17}. However, if $I$ is a tropical ideal (or even the tropicalization of an ideal in a polynomial ring) then $\Bend(I)$ is almost never finitely generated. Similarly, at the moment there is also no strong Nullstellensatz for the corresponding tropical ideals.

One object that has recently been of interest is the semifield of functions on a tropical variety; see \cite{AKS25}, \cite{SN24}, \cite{Son24} etc. In the case where the variety is all of tropical affine space, Theorem 4.9 (iv), (v) and Theorem 4.14 (iv) in \cite{JM17} show that this is the same as the semifield of fractions of $\T[x_1,\ldots,x_n]/\sqrt{\Delta}$ where $\Delta$ is the trivial congruence and the radical $\sqrt{C}$ of a congruence $C$ is the intersection of all prime congruences containing $C$. However, no explicit algebraic presentation is known for the semifield of functions on any tropical variety other than tropical toric varieties. Moreover, it is not known whether $\T[x_1,\ldots,x_n]/\sqrt{\Bend(I)}$ is cancellative and so whether the total semiring of fractions of $\T[x_1,\ldots,x_n]/\sqrt{\Bend(I)}$ is even a semifield. 

In this paper we resolve both of these. In doing so, we also given an algebraic description of the congruences that are considered in \cite{Ito26}, where they are defined purely geometrically.

Our original motivation for this work was to be able to describe integral closure in the context of coordinate semirings. We continue this project in upcoming work.

\subsection{Results}   
We first introduce a modified version of a tropical vanishing locus. Let $\base$ be a sub-semifield of $\T$ and let $\Mon$ be a toric monoid corresponding to a (strictly convex rational polyhedral) cone $\sigma\subseteq N_{\R}$. Given $f=\dsum_{u\in\Mon}f_u\chi^u\in \base[\Mon]$ and $(r,x)\in \R_{\geq0}\times N_{\R}(\sigma)$, we let $\wt{f}(r,x)=\displaystyle\max_{u\in\Mon}\left(r\log(f_u)+_{\R}\angbra{x,u}\right)$. For any $E\subseteq (\base[\Mon])^2$ and $I$ ideal of $\base[\Mon]$ we define

\begin{equation*}
\begin{split}
%&V(E) = \{ a \in N_{\R}(\sigma) : f(a) = g(a), \text{ for all } (f,g) \in E\} \\
&\wt{V}(E) =\{w\in\R_{\geq0}\times N_{\R}(\sigma)\;:\;\wt{f}(w)=\wt{g}(w)\text{ for all }(f,g)\in E\}.\\
%&V(I) = \{a \in N_{\R}(\sigma) : \forall f \in I, f \text{ tropically vanishes at } a\}
\end{split}
\end{equation*}

This is a refinement of the ``usual'' congruence variety 
$V(E) = \{ a \in N_{\R}(\sigma) : f(a) = g(a), \text{ for all } (f,g) \in E\}$
that also takes into account the variety of the corresponding congruence with $\B$ coefficients. %\red{we don't say what B is..}
Here $\B\subseteq\T$ is the \emph{boolean semifield}, consisting of the semiring additive and multiplicative identity elements of $\T$.

If $I$ is the ideal of a subvariety $X$ of the torus and $E = \Bend(\trop I)$, the set $\wt{V}(E)$ exactly coincides with Gubler's tropicalization $\operatorname{Trop}_W(X)$ defined in \cite{Gub11}.

%\red{ Our first result gives a condition... }

%(Have we explained (1) prime congruences and (2) finite tropical basis?)

Our first result classifies exactly which prime congruences contain a given congruence $E$ under the mild hypothesis that $E$ has a finite tropical basis; c.f.\ Theorem~\ref{thm:PrimeContainsCongruence}. Here we say that a congruence $E$ has a finite tropical basis if its variety can be written as a finite intersection of varieties of a single pair $(f,g)\in E$; see Definition~\ref{def: finiteTropBasis}.
To state the result, we will use the concept of the prime congruence $P_{\calc_\bullet}$ defined by a flag of cones $\calc_\bullet$; see Definitions~\ref{def:flagOfCones} and~\ref{def:PrimeOfAFlag}.

\begin{theorem}\label{IntroThm:PrimeContainsCongruence}
Let $E$ be a congruence with a finite tropical basis and let $P$ be a prime congruence, both on $\base[\Mon]$.
Then $E\subseteq P$ if and only if there is a flag of cones $\calc_\bullet$ contained in $\wt{V}(E)$ such that $P=P_{\calc_\bullet}$. 
\end{theorem}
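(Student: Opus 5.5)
The plan is to reduce everything to a statement about a single pair $(f,g)$ and then use the explicit description of $P_{\calc_\bullet}$. We take from Definitions~\ref{def:flagOfCones} and~\ref{def:PrimeOfAFlag} that every prime congruence on $\base[\Mon]$ is of the form $P_{\calc_\bullet}$ for some flag of cones $\calc_\bullet=(\calc_1\subseteq\cdots\subseteq\calc_k)$ in $\R_{\geq0}\times N_{\R}(\sigma)$. Here $(f,g)\in P_{\calc_\bullet}$ means that $f$ and $g$ agree after successively taking leading terms with respect to the vectors spanning the flag. Concretely, $(f,g)\in P_{\calc_\bullet}$ exactly when $\wt f$ and $\wt g$ agree on the relative interior of a ``lexicographic germ''. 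This germ is the set of points $w_1+\eps_2 w_2+\cdots+\eps_k w_k$ with $0<\eps_k\ll\cdots\ll\eps_2\ll 1$ and $w_i$ ranging over the cone generators. (If the paper's matrix description is phrased differently, I would first prove this reformulation as a lemma.) Since $\wt f,\wt g$ are piecewise linear, agreement on that germ is equivalent to agreement on a nonempty open subset of the top cone $\calc_k$ near the flag. By continuity this implies agreement on all of $\calc_1$, and in fact on the whole closed cone $\calc_k$ locally near the flag.

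\emph{The ``if'' direction.} Suppose $\calc_\bullet\subseteq\wt V(E)$, so every $(f,g)\in E$ satisfies $\wt f=\wt g$ on each cone $\calc_i$, in particular on $\calc_k$. Then the successive initial forms of $f$ and $g$ along the flag coincide, so $(f,g)\in P_{\calc_\bullet}$. This direction needs no finiteness hypothesis.

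\emph{The ``only if'' direction.} Suppose $E\subseteq P$ and write $P=P_{\calc_\bullet}$ for some flag, which need not lie in $\wt V(E)$. By the finite tropical basis hypothesis, $\wt V(E)=\bigcap_{j=1}^m\wt V(f_j,g_j)$ with $(f_j,g_j)\in E\subseteq P$. For each $j$, membership in $P$ gives an open region $U_j$, namely the lexicographic germ, on which $\wt f_j=\wt g_j$. A finite intersection of such germs is again a germ of the same flag type. So there is a small simplicial neighbourhood $\Simon$ in the parameter space $(\eps_2,\ldots,\eps_k)$ on which all $m$ equalities hold simultaneously. We then replace $\calc_\bullet$ by a new flag $\calc'_\bullet$ built from vectors $w'_i$ chosen inside this region, for example $w'_1=w_1$ and $w'_i=w_1+\eps_2 w_2+\cdots+\eps_i w_i$ for a fixed admissible tuple of $\eps$'s. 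The new flag is contained in $\bigcap_j\wt V(f_j,g_j)=\wt V(E)$, and we must check $P_{\calc'_\bullet}=P_{\calc_\bullet}$. That equality should follow because the initial-form order defined by a flag depends only on the lexicographic germ, which is unchanged by this triangular change of generators.

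\emph{Main obstacle.} The hard step is the last one: showing that the triangular change of generators preserves the prime while making the whole closed cones, not just germs, lie in $\wt V(E)$. The germ condition only gives equality near $\calc_1$. I would therefore take $\calc'_i$ to be the cone spanned by $w'_1,\ldots,w'_i$, shrunk so that it lies in the closure of a single linearity region of all the $\wt f_j,\wt g_j$. This is where finiteness is essential: with infinitely many pairs, the regions could shrink to nothing. Since the $\wt f_j,\wt g_j$ are linear on that region and agree on an open subset of it, they agree on its closure, which contains $\calc'_\bullet$.
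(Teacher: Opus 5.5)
Your proof is correct in outline and has the same skeleton as the paper's. For each basis pair $(f_j,g_j)$, find a cone near the flag on which $\wt f_j$ and $\wt g_j$ are computed by $P$-leading monomials with equal $P$-values. Intersect these finitely many cones, then move the flag into the intersection without changing the prime.

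The difference is in the tools. The paper gets the cone from Lemma~\ref{lemma:ConeInFilterAndVariety}: a $\Gamma$-admissible cone $\wt U_j\in\wtcalf_P$ contained in $\wt V\big((f_j,g_j)\big)$. It knows that $\bigcap_j\wt U_j$ meets $\relint\calc_i$ for every $i$ by quoting Theorem~\ref{thm:TheFilterHasFlagMeaning} from \cite{FM23}. It then simply intersects the original cones with this set (Lemma~\ref{lemma:CuttingFlagByNbhd}), and the ``if'' direction runs through the same lemmas. You instead prove the needed local statement by hand with lexicographic perturbations, which is essentially the paper's later Corollary~\ref{coro:InitialFormsAddingManyVectors}. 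You also replace the cutting lemma by the triangular change of generators $w'_i=w_1+\eps_2w_2+\cdots+\eps_iw_i$. This changes the defining matrix only by positive row scalings and adding multiples of earlier rows, so the lexicographic order is unchanged. Your route is self-contained; the paper's avoids all $\eps$-bookkeeping and produces cones that are subcones of the original ones.

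To make your crux step airtight, say exactly what ``shrunk'' means.
\begin{itemize}
\item Let $L$ be the closed cone on which every $\wt f_j,\wt g_j$ equals $\wt m$ for a chosen $P$-leading monomial $m$. You need every truncation $w_1+\eps_2w_2+\cdots+\eps_iw_i$ to lie in $L$, not only the full sum. This holds for $\eps_2<a$ and $\eps_{l+1}<a\eps_l$ with $a>0$ small. The reason is that each of the finitely many comparisons of a leading monomial with another monomial is lexicographically nonnegative along $(w_1,\ldots,w_k)$, and that property survives truncation. Convexity then gives $\calc'_k\subseteq L$. The truncations are points of $L\cap\relint\calc_i$, which is precisely the fact the paper imports from \cite{FM23}.
\item Equality on $\calc'_k$ should come from $\wt m_{f_j}(w_l)=\wt m_{g_j}(w_l)$ for all $l$, i.e.\ agreement on $\vspan\calc_k$. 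It should not come from agreement on an open subset of the region: your germ is open in $\calc_k$, not in $L$.
\end{itemize}

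Two side remarks are inaccurate, though you do not use them.
\begin{itemize}
\item Once $k\geq3$, agreement does not hold on all of $\calc_k$ near $\calc_1$. On $\T[x^{\pm1},y^{\pm1}]$ take $w_1=(1,0,0)$, $w_2=(0,1,0)$, $w_3=(0,0,1)$. The pair $(x+y,x)$ lies in $P$, but $\wt{x+y}\neq\wt{x}$ at $w_1+\eps_2w_2+\eps_3w_3$ whenever $\eps_3>\eps_2$. For the same reason, the converse direction of your ``equivalent to agreement on a nonempty open subset of $\calc_k$'' only holds for open sets inside the lexicographic regime.
\item In the ``if'' direction, it is the $P$-values of the leading terms that coincide, not the initial forms themselves.
\end{itemize}
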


When $\Mon=\Z^n$ or $\N^n$, corresponding to the torus and to affine space, respectively, Theorem~\ref{IntroThm:PrimeContainsCongruence} says that $E\subseteq P$ if and only if there is a defining matrix for $P$ whose rows are all in $\wt{V}(E)$.

Our next result, Corollary~\ref{coro: infiniteNull}, is a generalization of the ``strong Nullstellensatz for congruences'' result of \cite[Theorem 5.4]{JM17} to the case when the congruence is not necessarily finitely generated but has a finite tropical basis. %Definition~\ref{def: trop_ideals} for the precise definition of a tropical ideal.

% \begin{theorem}
% Let $E$ be a congruence on $\base[\Mon]$ with a finite tropical basis $\calB$. Then the finitely generated congruence $C=\angbra{\calB}$ satisfies $\sqrt{E}=\sqrt{C}$. In particular, if $I\subseteq\base[\Mon]$ is a tropical ideal then there is a finitely generated ideal $J\subseteq I$ such that $\sqrt{\Bend(I)}=\sqrt{\Bend(J)}$. 
% \end{theorem}

\begin{theorem}[Nullstellensatz for congruences with a finite tropical basis]
Let $S \subseteq \T$ and $\Mon$ a toric monoid. Let $E$ be a congruence on $\base[\Mon]$ that has a finite tropical basis. Suppose $(f,g)$ is such that $\wt f(w)=\wt g(w)$ for all $w\in \wt V(E)$. Then there are $i\in\Z_{\geq0}$ and $h\in\base[\Mon]$ such that $\big[(f+g)^i+h\big](f,g)\in E$.
\end{theorem}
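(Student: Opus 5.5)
The plan is to reduce to the finitely generated case and invoke the strong Nullstellensatz of \cite[Theorem 5.4]{JM17}, using Theorem~\ref{IntroThm:PrimeContainsCongruence} as the bridge.

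Let $\calB=\{(f_1,g_1),\ldots,(f_k,g_k)\}\subseteq E$ be a finite tropical basis of $E$, and let $C=\angbra{\calB}$ be the congruence it generates. Then $C\subseteq E$, so $\sqrt{C}\subseteq\sqrt{E}$.

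The first step is to show that $\wt V(C)=\wt V(E)$. Since $C\subseteq E$ we have $\wt V(E)\subseteq\wt V(C)$. For the reverse inclusion, note that $\wt V(C)=\bigcap_j\wt V(f_j,g_j)$, because the set of pairs on which $\wt{\cdot}$ agrees at a point is itself a congruence. The finite tropical basis hypothesis should then give exactly $\wt V(E)=\bigcap_j\wt V(f_j,g_j)$. Here I must check that Definition~\ref{def: finiteTropBasis} is phrased in terms of $\wt V$ rather than $V$. If it uses only $V$, I would first argue that the corresponding statement for $\wt V$ follows, by homogenizing: the $r=0$ slice records the $\B$-coefficient congruence, and the $r>0$ part is a cone over $V$.

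The second step is to show that every prime containing $C$ contains $E$. Let $P\supseteq C$ be prime. Since $C$ is finitely generated, it certainly has a finite tropical basis, namely $\calB$. Theorem~\ref{IntroThm:PrimeContainsCongruence} applied to $C$ then gives $P=P_{\calc_\bullet}$ for some flag $\calc_\bullet$ contained in $\wt V(C)=\wt V(E)$. Applying the same theorem to $E$ yields $E\subseteq P$. Hence $\sqrt{C}=\sqrt{E}$.

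The third step is to show that $(f,g)\in\sqrt{E}$. Take any prime $P\supseteq E$. By Theorem~\ref{IntroThm:PrimeContainsCongruence}, $P=P_{\calc_\bullet}$ with every cone of the flag inside $\wt V(E)$. The prime $P_{\calc_\bullet}$ is determined by lexicographic comparison of the functions $\wt{\cdot}$ evaluated at a sequence of points $w_1,\ldots,w_m$ taken from the cones of the flag, as in Definition~\ref{def:PrimeOfAFlag}. Since $\wt f=\wt g$ on all of $\wt V(E)$, the two vectors agree coordinatewise, so $(f,g)\in P$. Therefore $(f,g)\in\sqrt{E}=\sqrt{C}$.

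Finally, because $C$ is finitely generated, \cite[Theorem 5.4]{JM17} gives $i\ge0$ and $h$ with $\big[(f+g)^i+h\big](f,g)\in C\subseteq E$. This is the required conclusion. One caveat is that JM17 is stated over $\T$ or for polynomial and Laurent semirings, and I need it over a sub-semifield $\base$ and a general toric monoid $\Mon$. I would handle this by checking that their proof goes through verbatim with prime congruences described by flags. If necessary, I would instead reduce to the Laurent case by embedding $\base[\Mon]$ into $\base[M]$ and clearing denominators with a monomial absorbed into $h$.

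The main obstacle is this last transfer of JM17 to the present generality, together with confirming that the finite tropical basis condition is really a statement about $\wt V$. Step three is essentially formal once the description of $P_{\calc_\bullet}$ is unwound.
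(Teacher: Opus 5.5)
Your Step 3 rests on an incorrect unwinding of $P_{\calc_\bullet}$. A defining matrix with rows $w_0,\ldots,w_k$ compares $f$ and $g$ via the lexicographic maximum, over their monomials $t^{a}\chi^{u}$, of the vectors $\big(\angbra{(a,u),w_0},\ldots,\angbra{(a,u),w_k}\big)$. Unless $f$ is a monomial, this is not the vector $\big(\wt f(w_0),\ldots,\wt f(w_k)\big)$, and agreement of $\wt f$ and $\wt g$ at the rows does not imply $(f,g)\in P$.

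For example, on $\T[x^{\pm1}]$ with rows $w_0=(1,0)$ and $w_1=(0,1)$, the polynomials $f=1+t^{-1}x$ and $g=x$ satisfy $\wt f(w_0)=\wt g(w_0)=0$ and $\wt f(w_1)=\wt g(w_1)=1$. Yet their $P$-leading terms are $1$ and $x$, so $(f,g)\notin P$. This is the obstruction the paper discusses just before Lemma~\ref{lemma:GrobnerCellOfFContainingP}.

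The conclusion of Step 3 is still true, because your hypothesis gives agreement on the whole cones, not just at chosen points. For each $w$ the map $p\mapsto\wt{p}(w)$ is a semiring homomorphism, so $\calc_j\subseteq\wt V(E)\subseteq\wt V\big(\angbra{(f,g)}\big)$. The forward direction of Theorem~\ref{thm:PrimeContainsCongruence}, which needs no finite tropical basis, applied to the congruence $\angbra{(f,g)}$ then gives $(f,g)\in P$. Its proof is exactly the step you skipped. One shrinks the flag by a $\Gamma$-admissible neighborhood on which $f$ and $g$ are each dominated by a single $P$-leading monomial (Lemmas~\ref{lemma:GrobnerCellOfFContainingP} and~\ref{lemma:CuttingFlagByNbhd}), and only then compares values at the rows.

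With this repair your argument is correct, but it takes a different route from the paper's and contains a detour. The paper obtains $(f,g)\in\sqrt E$ from Corollary~\ref{coro:TropBasisImpliesRankOnePrimesEnough}, namely $\sqrt E=\bigcap_{w\in\wt V(E)}P_w$. That corollary combines your Steps 1--2 (Corollary~\ref{coro:pre-Nullstellensatz}) with the result of \cite{JM17} that rank-one primes suffice for finitely generated congruences. The hypothesis then says literally that $(f,g)\in P_w$ for all $w\in\wt V(E)$, and Corollary~\ref{coro:radical_by_generalized_power} applied to $E$ finishes.

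You instead test $(f,g)$ against every prime containing $E$ through the flag characterization, which bypasses the rank-one reduction entirely. But then Steps 1--2 and the final passage through $C$ are superfluous. Corollary~\ref{coro:radical_by_generalized_power} holds for an arbitrary congruence on an arbitrary additively idempotent semiring, so $(f,g)\in\sqrt E$ directly yields $i$ and $h$ with $\big[(f+g)^i+h\big](f,g)\in E$. The ``main obstacle'' of transferring \cite[Theorem 5.4]{JM17} to $\base[\Mon]$ therefore never arises. What the paper's route buys is the pointwise description of $\sqrt E$ by $\wt V(E)$, which it reuses in Proposition~\ref{prop: quotientByRadofBendIsFunctions}.

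Finally, Definition~\ref{def: finiteTropBasis} is indeed phrased with $\wt V$. That matters, because your homogenization fallback would not have worked: finitely many pairs cutting out $V(E)$ need not cut out the $r=0$ slice of $\wt V(E)$.
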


In particular, this conclusion holds when $E$ is the bend congruence of a tropical ideal; see Definition~\ref{def: trop_ideals} for the precise definition of a tropical ideal.

The next statement, Theorem~\ref{thm:resolving-primes}, is a strong structure result, which states that each minimal prime containing a congruence $E$ with finite tropical basis has trivial ideal-kernel. Here, the ideal-kernel of a congruence $E$ is the set of elements in the equivalence class of 0, the additive identity, in the quotient by $E$. Our proof is constructive and gives a recipe to produce those primes. This is a generalization of \cite[Theorem 4.9 and Theorem 4.14]{JM17} stating that minimal primes (over the diagonal) of a polynomial semiring have trivial ideal-kernel. The proof of our result is much more subtle and complicated - constructing a suitable prime and showing it actually contains $E$ requires significantly more nuance than doing so in the situation where the prime need only contain the diagonal.

\begin{theorem}\label{IntroThm:MinPrimes}
Let $E$ be a congruence on $\base[\Mon]$ with a finite tropical basis. 
Suppose that $\wt{V}(E)$ is the closure of $\wt{V}(E)\cap(\R_{\geq0}\times N_{\R})$ in $\R_{\geq0}\times N_{\R}(\sigma)$. Then every minimal prime congruence of $\base[\Mon]/E$ has trivial ideal-kernel.
\end{theorem}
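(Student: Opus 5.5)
By Theorem~\ref{IntroThm:PrimeContainsCongruence}, every prime $P\supseteq E$ has the form $P=P_{\calc_\bullet}$ for some flag of cones $\calc_\bullet=(\calc_1\subseteq\cdots\subseteq\calc_k)$ contained in $\wt{V}(E)$. So it suffices to show the following: given such a flag, there is another flag $\calc'_\bullet$, also contained in $\wt{V}(E)$, such that $P_{\calc'_\bullet}$ has trivial ideal-kernel and $P_{\calc'_\bullet}\subseteq P_{\calc_\bullet}$. A minimal prime over $E$ would then equal such a $P_{\calc'_\bullet}$, and so its ideal-kernel would be trivial.

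The ideal-kernel of $P_{\calc_\bullet}$ consists of those $f$ all of whose terms are ``$-\infty$'' along the flag. This happens exactly when the first cone meets only the boundary strata $\R_{\geq0}\times N_{\R}(\tau)$ with $\tau\neq 0$ (or $r=0$ in a degenerate way). So I expect the ideal-kernel to be trivial precisely when the first vector of the flag (a generic point of $\calc_1$) lies in $\R_{>0}\times N_{\R}$, the open torus part with positive first coordinate. The hypothesis says $\wt{V}(E)$ is the closure of $\wt{V}(E)\cap(\R_{\geq0}\times N_{\R})$. So for a flag whose first cone lies in a boundary stratum, I will approximate: choose a point $w_0\in\wt{V}(E)\cap(\R_{\geq0}\times N_{\R})$ close to a generic point of $\calc_1$. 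More precisely, using that $\wt{V}(E)$ is a finite intersection of polyhedral fan-support sets, I will pick a polyhedral cone $\Carl\subseteq\wt{V}(E)\cap(\R_{\geq0}\times N_{\R})$ whose closure in $\R_{\geq0}\times N_{\R}(\sigma)$ contains $\calc_1$ in its boundary.

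The new flag is then built by prepending. Take $\calc'_1$ to be a cone generated by a point of $\Carl$ together with the old cones, adjusted so each stays inside $\wt{V}(E)$. In matrix terms, the first row is a torus point and the subsequent rows are the old ones, with the limit direction toward $\calc_1$ encoded as a later row. Then I will check $P_{\calc'_\bullet}\subseteq P_{\calc_\bullet}$ by comparing the lexicographic orders on terms: if two polynomials agree to all orders along $\calc'_\bullet$, they agree along $\calc_\bullet$, because $\calc_\bullet$ appears as a ``specialization'' (a face/limit) of $\calc'_\bullet$. It remains to ensure $\calc'_\bullet\subseteq\wt{V}(E)$. The finite tropical basis reduces this to finitely many pairs $(f,g)$, each with a polyhedral equality locus, so a common refinement gives a single cone $\Carl$ that works simultaneously.

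The main obstacle is this construction. The closure hypothesis only gives points approaching $\calc_1$, not a whole cone in the torus part whose boundary face contains the entire flag with compatible ordering. Cases where $r=0$ on some cones, and non-simplicial boundary behavior in $N_{\R}(\sigma)$, need care. My first attempt is to use polyhedral structure: $\wt{V}(E)\cap(\R_{\geq0}\times N_{\R})$ is a finite union of rational polyhedral cones. By the curve-selection-type fact for polyhedra, one such cone $\Carl$ has closure containing a relatively open neighborhood of a generic point of $\calc_1$ in its stratum. Recession directions of $\Carl$ toward the boundary stratum then provide the extra row, and genericity handles the rest of the flag.
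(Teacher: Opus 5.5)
Your reduction is the same as the paper's: for each prime $P=P_{\calc_\bullet}\supseteq E$, produce a prime $Q$ with trivial ideal-kernel and $E\subseteq Q\subseteq P$. But the $Q$ you describe does not work, and the step you flag as the main obstacle is left open.

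\textbf{The proposed matrix is wrong.} All rows of a defining matrix must lie in one stratum. So you cannot keep the old rows $w_i\in\R_{\geq0}\times(N_{\R}/\tau)$ below a torus row: multiplying by $\chi^u$ with $u\notin\tau^\perp$ erases the later rows, so the quotient is not cancellative. The $w_i$ must be replaced by torus lifts $\what{w}_i$.

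More seriously, the limit direction must be the \emph{top} row, not a later one. The paper takes $Q$ given by $\begin{pmatrix}v\\ \what{w}_0\\ \vdots\\ \what{w}_k\end{pmatrix}$ with $v\in\{0\}\times\relint\tau$ and $\what{w}_i+\largeNum v\to w_i$. By Lemma~\ref{lemma:VandW}, $v$ pairs to $0$ with every term outside $\idker(P)$ and negatively with every term inside it. So $\idker(P)$ lies strictly $Q$-below everything else, and on the remaining terms $\what{w}_i$ and $w_i$ agree. That is exactly what $Q\subseteq P$ requires.

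With a torus point $w'$ of positive first coordinate on top (and $\Gamma\neq0$), $Q\subseteq P$ fails. For $m\in\idker(P)$ and a term $m'\notin\idker(P)$, you get $t^{-c}m'<_Q m$ for large $c\in\Gamma$, whereas $m\equiv_P 0<_P t^{-c}m'$. In Example~\ref{ex: noKerPrime2}, putting $(1,0,-1)$ first gives $t^{-1}<_Q x$ although $x\equiv_P 0$.

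Relatedly, your guess that trivial ideal-kernel needs the first row in $\R_{>0}\times N_{\R}$ is off. Trivial ideal-kernel just means all rows lie in $\R_{\geq0}\times N_{\R}$, and the correct top row has $r=0$.

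\textbf{The missing idea is finding a single cone.} Let $\Sigma$ be a fan structure on $X^\circ=\wt{V}(E)\cap(\R_{\geq0}\times N_{\R})$. The closure hypothesis only gives, point by point, some cone of $\Sigma$ whose closure contains that point. You need one cone $L\in\Sigma$ containing a $v\in\{0\}\times\relint\tau$ together with lifts of \emph{all} the $w_i$ at once, after a triangular rebasing $\zeta_i=\sum_{j\leq i}b_jw_j$. The phrase ``genericity handles the rest of the flag'' does not provide this.

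The paper does it in Lemma~\ref{lemma:GeneralClosureImpliesAConeCondition}, using $\Phi(w)=\{\rho\in\Sigma:\rho\cap(\id\times\pi_\tau)^{-1}(w)\neq\emptyset\}$:
\begin{enumerate}
\item $\Phi$ is constant on relative interiors of a complete fan and upper semicontinuous (Lemma~\ref{lemma:SetValuedFunctionProperties}).
\item Lemma~\ref{lemma:SimplexOfCoefficients} places an open simplex of points $w_0+\sum\delta_iw_i$ in one such relative interior.
\item Semicontinuity then shows each $\Phi(\zeta_i)$ contains that common value.
\item Lemma~\ref{lemma:PolyhedronClosureMeansEndOfRay} supplies $v$.
\end{enumerate}

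Once you have such an $L$, your plan for $E\subseteq Q$ works and is shorter than the paper's argument via Corollary~\ref{coro:InitialFormsAddingManyVectors}. For $\tau\neq\{0\}$, the vectors $v,\what{\zeta}_0,\ldots,\what{\zeta}_k\in L$ are linearly independent. The flag $\R_{\geq0}v\leq\operatorname{cone}(v,\what{\zeta}_0)\leq\cdots$ lies in $L\subseteq\wt{V}(E)$. It defines $Q$, since its rows differ from $v,\what{w}_0,\ldots,\what{w}_k$ by a lower-triangular change with positive diagonal. So the easy direction of Theorem~\ref{thm:PrimeContainsCongruence} applies. Without the correct row order and the single cone $L$, though, the proposal does not yet prove the theorem.
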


The above statement implies in Corollary ~\ref{coro:ClosureConditionImpliesRadicalQC} that $\base[\Mon]/\sqrt{E}$, which in view of Section~\ref{app:CPL} can be called the function semiring of a well-behaved congruence variety, is cancellative.

\begin{theorem}\label{thm-intro-rad}
Let $\base$ be a sub-semifield of $\T$, let $\Mon$ be the toric monoid corresponding to the cone $\sigma$, and let $E$ be a congruence on $\base[\Mon]$ with a finite tropical basis. Suppose that $\wt{V}(E)$ is the closure of $\wt{V}(E)\cap(\R_{\geq0}\times N_{\R})$ in $\R_{\geq0}\times N_{\R}(\sigma)$. Then $\base[\Mon]/\sqrt{E}$ is cancellative.
\end{theorem}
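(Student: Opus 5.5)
The plan is to deduce cancellativity of $\base[\Mon]/\sqrt{E}$ from Theorem~\ref{IntroThm:MinPrimes}, using only two facts: $\sqrt{E}$ is the intersection of the minimal primes over $E$, and each such prime has trivial ideal-kernel. First I will check that every prime congruence containing $E$ contains a minimal one. This follows from Zorn's lemma, since the intersection of a chain of prime congruences is again prime: the quotient of a chain intersection is still totally ordered and cancellative, because any failure of either property would be witnessed at one prime of the chain. Hence $\sqrt{E}=\bigcap_{P\in\calV} P$, where $\calV$ is the set of minimal primes of $\base[\Mon]/E$. Theorem~\ref{IntroThm:MinPrimes} applies, because its hypotheses are exactly the present ones, and it tells us that every $P\in\calV$ has trivial ideal-kernel.

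Next comes the cancellativity argument itself. Suppose $(fh,gh)\in\sqrt{E}$ and $h\not\sim_{\sqrt{E}}0$. I want to show $(f,g)\in\sqrt{E}$, that is, $(f,g)\in P$ for each $P\in\calV$. Fix such a $P$. Then $(fh,gh)\in P$. The quotient $\base[\Mon]/P$ is cancellative, being a prime quotient. So as long as $h$ is not in the ideal-kernel of $P$, we can cancel $h$ and get $(f,g)\in P$.

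The main point is to see that $h\notin$ the ideal-kernel of $P$. I expect this to follow directly from trivial ideal-kernel, read as: the class of $0$ modulo $P$ is just $\{0\}$ in $\base[\Mon]$. Then $h\sim_P 0$ forces $h=0$. But $h=0$ would give $h\sim_{\sqrt{E}}0$, contrary to our assumption. So $h\neq 0$ in $\base[\Mon]$, hence $h$ is nonzero modulo every $P\in\calV$, and the cancellation goes through for all minimal primes at once. This gives $(f,g)\in\bigcap_{P\in\calV}P=\sqrt{E}$.

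The only place I expect trouble is if "cancellative" for a semiring quotient by a congruence is meant as cancellation of all nonzero elements of the quotient. In that case the argument above still covers it, because a nonzero element of $\base[\Mon]/\sqrt{E}$ has a representative $h\neq0$, and trivial ideal-kernel for each minimal prime is exactly what makes $h$ nonzero modulo each $P$ simultaneously. So all the real work lies in Theorem~\ref{IntroThm:MinPrimes}, and this corollary is the formal consequence described above.
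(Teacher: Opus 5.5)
Your proposal is correct and takes the paper's route. The paper deduces this from Theorem~\ref{thm:resolving-primes} together with Proposition~\ref{prop:RadicalOfDiagonalIsQCIfPrimesCanBeResolved}, whose proof is exactly your cancellation argument over an intersection of primes with trivial ideal-kernel. Your Zorn's-lemma step is a harmless addition, since the paper's proof of Theorem~\ref{thm:resolving-primes} directly builds, inside every prime $P\supseteq E$, a prime $E\subseteq Q\subseteq P$ with trivial ideal-kernel; and if ``trivial ideal-kernel'' is read modulo $E$ instead, $h\sim_P 0$ gives $h\sim_E 0$, which yields the same contradiction.
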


In particular, we have the following corollary (see Remark~\ref{rmk:forTropicalizedCases}).

\begin{coro}\label{IntroCoro:TropIdeal}
Let $\base$ be a sub-semifield of $\T$, let $\Mon$ be the toric monoid corresponding to the cone $\sigma$, and let $I\subseteq\base[\Mon]$ be the tropicalization of an ideal in $k[\Mon]$ for some valued field $k\to\base$. Suppose that $V(I)$ is the closure of $V(I)\cap N_{\R}$ in the tropical toric variety $N_{\R}(\sigma)$. Then $\base[\Mon]/\sqrt{\Bend(I)}$ is cancellative.
\end{coro}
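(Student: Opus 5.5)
The plan is to deduce Corollary~\ref{IntroCoro:TropIdeal} directly from Theorem~\ref{thm-intro-rad} with $E=\Bend(I)$. This requires checking two hypotheses:
\begin{enumerate}
\item $\Bend(I)$ has a finite tropical basis;
\item $\wt{V}(\Bend(I))$ is the closure of $\wt{V}(\Bend(I))\cap(\R_{\geq0}\times N_{\R})$ in $\R_{\geq0}\times N_{\R}(\sigma)$.
\end{enumerate}

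For (1), I would first use that the tropicalization of an ideal of $k[\Mon]$ is a tropical ideal (Definition~\ref{def: trop_ideals}). Tropical ideals admit finite tropical bases: for the tropicalization $I=\trop J$ this is the classical existence of a finite tropical basis of $J$. The variety of $\Bend(I)$ is then cut out by the bend relations of finitely many polynomials. Each such polynomial contributes finitely many pairs $(f,f_{\hat u})$, and $\wt{V}$ of the whole congruence is the intersection of $\wt{V}$ of these pairs. This should match Definition~\ref{def: finiteTropBasis}, since the paper notes that the conclusion applies to bend congruences of tropical ideals.

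For (2), I would identify $\wt{V}(\Bend(I))$ with Gubler's $\Trop_W(X)$, as remarked in the introduction. That is the tropicalization over the "extended" base, where the slice $r=1$ is $V(I)$ and the slice $r=0$ is the recession (the $\B$-coefficient variety, i.e. the tropicalization with trivial valuation). The key steps are as follows.
\begin{itemize}
\item Over the open torus part $\R_{\geq0}\times N_{\R}$, the set $\wt{V}\cap(\R_{>0}\times N_\R)$ is the cone over $V(I)\cap N_\R$.
\item The $r=0$ slice inside $N_\R$ is the recession fan of $V(I)\cap N_\R$, which by Gubler's results is the closure of that cone in $\{0\}\times N_\R$.
\item Toric strata are handled by the hypothesis that $V(I)$ is the closure of $V(I)\cap N_\R$. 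At $r>0$ this gives directly that boundary points in $N_\R(\sigma)$ are limits of points of the torus part.
\item At $r=0$ and in boundary strata, I would use that $\wt{V}$ is closed and conical. The $r=0$ boundary part is then the recession of the boundary part at $r=1$, which by Gubler's compatibility of tropicalization with closures (the closure of $\Trop_W(X)$ in the toric stratum equals the tropicalization of the closure of $X$ in the stratum) reduces to the $r=1$ statement.
\end{itemize}

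Once (1) and (2) hold, Theorem~\ref{thm-intro-rad} gives that $\base[\Mon]/\sqrt{\Bend(I)}$ is cancellative.

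I expect the main obstacle to be the $r=0$ boundary points in (2). Identifying the boundary of $\wt V$ with limits requires the Gubler-type theorem that tropicalization commutes with closure in toric strata, applied in the extended (cone-over) setting. I would rely on \cite{Gub11} for this, as presumably Remark~\ref{rmk:forTropicalizedCases} does.
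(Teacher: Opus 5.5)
Your reduction to Theorem~\ref{thm-intro-rad} with $E=\Bend(I)$ is the paper's route, but step (1) has a genuine gap.

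A finite tropical basis $\{f_1,\ldots,f_r\}$ of $I$ only guarantees $V(I)=\bigcap_i V(f_i)$. Since $\wt{f}(r,x)=r\,f(x/r)$ for $r>0$, the pairs in $\bigcup_i\bend(f_i)$ cut out $\wt{V}(\Bend I)$ correctly only on $\R_{>0}\times N_{\R}(\sigma)$. On the slice $r=0$ one has $\wt{f}(0,x)=(\pi_{\B}f)(x)$. So these pairs cut out $\{0\}\times\bigcap_i V(\pi_{\B}f_i)$ there, whereas $\wt{V}(\Bend I)$ has $\{0\}\times V(\pi_{\B}(I))$. Recession does not commute with intersection, so the former can be strictly bigger. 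You yourself note in (2) that the $r=0$ slice is the $\B$-coefficient variety; that is exactly what a tropical basis of $I$ does not see.

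Here is a realizable example. In $k[x^{\pm1},y^{\pm1}]$ take $J=\langle x-1,\,x-a\rangle$ with $\nu(a)=c\neq0$.
\begin{itemize}
\item $J$ is the unit ideal, so $\wt{V}(\Bend(\trop J))=\emptyset$ and no prime contains $\Bend(\trop J)$.
\item $\{x-1,x-a\}$ is a tropical basis, because $\{x=0\}\cap\{x=c\}=\emptyset$.
\item The bend relations of $\chi^{(1,0)}+1$ and $\chi^{(1,0)}+t^{c}$ cut out $\{x=0\}\cap\{x=rc\}=\{(0,0,y)\,:\,y\in\R\}$.
\item This set contains a ray, so by the easy direction of Theorem~\ref{thm:PrimeContainsCongruence} the prime $P_{(0,0,1)}$ contains the congruence these relations generate.
\end{itemize}
The example at the start of the Gleichstellensatz section is even worse: an ideal with a finite tropical basis whose bend congruence has no finite tropical basis at all. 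So the fact that the result applies to bend congruences of tropical ideals cannot be derived the way you do it.

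The repair is the argument in the example following Definition~\ref{def: finiteTropBasis}.
\begin{itemize}
\item Take a finite tropical basis $\calB_1$ of $I$.
\item Also take a finite tropical basis $\calB'$ of the ideal $\pi_{\B}(I)\subseteq\B[\Mon]$. For $I=\trop J$ this ideal is the tropicalization of $J$ with respect to the trivial valuation, hence again a tropical ideal.
\item Lift each element of $\calB'$ to an element of $I$ to get $\calB_0$, and use the bend relations of $\calB_0\cup\calB_1$.
\end{itemize}
Then $\calB_1$ controls $r>0$ and $\calB_0$ controls $r=0$.

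Your step (2) uses the same Gubler input as Remarks~\ref{rmk:forTropicalizedCases} and~\ref{rmk: Gubler-recession}. Once $\wt{V}(\Bend I)=\cl_{\R_{\geq0}\times N_{\R}(\sigma)}\big(\R_{\geq0}\cdot(\{1\}\times V(I))\big)$ is known, the hypothesis $V(I)=\cl(V(I)\cap N_{\R})$ and continuity of positive scaling give (2) at once. No separate treatment of $r=0$ boundary points is needed.
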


We suspect that Corollary~\ref{IntroCoro:TropIdeal} is also true when $I$ is a tropical ideal; see Remark~\ref{rmk:errorInMR}.

The hypothesis of a finite tropical basis in Theorem~\ref{thm-intro-rad} is essential; see Example~\ref{ex: inf-trop-basis} for an example where the result fails for a congruence that has no finite tropical basis.

%In Section~\ref{app:CPL} we give an alternative proof of Theorem~\ref{thm-intro-rad} but it does not construct the relevant primes and does not imply Theorem~\ref{thm:resolving-primes}. On the other hand, Corollary~\ref{coro: functions-know-radical} shows that $V(I)$ and $\sqrt{\Bend(I)}$ carry the same information. \orange{This proof presents a more geometric point of view while the structure theorem, Theorem~\ref{thm:resolving-primes}, approaches the problem from an algebraic point of view.} \red{state the section 6 result}

In addition to the proof of Theorem~\ref{thm-intro-rad}
using the algebraic approach of Theorem~\ref{IntroThm:MinPrimes}/Theorem~\ref{thm:resolving-primes}, we also give a geometric proof of this result in Section~\ref{app:CPL}. These geometric methods also give the following corollary (Corollary~\ref{coro: functions-know-radical}). Here $\CPA(X)$ means the semiring of $\Gamma$-rational piecewise affine functions on $X$.

\begin{coro}
Let $J\subseteq k[\Mon]$ be an ideal such that there is no affine toric variety that is a proper toric subvariety of $X(\sigma)$ and which contains $V(J)$. %\footnote{In particular, this is true if $V(J)$ meets every boundary divisor.}.
Let $I=\trop(J)\subseteq \base[\Mon]$. Then the evaluation map $\base[\Mon]\to\CPA\big(V(\Bend(I))\big)$ induces an isomorphism
$$\dfrac{\base[\Mon]}{\sqrt{\Bend(I)}}\cong\CPA\big(V(\Bend I)\big).$$
\end{coro}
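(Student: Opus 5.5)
We want the evaluation map $\base[\Mon]\to\CPA\big(V(\Bend I)\big)$ to be surjective with kernel exactly $\sqrt{\Bend(I)}$.

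\textbf{Step 1: well-definedness and kernel.} Write $X=V(\Bend I)\subseteq N_{\R}(\sigma)$. Each $f\in\base[\Mon]$ restricts to a $\Gamma$-rational piecewise affine function on $X$, and the evaluation map is a semiring homomorphism. Its kernel congruence is
\[
K=\{(f,g): f|_X=g|_X\}.
\]
We will show $K=\sqrt{\Bend(I)}$.

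Since $I=\trop(J)$ is a tropical ideal, $\Bend(I)$ has a finite tropical basis, coming from a finite tropical basis of $J$. By Theorem~\ref{IntroThm:PrimeContainsCongruence}, the primes containing $\Bend(I)$ are exactly the $P_{\calc_\bullet}$ with $\calc_\bullet\subseteq\wt{V}(\Bend I)$. Hence $\sqrt{\Bend I}$ consists of the pairs $(f,g)$ with $\wt f=\wt g$ on $\wt V(\Bend I)$: a pair lies in $P_{\calc_\bullet}$ exactly when the lexicographic comparison along the flag agrees, and flags inside $\wt V$ can be taken to consist of single rays.

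\textbf{Step 2: from $\wt V$ to $V$.} This is where the hypothesis on $J$ enters. The slice of $\wt V(\Bend I)$ at $r=1$ is $X$. For $\trop J$, the cone $\wt V$ is Gubler's $\Trop_W$, and its $r=0$ part is the recession (Gröbner) fan. That fan is contained in the closure of the cone over the $r=1$ slice, by the usual structure theory of $\Trop_W$ of a subvariety.

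The hypothesis that no proper affine toric subvariety contains $V(J)$ should be exactly the condition that $X$ is the closure of $X\cap N_{\R}$ in $N_{\R}(\sigma)$. Under this condition $\wt V$ is the closure of the cone over $X$. By continuity and homogeneity of $\wt f$, the equality $f=g$ on $X$ then propagates to $\wt f=\wt g$ on all of $\wt V$, which gives $K\subseteq\sqrt{\Bend I}$. The reverse inclusion is immediate from the $r=1$ slice. Verifying this closure statement, including boundary strata of $N_{\R}(\sigma)$, is where I expect the main subtlety.

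\textbf{Step 3: surjectivity.} This is the main obstacle. Every $\Gamma$-rational piecewise affine function on $X$ must be the restriction of a tropical polynomial in $\base[\Mon]$, not merely of a tropical rational function.

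For a function finite on $X\cap N_{\R}$, we can write it as a max-min of finitely many $\Gamma$-rational affine functions $a+\angbra{\cdot,u}$. The min can be removed on the compact pieces via the standard trick of adding large multiples. The real difficulty is at the boundary and at infinity: the slopes $u$ must lie in $\Mon$, i.e.\ in $\sigma^\vee$. Here I would use the hypothesis that $X$ meets every relevant boundary stratum, so $\CPA(X)$ only allows functions whose behaviour at the boundary is compatible with those slopes.

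Concretely, I would cover $X$ by the cones of a polyhedral structure. On each region I would produce local monomial representatives with exponents in $\sigma^\vee$, and then take a max of these, each lowered by a sufficiently negative constant away from its region. The key estimate is that the pieces dominate appropriately near infinity, and I expect this estimate to demand the most work.
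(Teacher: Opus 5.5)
\textbf{There is a genuine gap, mainly in Step~3.} It comes from reading $\CPA(X)$ as the semiring of all $\Gamma$-rational piecewise affine functions on $X$. In the paper, $\CPA(X)$ is defined by analogy with $\CPL$. It is the set of restrictions to $X$ of convex functions $x\mapsto\max_i\big(\gamma_i+\angbra{x,u_i}\big)$ on $N_{\R}(\tau)$, with $\gamma_i\in\Gamma$ and $u_i\in\Mon_\tau$, where $\tau$ is the smallest face of $\sigma$ with $X\subseteq N_{\R}(\tau)$.

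With that definition, surjectivity is immediate once $\tau=\sigma$: each such function is the evaluation of $\sum_i t^{\gamma_i}\chi^{u_i}\in\base[\Mon]$. Guaranteeing $\tau=\sigma$ is exactly what the hypothesis on $J$ is used for.

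The surjectivity you set out to prove is false. For $J=0$ in $k[x^{\pm1}]$ we have $X=\R$, and $\min(0,x)$ is a $\Gamma$-rational piecewise affine function that is not the restriction of any tropical Laurent polynomial, since those are convex on every segment. No max-plus trick removes a $\min$ by ``adding large multiples''. So the local-representatives-and-gluing plan cannot succeed, and Step~3 should be replaced by the one-line definitional argument above.

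\textbf{Step~2 misuses the hypothesis.} The hypothesis is not equivalent to $X=\cl_{N_{\R}(\sigma)}(X\cap N_{\R})$. Take $J=(x(x-1))\subseteq k[x]$. Then $V(J)$ meets the boundary divisor, so the hypothesis holds, yet $X=\{-\infty,0\}$ while the closure of $X\cap N_{\R}$ is $\{0\}$.

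What you actually need is
$\wt{V}(\Bend I)=\cl_{\R_{\geq0}\times N_{\R}(\sigma)}\big(\R_{\geq0}\cdot(\{1\}\times V(I))\big)$.
This holds for every $J$, with no hypothesis (Remark~\ref{rmk: Gubler-recession}). One reduces to the irreducible components of $V(J)$. For each component one passes to the stratum of the torus orbit containing its generic point, and there Gubler's result \cite[Corollary 11.13]{Gub11} applies. Your route through the closure of $X\cap N_{\R}$ loses exactly the components lying in the toric boundary, as in the example. Once this identity is in place, your continuity and homogeneity argument identifying the kernel with $\sqrt{\Bend I}$ goes through.

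\textbf{Step~1.} Your description of $\sqrt{\Bend I}$ is correct; it is Corollary~\ref{coro:TropBasisImpliesRankOnePrimesEnough}. However, ``flags can be taken to consist of single rays'' is not a justification. The paper obtains this from \cite[Theorem 5.4]{JM17} via Corollary~\ref{coro:pre-Nullstellensatz}. Alternatively, you can note that membership of $(f,g)$ in $P_{\calc_\bullet}$ is decided by iterated one-sided directional derivatives of $\wt f$ and $\wt g$ at $w_0$ inside $\calc_k\subseteq\wt V$.
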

In particular, this shows that $V(I)$ and $\sqrt{\Bend(I)}$ carry the same information.

%In Section~\ref{app:CPL} we give an alternative proof of Theorem~\ref{thm-intro-rad} but it does not construct the relevant primes and does not imply Theorem~\ref{thm:resolving-primes}. On the other hand, Corollary~\ref{coro: functions-know-radical} shows that $V(I)$ and $\sqrt{\Bend(I)}$ carry the same information. \orange{This proof presents a more geometric point of view while the structure theorem, Theorem~\ref{thm:resolving-primes}, approaches the problem from an algebraic point of view.} \red{state the section 6 result}

The above results allow us to express the rational function semifields of \cite{SN24}, \cite{Son24}, \cite{Son23a}, \cite{Son23b} in terms of quotients of polynomial semirings. This links the non-embedded tropicalization of those works with the bend congruence approach of \cite{GG13} and \cite{MR18}. We address this is Appendix~\ref{app:Eu-Ja}.

Our last result concerns the closure of a polyhedron in a tropical toric variety. Until now, such results were limited to cases where the polyhedron was compatible with the fan defining the tropical toric variety; see, for example, \cite[Definition 3.18 and Proposition 3.19]{Rab10}. Our Corollary~\ref{coro: toric_Application}  gives a tool for analyzing the closure of a polyhedron in a tropical toric variety for which the polyhedron is not necessarily compatible with the fan.

\begin{theorem}
Let $\Sigma$ be a fan in $N_{\R}$ and let $w\in N_{\R}(\Sigma)\sdrop N_{\R}$. If $\mathfrak{L}\subseteq N_{\R}$ is a polyhedron such that $w\in\cl_{N_{\R}(\Sigma)}(\mathfrak{L})$, then there are $\what{w}\in \mathfrak{L}$ and $v\in\operatorname{rec}(\mathfrak{L})$ such that $\dlim_{\largeNum\to\infty}\what{w}+\largeNum v=w$.
\end{theorem}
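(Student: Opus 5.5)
The plan is to reduce to a single affine toric chart. Then I would treat the closure of a polyhedron in $N_{\R}(\sigma)$ as the set of limits of sequences in $\mathfrak{L}$, and extract a ray via a compactness and linear-programming argument.

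\textbf{Reduction to a chart.} First, $w$ lies in the orbit $N_{\R}(\tau)=N_{\R}/\vspan(\tau)$ for some nonzero cone $\tau\in\Sigma$. The open affine chart $N_{\R}(\tau)\subseteq N_{\R}(\Sigma)$ is an open neighbourhood of $w$, so $w\in\cl_{N_{\R}(\tau)}(\mathfrak{L})$ and we may assume $\Sigma=\tau$ is a single cone. Choose generators $u_1,\dots,u_m$ of $\Mon=\tau^\vee\cap M$. Then $N_{\R}(\tau)$ embeds in $(\R\cup\{-\infty\})^m$ by $x\mapsto(\angbra{x,u_j})_j$, with the convention that on the boundary the coordinates pairing with $u_j\notin\tau^\perp$ equal $-\infty$. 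Thus $w$ corresponds to a vector with finite entries $a_j$ for $j\in J:=\{j:u_j\in\tau^\perp\}$ and entries $-\infty$ for $j\notin J$.

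\textbf{Translating the hypothesis.} Write $\mathfrak{L}=\{x:\angbra{x,m_k}\le c_k\}$. The hypothesis says that for every $\eps>0$ and every $\largeNum$ the following system is feasible: $x\in\mathfrak{L}$, $|\angbra{x,u_j}-a_j|\le\eps$ for $j\in J$, and $\angbra{x,u_j}\le-\largeNum$ for $j\notin J$. I would then introduce the polyhedron
\[
Q_{\eps}=\{x\in\mathfrak{L}:\ |\angbra{x,u_j}-a_j|\le\eps\ (j\in J)\}.
\]
The functional $\phi(x)=\max_{j\notin J}\angbra{x,u_j}$ is then unbounded below on $Q_\eps$. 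Because $\phi$ is a max of finitely many linear forms, linear programming (applied to each of the finitely many sign patterns, or after adding a variable $t\ge\angbra{x,u_j}$) gives a recession direction $v\in\rec(Q_\eps)$ with $\angbra{v,u_j}<0$ for all $j\notin J$. Since $v\in\rec(Q_\eps)$ and $Q_\eps$ bounds the coordinates in $J$, we also get $\angbra{v,u_j}=0$ for $j\in J$.

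\textbf{Key step: fixing $\what{w}$ exactly.} So $v\in\rec(\mathfrak{L})$ and $v$ lies in the relative interior of $\tau$. The reason is that $v$ pairs to $0$ with $\tau^\perp\cap\Mon$ and negatively with the remaining generators; with the paper's sign conventions this is $v\in\relint(-\tau)$ or $\relint(\tau)$ as appropriate, and this is exactly the condition that $x+\largeNum v\to$ the projection of $x$ to $N_{\R}(\tau)$. It remains to find $\what{w}\in\mathfrak{L}$ whose image in $N_{\R}/\vspan(\tau)$ is exactly $w$, i.e.\ with $\angbra{\what{w},u_j}=a_j$ for $j\in J$, so that $\lim\what{w}+\largeNum v=w$. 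This is the main obstacle, because we only know approximate solutions exist. I would handle it by projecting: consider the image $\pi(\mathfrak{L}')$ under $\pi:N_{\R}\to N_{\R}/\vspan(\tau)$, where $\mathfrak{L}'$ is the set of $x\in\mathfrak{L}$ admitting a direction as above. Concretely, the set $\{(x,v): x\in\mathfrak{L},\ v\in\rec\mathfrak{L},\ \angbra{v,u_j}=0\ (j\in J),\ \angbra{v,u_j}\le-1\ (j\notin J)\}$ is a polyhedron. Its projection $R=\{\pi(x)\}$ is therefore a polyhedron, hence closed, and by the argument above $w\in\cl(R)$ (run it for each $\eps$, with $\eps\to0$). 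Closedness of projections of polyhedra then gives $w\in R$, which produces $\what{w}$ and $v$ with $\angbra{\what{w},u_j}=a_j$ exactly. This completes the proof, and the convergence $\what{w}+\largeNum v\to w$ is immediate from the coordinate description.
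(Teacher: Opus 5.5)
Your proof is correct, but it finds $v$ by a different mechanism than the paper does.

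The paper deduces this statement from Lemma~\ref{lemma:PolyhedronClosureMeansEndOfRay}, applied to the closed cone $L\subseteq\R_{\geq0}\times N_{\R}$ over $\mathfrak{L}$ and the point $(1,w)$. The lift $\what{w}$ comes from closedness of the projection of $L$ to $\R_{\geq0}\times N_{\R}/\tau$, which is essentially your key step. (Your $(x,v)$-polyhedron is just a product $\mathfrak{L}\times V$, so $R=\pi(\mathfrak{L})$ and the detour through it is unnecessary.) The direction $v$ comes from a duality argument. Closed half-spaces $\calH(a,u)$ in the toric variety show that no $u\in-\tau^\vee\sdrop\tau^\perp$ lies in $L_0^\vee$. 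The identity $(L_0\cap\tau)^\vee=L_0^\vee+\tau^\vee$ then turns this into $L_0\cap\relint\tau\neq\emptyset$.

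You instead pass to the coordinates $\angbra{\cdot,u_j}$ and read off $v$ from unboundedness of the linear program minimizing $t$ subject to $x\in Q_\eps$ and $t\geq\angbra{x,u_j}$ for $j\notin J$. This is more elementary. It also makes visible that the direction lies in $\rec(\mathfrak{L})\cap\relint\tau$ independently of $\eps$ and of $\what{w}$.

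The paper's packaging buys a stronger statement. Its lemma covers cones in $\R_{\geq0}\times N_{\R}(\sigma)$ and several points $w_0,\ldots,w_k$ of one stratum sharing a single $v$. It also shows that any point of the fiber and any $v\in L\cap(\{0\}\times\relint\tau)$ will do. This is exactly what Lemma~\ref{lemma:GeneralClosureImpliesAConeCondition} and Theorem~\ref{thm:resolving-primes} need, and your LP argument would adapt to that setting essentially unchanged.

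Minor notational point: in the paper $N_{\R}(\tau)$ is the affine chart and the orbit is $N_{\R}/\tau$. With the convention $\sigma^\vee=\{u:\angbra{v,u}\leq0 \ \forall v\in\sigma\}$, your $v$ lies in $\relint\tau$, not $\relint(-\tau)$. Only its sign pattern against the $u_j$ is actually used, so this does not affect the argument.
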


While this theorem is not particularly surprising, we do not see any very quick proof of it; see Remark~\ref{rmk: closures of polyhedra} for further discussion. 
% \orange{

% \newcommand{\Bd}{\operatorname{Bd}}

% While this theorem is not particularly surprising, we do not see any very quick proof of it. For example, one approach to proving this would be to reduce to the case where $\mathfrak{L}$ is full-dimensional and then consider cases as to whether $w$ is in the interior of $\cl_{N_{\R}(\Sigma)}\mathfrak{L}$ or not. If $w$ is in the interior then there is a basic open neighborhood of $w$ (see \cite[Remark 3.4]{Pay09} for the relevant neighborhood basis) that is completely contained in $\cl_{N_{\R}(\Sigma)}\mathfrak{L}$; using the structure of such a neighborhood it would then not be difficult to show that the relevant $\what{w}$ and $v$ exist inside this neighborhood. If $w$ is not in the interior, i.e., $w$ is in the boundary of $\cl_{N_{\R}(\Sigma)}\mathfrak{L}$, then one would hope to show that $w$ is in the closure of a proper face of $\mathfrak{L}$ and then use induction on dimension. However, in order to do this, one would need to show that $\Bd_{N_{\R}(\Sigma)} \left( \cl_{N_{\R}(\Sigma)} \mathfrak{L} \right) \subseteq \cl_{N_{\R}(\Sigma)} \left( \Bd_{N_{\R}}\mathfrak{L} \right)$ and, while intuitively clear, this does not seem to have a straightforward proof.

% Ultimately, our proof takes a different, case-free, approach which allows us to prove slightly more; see Lemma~\ref{lemma:PolyhedronClosureMeansEndOfRay}. The proof takes about one page. We bring this result to the attention of the community to save people time in not having to reproduce it or similar results from scratch.
% }  

\subsection{Context in the literature}

In recent years, there has been a lot of effort dedicated to developing the necessary tools for commutative semiring algebra to understand the geometry of tropical varieties; see \cite{GG13}, \cite{MR14}, \cite{MR18}, \cite{FGGJ24}, \cite{JM15}, \cite{JM17}, \cite{FM23}, \cite{SN24}, \cite{Son24}, \cite{Ito26}, \cite{BE13}, \cite{IR14}, \cite{GP14}, \cite{ABG24}. One particular approach, introduced in \cite{GG13}, focuses on constructing an analogue of a coordinate semiring which allows us to remember more information about the original algebro-geometric objects and are able to define tropical objects intrinsically.

%\orange{algebraic methods hype}
There are compelling results pointing to the power of semiring methods in tropical geometry. 

Tropicalizations of ideals in polynomial rings, or equivalently the bend congruences of such tropicalized ideals, have been shown to preserve more information than the tropicalizations of the corresponding varieties. For example, the tropical variety retains the information of the dimension and degree, whereas the whole Hilbert polynomial is determined by the tropical ideal; see \cite{GG13}, \cite{MR14}, and \cite{MR18}. Maclagan and the second author, in unpublished work, give concrete ways to use tropical ideals to distinguish planar quadric curves that have the same tropical variety.

A recent paper, \cite{AKS25}, focuses on the tropicalization of the function field of the variety over a discretely valued field and, extending a result of Baker and Rabinoff for curves, shows that this is the same as the semifield of tropical rational functions.
They also show that this semifield is finitely generated. These results lead them to new proofs of the discretely valued cases of a recent result by Ducros, Hrushovski, Loeser and Ye \cite{DHLY24} and of the faithful tropicalization theorem by Gubler, Rabinoff and Werner \cite{GRW16}.

In \cite{To16} Tolliver provides an application to number theory -- he gives a purely semiring-theoretic proof for the extension of valuations of fields. 

\section*{Acknowledgments} 
K.M. acknowledges the support of the Simons Foundation MPS-TSM-00008148. The authors would like to thank Diane Maclagan for her insightful comments on an earlier version of the paper.

\section{preliminaries}\label{prelims}

A \textit{semiring} is a set $R$ with two binary operations (addition $+$ and multiplication $\cdot$ ) satisfying the same axioms as rings except the existence of additive inverses, as well as the axioms that $a\cdot0=0$ and $0\cdot a=0$. In this paper, a semiring is always assumed to be commutative. A semiring $(R,+,\cdot)$ is a \emph{semifield} if $(R\backslash\{0_R\},\cdot)$ is a group. A semiring $R$ is called \emph{additively idempotent} if for all $a\in R$ we have that $a+a =a$.

\textbf{Key assumption:} All semirings will be assumed to be additively idempotent.

Any additively idempotent semiring comes with a partial order defined by $a\leq b\iff a+b=b$. Both addition and multiplication preserve this partial order. One consequence of this is the following well-known fact.

\begin{lemma}\label{lem:zero-sum-free}
    Let $A$ be an additively idempotent semiring, then it is zero-sum free, i.e., if $a, b \in A$ with $a+b = 0$, then $a = b = 0$.
\end{lemma}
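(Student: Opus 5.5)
The plan is to use the partial order that additive idempotence gives on $A$, namely $x\leq y\iff x+y=y$, and to show that $0$ is its least element while $a$ and $b$ both lie below $a+b$. Once this is set up, antisymmetry of the order forces $a=b=0$.

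The order facts are immediate. Antisymmetry holds because if $x\leq y$ and $y\leq x$, then $y=x+y=y+x=x$, using commutativity of addition. The element $0$ is below everything, since $0+x=x$ means $0\leq x$ for every $x\in A$. Finally, $a\leq a+b$, because by idempotence
\[
a+(a+b)=(a+a)+b=a+b.
\]
The same computation gives $b\leq a+b$.

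Now suppose $a+b=0$. From the previous paragraph we get $a\leq a+b=0$, and we also have $0\leq a$. Antisymmetry then gives $a=0$, and the identical argument gives $b=0$.

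There is essentially no obstacle here. The only care needed is to check that antisymmetry and the inequality $a\leq a+b$ really do follow from idempotence, associativity and commutativity. If one prefers to avoid the order altogether, the same argument can be written directly: $a=a+0=a+(a+b)=(a+a)+b=a+b=0$, and symmetrically $b=0$.
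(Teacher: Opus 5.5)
Your proof is correct and is essentially the paper's argument: the paper also adds $a$ to both sides of $a+b=0$ and uses idempotence to obtain $a+b=a$. The paper then passes through $b\leq a$, $a\leq b$ (by symmetry) and $a=b$ before concluding $a=a+a=a+b=0$, whereas your chain $a=a+0=a+(a+b)=(a+a)+b=a+b=0$ reaches the conclusion directly.
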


\begin{proof}
    If $a+b = 0$, then $a+a+b = a+0$ and so $a+b = a$ implying that $b \leq a$ and by symmetry $a \leq b$. So $a=b$ and $a+ b = a+a = a = 0$.
\end{proof}

We will denote by $\mathbb{B}$ the semifield with two elements $\{1,0\}$, where $1$ is the multiplicative identity, $0$ is the additive identity and $1+1 = 1$. 
The {\it tropical semifield}, denoted $\mathbb{T}$, is the set $\mathbb{R}  \cup \{-\infty\} $ with the $+$ operation defined to be the maximum and the $\cdot$ operation defined to be the usual addition, with $-\infty = 0_\mathbb{T}$. 

In order to distinguish when we are considering a real number as being in $\R$ or being in $\T$ we introduce some notation. For a real number $a$, we let $t^a$ denote the corresponding element of $\T$. In the same vein, given $\mathfrak{a}\in\T$, we write $\log(\mathfrak{a})$ for the corresponding element of $\R\cup\{-\infty\}$. This notation is motivated as follows.
Given a non-archimedean valuation $\nu: K \rightarrow \R\cup\{-\infty\}$ on a field and $\lambda \in \R$ with $\lambda>1$, we get a non-archimedean absolute value $|\cdot|_\nu:K\rightarrow[0, \infty)$ by setting $|x|_{\nu}=\lambda^{\nu(x)}$. Since $\T$ is isomorphic to the semifield $\big([0,\infty),\max,\cdot_{\R}\big)$, we use a notation for the correspondence between elements of $\R\cup\{-\infty\}$ and elements of $\T$ that is analogous to the notation for the correspondence between $\nu(x)$ and $|x|_{\nu}$. This notation is also convenient as we get many familiar identities such as $\log(1_{\T})=0_{\R}$ and $t^a t^b=t^{a+_{\R}b}$.

\subsection{Congruences}

\begin{defi}
Let $A$ be a semiring and let $a \in A\sdrop\{0\}$. We say that $a$ is a \emph{cancellative element} if, for all $b, c \in A$, whenever $ab=ac$ then $b=c$. If all elements of $A\sdrop\{0\}$ are cancellative, then we say that $A$ is a \textit{cancellative semiring}.
\end{defi}

\begin{definition}[cf.\ Definition 2.3 and Proposition 2.10 in \cite{JM17}]\label{def: prime_domain}
A \textit{congruence} on a semiring $A$ is an equivalence relation on $A$ that respects the operations of $A$. The \emph{trivial congruence} on $A$ is the diagonal $\Delta\subseteq A\times A$, for which $A/\Delta\cong A$. We call a congruence $C$ on $A$ \emph{cancellative} if $A/C$ is a cancellative semiring. We call a proper congruence $P$ of an idempotent semiring $A$  {\it prime} if $A/P$ is totally ordered and cancellative. %\pinky{(cf.\ Definition 2.3 and Proposition 2.10 in \cite{JM17})}. %(cf. Definition~\ref{def: prime2}) %(cf.\ Definition 2.3 and Proposition 2.10 in \cite{JM17}).
\end{definition}

Let $C$ be a congruence on a semiring $A$. The \emph{ideal-kernel} of $C$ is the set of elements $a \in A$ such that $(a,0_A) \in C$. It is easy to see that the ideal-kernel of $C$ is an ideal of $A$. If the ideal-kernel of $C$ is the zero ideal, then we say that $C$ has \emph{trivial ideal-kernel}.

Let $\ph:A\to A'$ be a homomorphism of semirings. The \emph{congruence-kernel} of $\ph$, denoted $\ker(\ph)$, is the pullback $\ph^*(\Delta)$ of the trivial congruence on $A'$. 
That is, $\ker(\ph):=\{(f,g)\in A\times A\,:\,\ph(f)=\ph(g)\}$.
If $A'$ is totally ordered and cancellative, then $\ker(\ph)$ is prime. The \emph{ideal-kernel} of $\ph$ is the set of those $a\in A$ such that $\ph(a)=0_{A'}$. Clearly, the ideal-kernel of $\ph$ is the same as the ideal-kernel of $\ker(\ph)$. 

\begin{notation}\label{notation:leqP}
Let $R$ be semiring and let $P$ be a prime congruence on $R$. For two elements $r_1, r_2 \in R$ we say that $r_1 \leq_P r_2$ (resp. $r_1 <_P r_2$, resp. $r_1 \equiv_P r_2$) whenever $\overline{r_1} \leq \overline{r_2}$ (resp. $\overline{r_1} <  \overline{r_2}$, resp. $\overline{r_1} = \overline{r_2}$) in $R/P$. Here by $\overline{r}$ we mean the image of $r$ in $R/P$.
\end{notation}

Note that $P$ is determined by the relation $\leq_P$ on $R$. Moreover, if $\mathcal{G}$ is a set of additive generators for $R$, then $P$ is determined by the restriction of $\leq_P$ to $\mathcal{G}$. 

\begin{defi}\label{def:ResidueSemifield}
Let $P$ be a prime congruence on a semiring $A$. The {\it residue semifield of $A$ at $P$}, denoted $\kappa(P)$, is the total semiring of fractions\footnote{This is obtained via the usual construction as equivalence classes of pairs. For more details see Definition 2.15 in \cite{FM22}} of $A/P$. Since $A/P$ is cancellative, $\kappa(P)$ is, indeed, a semifield. We denote the canonical homomorphism $A\to A/P\to\kappa(P)$ by $a\mapsto|a|_P$.
\end{defi}

When $\base\subseteq \TT$ we describe the prime congruences on $\base[\ZZ^n]$ in terms of their \emph{defining matrices}. Every prime congruence on $\base[\Z^n]$ has a defining matrix; see \cite{JM17}, where it is shown that the matrix can be taken to be particularly nice. 

A $k\times (n+1)$ real valued matrix $\Theta$ such that the first column of $\Theta$ is lexicographically greater than or equal to the zero vector gives a prime congruence on $\base[\Z^n]$ as follows:
For any monomial $m=t^a\chi^u\in\base[\Z^n]$ we let $\Phi(m)=\Theta\begin{pmatrix}a \\ u\end{pmatrix}\in\R^k$, where we view $u\in\Z^n$ as a column vector. We call $\begin{pmatrix}a \\ u\end{pmatrix}$ the \emph{exponent vector} of the monomial $m$. For any nonzero $f\in\base[\Z^n]$, write $f$ as a sum of monomials $m_1,\ldots,m_r$ and set $\Phi(f)=\max_{1\leq i\leq r}\Phi(m_i)$, where the maximum is taken with respect to the lexicographic order. Finally, set $\Phi(0)=-\infty$. We specify the prime congruence $P$ by saying that $f$ and $g$ are equal modulo $P$ if $\Phi(f)=\Phi(g)$. In this case we say that $\Theta$ is a defining matrix for $P$. The first column of $\Theta$ is called the column corresponding to the coefficient or the column corresponding to $\base$. For $1\leq i\leq n$, we say that the $(i+1)^{\text{st}}$ column of $\Theta$ is the column corresponding to $x_i$.

\begin{example}
Let $\base [\Z^2] = \T[x_1^{\pm1},x_2^{\pm1}]$. The matrix $\begin{pmatrix} 0& 1 & 0\\ 0& 0 & 1 \end{pmatrix}$ represents the lexicographic order on this semiring with $x_1 \gg x_2$. The corresponding prime congruence contains $(f,g)$ if $f$ and $g$ have the same initial term with respect to this order.\exEnd
\end{example}

In contrast, the congruences on $\base[\N^n]$ are allowed to have non-trivial ideal-kernel. We account for this by allowing $-\infty$ as an entry of the matrix, so long as the whole column consists of $-\infty$'s as seen in the next example. Here we use the convention that $0\cdot(-\infty)=0$.

\begin{example}
     Let $\base [\N^n] = \T[x_1, x_2]$. The congruence $C = \left< (x_1, 1), (x_2, 0)\right>$
    is given by the matrix $\begin{pmatrix} 1& 0 & -\infty\end{pmatrix}.$
    When a congruence has non-trivial ideal-kernel, we note that it has the same quotient as a congruence with trivial ideal-kernel on a smaller semiring. In this example, if we let $C'=\left< (x_1, 1)\right>$ on $\T[x_1]$ then $\T[x_1, x_2]/C = \T[x_1]/C'$. Geometrically, the rows of a matrix with infinities in the same columns are in the same stratum of a toric variety.  
    \exEnd
\end{example}

\begin{definition}[adapted from Definition 5.1.1 in \cite{GG13}]\label{def:bend_rel}

Let $A$ be an idempotent semiring, let $\Mon$ be a monoid, and let $f \in A[\Mon]$. We let $\supp(f)$ denote the support of $f$ and, for $i$ in $\supp(f)$, we write $f_{\hat \imath}$ for the result of deleting the $i$ term from $f$. Then the set of \emph{bend relations of $f$} is the set of pairs 
$$\bend(f) = \{(f , f_{\hat \imath})\}_{i\in \supp(f)}.$$
 
For any set $I\subseteq A[\Mon]$, the \emph{bend congruence} of $I$, denoted $\Bend I$,  is the congruence generated by the bend relations of all $f\in I$. 
\end{definition}

\begin{definition}
    Let $C$ be a congruence. The \textit{radical of $C$}, denoted $\sqrt{C}$, is the intersection of all prime congruences containing $C$. If $C = \sqrt{C}$, we say that that $C$ is a \emph{radical} congruence. 
\end{definition}

\begin{proposition}[Proposition 3.10 \cite{JM17}]
Let $A$ be an idempotent semiring. Then every cancellative congruence on $A$ is radical. 
\end{proposition}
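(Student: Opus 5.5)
The plan is to show that for a cancellative congruence $C$ on the idempotent semiring $A$, every pair $(a,b)\notin C$ lies outside some prime congruence containing $C$. Passing to $B=A/C$, which is cancellative and idempotent, it suffices to prove: for distinct $a\neq b$ in $B$ there is a prime congruence $Q$ on $B$ with $(a,b)\notin Q$. Pulling $Q$ back along $A\to B$ then gives a prime of $A$ containing $C$ and excluding $(a,b)$. Hence $\sqrt{C}\subseteq C$, and the reverse inclusion is trivial.

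First I reduce the separation to an order statement. Since $a\neq b$, we have $a+b\neq a$ or $a+b\neq b$; by symmetry assume $a+b\neq b$, i.e.\ $a\not\leq b$. Replacing $a$ by $a+b$, we may assume $b<a$. Any prime $Q$ with $b<_Q a$ separates them. The goal is therefore: whenever $b<a$ in a cancellative idempotent semiring, there is a total cancellative quotient preserving the strict inequality.

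Second, I embed $B$ into its total semiring of fractions $F$. Cancellativity makes this an injective map into an idempotent semifield, and a prime of $F$ restricts to a prime of $B$: a subsemiring of a totally ordered cancellative semiring is again totally ordered and cancellative, and properness is kept because $0\neq 1$ is preserved. Setting $c=b/a$, we get $c+1=1$ and $c\neq 1$ in $F$, so it suffices to find a prime of $F$ in which $c<1$. Idempotent semifields are lattice-ordered abelian groups, possibly with an adjoined $0$. Prime congruences on $F$, restricted to $F^\times$, correspond to total group orders compatible with the lattice structure, that is, to quotients of the $\ell$-group by prime convex $\ell$-subgroups (the standard Conrad/Holland theory). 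The element $c^{-1}>1$ is a nonidentity element, so by Zorn's lemma there is a convex $\ell$-subgroup maximal with respect to not containing $c^{-1}$. Such a subgroup is a value of $c^{-1}$, and values are prime, so the quotient is totally ordered. In it $c^{-1}$ maps to something $\geq 1$ and not equal to $1$, hence $c<1$ strictly. The quotient of a group is automatically cancellative, and $0$ is adjoined as a bottom element.

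The main obstacle is this last step: carefully identifying congruences on the semifield with convex $\ell$-subgroups, including the behaviour of $0$ and the fact that the quotient order is induced by $+$. The key facts to verify are that a convex $\ell$-subgroup $K$ defines a congruence via $x\sim y \iff x/y\in K$, compatible with $\max$ because $K$ is a convex sublattice, and that primality of $K$ gives totality of the quotient. Alternatively, I could cite \cite{JM17}, where this proposition is proved; the argument above is the route I would follow to reconstruct it.
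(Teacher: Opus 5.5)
Your proof is correct apart from one small omitted case. It also takes a genuinely different route: the paper gives no proof of its own here and only cites \cite{JM17}.

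\textbf{The omitted case.} After your normalization $b<a$, it can happen that $b=0$. Then $c=b/a=0$, and there is no $c^{-1}$ to take a value of. The repair is immediate. Since $B$ is cancellative it has no zero divisors, so $\{(x,y)\in B^2 : x,y\neq 0\}\cup\{(0,0)\}$ is a prime congruence on $B$ with quotient $\B$, and it separates $0$ from $a$. In your language, take $K=F^\times$.

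Two further remarks on the same step:
\begin{enumerate}
\item The normalization to $b<a$ is not needed. If $a\neq b$ are both nonzero, any value of $a/b\neq 1$ in $F^\times$ already gives a totally ordered quotient in which $a\not\equiv b$.
\item The compatibility check you single out as the main obstacle is one line. If $k=x/y\in K$, then $(k\wedge 1)(y+z)\le x+z\le (k+1)(y+z)$, so $(x+z)/(y+z)$ lies between two elements of $K$ and hence in $K$ by convexity.
\end{enumerate}

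\textbf{Comparison of routes.} Inside the paper, the shortest derivation uses Corollary~\ref{coro:radical_by_generalized_power}, the generalized-power description of $\sqrt{C}$ imported from \cite{JM17}. Suppose $(x,y)\in\sqrt{C}$. Then $\big((x+y)^k+d\big)(x,y)\in C$ for some $k\geq 0$ and $d\in A$. In the cancellative quotient $A/C$ there are two cases:
\begin{enumerate}
\item $(x+y)^k+d\equiv 0$. By Lemma~\ref{lem:zero-sum-free} and the absence of zero divisors, this forces $x\equiv y\equiv 0$.
\item Otherwise, you cancel the nonzero factor $(x+y)^k+d$ to get $x\equiv y$.
\end{enumerate}
In that approach, all the work of producing prime congruences is done once, for arbitrary congruences, in the proof of the generalized-power description. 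Cancellativity is then used only to cancel a single factor.

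Your approach instead proves directly that prime congruences separate the points of a cancellative idempotent semiring. It does this by passing to the semifield of fractions and using the classical fact that values in an abelian $\ell$-group are prime, equivalently that abelian $\ell$-groups are subdirect products of totally ordered groups. This is transparent and independent of the generalized-power machinery. The cost is that it imports $\ell$-group theory, and unlike the other route it says nothing about radicals of non-cancellative congruences.
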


\begin{coro}\label{coro:radical_by_generalized_power}
For any congruence $C$ of an additively idempotent semiring $A$, we have that
$$\sqrt{C}=\left\{ (x,y)\in A\times A \,:\, \text{there are }k\in\Z_{\geq0}\text{ and }c\in A \text{ such that } \left((x+y)^k+c\right)(x,y)\in C \right\}$$
\end{coro}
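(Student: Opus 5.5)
The plan is to obtain the description of $\sqrt{C}$ by combining the preceding Proposition (every cancellative congruence is radical) with the standard quotient construction. Write
$$D=\left\{ (x,y)\in A\times A \,:\, \left((x+y)^k+c\right)(x,y)\in C\text{ for some }k\in\Z_{\geq0},\ c\in A \right\}.$$
Here $h(x,y)$ means the pair $(hx,hy)$. I will show that $D$ is a congruence, that it is the smallest cancellative congruence containing $C$, and that $D=\sqrt{C}$.

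\textbf{Step 1: $D\subseteq\sqrt{C}$.} Let $P$ be a prime containing $C$ and suppose $\left((x+y)^k+c\right)x\equiv_P\left((x+y)^k+c\right)y$. If $x\equiv_P y$ there is nothing to prove. Otherwise, by total order we may assume $x<_P y$, so $x+y\equiv_P y\neq 0$ in $A/P$. Set $h=(x+y)^k+c$. Then $h\geq_P y^k$, which is nonzero, since $A/P$ is cancellative and therefore has no zero divisors. So $h$ is a nonzero element of $A/P$, and cancelling it gives $x\equiv_P y$, a contradiction. Hence $D\subseteq P$ for every such $P$.

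\textbf{Step 2: $D$ is a cancellative congruence containing $C$.} Taking $k=0$ and $c=0$ shows $C\subseteq D$. Reflexivity and symmetry are clear, and compatibility with multiplication follows because $D$ is stable under multiplying pairs by elements. For transitivity and additivity, given witnesses $h_1=(x+y)^{k_1}+c_1$ and $h_2=(y+z)^{k_2}+c_2$, I would use the product $h_1h_2$. Expanding $(x+z)^{k}$ for large $k$ and using idempotency, one absorbs the relevant terms into a witness of the form $(x+z)^{k}+c$. This relies on the order inequalities $(x+z)\le x+y+z$ and on the fact that $(x+y+z)^N$ is a sum of monomials, each of which is divisible by a high power of $x+y$ or of $y+z$.

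Cancellativity of $A/D$ goes the same way. Suppose $a\neq 0$ modulo $D$ and $ah(ax)=ah(ay)$, up to the appropriate witness. One must produce a witness of the required shape for $(x,y)$, namely $h'=(x+y)^{k'}+c'$, using $a(x+y)\le(a+x+y)^2$ and similar absorption tricks.

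\textbf{Step 3: conclude.} By the Proposition, $D$ is radical, so $D$ is the intersection of the primes containing it. Each such prime contains $C$, so $\sqrt{C}\subseteq D$. Together with Step 1 this gives $D=\sqrt{C}$.

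The main obstacle is the bookkeeping in Step 2, where witnesses have to be combined into a single one of the form $(x+y)^k+c$. This is delicate, especially for cancellativity. If it becomes unwieldy, I would instead cite \cite{JM17}, whose description of the radical has this form and which the Corollary's phrasing suggests, and reduce the present statement to it. The Proposition then only guarantees that the radical is an intersection of primes.
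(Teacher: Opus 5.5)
There is a genuine gap, and it sits in the central claim of Step 2: $A/D$ is \emph{not} cancellative in general, so Step 3 cannot invoke the Proposition preceding the statement (\cite[Proposition 3.10]{JM17}) to get $\sqrt{C}\subseteq D$.

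Here is a counterexample. Take $A=\B[x,y]$ and let $C$ be the congruence generated by $(xy,0)$. Consider the quotient map onto the semiring in which every monomial divisible by $xy$ is set to $0$. Its congruence-kernel contains $(xy,0)$, so it contains $C$. Hence every element of the ideal-kernel of $C$ has all of its monomials divisible by $xy$. In particular, no $x^{k+1}+cx$ and no $y^{k+1}+cy$ lies in that ideal-kernel, so $(x,0)\notin D$ and $(y,0)\notin D$. On the other hand, $(x\cdot y,\,x\cdot 0)=(xy,0)\in C\subseteq D$. Thus $\bar x\,\bar y=\bar x\cdot\bar 0$ in $A/D$ with $\bar x\neq\bar 0$ and $\bar y\neq\bar 0$.

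Since the statement you are proving says $D=\sqrt{C}$, this also shows that radical congruences need not be cancellative. That is exactly why the paper needs extra hypotheses (Proposition~\ref{prop:RadicalOfDiagonalIsQCIfPrimesCanBeResolved}, Corollary~\ref{coro:ClosureConditionImpliesRadicalQC}) before concluding that $\base[\Mon]/\sqrt{E}$ is cancellative. No bookkeeping with witnesses can rescue Step 2, because the assertion itself is false.

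Your Step 1, the inclusion $D\subseteq\sqrt{C}$, is correct. What is missing is the nontrivial inclusion $\sqrt{C}\subseteq D$, which requires a prime-existence argument. For $(x,y)\notin D$ you must actually construct a prime $P\supseteq C$ with $(x,y)\notin P$. The usual tool is a Zorn's-lemma argument on congruences containing $C$ that avoid suitable ``powers'' of $(x,y)$, analogous to showing the nilradical is the intersection of primes. This is the content of \cite[Theorem 3.9]{JM17}, and \cite[Proposition 5.2]{JM17} converts that description into the $(x+y)^k+c$ form. The paper's proof consists precisely of citing these two results, i.e.\ your stated fallback.

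Your transitivity sketch is also not yet an argument. With $c_1=c_2=0$ and $k_2\geq1$, every multiple of $h_1h_2=(x+y)^{k_1}(y+z)^{k_2}$ in a free semiring has all monomials of degree at least $k_2$ in $y,z$, so it never dominates $(x+z)^K$. Any absorption would have to take place modulo $C$, and you do not show how. $D$ is indeed a congruence, but that becomes clear only once $D=\sqrt{C}$ is established.
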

\begin{proof}
This follows from Theorem 3.9 and Proposition 5.2 in \cite{JM17}.
\end{proof}

\subsection{
Polyhedral geometry and basics of tropical toric varieties}
In this section we recall the definition of classical tropical toric varieties.

\par\medskip  
For a general introduction to toric varieties see \cite{CLS} or \cite{Ful93}. 
In this paper we follow conventions from \cite{Rab10}.
Let $\Lattice$ be a finitely generated free abelian group and denote by $\Lattice_\R $ the vector space $ \Lattice\otimes_{\Z}\R$. We write $N:=\Lattice^*=\Hom(\Lattice,\Z)$ and $N_\R=\Hom(\Lattice,\R)\cong N\otimes_{\Z}\R$. We will use the pairing $N_\R\times\Lattice_{\R}\to\R$ given by $(v,u)\mapsto\angbra{v,u}:=v(u)$. 
A cone $\sigma$ is a \textit{strongly convex rational polyhedral cone} in $N_\R$ if $\sigma = \sum\limits_{i=1}^{r}\R_{\geq 0}v_i$ for $v_i \in N$ and $\sigma$ contains no line. %By a \textit{cone} we will always mean a strongly convex rational polyhedral cone. 
We write $\tau\leq\sigma$ to mean that $\tau$ is a face of $\sigma$. 
We denote by $\sigma^\vee$ the dual cone of $\sigma$,
$$\sigma^\vee = \{u\in \Lattice_\R : \left< v,u \right> \leq 0, \forall v\in \sigma\} = \bigcap\limits_{i = 1}^r \{u\in \Lattice_\R : \left< v_i,u \right> \leq 0\}.$$
A \emph{fan} is a collection $\Sigma$ of strongly convex rational polyhedral cones in $N_{\R}$ such that (1) if $\tau\leq\sigma\in\Sigma$ then $\tau\in\Sigma$ and (2) if $\sigma_1,\sigma_2\in\Sigma$ then $\sigma_1\cap\sigma_2$ is a face of both $\sigma_1$ and $\sigma_2$.

\begin{definition}
    Let $P$ be a polyhedron in $N_{\R}$, and pick a presentation $P = \{ x\in N_\R: \left< a_i, x\right> \leq b_i\text{ for }i=1,\ldots,q\}$, for given vectors $a_1,\ldots,a_q\in \Lattice_{\R}$ and $b_1,\ldots,b_q \in \R$. 
    The \emph{recession cone} of $P$ is 
    %$$\text{rec}(P) = \{x\in N_{\R} \,:\, \text{for all }y\in P,\ x+y\in P\} = \{ x\in N_\R: \left< a_i, x\right> \leq 0 \text{ for }i=1,\ldots,q\}.$$ 
\begin{align*}
    \rec(P) &= \{x\in N_{\R} \,:\, \text{for all }y\in P,\ x+y\in P\} = \{ x\in N_\R: \left< a_i, x\right> \leq 0 \text{ for }i=1,\ldots,q\}\\
    &=\{x\in N_{\R} \,:\, \text{there is a }y\in P\text{ such that for all }r\in\R_{\geq0}, y+rx\in P\}.
\end{align*} 
    If $P=\emptyset$, we let $\rec(P)=\{0_{N_{\R}}\}$.
    If $\Sigma$ is a polyhedral complex in $\R^n$, then its \emph{recession}\footnote{We do not refer to the \emph{recession fan} as the set of recession cones is not always a fan.} $\rec(\Sigma)$ is the union of all cones $\rec(P)$ where $P$ runs over $\Sigma$. Note that 
    $$\rec(\Sigma)=\{x\in N_{\R} \,:\, \text{there is a }y\in |\Sigma|\text{ such that for all }r\in\R_{\geq0}, y+rx\in |\Sigma|\},$$
    where $|\Sigma|=\dcup_{P\in\Sigma}P$ is the \emph{support} of $\Sigma$. We therefore extend both of these definitions by saying that if $\frakP$ is a polyhedral set (that is, a finite union of polyhedra), then 
    $$\rec(\frakP)=\{x\in N_{\R} \,:\, \text{there is a }y\in \frakP\text{ such that for all }r\in\R_{\geq0}, y+rx\in \frakP\}.$$
\end{definition}

It follows from this last definition that, if $\frakP$ and $\frakQ$ are polyhedral sets, then $\rec(\frakP\cup\frakQ)=\rec(\frakP)\cup\rec(\frakQ)$ and that $\frakP\subseteq\frakQ\implies\rec(\frakP)\subseteq\rec(\frakQ)$. On the other hand, in general we only have $\rec(\frakP\cap\frakQ)\subseteq\rec(\frakP)\cap\rec(\frakQ)$.

\begin{definition}
    We call a monoid of the form $\Mon = \sigma^\vee \cap \Lattice$ a \textit{toric monoid}. When we want to specify $\sigma$, we say that $\Mon$ is the \textit{toric monoid corresponding to} $\sigma$.
\end{definition}

Because $\sigma$ is strongly convex, we can identify $\Lattice$ with the groupification of $\Mon$.

We let $\nbar{\R}=\R\cup\{-\infty\}$ considered as a monoid with the operation $+_{\R}$.

\begin{definition} 
Let $\sigma$ be a strongly convex rational polyhedral
cone in $N_\R$. We denote by $N_{\R}(\sigma)$ the set of monoid maps $\Hom(\sigma^\vee \cap\Lattice, \nbar{\R})$. We call $N_{\R}(\sigma)$ the \emph{tropical toric variety} corresponding to the cone $\sigma$.
\end{definition}

The tropical toric variety $N_{\R}(\sigma)$ is endowed with a topology, given as follows. First, observe that the map $(\nbar{\R}, +_{\R}) \to (\R_{\geq 0}, \cdot_\R)$ given by $x\mapsto e^x$ is an isomorphism of monoids. Thus, we can use the usual topology on $\R_{\geq0}$ to make $\nbar{\R}$ into a topological monoid. So we can give 
$N_{\R}(\sigma) = \Hom(\sigma^\vee\cap\Lattice,\nbar{\R}) \subseteq \left(\nbar{\R}\right)^{\sigma^\vee\cap\Lattice}$
the subspace topology where the space $\left(\nbar{\R}\right)^{\sigma^\vee\cap\Lattice}$ of all functions $\sigma^\vee\cap\Lattice\to\nbar{\R}$ carries the product topology. This makes $N_{\R}(\sigma)$ into a topological monoid using the operation $+_{\R}$. 
For any face $\tau$ of $\sigma$, the inclusion $\sigma^\vee\cap\Lattice\subseteq\tau^\vee\cap\Lattice$ induces a map $N_{\R}(\tau)\to N_{\R}(\sigma)$, which identifies $N_{\R}(\tau)$ with an open topological submonoid of $N_{\R}(\sigma)$.
%The copy of $N_{\R}\cong N_{\R}(\{0\})$ in $N_{\R}(\sigma)$ is exactly the set of units for the operation $+_{\R}$.
If $\Sigma$ is a fan then we can glue the $N_{\R}(\sigma)$s for $\sigma\in\Sigma$ along these inclusion maps to get the \emph{tropical toric variety $N_{\R}(\Sigma)$ corresponding to $\Sigma$.}

For any strongly convex rational polyhedral cone $\sigma$ there is a bijection 
$$\bigsqcup_{\tau\leq\sigma}N_{\R}/\vspan{\tau}\to N_{\R}(\sigma)$$
given as follows. We send $w\in N_{\R}/\vspan{\tau}$ to the function $\ph_w:\sigma^\vee\cap\Lattice\to\nbar{\R}$ given by $\ph_w(u)=\begin{cases}\angbra{u,w}&\text{if }u\in\tau^\perp\\-\infty&\text{if }u\notin\tau^\perp\end{cases}$; we will write $\angbra{u,w}$ for $\ph_w(u)$ even when this value is $-\infty$. We let $N_{\R}/\tau$ denote the copy of $N_{\R}/\vspan(\tau)$ in $N_{\R}(\sigma)$. Then each $N_{\R}/\tau$ is a monoid that is a sub-semigroup (but not sub-monoid) of $N_{\R}(\sigma)$ (the units of the two monoids are not the same). The monoid structure and topology on $N_{\R}/\tau$ are the same as the addition and topology on $N_{\R}/\vspan(\tau)$ as a quotient vector space.
The copy of $N_{\R}\cong N_{\R}/\{0\}$ in $N_{\R}(\sigma)$ is exactly the set of units for the operation $+_{\R}$ on $N_{\R}(\sigma)$.

We call each $N_{\R}/\tau$ a \emph{stratum} of the tropical toric variety.

\begin{remark}
We can now extend the idea of a matrix defining a prime congruence on $\base[\Z^n]$ or $\base[\N^n]$ to prime congruences on $\base[\Mon]$. To do this, the rows of the matrix are now elements of $\R_{\geq0}\times N_{\R}(\sigma)$. We also require that all of the rows of the matrix come from $\R_{\geq0}\times(N_{\R}/\tau)$ for a single $\tau\leq\sigma$; this corresponds to the condition that in a matrix for a prime of $\base[\N^n]$, if a $-\infty$ appears then the whole column must be $-\infty$'s.
\end{remark}

\subsection{Tropical varieties} 

Let $A = \base[\Mon]$, where $\base\subseteq \T$ and $\Mon$ is a toric monoid corresponding to the cone $\sigma$.

Let $I$ be a set of polynomials of $A$ and let $C\subseteq A\times A$ be a set of pairs. In this paper we will consider the following varieties:

\begin{equation*}
\begin{split}
% & = \{P \text{ geometric prime congruence} : \text{ for all } f \in I, \bend(f) \subseteq P\},  \\
V(C) &= \{ a \in N_{\R}(\sigma) : f(a) = g(a), \text{ for all } (f,g) \in C\}\\ 
%&= \{P \text{ geometric prime congruence } : P \supseteq C\}
V(I) &= \{a \in N_{\R}(\sigma) : \forall f \in I, f \text{ tropically vanishes at } a\} \\
&= V(\Bend(I)).
\end{split}
\end{equation*}

Let $I$ be an ideal of $\base[\Mon]$, and let $\{f_1, \ldots, f_k\} \subseteq I$ such that 
%$$V(I) = V(f_1, \ldots, f_k) = \bigcap_{i=1}^k V(f_i),$$ 
$V(I) = \bigcap_{i=1}^k V(f_i),$ 
then we say that $\{f_1, \ldots, f_k\}$ is a \emph{tropical basis} for $I$. If $k < \infty$, we say that $I$ has a \emph{finite tropical basis}.

We refrain from making an analogous definition for congruences until Section 3, where we will define an enhanced version of $V(C)$. 

\newcommand{\theFiniteSet}{D}
\begin{definition}[Adapted from Definition 1.1 of \cite{MR18}]\label{def: trop_ideals}
Let $\base$ be a sub-semifield of $\T$, let $\Mon$ be any monoid, and let $I\subseteq\base[\Mon]$ be an ideal. For any $\theFiniteSet\subseteq \Mon$ we let $I_\theFiniteSet$ denote the set of those $f\in I$ whose support is contained in $\theFiniteSet$.

We say that $I$ is a \emph{tropical ideal} if, for every finite set $\theFiniteSet\subset\Mon$, $I_{\theFiniteSet}$ is a tropical linear space, i.e., $I_{\theFiniteSet}$ is the set of vectors of a valuated matroid. 
To be very concrete, $I$ is a tropical ideal if it satisfies the following monomial elimination axiom:

For any $f, g \in I$ and any monomial ${\pmb x}^{\pmb u}$ which appears in both $f$ and $g$ with the same nonzero coefficient, there exists a polynomial $h \in I$ such that $\pmb u$ is not in the support of $h$ and, for any other $\pmb v\in \Mon$, the coefficient $c_{\pmb v}(h)$ of ${\pmb x}^{\pmb v}$ in $h$ satisfies $c_{\pmb v}(h) \leq \max (c_{\pmb v}(f), c_{\pmb v}(g))$ with equality holding whenever $c_{\pmb v}(f) \neq c_{\pmb v}(g)$. Note that, in this case, we have $\supp(h)\subseteq \Big(\supp(f)\cup\supp(g)\Big)\sdrop\{\pmb u\}$, so if $f,g\in I_\theFiniteSet$ then $h\in I_\theFiniteSet$.
\end{definition}

If $I$ is a tropical ideal, then by \cite[Theorem 5.9]{MR18} $I$ has a finite tropical basis.

\begin{example}
Let $\base$ be a sub-semifield of $\T$, let $\Mon$ be any monoid, and let $k\onto\base$ be a surjective valuation. Then, for any ideal $I$ of $k[\Mon]$, $\trop(I):=\{\trop(f)\,:\, f\in I\}$ is a \emph{realizable} tropical ideal in $\base[\Mon]$. 
\exEnd
\end{example}

We now give a few basic lemmas that will be useful later for constructing examples.
While these lemmas are known to experts, they have not been proven in the literature, so we provide proofs here.

\begin{lemma}
For any $f,g\in\base[\Mon]$, $V(fg)= V(f)\cup V(g)$.
\end{lemma}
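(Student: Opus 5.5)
The plan is to work pointwise at each $a\in N_{\R}(\sigma)$, using that evaluation at $a$ is multiplicative. Here $V(f)$ is the set of points where $f$ tropically vanishes, that is, where the maximum in $f(a)$ is attained at least twice, or $f(a)=0_{\T}=-\infty$. Since $V(f)=V(\bend(f))$, equivalently $a\in V(f)$ iff $f(a)=f_{\hat\imath}(a)$ for every $i\in\supp(f)$.

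For a point $a\in N_{\R}(\sigma)$ and $h\in\base[\Mon]$, let $T_a(h)\subseteq\supp(h)$ be the set of exponents $u$ with $\log(h_u)+\angbra{a,u}=\log h(a)$ and $\log h(a)>-\infty$; these are the terms achieving the maximum. Then $a\in V(h)$ iff either $h(a)=-\infty$ or $|T_a(h)|\geq 2$. The evaluation map $h\mapsto h(a)$ is a semiring homomorphism $\base[\Mon]\to\T$, so $(fg)(a)=f(a)g(a)$. In particular $(fg)(a)=-\infty$ iff $f(a)=-\infty$ or $g(a)=-\infty$, which disposes of that case on both sides.

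Now assume $f(a),g(a)>-\infty$. The key step is to show that $T_a(fg)=T_a(f)+T_a(g)$ (Minkowski sum) and that the maximal terms do not cancel. Write $fg=\sum_w\big(\sum_{u+v=w}f_ug_v\big)\chi^w$ with the inner sum taken in $\T$, so that its coefficient is the maximum. The value of the term at $w$ is $\max_{u+v=w}(\log f_u+\log g_v+\angbra{a,u}+\angbra{a,v})$, which is at most $\log f(a)+\log g(a)$, with equality iff some decomposition has $u\in T_a(f)$ and $v\in T_a(g)$. Here I need $\angbra{a,u+v}=\angbra{a,u}+\angbra{a,v}$ even when values are $-\infty$; this holds because $a$ is a monoid map to $\nbar{\R}$. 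Since addition in $\T$ is max, nothing cancels, and so $T_a(fg)=T_a(f)+T_a(g)$ exactly.

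It remains to check that $|T_a(f)+T_a(g)|\geq2$ iff $|T_a(f)|\geq2$ or $|T_a(g)|\geq 2$. The backward direction is the substantive one: if $u\neq u'$ are in $T_a(f)$ and $v\in T_a(g)$, then $u+v\neq u'+v$, because $\Mon\subseteq\Lattice$ is cancellative. The forward direction is trivial, since singletons sum to a singleton. Together these give $V(fg)=V(f)\cup V(g)$.

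The main obstacle is mild and comes from boundary points $a\notin N_{\R}$. There some monomials evaluate to $-\infty$, and one must make sure that $T_a$ is defined only using the finite-valued terms, so that the no-cancellation argument still goes through. This works because a term with value $-\infty$ can never be maximal once $h(a)>-\infty$.
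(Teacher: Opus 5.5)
Your argument is correct and rests on the same mechanism as the paper's proof: at a point where $f$ and $g$ are finite, the maximizing terms of $fg$ are exactly the products of maximizing terms of $f$ and of $g$. Hence a unique maximum in both factors stays unique, and a repeated maximum in one factor stays repeated. The paper organizes this as a case analysis on whether $f$ and $g$ are zero, nonzero constants, monomials, or have several terms. Your uniform description of vanishing ($h(a)=-\infty$ or $|T_a(h)|\geq 2$), together with the exact identity $T_a(fg)=T_a(f)+T_a(g)$, handles all these cases at once and makes explicit two points the paper uses tacitly: like terms of $fg$ combine by taking a maximum, so leading terms do not disappear, and $\Mon$ is cancellative.
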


\begin{proof}
We first show $V(f)\cup V(g)\subseteq V(fg)$. By symmetry, it suffices to show $V(f)\subseteq V(fg)$. If $f$ is constant, we get $V(f)\subseteq V(fg)$ by considering cases as to whether $f=0$ or not. If $f$ is a non-constant monomial and $w\in V(f)$ then $f(w)=-\infty$, so $(fg)(w)=-\infty$, so $w\in V(fg)$.

Now suppose $f$ has multiple terms. If $w\in V(f)$, then there are terms $m_1,m_2$ of $f$ such that $f(w)=m_1(w)=m_2(w)$. Let $\mu$ be any term of $g$ for which $g(w)=\mu(w)$. Then $(fg)(w)=(\mu m_1)(w)=(\mu m_2)(w)$, with $\mu m_1, \mu m_2$ terms of $fg$, so $w\in V(fg)$.

For the other direction, suppose that $w\in N_{\R}(\sigma)$ is not in $V(f)\cup V(g)$; we want to show that $w$ is not in $V(fg)$. We cannot have $f=0$ or $g=0$, for then $V(f)=N_{\R}(\sigma)$ or $V(g)=N_{\R}(\sigma)$. If $f$ (or $g$) is a nonzero constant then $V(f)=\emptyset$ (resp.\ $V(g)=\emptyset$) and $V(fg)=V(g)$ (resp.\ $V(fg)=V(f)$), so we are done. If $f$ and $g$ are both monomials then $f(w)\neq-\infty$ and $g(w)\neq-\infty$ so $(fg)(w)\neq-\infty$, giving us $w\notin V(fg)$.

So now suppose that $f,g$ are both nonzero and that at least one of $f,g$ has multiple terms; we assume without loss of generality that $f$ has multiple terms. In this case $fg$ has multiple terms, so we want to show that the maximum in $fg(w)$ is attained only once. Since $w\notin V(f)$, there is a term $m$ of $f$ such that, for all other terms $m'$ of $f$, $m'(w)<m(w)$. In particular, $m(w)\neq-\infty$. Since $w\notin V(g)$, there is a term $\mu$ of $g$ such that $g(w)=\mu(w)$, $\mu'(w)<\mu(w)$ for all other terms $\mu'$ of $g$ (if there are any other terms), and $\mu(w)\neq-\infty$ (whether or not there are any other terms).

Any term of $fg$ other than $m\mu$ can be written as $m'\mu'$ where either $m'\neq m$ or $\mu'\neq \mu$. If $m'\neq m$ then $m'(w)< m(w)$, so $m'\mu'(w)\leq m'\mu(w)<m\mu(w)$. Similarly, if $\mu'\neq\mu$ then $\mu'(w)<\mu(w)$ so $m'\mu'(w)\leq m\mu'(w)<m\mu(w)$. Thus any term of $fg$ other than $m\mu$ evaluates to less than $m\mu(w)$. So $w\notin V(fg)$.
\end{proof}

\begin{lemma}\label{lemma:V(Polynomial)ContainsV(Generators)}
Let $f_1,\ldots,f_k,a_1,\ldots,a_k\in\base[\Mon]$. Then $V(a_1f_1+\cdots+a_kf_k)\supseteq V(f_1)\cap\cdots\cap V(f_k)$.
\end{lemma}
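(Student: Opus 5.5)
Fix $w\in V(f_1)\cap\cdots\cap V(f_k)$ and set $F=a_1f_1+\cdots+a_kf_k$. We must show that $F$ tropically vanishes at $w$. The approach is the same as in the preceding lemma: compare the terms at which $F(w)$ attains its maximum with the terms of the products $a_if_i$. Two earlier facts carry most of the weight. First, by the previous lemma, $V(a_if_i)=V(a_i)\cup V(f_i)\supseteq V(f_i)$, so $w\in V(a_if_i)$ for every $i$. Second, $w$ acts as a semiring homomorphism $\base[\Mon]\to\T$, so $F(w)=\max_i (a_if_i)(w)$. So it suffices to prove the following: if $g_1,\ldots,g_k$ all tropically vanish at $w$, then so does $g_1+\cdots+g_k$.

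To prove this, first handle the degenerate case $F(w)=-\infty$. Tropical vanishing includes the case where the value is $0_{\T}$, as in the convention used for monomials in the previous proof, so $w\in V(F)$; this covers $F=0$ as well. Now assume $F(w)\neq-\infty$. Each term of $g=\sum_i g_i$ has exponent $u$ and coefficient $\max_i (g_i)_u$, because addition in $\base$ is the maximum. Hence $g(w)=\max_i g_i(w)$. Choose $i$ with $g_i(w)=g(w)\neq -\infty$. Since $w\in V(g_i)$ and $g_i(w)\neq-\infty$, the maximum defining $g_i(w)$ is attained by two distinct terms, with exponents $u\neq u'$.

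The key step is to show that the terms of $g$ at $u$ and $u'$ both attain $g(w)$. The coefficient of $g$ at $u$ is at least $(g_i)_u$, so the $u$-term of $g$ evaluates at $w$ to at least $g_i(w)=g(w)$. Every term of $g$ evaluates to at most $g(w)$, so this value is exactly $g(w)$. The same argument applies to $u'$. Therefore the maximum defining $g(w)$ is attained twice, and $w\in V(g)$.

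The main point needing care is what "tropically vanishes" means on boundary strata of $N_{\R}(\sigma)$, where a term can evaluate to $-\infty$. In the previous lemma's proof, a point lies in $V(h)$ if the maximum is attained at least twice or if $h(w)=-\infty$. Evaluation on $N_{\R}(\sigma)$ still sends sums to maxima and products to sums, with $-\infty$ absorbing under products, so the argument above goes through under that convention. In fact the previous lemma is only needed to guarantee $w\in V(a_if_i)$.
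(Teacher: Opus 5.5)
Your proof is correct. Its core step, that the two terms of a summand realizing the maximum still realize the maximum in the sum, is the same one the paper uses in its last case. The organization, however, is different.

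\textbf{The paper's route.} It reduces to $k=2$ by induction and absorbs the $a_i$ via the previous lemma. It then splits into cases according to whether $f$ and $g$ are zero, monomials, or have several terms.

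\textbf{Your route.} You use the uniform criterion ``$w\in V(h)$ iff $h(w)=-\infty$ or the maximum defining $h(w)$ is attained by at least two terms.'' This handles all $k$ summands at once, with no induction. It also folds the paper's zero and monomial cases into the single case $F(w)=-\infty$.

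\textbf{What your route buys.} Your key step compares values rather than coefficients: the coefficient of $g$ at $u$ is at least $(g_i)_u$, and no term exceeds $g(w)$. Together with splitting off the case $F(w)=-\infty$, this avoids arguing that the maximizing terms of a summand appear in the sum with unchanged coefficients. That claim, as phrased in the paper's last case, can fail literally on boundary strata when all values are $-\infty$. For example, take $x+y$ and $tx+y$ at $(-\infty,-\infty)$. The conclusion $w\in V(f+g)$ still holds there, because every term is $-\infty$.

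\textbf{One point to tighten.} You use the criterion in both directions: to extract two maximizing terms of $g_i$, and to conclude $w\in V(g)$. So derive it directly from $V(h)=V(\bend(h))$ rather than reading it off the previous proof. The relations $h(w)=h_{\hat\imath}(w)$ for all $i\in\supp(h)$ hold exactly when either every term of $h$ is $-\infty$ at $w$ or the maximal value is attained at least twice. For a monomial $h$, the single relation $(h,0)$ forces $h(w)=-\infty$. This is a one-line check, so there is no real gap.
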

\begin{proof}
An elementary induction argument shows that it suffices to show the result for $k=2$. So suppose $a,b,f,g\in\base[\Mon]$; we want to show that $V(af+bg)\supseteq V(f)\cap V(g)$.

The previous lemma shows that $V(af)\supseteq V(f)$ and $V(bg)\supseteq V(g)$, so it suffices to show that $V(af+bg)\supseteq V(af)\cap V(bg)$. By renaming $af$ and $bg$, it suffices to show that $V(f+g)\supseteq V(f)\cap V(g)$.

If $f$ or $g$ is zero, this is trivial, so suppose both $f$ and $g$ are nonzero. We now consider the case where $f$ and $g$ are monomials. If $f+g$ is a monomial then it is one of $f$ or $g$ and we are done. So suppose $f+g$ is a binomial. Any $w\in V(f)\cap V(g)$ then satisfies $f(w)=-\infty=g(w)$, i.e., $w$ satisfies the unique bend relation $f\sim g$ of $f+g$. Thus $w\in V(f+g)$.

Now suppose that one of $f$ and $g$ is a monomial and the other is not. We assume without loss of generality that $f$ has multiple nonzero terms and $g$ is a monomial. If $w\in V(f)\cap V(g)$ then there are terms $m_1,m_2$ of $f$ such that $f(w)=m_1(w)=m_2(w)$ and $g(w)=-\infty$. So $(f+g)(w)=\max(f(w),-\infty)=f(w)=m_1(w)=m_2(w)$. A simple case analysis as to whether $m_1$ and $m_2$ are terms of $f+g$ (with the same coefficients) then shows that $w\in V(f+g)$.

Now suppose that both $f$ and $g$ have multiple terms. Say $w\in V(f)\cap V(g)$. So there are terms $m_1,m_2$ of $f$ such that $f(w)=m_1(w)=m_2(w)$ and terms $\mu_1,\mu_2$ of $g$ such that $g(w)=\mu_1(w)=\mu_2(w)$. We may assume without loss of generality that $f(w)\geq g(w)$. Then $m_1$ and $m_2$ are terms of $f+g$ and $(f+g)(w)=m_1(w)=m_2(w)$, so $w\in V(f+g)$.
%The arguement that m_1 and m_2 are terms of the sum comes from the fact that $m_1(w)=f(w)\geq g(w)\geq \mu'(w)$ for any term $\mu'$ of $g$, so if $m_1$ and $\mu'$ have the same exponent vector, then the coefficient of $\mu'$ must be $\leq$ the coefficient of $m_1$. 
\end{proof}

\begin{lemma}\label{lemma:TropBasisInGeneratingSet}
Let $I\subseteq\base[\Mon]$ be an ideal generated by a set $G\subseteq\base[\Mon]$. Then $V(I)=V(G)$. Moreover, if $I$ has a finite tropical basis then $I$ has a finite tropical basis contained in $G$.
\end{lemma}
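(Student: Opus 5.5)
We first show $V(I)=V(G)$. Since $G\subseteq I$ and $V(I)=V(\Bend(I))$ is the intersection of the $V(f)$ for $f\in I$, we get $V(I)\subseteq V(G)$ at once. For the reverse inclusion, let $f\in I$. Because $I$ is the ideal generated by $G$, we can write $f=a_1g_1+\cdots+a_kg_k$ with $g_i\in G$ and $a_i\in\base[\Mon]$. Here we use that in an additively idempotent semiring the ideal generated by $G$ consists exactly of such finite linear combinations.

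Lemma~\ref{lemma:V(Polynomial)ContainsV(Generators)} then gives
$$V(f)\supseteq V(g_1)\cap\cdots\cap V(g_k)\supseteq V(G).$$
Intersecting over all $f\in I$ yields $V(I)\supseteq V(G)$.

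For the second statement, suppose $\{f_1,\ldots,f_m\}\subseteq I$ is a finite tropical basis, so that $V(I)=\bigcap_j V(f_j)$. For each $j$, write $f_j=\sum_{i} a_{ij}g_{ij}$ with $g_{ij}\in G$, and let $G'$ be the finite set of all the $g_{ij}$ that occur. Lemma~\ref{lemma:V(Polynomial)ContainsV(Generators)} gives $V(f_j)\supseteq\bigcap_{g\in G'}V(g)$ for each $j$, and hence
$$V(I)=\bigcap_j V(f_j)\supseteq\bigcap_{g\in G'}V(g).$$
Conversely, $G'\subseteq G\subseteq I$, so $\bigcap_{g\in G'}V(g)\supseteq V(I)$. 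Therefore $G'$ is a finite tropical basis of $I$ contained in $G$.

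There is no substantial obstacle here. The only point needing care is that ideal membership means an expression as a finite combination $\sum a_ig_i$, which then feeds directly into Lemma~\ref{lemma:V(Polynomial)ContainsV(Generators)}. The degenerate case $f=0$, which arises from the empty sum, is harmless: $V(0)$ is the whole space.
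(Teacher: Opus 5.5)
Your proposal is correct and follows essentially the same route as the paper: write each $f\in I$ as a finite combination $\sum a_i g_i$ with $g_i\in G$, apply Lemma~\ref{lemma:V(Polynomial)ContainsV(Generators)} to get $V(f)\supseteq V(G)$, and for the second claim collect the finitely many generators appearing in expressions for the elements of a given finite tropical basis. Your remark that the empty-sum case $f=0$ is harmless, since $V(0)$ is the whole space, is a small check the paper leaves implicit.
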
\begin{proof}
For any set $X\subseteq I$, $V(X)\supseteq V(I)$. In particular, $X$ is a tropical basis if and only if $V(X)\subseteq V(I)$.

Since $V(I)=\dcap_{f\in I}V(f)$, to prove that $V(I)=V(G)$ it suffices to show that, for all $f\in I$, $V(f)\supseteq V(G)$. Fix $f\in I$.

Since $f$ is in the ideal $I$ generated by $G$, there are $g_1,\ldots,g_k\in G$ and $a_1,\ldots,a_k\in \base[\Mon]$ such that $f=a_1g_1+\cdots+a_kg_k$. So by Lemma~\ref{lemma:V(Polynomial)ContainsV(Generators)} 
%$V(f)=V(a_1g_1+\cdots+a_kg_k)\supseteq V(g_1)\cap\cdots\cap V(g_k)\supseteq V(G)$.
$$V(f)=V(a_1g_1+\cdots+a_kg_k) \supseteq V(g_1)\cap\cdots\cap V(g_k) \supseteq V(G).$$

Now suppose that $I$ has a finite tropical basis $\{f_1,\ldots,f_m\}$. For each $i=1,\ldots,m$ write $f_i=\dsum_{j=1}^{k_i}a_{i,j}g_{i,j}$ with $g_{i,j}\in G$ and $a_{i,j}\in \base[\Mon]$. Let $X=\{g_{i,j}\,:\,1\leq i\leq m,\ 1\leq j\leq k_i\}$. Then
\begin{align*}
V(I)&=V(f_1,\ldots,f_m)=\dcap_{i=1}^m V(f_i)=\dcap_{i=1}^m V\left(\dsum_{j=1}^{k_i}a_{i,j}g_{i,j}\right)\supseteq\dcap_{i=1}^m \dcap_{j=1}^{k_i}V\left(g_{i,j}\right)=V(X)
\end{align*}
by Lemma~\ref{lemma:V(Polynomial)ContainsV(Generators)}, so $X$ is a finite tropical basis for $I$ that is contained in $G$.
\end{proof}

\subsection{Prime filters and flags of polyhedra}
We recall some needed definitions and results from our previous work in \cite{FM23}. 
%While in \cite{FM23}  we focused on the case of the torus, we also need the remaining strata of a (tropical) toric variety. 
While in \cite{FM23}  we focused on the case of the torus, in this paper we work with all of the strata of a tropical toric variety. 
Below we adapt the relevant definitions and results from \cite{FM23} to include the boundary strata. 

Let $\sigma$ be a cone in $N_{\R}$. For $\tau\leq\sigma$, we let $K_\tau$ be the set of those terms $a\chi^u$ in $\base[\Mon]$ such that $u\notin\tau^\perp$. Given any prime congruence $P$ on $\base[\Mon]$, there is a face $\tau\leq\sigma$ such that the intersection of the ideal-kernel of $P$ with the set $\Terms{\base[\Mon]}$ of terms is $K_\tau$. We let $V_\tau$ denote the set of such prime congruences $P$. If $P\in V_\tau$ we may also write $\tau=\tau_P$.

\begin{notation}
    Throughout the paper we will denote by $\Gamma:=\log(\base^\times)$ the subgroup of $\R$ corresponding to $\base$. 
\end{notation}

\begin{definition}\label{def:flagOfCones}
We let $\calC_\bullet=(\calc_{-1}\leq\calC_0\leq\calC_1\leq\cdots\leq\calC_{k})$ denote a \emph{flag of cones} in $\R_{\geq_0}\times N_{\R}/\tau$ with $\dim\calC_{i}=i+1$. We say that $\calC_\bullet$ is \emph{simplicial} if each $\calC_i$ is a simplicial cone.
\end{definition}
\noindent We often leave off $\calc_{-1}$ from the notation because it is always the zero cone.

A cone in $\R_{\geq0}\times N_{\R}/\tau$ is \emph{$\Gamma$-admissible} if it can be written as 
$$\{(r,x)\in \R_{\geq0}\times N_{\R}/\tau\ :\ r\gamma_i+\angbra{x,u_i}\leq0\text{ for }i=1,\ldots,q\}$$
for some $u_1,\ldots,u_q\in \Lattice\cap\tau^\perp$ and $\gamma_1,\ldots,\gamma_q\in\Gamma$. Note that we can also allow $u_i\in\Mon\sdrop\tau^\perp$, for then $r\gamma_i+\angbra{x,u_i}=r\gamma_i+(-\infty)=-\infty\leq0$.
%A polyhedron $\calQ$ is $\Gamma$-rational if and only if $c(\calQ)$ is $\Gamma$-admissible. 
A cone contained in $\{0\}\times N_{\R}/\tau$ is $\Gamma$-admissible if and only if it is rational. We call a finite union of $\Gamma$-admissible cones a \emph{$\Gamma$-admissible \conoidSet}. Note that the set of $\Gamma$-admissible \conoidSets\ forms a lattice when ordered by inclusion; meet and join are given by intersection and union, respectively.

\begin{definition}\label{def: gamma-rational-nbhd}

Let $\calc_\bullet$ be a flag of cones. A \emph{$\Gamma$-admissible neighborhood} of $\calc_\bullet$ is a $\Gamma$-admissible cone $\calD$ which meets the relative interior of each $\calc_i$.
\end{definition}

\begin{defi}\label{def:PrimeOfAFlag}%Definition 3.3
Let $\calc_\bullet$ be a simplicial flag of cones. The \emph{prime congruence $P_{\calc_\bullet}$ defined by $\calC_\bullet$} is the prime congruence %\green{whose defining matrix has rows $w_0,\ldots, w_k$ with $ w_i\in\calc_i\sdrop\calc_{i-1}$.}   
given by any matrix that has rows $w_0,\ldots, w_k$ with $ w_i\in\calc_i\sdrop\calc_{i-1}$. This prime congruence is independent of the choice of $w_0,\ldots,w_k$.

If $\calc_\bullet$ is the flag whose only cone is the ray generated by some nonzero $w\in N_{\R}(\sigma)$ then we also write $P_{w}$ for $P_{\calc_\bullet}$.
\end{defi}

Given $f=\dsum_{u\in\Mon}f_u\chi^u\in \base[\Mon]$ and $(r,x)\in \R_{\geq0}\times N_{\R}(\sigma)$ we let $$\wt{f}(r,x)=\displaystyle\max_{u\in\Mon}\left(r\log(f_u)+_{\R}\angbra{x,u}\right),$$
which can be viewed as a homogenization of the usual function that $f$ defines on $N_{\R}(\sigma)$. For any $f_0,f_1,\ldots,f_n\in\base[\Mon]$ and $\tau\leq\sigma$, we define 
$$\wt{R}_{\tau}(f_0,f_1,\ldots,f_n):=\{ w\in\R_{\geq0}\times(N_{\R}/\tau)\;:\;\wt{f}_0(w)\geq\wt{f}_i(w)\text{ for }1\leq i\leq n\}.$$
%By $N_{\R}/\tau$ we denote the stratum of $N_{\R}(\sigma)$ corresponding to $\tau$. 
Note that, if $m_0,m_1,\ldots,m_n$ are monomials, then $\wt{R}_\tau(m_0,m_1\ldots,m_n)$ is a polyhedral cone in $\R_{\geq0}\times (N_{\R}/\tau)$.

Recall that, for a prime congruence $P$ on a semiring $A$, the canonical homomorphism $A\to A/P\to\kappa(P)$ is denoted by $|\bullet|_P$.

\begin{defi}
For any prime congruence $P$ on $\base[\Mon]$, the \emph{prime filter $\wt{\calf}_P$ that $P$ defines on the \ConoidLarry}\ is the collection of $\Gamma$-admissible \conoidSets\ $\wt{U}$ for which there are $f_0,f_1, \ldots, f_n \in \base[\Mon]$ such that $\wt{R}_\tau(f_0,f_1, \ldots, f_n ) \subseteq \wt{U}$ and $|f_0|_P \geq |f_i|_P$ for $1\leq i\leq n$. Here $\tau=\tau_P$. 

\end{defi}
\noindent

In particular, if $|f|_P\geq|g|_P$ then $\wt{R}_\tau(f,g)\in\wtcalf_P$.

\begin{theorem}\label{thm:TheFilterHasFlagMeaning}
For any prime $P$ on $\base[\Mon]$, let $\wtcalf=\wtcalf_P$ and pick a simplicial flag $\calc_\bullet$ of cones such that $P=P_{\calc_\bullet}$. Then, for any $\Gamma$-admissible cone $\wt{U}$ in $\R_{\geq0}\times N_{\R}/\tau_P$, $\wt{U}\in\wtcalf$ if and only if $\wt{U}$ is a neighborhood of $\calc_\bullet$.
\end{theorem}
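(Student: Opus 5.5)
Fix $\tau=\tau_P$ and a simplicial flag $\calc_\bullet=(\calc_0\leq\cdots\leq\calc_k)$ with $P=P_{\calc_\bullet}$. Choose rows $w_i\in\relint(\calc_i)$. We may do this because $\relint(\calc_i)\subseteq\calc_i\sdrop\calc_{i-1}$. By Definition~\ref{def:PrimeOfAFlag}, $|f|_P\geq|g|_P$ holds if and only if $\big(\wt f(w_0),\ldots,\wt f(w_k)\big)\geq_{\mathrm{lex}}\big(\wt g(w_0),\ldots,\wt g(w_k)\big)$. The central observation is a "perturbation" reformulation. For a fixed finite set of monomials, lexicographic comparison along $(w_0,\ldots,w_k)$ agrees with ordinary comparison at the single point
$$w_\eps=w_0+\eps_1w_1+\eps_1\eps_2w_2+\cdots+\eps_1\cdots\eps_kw_k$$
for all sufficiently small $\eps_i>0$. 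Because the flag is simplicial, such points sweep out neighborhoods of each point in the relative interiors, and points near $w_\eps$ lie in $\relint(\calc_k)$ near the face chain.

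For the direction ($\Rightarrow$), suppose $\wt U\in\wtcalf$. Then there are $f_0,\ldots,f_n$ with $\wt R_\tau(f_0,\ldots,f_n)\subseteq\wt U$ and $|f_0|_P\geq|f_i|_P$. We must show that $\wt U$ meets each $\relint(\calc_j)$. Fix $j$ and consider the truncated flag $\calc_0\leq\cdots\leq\calc_j$.

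The plan is to show that $\wt R_\tau(f_0,\ldots,f_n)$ itself meets $\relint(\calc_j)$. The lexicographic inequality $|f_0|_P\geq|f_i|_P$ gives one of two things for each $i$. Either $\wt f_0(w_l)=\wt f_i(w_l)$ for all $l\leq j$, or there is a first index $l\leq j$ with $\wt f_0(w_l)>\wt f_i(w_l)$ and equality before it. In either case, the point $w_0+\eps w_1+\cdots+\eps^jw_j\in\relint(\calc_j)$ satisfies $\wt f_0\geq\wt f_i$ for small $\eps$. This follows from the piecewise-linearity of $\wt f$, which is a maximum of finitely many linear functions.

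The case-checking needs care. The maximizing terms of $\wt f_0$ at $w_0$ might differ from those that are leading lexicographically. I would handle this by restricting to terms that are lex-maximal for $f_0$ and $f_i$ along $(w_0,\ldots,w_j)$. Linear functions ordered lexicographically on $(w_0,\ldots,w_j)$ are ordered genuinely at $w_0+\eps w_1+\cdots$ for small $\eps$. Hence $\wt U$ meets $\relint(\calc_j)$ for every $j$.

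For the direction ($\Leftarrow$), let $\wt U=\{w: r\gamma_l+\angbra{x,u_l}\leq0,\ l=1,\ldots,q\}$ be a $\Gamma$-admissible cone meeting each $\relint(\calc_j)$. Each inequality is $\wt{m}_l\leq\wt{1}$ for the monomial $m_l=t^{\gamma_l}\chi^{u_l}$. We cannot simply take $f_0=1$ and $f_i=m_l$, since $|1|_P\geq|m_l|_P$ may fail: $\wt U$ only meets the relative interiors, it need not contain the $w_j$. So I would instead use the freedom in choosing rows. Pick $w_j'\in\wt U\cap\relint(\calc_j)$. These are still valid rows, because they lie in $\calc_j\sdrop\calc_{j-1}$, so $P=P_{\calc_\bullet}$ is computed by the matrix with rows $w'_0,\ldots,w'_k$. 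Each row satisfies $\wt m_l(w'_j)\leq0=\wt 1(w'_j)$ for all $l$ and $j$, hence $|m_l|_P\leq|1|_P$ lexicographically. Then $\wt R_\tau(1,m_1,\ldots,m_q)=\wt U$, so $\wt U\in\wtcalf$. The $u_l\notin\tau^\perp$ case is trivial since those monomials are in $K_\tau$.

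I expect the main obstacle to be the ($\Rightarrow$) step. It requires a careful argument that lexicographic dominance along the flag yields an actual point of $\relint(\calc_j)$ in $\wt R_\tau$. The difficulties are handling ties and $-\infty$ values on the boundary stratum, and using simpliciality to guarantee that $w_0+\eps w_1+\cdots+\eps^jw_j$ lies in $\relint(\calc_j)$, which holds since the $w_l$ span $\calc_j$ and lie in successive relative interiors.
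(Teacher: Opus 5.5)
Your overall strategy is sound, and it differs from the paper, which gives no argument of its own. The paper cites [FM23, Theorem 3.20] and asserts that the result carries over from $N_{\R}$ to the stratum $N_{\R}/\tau_P$. Your direct proof is self-contained and makes that transfer explicit: for ($\Rightarrow$) you perturb to $w_0+\eps w_1+\cdots+\eps^jw_j$, and for ($\Leftarrow$) you re-choose the rows inside $\wt U\cap\relint(\calc_j)$. It also shows that simpliciality plays no role, since $\relint(\calc_j)+\calc_j\subseteq\relint(\calc_j)$ already places $w_\eps$ in $\relint(\calc_j)$. As written, though, the proposal contains one false claim and one omission, and both must be repaired.

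\emph{The opening characterization is wrong.} $|f|_P$ is recorded by the row vector $(\angbra{m,w_0},\ldots,\angbra{m,w_k})$ of a lex-leading term $m$ of $f$. Equivalently it is $(\wt f(w_0),\wt{\whin_{w_0}(f)}(w_1),\ldots)$, not the evaluation vector $(\wt f(w_0),\ldots,\wt f(w_k))$. Take $\T[x^{\pm1},y^{\pm1}]$ with rows $(1;0,0),(0;1,0),(0;0,1)\in\R_{\geq0}\times\R^2$.
\begin{itemize}
\item The elements $x$ and $x+y$ are equal modulo $P$, yet their evaluation vectors satisfy $(0,1,0)<_{\mathrm{lex}}(0,1,1)$.
\item For $f_0=x+y^2$ and $m=xy$, the evaluation vectors satisfy $(0,1,2)>_{\mathrm{lex}}(0,1,1)$, while $\wt f_0(w_\eps)=\eps<\eps+\eps^2=\wt m(w_\eps)$.
\end{itemize}
So your dichotomy in ($\Rightarrow$) is not implied by $|f_0|_P\geq|f_i|_P$, and it would not yield the conclusion even if it held. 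The repair is the one you gesture at, and it should replace the dichotomy entirely:
\begin{itemize}
\item For all small $\eps>0$, $\wt f(w_\eps)=\sum_{l\leq j}\eps^l\angbra{m_f,w_l}$, where $m_f$ is a lex-leading term of $f$ for the rows $w_0,\ldots,w_j$.
\item Truncating the lexicographic inequality between the leading row vectors of $f_0$ and $f_i$ to the first $j+1$ rows gives $\wt f_0(w_\eps)\geq\wt f_i(w_\eps)$. Take $\eps$ small enough for all $i$ at once.
\item Terms whose exponent lies outside $\tau^\perp$ are $-\infty$ on the whole stratum.
\item If $|f_0|_P=0$, then every $\wt f_i$ is identically $-\infty$ there, so the inequality holds trivially.
\end{itemize}

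\emph{The ($\Leftarrow$) direction uses polynomials that may not exist.} $\Gamma$-admissibility only gives $u_l\in\Lattice\cap\tau^\perp$, so $t^{\gamma_l}\chi^{u_l}$ need not lie in $\base[\Mon]$. For example, for $\T[x]$ and $\tau_P=\{0\}$, the cone $\{(r,x):x\geq0\}$ requires $u=-1$. This is exactly where the boundary-stratum version differs from the torus case. To fix it:
\begin{itemize}
\item Choose $b\in\Mon\cap\tau^\perp$ with $u_l+b\in\Mon$ for all $l$. A large multiple of a lattice point in the relative interior of the face $\sigma^\vee\cap\tau^\perp$ works, since that face spans $\tau^\perp$.
\item Take $f_0=\chi^b$ and $f_l=t^{\gamma_l}\chi^{u_l+b}$.
\item Because $\angbra{x,b}$ is finite on $N_{\R}/\tau$, you still get $\wt R_\tau(f_0,\ldots,f_q)=\wt U$.
\item The row-by-row inequalities at your $w'_j$ still give $|f_l|_P\leq|f_0|_P$.
\end{itemize}
With these two changes your argument is a complete proof.
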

\begin{proof}
    This is Theorem 3.20 in \cite{FM23} but stated for the toric stratum $N_{\R}/\tau_P$ instead of $N_{\R}$.
\end{proof}

\section{Congruences and flags of cones}\label{sec:Filters}

Let $\base$ be a sub-semifield of $\T$, let $\Gamma$ be the corresponding additive subgroup of $\R$, and let $\Mon$ be a toric monoid corresponding to a (strictly convex rational polyhedral) cone $\sigma\subseteq N_{\R}$. 
Recall, that given $f=\dsum_{u\in\Mon}f_u\chi^u\in \base[\Mon]$ and $(r,x)\in \R_{\geq0}\times N_{\R}(\sigma)$, we let $\wt{f}(r,x)=\displaystyle\max_{u\in\Mon}\left(r\log(f_u)+_{\R}\angbra{x,u}\right)$. For any $E\subseteq (\base[\Mon])^2$ we set 
$$\wt{V}(E):=\{w\in\R_{\geq0}\times N_{\R}(\sigma)\;:\;\wt{f}(w)=\wt{g}(w)\text{ for all }(f,g)\in E\};$$
$E$ will usually be a congruence on $\base[\Mon]$.

\begin{defi}\label{def: finiteTropBasis}
Let $E$ be a congruence on $\base[\Mon]$. We say that $E$ \emph{has a finite tropical basis} if there are $(f_1,g_1),\ldots,(f_r,g_r)\in E$ such that $\wt{V}(E)=\wt{V}\big((f_1,g_1),\ldots,(f_r,g_r)\big)$.
\end{defi}

\begin{example}
    For any tropical ideal $I$ in $\T[\Mon]$, $E=\Bend(I)$ has finite tropical basis. Towards seeing this, we let $\calb_1$ be a finite tropical basis for $I$.

    Consider the map $\varphi : \T[\Mon] \to \B[\Mon]$. The ideal $\varphi(I)$ is a tropical ideal by the discussion after \cite[Corollary 2.11]{MR20} and therefore has a finite tropical basis $\mathcal{B'}$. For each element $f'$ in $\mathcal{B'}$ choose an $f\in I$ with $\ph(f)=f'$. Let $\calb_0$ be the collection of these $f$s.
    
    Then $\calb=\calb_0\cup\calb_1$ is a finite tropical basis for $E$. 
    \exEnd
\end{example}

A related notion was introduced in \cite{SN24} where they say that $E$ has a finite congruence tropical basis if there are $(f_1,g_1),\ldots,(f_r,g_r)\in E$ such that $V(E)=V((f_1,g_1),\ldots,(f_r,g_r))$. In general, we have that $E$ has a finite tropical basis (in our sense) if and only if $E$ and $E_{\B}$ both have finite congruence tropical bases (in the sense of \cite{SN24}), where $E_{\B}$ is the pushforward of $E$ along the map $\base[\Mon]\to\B[\Mon]$.

Our goal in this section is to prove the following theorem. It gives us a geometric interpretation of what it means for a prime congruence to contain a given congruence, analogous to the fact that an ideal $I$ in a polynomial ring over a field is contained in a prime ideal $P$ if and only if the algebraic set defined by $I$ contains the variety of $P$. 
Proving this result will require technical tools using the filters $\wtcalf_P$.

\begin{theorem}\label{thm:PrimeContainsCongruence}
Let $E$ and $P$ be congruences on $\base[\Mon]$ with $P$ prime. If there is a flag of cones $\calc_\bullet$ such that $P=P_{\calc_\bullet}$ and $\calc_j\subseteq\wt{V}(E)$ for all $j$ then $P\supseteq E$.
If $E$ has a finite tropical basis then the converse is true as well.    
\end{theorem}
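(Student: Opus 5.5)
For the forward direction, fix a simplicial flag $\calc_\bullet$ in $\R_{\geq0}\times N_{\R}/\tau$ with every $\calc_j\subseteq\wt{V}(E)$ and $P=P_{\calc_\bullet}$, and pick rows $w_0,\ldots,w_k$ with $w_j\in\calc_j\sdrop\calc_{j-1}$. I would show that $f\equiv_P g$ for every $(f,g)\in E$ by the usual lexicographic argument. The class of $f$ modulo $P$ is determined by the lex-maximum of the vectors $(m(w_0),\ldots,m(w_k))$ over the terms $m$ of $f$, i.e.\ by the terms that are initial in the refinement sequence.

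The key observation is that for generic points $w=w_0+\eps_1w_1+\cdots+\eps_kw_k$ with $0<\eps_k\ll\cdots\ll\eps_1\ll1$, we have $w\in\relint\calc_k\subseteq\wt{V}(E)$. Also $\wt{f}$ is max-affine on each term, and for $\eps$'s sufficiently small in the iterated sense, the set of terms achieving $\wt f(w)$ is exactly the set of $P$-initial terms of $f$. Since $\wt f(w)=\wt g(w)$ for all such $w$ in an open region of the parameter simplex, comparing the piecewise-linear functions $\eps\mapsto\wt f(w(\eps))$ and $\wt g(w(\eps))$ near the iterated limit recovers the whole lex-vector of the initial terms. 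Hence $\Phi(f)=\Phi(g)$, and $(f,g)\in P$.

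An alternative route uses Theorem~\ref{thm:TheFilterHasFlagMeaning}, and I would use it for the zero/ideal-kernel bookkeeping. If, say, $|f|_P>|g|_P$, then $\wt R_\tau(f,g)$ lies in $\wtcalf_P$ but the strict version would too, giving a $\Gamma$-admissible neighborhood of $\calc_\bullet$ on which $\wt f>\wt g$. That neighborhood meets $\relint\calc_k\subseteq \wt V(E)$, which is a contradiction. To make this precise I need that the strict-inequality region $\{\wt g\ge \wt f\}$ is not in the filter, which follows from primeness of the filter: exactly one of $\wt R(f,g)$, $\wt R(g,f)$ restricted appropriately is forced.

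For the converse, assume $E\subseteq P$ and $E$ has finite tropical basis $(f_1,g_1),\ldots,(f_r,g_r)$. Write $P=P_{\calc_\bullet}$ for some simplicial flag, which exists by \cite{FM23}. Since $f_i\equiv_P g_i$, both $\wt R_\tau(f_i,g_i)$ and $\wt R_\tau(g_i,f_i)$ lie in $\wtcalf_P$. Their intersection $\wt V(f_i,g_i)\cap(\R_{\geq0}\times N_\R/\tau)$ lies in $\wtcalf_P$ as well, since a filter is closed under meets, and intersecting over $i$ gives that $\wt U:=\wt V(E)\cap(\R_{\ge0}\times N_\R/\tau)$ is in $\wtcalf_P$. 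This uses the finite tropical basis essentially: finitely many meets. Now $\wt U$ is a $\Gamma$-admissible fan support set, i.e.\ a finite union of $\Gamma$-admissible cones $\calD_1\cup\cdots\cup\calD_s$. Since $\wtcalf_P$ is a \emph{prime} filter, some single $\calD_\ell\in\wtcalf_P$, so by Theorem~\ref{thm:TheFilterHasFlagMeaning} $\calD_\ell$ meets $\relint\calc_j$ for every $j$.

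The main obstacle is the last step: from a $\Gamma$-admissible cone $\calD$ meeting every $\relint\calc_j$, produce a new flag $\calc'_\bullet$ with $\calc'_j\subseteq\calD\subseteq\wt V(E)$ and $P_{\calc'_\bullet}=P$. I would construct it inductively, choosing $w'_0\in\calD\cap\relint\calc_0$ and then, for each $j$, $w'_j\in\calD\cap\relint\calc_j$ close enough to $\calc_{j-1}$. The flag of cones spanned by $w'_0,\ldots,w'_j$ lies in the convex cone $\calD$, and it defines the same prime because $w'_j\in\calc_j\sdrop\calc_{j-1}$, using the independence of choice in Definition~\ref{def:PrimeOfAFlag}. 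Since $\calD$ is convex, the simplicial cones $\calc'_j=\operatorname{cone}(w'_0,\ldots,w'_j)\subseteq\calD$. Some care is needed that $\calc'_j\subseteq\calc_j$ spans the same dimension, but choosing $w'_j\in\relint\calc_j$ handles this. The remaining technical concerns are checking that the meet and prime properties of $\wtcalf_P$ apply to these fan support sets as stated in \cite{FM23}, and that $\wt V(f_i,g_i)$ restricted to the stratum is indeed $\Gamma$-admissible.
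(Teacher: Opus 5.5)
Your proof is correct, but both directions take a different route from the paper's.

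\textbf{Forward direction.} The paper stays inside the filter machinery. Lemma~\ref{lemma:GrobnerCellOfFContainingP} gives $\Gamma$-admissible cones $L_f,L_g\in\wtcalf_P$ on which $\wt{f},\wt{g}$ agree with $P$-leading terms $m_f,m_g$. Theorem~\ref{thm:TheFilterHasFlagMeaning} and Lemma~\ref{lemma:CuttingFlagByNbhd} then shrink the flag into $L_f\cap L_g$, and $m_f,m_g$ are compared row by row.

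Your perturbation argument runs as follows: $w(\eps)=w_0+\sum_j\eps_jw_j\in\calc_k\subseteq\wt{V}(E)$; $\whin_{w(\eps)}(f)=\whin_P(f)$ for iterated-small $\eps$; and affine functions of $\eps$ agreeing on a nonempty open set have equal coefficients. This is more elementary and avoids the filter results entirely. It is the same device the paper uses later to prove $E\subseteq Q$ in Theorem~\ref{thm:resolving-primes}. The ideal-kernel case is trivial, since all rows lie in one stratum.

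\textbf{Converse.} For each basis pair, the paper gets a single $\Gamma$-admissible cone in $\wtcalf_P$ inside $\wt{V}\big((f_i,g_i)\big)$ by choosing leading terms (Lemma~\ref{lemma:ConeInFilterAndVariety}). So it only uses closure of $\wtcalf_P$ under intersections. You instead put the fan support set $\bigcap_i\big(\wt{R}_\tau(f_i,g_i)\cap\wt{R}_\tau(g_i,f_i)\big)$ in the filter and use primeness to extract one cone $\calD$. That is valid, but it leans on the stronger property.

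Your last step is Lemma~\ref{lemma:CuttingFlagByNbhd}, done with cones spanned by points $w'_j\in\calD\cap\relint\calc_j$ rather than with $\calc_j\cap\calD$. This even yields a simplicial flag, as Definition~\ref{def:PrimeOfAFlag} asks. The ``close enough to $\calc_{j-1}$'' condition is unnecessary, since $\relint\calc_j$ already avoids $\vspan\calc_{j-1}$.

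\textbf{The ``alternative route'' paragraph.} Drop it; it does not work as written.
\begin{itemize}
\item Primeness only says that at least one of $\wt{R}_\tau(f,g),\wt{R}_\tau(g,f)$ lies in $\wtcalf_P$.
\item Strict-inequality regions are not $\Gamma$-admissible fan support sets.
\item $\calc_k$ need not be $\Gamma$-admissible.
\end{itemize}
Your main argument does not need it.
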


\begin{remark}\label{rmk:FiniteTropBasisNeeded}
The hypothesis that $E$ has a finite tropical basis is needed for the converse to be true. To see this, consider the congruence $E$ on $\T[x^{\pm1}]$ generated by the pairs $(t^a+x, t^a)$ for $a>0$. Then $$\wt{V}(E) = \dcap_{\substack{a\in\R\\a>0}} \wt{V}\big((t^a+x, t^a)\big) = \dcap_{\substack{a\in\R\\a>0}} \{(r,x)\in\R_{\geq0}\times\R\,:\,x\leq ra\} = \R_{\geq0}\times(-\infty,0_{\R}].$$
We let $P$ be the prime congruence on $\T[x^{\pm1}]$ defined by the matrix $\begin{pmatrix}
1&0\\0&1
\end{pmatrix}$; direct evaluation of each $t^a+x$ and $t^a$ shows that $E\subseteq P$. But no flag of cones contained in $\wt{V}(E)=\R_{\geq0}\times(-\infty,0_{\R}]$ gives $P$. 
\end{remark}

Before working toward the proof of Theorem~\ref{thm:PrimeContainsCongruence}, we note the following corollary %\orange{which is a natural consequence of Theorem~\ref{thm:PrimeContainsCongruence}, but we do not use it for the subsequent results.}
because it is a natural consequence of Theorem~\ref{thm:PrimeContainsCongruence}; we do not use it in the remainder of this paper.

\begin{coro}
Let $E$ be any congruence on $\base[\Mon]$ that has a finite tropical basis. Then the map sending a flag of cones $\calc_\bullet$ to $P_{\calc_\bullet}$ induces a bijection from the set of flags of cones contained in $\wt{V}(E)$ modulo $\Gamma$-local equivalence\footnote{Two flags of cones are $\Gamma$-locally equivalent if they have the same $\Gamma$-admissible neighborhoods.} to the set of prime congruences on $\base[\Mon]/E$.
\end{coro}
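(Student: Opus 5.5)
The plan is to derive the corollary directly from Theorem~\ref{thm:PrimeContainsCongruence}, combined with the flag description of prime filters in Theorem~\ref{thm:TheFilterHasFlagMeaning}. Prime congruences on $\base[\Mon]/E$ correspond, via pullback along the quotient map, to prime congruences $P$ on $\base[\Mon]$ with $P\supseteq E$. So it suffices to show two things: the assignment $\calc_\bullet\mapsto P_{\calc_\bullet}$, defined on flags contained in $\wt{V}(E)$, lands in primes containing $E$ and is surjective onto them; and $P_{\calc_\bullet}=P_{\calc'_\bullet}$ holds exactly when $\calc_\bullet$ and $\calc'_\bullet$ are $\Gamma$-locally equivalent.

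For the first point, the forward direction of Theorem~\ref{thm:PrimeContainsCongruence} gives $E\subseteq P_{\calc_\bullet}$ whenever every $\calc_j\subseteq\wt{V}(E)$. Since $E$ has a finite tropical basis, the converse direction gives surjectivity: every prime $P\supseteq E$ equals $P_{\calc_\bullet}$ for some flag in $\wt{V}(E)$. There is one subtlety. Definition~\ref{def:PrimeOfAFlag} is stated for simplicial flags, while $\Gamma$-local equivalence is stated for arbitrary flags. I would handle this by observing that replacing each $\calc_i$ by a simplicial cone spanned by chosen vectors $w_0,\dots,w_i$ with $w_j\in\relint\calc_j$ yields a simplicial flag inside the original one. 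This replacement stays inside $\wt{V}(E)$ and is $\Gamma$-locally equivalent to the original flag. I will treat it as a harmless normalization, using that neighborhoods only see relative interiors near the chosen rows.

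For well-definedness and injectivity, suppose $\calc_\bullet$ and $\calc'_\bullet$ are flags. If $P_{\calc_\bullet}=P_{\calc'_\bullet}=P$, then both flags lie in the same stratum $\R_{\geq0}\times N_{\R}/\tau_P$, because the ideal-kernel determines $\tau_P$. Theorem~\ref{thm:TheFilterHasFlagMeaning} then says the $\Gamma$-admissible neighborhoods of each flag are exactly the $\Gamma$-admissible cones in $\wtcalf_P$, so the two flags have the same neighborhoods. Conversely, if the two flags have the same $\Gamma$-admissible neighborhoods, then the set of $\Gamma$-admissible cones in $\wtcalf_{P_{\calc_\bullet}}$ equals that in $\wtcalf_{P_{\calc'_\bullet}}$. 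I then need that this set determines the prime. The relation $|f|_P\geq|g|_P$ between monomials $f,g$ holds iff the $\Gamma$-admissible cone $\wt{R}_\tau(f,g)$ lies in $\wtcalf_P$: one direction is the remark after the definition of $\wtcalf_P$, and the other uses that $\wtcalf_P$ is a proper prime filter, so if $|f|_P<|g|_P$ then $\wt{R}_\tau(g,f)$ is in the filter and $\wt{R}_\tau(f,g)$ cannot be. Since monomials generate additively, the restriction of $\leq_P$ to them determines $P$.

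The main obstacle is this last point: that the filter restricted to cones recovers $P$. In particular, one must exclude that $\wt{R}_\tau(f,g)$ and $\wt{R}_\tau(g,f)$ both lie in $\wtcalf_P$ when $|f|_P<|g|_P$. Their intersection is the cone where the two monomials agree, and I would argue via Theorem~\ref{thm:TheFilterHasFlagMeaning} that it is not a neighborhood of the flag. Indeed, the first row $w_i$ at which $f$ and $g$ differ lies in the relative interior of $\calc_i$, and a cone meeting $\relint \calc_i$ would also have to contain points near $w_i$ where $\wt f<\wt g$. This can be made precise by the lexicographic comparison along the rows.
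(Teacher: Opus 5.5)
Your outline matches the paper's proof. The paper gets the statement in one line by combining Theorem~\ref{thm:PrimeContainsCongruence} (the primes containing $E$ are exactly the $P_{\calc_\bullet}$ with $\calc_\bullet\subseteq\wt{V}(E)$) with \cite[Corollary~3.38]{FM23} (flags modulo $\Gamma$-local equivalence correspond bijectively to primes). You instead re-derive that corollary from Theorem~\ref{thm:TheFilterHasFlagMeaning}, which is fine, but the step you yourself flag as the main obstacle is not justified as written.

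The inference ``$\wtcalf_P$ is a proper prime filter, so $\wt{R}_\tau(f,g)$ cannot be in it'' is invalid. Primeness only gives that \emph{at least} one of $\wt{R}_\tau(f,g)$, $\wt{R}_\tau(g,f)$ lies in the filter. Their intersection $\{\wt f=\wt g\}$ is a nonempty cone, so properness rules nothing out. Your later repair is also not right as phrased, since a cone meeting $\relint\calc_i$ need not contain points near $w_i$. The correct reason holds on all of $\relint\calc_i$. For terms $f,g\notin K_\tau$, $\ell=\wt g-\wt f$ is a linear functional on $\R_{\geq0}\times N_{\R}/\tau$ vanishing at $w_0,\dots,w_{i-1}$. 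It therefore vanishes on $\vspan\calc_{i-1}$, since these rows span it because $\calc_j\cap\vspan\calc_{j-1}=\calc_{j-1}$. It is also positive at $w_i$. Since $\calc_{i-1}$ is a facet of $\calc_i$, this forces $\ell>0$ on all of $\calc_i\sdrop\calc_{i-1}\supseteq\relint\calc_i$. Hence $\wt{R}_\tau(f,g)$ itself misses $\relint\calc_i$ and is not a neighborhood, and Theorem~\ref{thm:TheFilterHasFlagMeaning} gives $\wt{R}_\tau(f,g)\notin\wtcalf_P$ directly, with no intersection argument needed. Terms in $K_\tau$, where $\wt{R}_\tau(f,g)$ may be empty rather than a cone, are settled by the stratum alone.

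The normalization to the simplicial subflag $\calc'_j=\operatorname{cone}(w_0,\dots,w_j)$ also needs proof in one direction. The inclusion $\relint\calc'_j\subseteq\relint\calc_j$ only shows that neighborhoods of $\calc'_\bullet$ are neighborhoods of $\calc_\bullet$, not the converse. The same computation handles this, and in fact lets you bypass both the normalization and Theorem~\ref{thm:TheFilterHasFlagMeaning}. For an arbitrary flag, with $w_j\in\relint\calc_j$, a $\Gamma$-admissible cone $\bigcap_m\{\ell_m\leq0\}$ is a neighborhood if and only if every vector $(\ell_m(w_0),\dots,\ell_m(w_k))$ is lexicographically $\leq0$. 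Necessity follows from the positivity argument above. Sufficiency follows by checking that $w_0+\eps_1w_1+\cdots+\eps_jw_j\in\relint\calc_j$ lies in the cone for $1\gg\eps_1\gg\cdots\gg\eps_j>0$. Writing $\ell_m=\wt{m}_1-\wt{m}_2$ for terms $m_1,m_2$, the condition says exactly $m_1\leq_{P_{\calc_\bullet}}m_2$. So, within a fixed stratum, the set of $\Gamma$-admissible neighborhoods and the relation $\leq_{P_{\calc_\bullet}}$ on terms determine each other, which gives both well-definedness and injectivity at once.
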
\begin{proof}
This follows immediately from \cite[Corollary~3.38]{FM23} and Theorem~\ref{thm:PrimeContainsCongruence}.
\end{proof}

The idea of the proof of Theorem~\ref{thm:PrimeContainsCongruence} is as follows. Given a flag of cones $\calc_\bullet$ as in the statement and $(f,g)\in E$, we want to show $(f,g)\in P$. If we pick $w_i\in\calc_i\sdrop\calc_{i-1}$ for $i=0,\ldots,k$ then evaluation of $f$ and $g$ at $P$ is given by evaluating each of their monomials using the matrix $\begin{pmatrix}
w_0\\w_1\\\vdots\\w_k
\end{pmatrix}$. If we could reduce to the case where $f=m$ and $g=\mu$ are actually monomials, then the fact that $w_i\in\wt{V}(E)\subseteq\wt{V}\big((m,\mu)\big)$ would tell us that $|m|_P=|\mu|_P$, i.e., $(m,\mu)\in P$.

While the reduction described above is not possible, there is another reduction that does work and allows us to focus on monomials. The following lemmas facilitate that reduction.

\begin{lemma}\label{lemma:GrobnerCellOfFContainingP}
Let $P$ be a prime congruence on $\base[\Mon]$ and let $f\in\base[\Mon]$. Then there is a $\Gamma$-admissible cone $L\in\wtcalf_P$ and a term $m$ of $f$ such that $|f|_P=|m|_P$ and $\wt{f}(w)=\wt{m}(w)$ for all $w\in L$.
\end{lemma}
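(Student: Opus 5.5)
Write $f=\sum_{j=1}^{s} m_j$ as a finite sum of its terms, and let $\tau=\tau_P$. If $f=0$ we take $m=0$ (the empty term) and $L=\R_{\geq0}\times N_{\R}/\tau$, which is $\Gamma$-admissible and lies in $\wtcalf_P$ since $|0|_P\geq|0|_P$. So assume $f\neq0$. Because $P$ is prime, $\base[\Mon]/P$ is totally ordered and addition in it is the maximum. Hence $|f|_P=\max_j|m_j|_P$, and we can choose an index $j_0$ with $|m_{j_0}|_P\geq|m_j|_P$ for all $j$; set $m=m_{j_0}$. Then $|f|_P=|m|_P$ immediately.

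Next put $L:=\wt{R}_\tau(m,m_1,\ldots,m_s)$. Since $m,m_1,\ldots,m_s$ are monomials, the remark after the definition of $\wt{R}_\tau$ shows that $L$ is a polyhedral cone in $\R_{\geq0}\times N_{\R}/\tau$. In explicit form, writing $m=a\chi^u$ and $m_j=a_j\chi^{u_j}$, it is cut out by the inequalities
$$r(\log a_j-\log a)+\angbra{x,u_j-u}\leq0 .$$
These have coefficients $\log a_j-\log a\in\Gamma$. The only delicate point is when some exponent lies outside $\tau^\perp$.

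If $u\notin\tau^\perp$, then $m\in K_\tau$ lies in the ideal-kernel of $P$, so $|m|_P=0$. By maximality every $|m_j|_P=0$, and hence every $u_j\notin\tau^\perp$. Then all $\wt{m}_j$ and $\wt{m}$ are $-\infty$ on $\R_{\geq0}\times N_{\R}/\tau$. In this case we take $L$ to be the whole space; it is $\Gamma$-admissible and satisfies $\wt f=\wt m$ on it.

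If $u\in\tau^\perp$, any $u_j\notin\tau^\perp$ gives an inequality $-\infty\leq\wt m$, which holds automatically and can be dropped. The remaining $u_j$ lie in $\tau^\perp$, so $u_j-u\in\Lattice\cap\tau^\perp$, and $L$ has exactly the $\Gamma$-admissible form required. (The definition uses $\angbra{x,u_i}$ with a single lattice vector; since $\angbra{x,u_j}-\angbra{x,u}=\angbra{x,u_j-u}$ on $N_{\R}/\tau$ when both vectors are in $\tau^\perp$, this is fine.)

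Membership $L\in\wtcalf_P$ is immediate from the definition of the prime filter, taking $f_0=m$ and $f_i=m_i$, because $|m|_P\geq|m_i|_P$ for all $i$; in the degenerate case the whole space contains any $\wt R_\tau(m,m_1,\dots)$. Finally, on $L$ we have $\wt{m}(w)\geq\wt{m}_j(w)$ for every $j$, and $\wt f=\max_j\wt{m}_j$ since $\wt f$ is defined termwise as a maximum. Since $m$ is itself one of the $m_j$, this gives $\wt f(w)=\wt m(w)$ for all $w\in L$.

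The main obstacle is the bookkeeping needed to verify that $L$ is genuinely a $\Gamma$-admissible cone in the boundary stratum $N_{\R}/\tau$, particularly the handling of $-\infty$ values. The order-theoretic part is trivial once we use that the quotient by a prime is totally ordered.
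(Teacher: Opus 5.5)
Your proof is correct and takes the same route as the paper: choose a term $m$ of $f$ with $|m|_P$ maximal, using that $\kappa(P)$ is totally ordered, and set $L=\wt{R}_\tau(m,m_1,\ldots,m_s)=\bigcap_j\wt{R}_\tau(m,m_j)$, on which $\wt{f}=\wt{m}$. Your additional case analysis ($f=0$, the case $|m|_P=0$, and discarding the inequalities with $u_j\notin\tau^\perp$) spells out the $\Gamma$-admissibility and filter-membership checks that the paper leaves implicit.
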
\begin{proof}
Let $\tau=\tau_P$ and let $m_0,\ldots,m_n$ be the monomials of $f$, so $f=\dsum_{j=0}^n m_j$. 
Because $\kappa(P)$ is totally ordered, there is a $j$ for which $|m_j|_P$ is as large as possible. We may assume without loss of generality is $j=0$. 

Let $L:=\dcap_{j=1}^n\wt{R}_\tau(m_0,m_j)$; $L$ is a $\Gamma$-admissible cone because $m_0,\ldots,m_n$ are monomials. If $w\in L$ then $\wt{m}_0(w)\geq\wt{m}_j(w)$ for all $j$ so $\wt{f}(w)=\wt{m}_0(w)$. Thus, setting $m=m_0$, we have the result.
\end{proof}

\begin{lemma}\label{lemma:ConeInFilterAndVariety}
Let $P$ be a prime congruence on $\base[\Mon]$ and say $(f,g)\in P$. Then there is a $\Gamma$-admissible cone $\wt{U}\in\wtcalf_P$ that is contained in $\wt{V}\big((f,g)\big)$.
\end{lemma}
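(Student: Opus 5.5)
The plan is to apply Lemma~\ref{lemma:GrobnerCellOfFContainingP} to $f$ and to $g$ separately, and intersect the resulting cones. Let $\tau=\tau_P$.

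First, Lemma~\ref{lemma:GrobnerCellOfFContainingP} applied to $f$ gives a $\Gamma$-admissible cone $L_f\in\wtcalf_P$ and a term $m$ of $f$ such that $|f|_P=|m|_P$ and $\wt f=\wt m$ on $L_f$. Applied to $g$, it gives $L_g\in\wtcalf_P$ and a term $\mu$ of $g$ such that $|g|_P=|\mu|_P$ and $\wt g=\wt\mu$ on $L_g$. Since $(f,g)\in P$, we get $|m|_P=|f|_P=|g|_P=|\mu|_P$. In particular $|m|_P\geq|\mu|_P$ and $|\mu|_P\geq|m|_P$. By the remark after the definition of $\wtcalf_P$, both $\wt R_\tau(m,\mu)$ and $\wt R_\tau(\mu,m)$ therefore lie in $\wtcalf_P$.

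Next, set
$$\wt U:=L_f\cap L_g\cap\wt R_\tau(m,\mu)\cap\wt R_\tau(\mu,m).$$
Since $m,\mu$ are monomials, each piece is a $\Gamma$-admissible cone in $\R_{\geq0}\times N_{\R}/\tau$, so $\wt U$ is one as well. On $\wt U$ we have $\wt m=\wt\mu$, hence $\wt f=\wt m=\wt\mu=\wt g$. Thus $\wt U\subseteq\wt V\big((f,g)\big)$. Points of the stratum $N_{\R}/\tau$ are viewed inside $N_{\R}(\sigma)$, so this is the correct ambient set.

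It remains to show $\wt U\in\wtcalf_P$, i.e.\ that $\wtcalf_P$ is closed under finite intersections. This is the step I expect to need care, since the membership criterion is phrased via sets $\wt R_\tau(f_0,\dots,f_n)$. I would first try to cite that $\wtcalf_P$ is a (prime) filter on the lattice of $\Gamma$-admissible fan support sets, as its name and \cite{FM23} indicate. If a direct argument is needed, I would use Theorem~\ref{thm:TheFilterHasFlagMeaning}: choose a simplicial flag $\calc_\bullet$ with $P=P_{\calc_\bullet}$. Each of the four cones is then a $\Gamma$-admissible neighborhood of $\calc_\bullet$, so it meets every $\relint\calc_i$.

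Showing that the intersection still meets every $\relint\calc_i$ is the delicate point. I would instead argue via defining conditions. Pick $w_i\in\calc_i\sdrop\calc_{i-1}$, and write each cone as the set of points satisfying finitely many linear inequalities $r\gamma+\angbra{x,u}\leq0$. Membership in $\wtcalf_P$ means each such inequality is satisfied lexicographically by $(w_0,\dots,w_k)$. Then small perturbations $w_0+\eps w_1+\dots+\eps^i w_i$ lie in all four cones simultaneously and in $\relint\calc_i$. Hence $\wt U$ is a neighborhood of $\calc_\bullet$, and so $\wt U\in\wtcalf_P$.
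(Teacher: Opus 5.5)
Your proposal is correct and is essentially the paper's proof: apply Lemma~\ref{lemma:GrobnerCellOfFContainingP} to $f$ and $g$, use $(f,g)\in P$ to get $|m|_P=|\mu|_P$, and take $\wt U=L_f\cap L_g\cap\wt R_\tau(m,\mu)\cap\wt R_\tau(\mu,m)$. The paper simply uses, without comment, that $\wtcalf_P$ is a filter and hence closed under finite intersections, so your extra verification via Theorem~\ref{thm:TheFilterHasFlagMeaning} and a lexicographic perturbation is not needed (though it does go through for a simplicial flag).
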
\begin{proof}
Let $\tau=\tau_P$. By Lemma~\ref{lemma:GrobnerCellOfFContainingP} there is a $\Gamma$-admissible cone $L_f\in\wtcalf_P$ and a term $m_f$ of $f$ such that $|f|_P=|m_f|_P$ and $\wt{f}(w)=\wt{m}_f(w)$ for all $w\in L_f$. Similarly, we get $L_g$ and $m_g$. Since $(f,g)\in P$, we have $|m_f|_P=|f|_P=|g|_P=|m_g|_P$. Thus $L':=\wt{R}_\tau(m_f,m_g)\cap\wt{R}_\tau(m_g,m_f)\in\wtcalf_P$; since $m_f,m_g$ are monomials, $L'$ is a $\Gamma$-admissible cone. 

Let $\wt{U}:=L'\cap L_f\cap L_g\in\wtcalf_P$, which is a $\Gamma$-admissible cone because $L', L_f$, and $L_g$ are.
If $w\in\wt{U}$ then $\wt{f}(w) =\wt{m}_f(w) =\wt{m}_g(w)=\wt{g}(w)$ so $w\in\wt{V}\big((f,g)\big)$. Thus $\wt{U}\subseteq\wt{V}\big((f,g)\big)$.
\end{proof}

\begin{lemma}\label{lemma:CuttingFlagByNbhd}
Let $\cald_\bullet=(\cald_{-1}\leq\cald_0\leq\cdots\leq\cald_k)$ be a flag of cones in $\R_{\geq0}\times(N_{\R}/\tau)$ and let $\wt{U}$ be a ($\Gamma$-admissible) neighborhood of $\cald_\bullet$. Then letting $\calc_j:=\cald_j\cap\wt{U}$ for $j=-1,\ldots,k$ defines a flag of cones $\calc_\bullet$ such that $P_{\calc_\bullet}=P_{\cald_\bullet}$.
\end{lemma}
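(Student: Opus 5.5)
The plan has two parts. First, show that each $\calc_j:=\cald_j\cap\wt{U}$ is a cone of dimension $j+1$ and that these cones form a flag. Second, choose representative rows for $P_{\calc_\bullet}$ that are simultaneously valid representatives for $P_{\cald_\bullet}$.

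For the first part, $\calc_j$ is an intersection of two polyhedral cones in $\R_{\geq0}\times(N_{\R}/\tau)$, so it is a polyhedral cone. The inclusions $\calc_{j-1}\subseteq\calc_j$ follow from $\cald_{j-1}\subseteq\cald_j$, and $\calc_{-1}=\{0\}$. For the dimension count, use that $\wt{U}$ is a neighborhood of $\cald_\bullet$, so by Definition~\ref{def: gamma-rational-nbhd} there is a point $w_j\in\relint(\cald_j)\cap\wt{U}$. This gives
$$\dim\calc_j\leq\dim\cald_j=j+1.$$
For the reverse inequality I would use the points $w_0,\ldots,w_j$. Each $w_i$ lies in $\relint(\cald_i)\subseteq\cald_i\sdrop\cald_{i-1}$, because $\cald_{i-1}$ is a proper face-dimensional subcone of $\cald_i$ and hence cannot contain a relative interior point of $\cald_i$. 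Inductively, $w_i\notin\vspan(\cald_{i-1})$ while $w_0,\ldots,w_{i-1}\in\cald_{i-1}$. Therefore $w_0,\ldots,w_j$ are linearly independent points of $\calc_j$, giving $\dim\calc_j\geq j+1$. The same argument shows $w_i\in\calc_i\sdrop\calc_{i-1}$. If the definition of a flag also requires $\calc_{i-1}$ to be a face of $\calc_i$, I would check this separately: $\cald_{i-1}$ is a face of $\cald_i$ cut out by a supporting hyperplane, and intersecting with $\wt{U}$ preserves being cut out by that same hyperplane.

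For the second part, $P_{\calc_\bullet}$ is given by any matrix with rows $w_i\in\calc_i\sdrop\calc_{i-1}$. Since $\calc_i\subseteq\cald_i$ and $w_i\notin\vspan(\cald_{i-1})\supseteq\cald_{i-1}$, the same rows satisfy $w_i\in\cald_i\sdrop\cald_{i-1}$. So the single matrix with rows $w_0,\ldots,w_k$ defines both $P_{\calc_\bullet}$ and $P_{\cald_\bullet}$, and the two primes agree.

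The main obstacle is that Definition~\ref{def:PrimeOfAFlag} is stated only for simplicial flags, while $\calc_j$ need not be simplicial. I would handle this by showing that the rows chosen above give a well-defined prime for arbitrary flags. If only independence of choice for simplicial flags is available, I would invoke the general version from \cite{FM23}, where the prime of a flag depends only on its $\Gamma$-admissible neighborhoods (compare Theorem~\ref{thm:TheFilterHasFlagMeaning}). Independence of the choice of rows then rests on the lexicographic structure: replacing $w_i$ by another point of $\calc_i\sdrop\calc_{i-1}$ changes it, modulo $\vspan(\calc_{i-1})$, only by a positive multiple. That is exactly the situation covered by the general statement in \cite{FM23}.
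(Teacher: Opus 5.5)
Your proposal is correct and follows the paper's proof. You pick $w_j\in\wt{U}\cap\relint\cald_j$ and use $w_j\notin\vspan(\cald_{j-1})$ to get $\dim\calc_j=j+1$; you phrase this as linear independence of $w_0,\ldots,w_j$, where the paper inducts on $\dim\calc_{j-1}$. You then observe that the single matrix with rows $w_0,\ldots,w_k$ defines both $P_{\calc_\bullet}$ and $P_{\cald_\bullet}$. Your extra checks fill in points the paper passes over silently: that $\calc_{j-1}$ is a face of $\calc_j$, and that $P_{\calc_\bullet}$ is well defined without simpliciality, because $\calc_{i-1}$ is a facet of $\calc_i$ and so the lexicographic order is unchanged.
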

\begin{proof}
Note that the $\calc_j$ are cones and $\calc_{j-1}\leq\calc_{j}$ for $j\geq0$. So, to show that $\calc_\bullet$ is a flag of cones, we just need to show that $\dim(\calc_j)=j+1$. Since $\calc_j\subseteq\cald_j$, $\dim(\calc_j)\leq\dim(\cald_j)=j+1$. We also have $\dim(\calc_{-1})=0$ because $\calc_{-1}=\cald_{-1}$ is the origin. So, for the rest of this proof, we only consider $j\geq 0$.

Because $\wt{U}$ is a neighborhood of $\cald_\bullet$, for all $j$ we have $\wt{U}\cap\relint\cald_j\neq\emptyset$. Pick $w_j\in\wt{U}\cap\relint\cald_j$, so $w_j\in\calc_j$. Since $w_j\in\relint\cald_j$, $w_j$ is not in the span of $\cald_{j-1}$. Since $\calc_{j-1}\subseteq\cald_{j-1}$ and $\calc_j$ contains both $\calc_{j-1}$ and $w_j$, we have $\dim(\calc_j)\geq \dim\vspan\calc_{j-1}\cup\{w_j\}=\dim(\calc_{j-1})+1$. Thus a quick induction argument shows that $\dim(\calc_j)\geq j+1$. Since we already have the opposite inequality, we conclude that $\dim(\calc_j)=j+1$.

Continuing with $w_0,\ldots,w_k$ as above, note that $w_j\in\calc_j\sdrop\calc_{j-1}$ so $P_{\calc_\bullet}$ is the prime congruence defined by the matrix $\begin{pmatrix}w_0\\w_1\\\vdots\\w_k\end{pmatrix}$. But we also have that $w_j\in\relint\cald_j$, so the same matrix also gives us the prime congruence $P_{\cald_\bullet}$. Thus $P_{\calc_\bullet}=P_{\cald_\bullet}$.
\end{proof}

\begin{proof}[Proof of Theorem~\ref{thm:PrimeContainsCongruence}]
Suppose we have such a $\calc_\bullet=(\calc_{-1}\leq\calc_0\leq\cdots\leq\calc_k)$ defining $P$; we want to show $E\subseteq P$.

Given $(f,g)\in E$, Lemma~\ref{lemma:GrobnerCellOfFContainingP} tells us that there are $\Gamma$-admissible cones $L_f,L_g\in\wtcalf_P$ and terms $m_f$ and $m_g$ of $f$ and $g$, respectively, such that $|f|_P=|m_f|_P$, $\wt{f}(w)=\wt{m}_f(w)$ for all $w\in L_f$, $|g|_P=|m_g|_P$, and $\wt{g}(w)=\wt{m}_g(w)$ for all $w\in L_g$. Setting $L:=L_f\cap L_g$ we have that $L\in\wtcalf_P$ is a $\Gamma$-admissible cone so $L$ is a $\Gamma$-admissible neighborhood of $\calc_\bullet$. Thus, Lemma~\ref{lemma:CuttingFlagByNbhd} tells us that setting $\cald_j:=\calc_j\cap L$ defines a flag of cones $\cald_\bullet$ with $P_{\cald_\bullet}=P_{\calc_\bullet}=P$.

Note that, by the definition of $L$, if $w\in L$ then $\wt{f}(w)=\wt{m}_f(w)$ and $\wt{g}(w)=\wt{m}_g(w)$. So, if $w\in \wt{V}(E)\cap L \subseteq \wt{V}\big((f,g)\big)\cap L$, then $\wt{m}_f(w)=\wt{f}(w)=\wt{g}(w)=\wt{m}_g(w)$.

For each $j\geq0$ pick $w_j\in\cald_j\sdrop\cald_{j-1}$. So $P=P_{\cald_\bullet}$ is given by the matrix $\begin{pmatrix}w_0\\w_1\\\vdots\\w_k\end{pmatrix}$. For each $j$ we have $w_j\in\cald_j=\calc_j\cap L\subseteq\wt{V}(E)\cap L$ so $\wt{m}_f(w_j)=\wt{m}_g(w_j)$. Thus evaluating $m_f$ and $m_g$ using the matrix $\begin{pmatrix}w_0\\w_1\\\vdots\\w_k\end{pmatrix}$ give the same result, i.e., $|m_f|_P=|m_g|_P$. Thus $|f|_P=|m_f|_P=|m_g|_P=|g|_P$ so $(f,g)\in P$. Since we showed this for an arbitrary $(f,g)\in E$, we conclude that $E\subseteq P$.

Now suppose that there are $(f_1,g_1),\ldots,(f_r,g_r)\in E$ such that $\wt{V}(E) = \wt{V}\big((f_1,g_1),\ldots,(f_r,g_r)\big)$ and that $E\subseteq P$. Since $(f_i,g_i)\in E\subseteq P$, Lemma~\ref{lemma:ConeInFilterAndVariety} tells us that there is a $\Gamma$-admissible cone $\wt{U}_i\in\wtcalf_P$ that is contained in $\wt{V}\big((f_i,g_i)\big)$. Letting $\wt{U}:=\dcap_{i=1}^r\wt{U}_i$ we have that $\wt{U}\in\wtcalf_P$ is a $\Gamma$-admissible cone. 

Fix any flag of cones $\cald_\bullet$ with $P=P_{\cald_\bullet}$. So 
by Theorem~\ref{thm:TheFilterHasFlagMeaning} $\wt{U}$ is a $\Gamma$-admissible neighborhood of $\cald_\bullet$. Lemma~\ref{lemma:CuttingFlagByNbhd} now tells us that setting $\calc_j:=\cald_j\cap\wt{U}$ defines a flag of cones $\calc_\bullet$ such that $P_{\calc_\bullet}=P_{\cald_\bullet}=P$. For any $j$ we have
$$ \calc_j \subseteq\wt{U}=\dcap_{i=1}^r\wt{U}_i \subseteq\dcap_{i=1}^r\wt{V}\big((f_i,g_i)\big)=\wt{V}(E).$$ \par\nopagebreak\vspace{-2\baselineskip}\mbox{}
\end{proof}

\begin{remark}
In the example given in Remark~\ref{rmk:FiniteTropBasisNeeded}, while the second row of the matrix, corresponding to $\calc_1$, is not contained in $\wt{V}(E)$, the first row, corresponding to $\calc_0$, is contained in $\wt{V}(E)$. We now use the method at the end of the proof of Theorem~\ref{thm:PrimeContainsCongruence} to show that this happens more generally.
\end{remark}

\begin{coro}
Let $E\subseteq P$ be congruences on $\base[\Mon]$ with $P$ prime. For any flag $\calc_\bullet=(\calc_{-1}\leq\calc_0\leq\cdots\leq\calc_k)$ (with $k\geq0$) with $P=P_{\calc_\bullet}$, we have $\calc_{-1}, \calc_0\subseteq\wt{V}(E)$.
\end{coro}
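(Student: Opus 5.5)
The claim for $\calc_{-1}$ is immediate. $\calc_{-1}$ is the zero cone $\{0\}$, which lies in $\R_{\geq0}\times N_{\R}/\tau$ with $\tau=\tau_P$, and at $w=(0,0)$ every $\wt{f}$ equals $\max_{u\in\Mon\cap\tau^\perp,\,f_u\neq0}0$. This value is $0$ or $-\infty$, depending only on whether $f$ has a term with exponent in $\tau^\perp$. So for $(f,g)\in E\subseteq P$ we need: $f$ has such a term iff $g$ does. This holds because the ideal-kernel of $P$ on terms is $K_\tau$. Since $P$ is prime, $|f|_P=0$ iff every term of $f$ lies in $K_\tau$, and $(f,g)\in P$ forces $|f|_P=0\iff|g|_P=0$.

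For $\calc_0$, the approach is to imitate the end of the proof of Theorem~\ref{thm:PrimeContainsCongruence}, one pair at a time. Fix $(f,g)\in E$. By Lemma~\ref{lemma:ConeInFilterAndVariety} there is a $\Gamma$-admissible cone $\wt{U}\in\wtcalf_P$ with $\wt{U}\subseteq\wt{V}\big((f,g)\big)$. Choosing a simplicial flag defining $P$ (or arguing directly), Theorem~\ref{thm:TheFilterHasFlagMeaning} shows $\wt{U}$ is a neighborhood of $\calc_\bullet$. In particular $\wt{U}$ meets $\relint\calc_0$. Since $\calc_0$ is a ray, $\relint\calc_0=\calc_0\sdrop\{0\}$ and $\wt{U}$ is a cone, so $\wt{U}$ contains this whole open ray, and with $0\in\wt{U}$ it contains all of $\calc_0$. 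Hence $\calc_0\subseteq\wt{V}\big((f,g)\big)$. As $(f,g)\in E$ was arbitrary, $\calc_0\subseteq\dcap_{(f,g)\in E}\wt{V}\big((f,g)\big)=\wt{V}(E)$. No finite tropical basis is needed, since only one pair is handled at a time.

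The main point to check is that Theorem~\ref{thm:TheFilterHasFlagMeaning} applies to the given flag, which need not be simplicial. For $\calc_0$ this is harmless, because a one-dimensional cone is simplicial. If the theorem is only stated for simplicial flags, I would replace $\calc_\bullet$ by a simplicial flag $\cald_\bullet$ with $\cald_0=\calc_0$ and $P_{\cald_\bullet}=P$. To build it, pick $w_j\in\relint\calc_j$ and let $\cald_j$ be the cone spanned by $w_0,\ldots,w_j$; then $\cald_j\sdrop\cald_{j-1}\subseteq\calc_j\sdrop\calc_{j-1}$, so both flags give the same defining matrix. Alternatively, one can argue directly from $|\cdot|_P$ being computed by a matrix whose first row $w_0$ spans $\calc_0$. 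From Lemma~\ref{lemma:GrobnerCellOfFContainingP} we have terms $m_f,m_g$ with $|m_f|_P=|m_g|_P$, so in particular $\wt{m}_f(w_0)=\wt{m}_g(w_0)$. These terms realize the maxima of $\wt f,\wt g$ at $w_0$ by the lexicographic maximality of the first coordinate, giving $\wt{f}(w_0)=\wt{g}(w_0)$, and positive homogeneity extends this to the whole ray.
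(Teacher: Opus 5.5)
Your proof is correct and follows the paper's route: Lemma~\ref{lemma:ConeInFilterAndVariety} gives a cone $\wt{U}\subseteq\wt{V}\big((f,g)\big)$, and Theorem~\ref{thm:TheFilterHasFlagMeaning} makes it a neighborhood of the flag; the only difference is that the paper obtains $\calc_0\subseteq\wt{U}$ from the dimension count in Lemma~\ref{lemma:CuttingFlagByNbhd}, while you use directly that a cone meeting the open ray $\relint\calc_0$ contains all of $\calc_0$. Your extra care is all valid and fills in points the paper treats as immediate: the explicit check for $\calc_{-1}$ in a boundary stratum (which also follows from $\calc_{-1}\subseteq\calc_0$), the reduction to a simplicial flag, and the alternative argument via the first row of the defining matrix.
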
\begin{proof}
Since $\calc_{-1}=\{(0,0)\}$, $\calc_{-1}\subseteq\wt{V}(E)$. 

Now suppose $(f,g)\in E$; we want to show that $\wt{f}$ and $\wt{g}$ agree on $\calc_0$, i.e., $\calc_0\subseteq\wt{V}\big((f,g)\big)$.
Since $(f,g)\in E\subseteq P$, Lemma~\ref{lemma:ConeInFilterAndVariety} tells us that there is a $\Gamma$-admissible cone $\wt{U}\in\wtcalf_P$ that is contained in $\wt{V}\big((f,g)\big)$. 
By Theorem~\ref{thm:TheFilterHasFlagMeaning}, $\wt{U}$ is a $\Gamma$-admissible neighborhood of $\calc_\bullet$, so Lemma~\ref{lemma:CuttingFlagByNbhd} now tells us that setting $\cald_j:=\calc_j\cap\wt{U}$ defines a flag of cones $\cald_\bullet$ such that $P_{\cald_\bullet}=P_{\calc_\bullet}$. In particular, $\dim(\calc_0\cap\wt{U})=1=\dim(\calc_0)$, so $\calc_0\cap\wt{U}=\calc_0$, and thus $\calc_0 \subseteq \wt{U} \subseteq \wt{V}\big((f,g)\big)$, as desired.
\end{proof}

\section{Gleichstellensatz}
In this section we provide several important consequences of Theorem~\ref{thm:PrimeContainsCongruence}.
%Let $\base$ be a sub-semifield of $\T$ and let $\Mon$ be a toric monoid corresponding to a cone $\sigma\subseteq N_{\R}$.

In applying Theorem~\ref{thm:PrimeContainsCongruence} to ideals we must proceed with caution -- it is possible for an ideal $I$ to have a finite tropical basis but for $\Bend(I)$ to not have a finite tropical basis, as the following example shows.

\begin{example}
Let $I\subseteq\T[x,y,z]$ be the ideal generated by the polynomials
\begin{align*}
g_{xy}&=1+x^2+y^2+z^2+txy\\
g_{xz}&=1+x^2+y^2+z^2+txz\\
g_{yz}&=1+x^2+y^2+z^2+tyz, \text{ and}\\
f_n&=1+x+y+z^n
\end{align*}
for all $n\geq1$.
Routine computation shows that 
\begin{align*}
V(g_{xy})\cap V(g_{xz})\cap V(g_{yz}) &= \{(x,y,z)\in(\R\cup\{-\infty\})^3 \,:\, x\leq -1,\, y\leq -1,\, z=0\}\\
&\ \ \cup\{(x,y,z)\in(\R\cup\{-\infty\})^3 \,:\, x\leq -1,\, y=0,\, z\leq-1\}\\
&\ \ \cup\{(x,y,z)\in(\R\cup\{-\infty\})^3 \,:\, x=0,\, y\leq-1,\, z\leq-1\}
\end{align*}
and
\begin{align*}
V(f_n)&=\{(x,y,z)\in(\R\cup\{-\infty\})^3 \,:\, x=0,\, y\leq 0,\, z\leq0\}\\
&\ \ \cup\{(x,y,z)\in(\R\cup\{-\infty\})^3 \,:\, x\leq0,\, y=0,\, z\leq0\}\\
&\ \ \cup\{(x,y,z)\in(\R\cup\{-\infty\})^3 \,:\, x\leq0,\, y\leq0,\, z=0\}\\
&\ \ \cup\{(x,y,z)\in(\R\cup\{-\infty\})^3 \,:\, x=y\geq0,\, x\geq nz\}\\
&\ \ \cup\{(x,y,z)\in(\R\cup\{-\infty\})^3 \,:\, x=nz\geq0,\, x\geq y\}\\
&\ \ \cup\{(x,y,z)\in(\R\cup\{-\infty\})^3 \,:\, y=nz\geq0,\, y\geq x\}
\end{align*}
In particular, $V(g_{xy})\cap V(g_{xz})\cap V(g_{yz})\subseteq V(f_n)$ for all $n$, so $\{g_{xy},g_{xz},g_{yz}\}$ is a tropical basis for $I$.

We now show that $\Bend(I)$ does not have a finite tropical basis. For this, it suffices to show that there is no finite set $X\subseteq I$ with $\wt{V}(\bend{X})=\wt{V}(\Bend(I))$. 

Letting $\pi_{\B}:\base[\Mon]\to\B[\Mon]$ be the homomorphism induced by sending all nonzero $a\in\base$ to $1_{\B}$, we have that 
% $\wt{V}(\Bend(I))\cap(\{0\}\times (\R\cup\{-\infty\})^3)=\{0\}\times V(\Bend(\pi_{\B}(I)))$. 
$\wt{V}(\Bend(I))\cap(\{0\}\times (\R\cup\{-\infty\})^3)=\{0\}\times V(\pi_{\B}(I))$. 
So it suffices to show that there is no finite set $X\subseteq I$ such that 
% $V(\Bend(\pi_{\B}(I)))=V(\bend(\pi_{\B}(X)))$. 
$V(\pi_{\B}(I))=V(\pi_{\B}(X))$. 
Moreover, by Lemma~\ref{lemma:TropBasisInGeneratingSet}, it suffices to show that there is no finite set $X\subseteq\{g_{xy},g_{xz},g_{yz}\}\cup\{f_n \,:\, n\geq1\}$ such that 
% $V(\Bend(\pi_{\B}(I)))=V(\bend(\pi_{\B}(X)))$.
$V(\pi_{\B}(I))=V(\pi_{\B}(X))$.

Note that 
\begin{align*}
V(\pi_{\B}(g_{xy}))&=V(\pi_{\B}(g_{xz}))=V(\pi_{\B}(g_{yz}))\\ 
&=\{(x,y,z)\in(\R\cup\{-\infty\})^3 \,:\, x=0,\, y\leq 0,\, z\leq 0\}\\
&\ \ \cup\{(x,y,z)\in(\R\cup\{-\infty\})^3 \,:\, x\leq0,\, y=0,\, z\leq0\}\\
&\ \ \cup\{(x,y,z)\in(\R\cup\{-\infty\})^3 \,:\, x\leq0,\, y\leq 0,\, z=0\}\\
&\ \ \cup\{(x,y,z)\in(\R\cup\{-\infty\})^3 \,:\, x=y\geq0,\, z\leq x\}\\
&\ \ \cup\{(x,y,z)\in(\R\cup\{-\infty\})^3 \,:\, x=z\geq0,\, y\leq x\}\\
&\ \ \cup\{(x,y,z)\in(\R\cup\{-\infty\})^3 \,:\, y=z\geq0,\, x\leq y\}
\end{align*}
and $V(\pi_{\B}(f_n))=V(f_n)$. Thus 
\begin{align*}
V(\pi_{\B}(I))&=\{(x,y,z)\in(\R\cup\{-\infty\})^3 \,:\, x=0,\, y\leq 0,\, z\leq 0\}\\
&\ \ \cup\{(x,y,z)\in(\R\cup\{-\infty\})^3 \,:\, x\leq0,\, y=0,\, z\leq0\}\\
&\ \ \cup\{(x,y,z)\in(\R\cup\{-\infty\})^3 \,:\, x\leq0,\, y\leq 0,\, z=0\}.
\end{align*}
But if $X$ is any finite subset of $\{g_{xy},g_{xz},g_{yz}\}\cup\{f_n \,:\, n\geq1\}$ and $n_0$ is the largest $n$ such that $f_n\in X$ (or $0$ if there is no such $n$), then
\begin{align*}
V(\pi_{\B}(X))&\supseteq \{(x,y,z)\in(\R\cup\{-\infty\})^3 \,:\, x=0,\, y\leq 0,\, z\leq0\}\\
&\ \ \cap\{(x,y,z)\in(\R\cup\{-\infty\})^3 \,:\, x\leq0,\, y=0,\, z\leq0\}\\
&\ \ \cap\{(x,y,z)\in(\R\cup\{-\infty\})^3 \,:\, x\leq0,\, y\leq0,\, z=0\}\\
&\ \ \cap\{(x,y,z)\in(\R\cup\{-\infty\})^3 \,:\, x=y\geq0,\, x\geq n_0z\},
\end{align*}
so $V(\pi_{\B}(X))\supsetneq V(\pi_{\B}(I))$.
\exEnd
\end{example}

\begin{coro}\label{coro:pre-Nullstellensatz}

Let $\base\subseteq\T$ be a sub-semifield and let $\Mon$ be a toric monoid. Let $E$ be a congruence on $\base[\Mon]$ with a finite tropical basis $\calB$. Then the finitely generated congruence $C=\angbra{\calB}$ satisfies $\sqrt{E}=\sqrt{C}$. In particular, if $I\subseteq\base[\Mon]$ is a tropical ideal then there is a finitely generated ideal $J$ such that $\sqrt{\Bend(I)}=\sqrt{\Bend(J)}$. 
\end{coro}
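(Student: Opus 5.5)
The plan is to compare the sets of prime congruences containing $E$ and containing $C$, using Theorem~\ref{thm:PrimeContainsCongruence}; the radicals are intersections over these sets. Since $\calB\subseteq E$ and $E$ is a congruence, $C=\angbra{\calB}\subseteq E$. Hence every prime containing $E$ contains $C$, which gives $\sqrt{C}\subseteq\sqrt{E}$.

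For the reverse inclusion, I show that every prime $P\supseteq C$ also contains $E$. First, $C$ itself has a finite tropical basis, namely $\calB$. Indeed, since $\calB\subseteq C\subseteq E$ we have
$$\wt{V}(E)\subseteq\wt{V}(C)\subseteq\wt{V}(\calB)=\wt{V}(E),$$
so $\wt{V}(C)=\wt{V}(\calB)=\wt{V}(E)$. Now let $P$ be a prime with $C\subseteq P$. By the converse direction of Theorem~\ref{thm:PrimeContainsCongruence}, applied to $C$, there is a flag $\calc_\bullet$ with $P=P_{\calc_\bullet}$ and every $\calc_j\subseteq\wt{V}(C)=\wt{V}(E)$. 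The forward direction of the same theorem, which needs no hypothesis on $E$, then gives $E\subseteq P$. So the primes containing $E$ are exactly the primes containing $C$, and $\sqrt{E}=\sqrt{C}$.

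For the tropical ideal statement, take $E=\Bend(I)$. By the example following Definition~\ref{def: finiteTropBasis}, $E$ has a finite tropical basis; I need one made of bend relations of finitely many polynomials in $I$.
\begin{itemize}
\item The basis constructed there is a finite set $\calb\subseteq I$ of polynomials, and I use $\bend(\calb)$. It is a finite set of pairs in $E$.
\item I claim $\wt{V}(\bend(\calb))=\wt{V}(E)$, by splitting $\R_{\geq0}\times N_{\R}(\sigma)$ into $r>0$ and $r=0$. For $r>0$, scaling reduces $\wt V$ to $V$ for $I$, handled by $\calb_1$. For $r=0$ it reduces to $V$ for $\pi_{\B}(I)$, handled by $\calb_0$.
\end{itemize}
Set $J$ to be the ideal generated by $\calb$. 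Then $\Bend(J)\subseteq\Bend(I)$ and $\Bend(J)\supseteq\angbra{\bend(\calb)}$. Sandwiching as above, $\wt{V}(\Bend(J))=\wt{V}(E)$, so $\Bend(J)$ has finite tropical basis $\bend(\calb)$. Applying the first part to both $\Bend(I)$ and $\Bend(J)$ gives
$$\sqrt{\Bend(I)}=\sqrt{\angbra{\bend(\calb)}}=\sqrt{\Bend(J)}.$$

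The main obstacle is this last verification, namely the scaling argument identifying $\wt{V}$ on each of the slices $r>0$ and $r=0$. Within it, the point needing most care is checking that bend relations of $f\in\calb_0$ still cut out the Boolean variety $V(\pi_{\B}(I))$ at $r=0$.
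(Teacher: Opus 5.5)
Your proposal is correct and follows the paper's proof. Both show that a prime contains $E$ if and only if it contains $C$ by applying Theorem~\ref{thm:PrimeContainsCongruence} in both directions, using $\wt{V}(C)=\wt{V}(E)$ (so $C$ also has finite tropical basis $\calB$); for the tropical-ideal clause the paper only says ``in particular,'' relying on the example after Definition~\ref{def: finiteTropBasis}, and your choice $J=\angbra{\calb}$ together with the sandwich $\angbra{\bend(\calb)}\subseteq\Bend(J)\subseteq\Bend(I)$ correctly fills in that step.
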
\begin{proof}

It suffices to show that, for any prime congruence $P$, $P\supseteq E$ if and only if $P\supseteq C$. We have $\wt{V}(C)=\wt{V}(E)$. 
By Theorem~\ref{thm:PrimeContainsCongruence}, $P\supseteq E$ if and only if there is a $\calc_\bullet$ with $P=P_{\calc_\bullet}$ and $\calc_j\subseteq\wt{V}(E)$ if and only if there is a $\calc_\bullet$ with $P=P_{\calc_\bullet}$ and $\calc_j\subseteq\wt{V}(C)$ if and only if $P\supseteq C$.
\end{proof}

The next corollary states that the radical of a congruence $E$ with finite tropical basis is determined by its variety, as rank one primes containing $E$ correspond to points on the variety $\wt{V}(E)$.

\begin{coro}\label{coro:TropBasisImpliesRankOnePrimesEnough}
Let $\base\subseteq\T$ be a sub-semifield and let $\Mon$ be a toric monoid. 
Let $E$ be a congruence on $\base[\Mon]$ with a finite tropical basis. Then the radical of $E$ is the intersection of the rank 1 prime congruences containing $E$, i.e.,
$$\sqrt{E}=\dcap_{w\in\wt{V}(E)}P_w.$$
\end{coro}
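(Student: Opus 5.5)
We need to show $\sqrt{E}=\bigcap_{w\in\wt{V}(E)}P_w$. Here $w$ ranges over nonzero points of $\wt{V}(E)$; the origin does not define a ray and must be excluded.

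The inclusion $\sqrt{E}\subseteq\bigcap_{w}P_w$ is immediate from the easy direction of Theorem~\ref{thm:PrimeContainsCongruence}. For each nonzero $w\in\wt{V}(E)$, the flag consisting of the ray $\R_{\geq0}w$ is contained in $\wt{V}(E)$, because $\wt{V}(E)$ is closed under nonnegative scaling: each $\wt{f}$ is positively homogeneous. Hence $E\subseteq P_w$, and so $\sqrt{E}\subseteq P_w$ by the definition of the radical.

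For the reverse inclusion, take $(f,g)\in\bigcap_w P_w$ and an arbitrary prime $P\supseteq E$; we must show $(f,g)\in P$.

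\begin{itemize}
\item By the converse in Theorem~\ref{thm:PrimeContainsCongruence}, which uses the finite tropical basis, there is a flag $\calc_\bullet$ with $P=P_{\calc_\bullet}$ and every $\calc_j\subseteq\wt{V}(E)$, living in some stratum $\R_{\geq0}\times N_{\R}/\tau$.
\item Apply Lemma~\ref{lemma:GrobnerCellOfFContainingP} to $f$ and to $g$. This gives $\Gamma$-admissible cones $L_f,L_g\in\wtcalf_P$ and terms $m_f,m_g$ with $|f|_P=|m_f|_P$, $|g|_P=|m_g|_P$, and $\wt{f}=\wt{m}_f$ on $L_f$, $\wt{g}=\wt{m}_g$ on $L_g$.
\item Put $L=L_f\cap L_g$. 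By Theorem~\ref{thm:TheFilterHasFlagMeaning}, $L$ is a $\Gamma$-admissible neighborhood of $\calc_\bullet$, so Lemma~\ref{lemma:CuttingFlagByNbhd} lets us replace $\calc_\bullet$ by $\cald_j=\calc_j\cap L$ without changing $P$. We still have $\cald_j\subseteq\wt{V}(E)$.
\item Choose rows $w_j\in\cald_j\sdrop\cald_{j-1}$. Each $w_j$ is a nonzero point of $\wt{V}(E)\cap L$.
\end{itemize}

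The key step is to show $\wt{f}(w_j)=\wt{g}(w_j)$ for each $j$. Since $(f,g)\in P_{w_j}$ and $P_{w_j}$ is given by the $1\times(n+1)$ matrix with the single row $w_j$, equality modulo $P_{w_j}$ means exactly that the maxima of the monomial evaluations agree. That is, $\wt{f}(w_j)=\wt{g}(w_j)$, with the convention that both may equal $-\infty$ on a boundary stratum.

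Because $w_j\in L$, this gives $\wt{m}_f(w_j)=\wt{m}_g(w_j)$ for all $j$. The two monomials therefore have the same lexicographic evaluation under the matrix with rows $w_0,\dots,w_k$, so $|m_f|_P=|m_g|_P$. Hence $|f|_P=|g|_P$, i.e. $(f,g)\in P$. Since $P\supseteq E$ was an arbitrary prime, $(f,g)\in\sqrt{E}$.

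The main obstacle is checking the $-\infty$ bookkeeping in the key step when $\tau\neq0$. Membership in $P_w$ for $w$ in a boundary stratum must correspond exactly to equality of $\wt{f}(w)$ and $\wt{g}(w)$ in $\R\cup\{-\infty\}$, including the case where the ideal-kernel kills both sides. This should follow from the matrix description of rank-one primes with $-\infty$ columns in the Remark after the definition of strata.
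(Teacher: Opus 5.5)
Your proof is correct, and it follows a different route from the paper's. The paper does not argue with flags here. It passes to the finitely generated congruence $C=\angbra{\calB}$ generated by a finite tropical basis, and uses Corollary~\ref{coro:pre-Nullstellensatz} to get $\sqrt{E}=\sqrt{C}$. It then notes via Theorem~\ref{thm:PrimeContainsCongruence} that $P_w\supseteq C$ if and only if $w\in\wt{V}(C)=\wt{V}(E)$. Finally it imports \cite[Theorem~5.4]{JM17}: the radical of a finitely generated congruence is the intersection of the rank-one primes containing it.

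You stay with $E$ throughout. The hypothesis $(f,g)\in\bigcap_w P_w$ says exactly that $\wt{V}(E)\subseteq\wt{V}\big((f,g)\big)$. Your cutting of the flag by $L_f\cap L_g$ is then the easy direction of Theorem~\ref{thm:PrimeContainsCongruence}, rerun for the single pair $(f,g)$ on the flag that the hard direction supplies for $E$. So your argument shows the corollary is a formal consequence of the two directions of that theorem, with no outside input and no auxiliary congruence $C$. The paper's version is shorter given the citation, and it presents the statement as an extension of the finitely generated Nullstellensatz of \cite{JM17}.

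Note also that $P_w$ is literally the kernel of $h\mapsto\wt{h}(w)$, with values in $\R\cup\{-\infty\}$. This makes your first inclusion immediate from the definition of $\wt{V}(E)$, and it settles the $-\infty$ bookkeeping you flag.

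One correction to your set-up: excluding the origin is not harmless in degenerate cases. Suppose a prime $P\supseteq E$ arises only from the length $-1$ flag $(\calc_{-1})$ in some stratum. Then your argument produces no rows $w_j$, and the conclusion $|m_f|_P=|m_g|_P$ no longer follows. For example, take $\T[x]$ with $E=\angbra{(1,t),(x^2,x)}$. Then $\wt{V}(E)=\{(0,0),(0,-\infty)\}$, and the only primes containing $E$ are the evaluation kernels at these two points. The pair $(x,0)$ lies in the kernel at $(0,-\infty)$ but not in $\sqrt{E}$, so the intersection over $w\neq(0,0)$ is strictly larger than $\sqrt{E}$.

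The fix is to let $P_w$ be the evaluation kernel for every $w\in\wt{V}(E)$, including the origins of strata. The empty-flag case is then trivial, because $P$ is itself one of the $P_w$, and your argument handles all remaining primes. The paper's formulation leaves this point implicit as well.
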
\begin{proof}
Let $\calB$ be a tropical basis for $E$ and let $C=\angbra{\calB}$. By Theorem~\ref{thm:PrimeContainsCongruence}, $P\supseteq C$ if and only if there is a flag of cones $\calc_\bullet$ such that $P=P_{\calc_\bullet}$ and $\calc_j\subseteq\wt{V}(C)$ for all $j$. So for any rank-one prime $P_w$, we have $P_w\supseteq C$ if and only if $w\in\wt{V}(C)$. By \cite[Theorem 5.4]{JM17} we know that the radical of a finitely generated congruence is the intersection of the rank-one primes containing it. Thus $$\bigcap_{w\in \wt V(E)} P_w = \bigcap_{w\in \wt{V}(C)} P_w = \sqrt{C} = \sqrt{E},$$ where the last equality is true by Corollary~\ref{coro:pre-Nullstellensatz}.
\end{proof}

In \cite{JM17} the authors prove a tropical Nullstellensatz for finitely generated congruences. The statement below extends that result to congruences whose variety can be cut out by finitely many pairs, such as the bend congruences of tropical ideals.  

\begin{coro}[Gleichstellensatz for congruences with a finite tropical basis]\label{coro: infiniteNull}
Let $\base \subseteq \T$ be a sub-semifield and let $\Mon$ be a toric monoid. Let $E$ be a congruence on $\base[\Mon]$ that has a finite tropical basis. Suppose $(f,g)$ is such that $\wt f(w)=\wt g(w)$ for all $w\in \wt V(E)$. Then there are $i\in\Z_{\geq0}$ and $h\in\base[\Mon]$ such that $\big[(f+g)^i+h\big](f,g)\in E$.
\end{coro}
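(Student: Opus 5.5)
The plan is to show that $(f,g)\in\sqrt{E}$ and then read off the conclusion from the explicit description of the radical in Corollary~\ref{coro:radical_by_generalized_power}. That corollary says that $(f,g)\in\sqrt{E}$ holds exactly when there are $i\in\Z_{\geq0}$ and $h\in\base[\Mon]$ with $\big[(f+g)^i+h\big](f,g)\in E$. This is precisely the desired statement, so everything reduces to proving $(f,g)\in\sqrt{E}$.

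To show this membership, I will use Corollary~\ref{coro:TropBasisImpliesRankOnePrimesEnough}. Since $E$ has a finite tropical basis, it gives $\sqrt{E}=\bigcap_{w\in\wt{V}(E)}P_w$. So it suffices to check that $(f,g)\in P_w$ for every $w\in\wt{V}(E)$.

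Fix $w\in\wt{V}(E)$.

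\begin{itemize}
\item If $w=0$, the convention needs checking. I will treat the intersection as running over nonzero $w$, which is what gives rank-one primes. If the corollary literally includes $w=0$, then I interpret $P_0$ as imposing nothing extra, or simply discard that term: $\wt{f}(0)=\wt{g}(0)$ holds for nonzero $f,g$, and the rank-one primes are indexed by nonzero $w$ in the cited result.
\item For nonzero $w$, the prime $P_w$ is defined by the $1$-row matrix with row $w$. Evaluating a polynomial at $P_w$ means taking the maximum over its terms $t^a\chi^u$ of $ra+\angbra{x,u}$, where $w=(r,x)$. This is exactly $\wt{f}(w)$.
\end{itemize}

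In the $\base[\N^n]$ or boundary-stratum cases, terms pairing to $-\infty$ lie in the ideal-kernel, consistent with $\wt{f}$ taking the value $-\infty$ there. So $|f|_{P_w}$ and $|g|_{P_w}$ correspond to $\wt{f}(w)$ and $\wt{g}(w)$ under the order embedding of the residue semifield into $\T$. The hypothesis $\wt f(w)=\wt g(w)$ then gives $(f,g)\in P_w$.

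The only step needing care is this identification: that $(f,g)\in P_w$ if and only if $\wt f(w)=\wt g(w)$, including the zero-polynomial and boundary-stratum conventions. This is essentially the definition of the defining matrix, and it is the same identification already used in the proof of Corollary~\ref{coro:TropBasisImpliesRankOnePrimesEnough}, where $P_w\supseteq C$ was equated with $w\in\wt{V}(C)$.

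Combining the steps, $(f,g)\in\bigcap_{w} P_w=\sqrt{E}$, and Corollary~\ref{coro:radical_by_generalized_power} produces the required $i$ and $h$.
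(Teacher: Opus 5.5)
Your proposal is correct and follows the paper's proof: the hypothesis places $(f,g)$ in $\bigcap_{w\in\wt{V}(E)}P_w$, Corollary~\ref{coro:TropBasisImpliesRankOnePrimesEnough} identifies this intersection with $\sqrt{E}$, and Corollary~\ref{coro:radical_by_generalized_power} then produces $i$ and $h$. Your side remarks on $w=0$ and on boundary strata (identifying $(f,g)\in P_w$ with $\wt{f}(w)=\wt{g}(w)$) are harmless and only make explicit a convention the paper uses implicitly.
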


\begin{proof}
By hypothesis, $(f,g)\in\dcap_{w\in\wt{V}(E)}P_{w}$, which orollary~\ref{coro:TropBasisImpliesRankOnePrimesEnough} says is $\sqrt{E}$. So Corollary~\ref{coro:radical_by_generalized_power} tells us that there are $i\in\Z_{\geq0}$ and $h\in\base[\Mon]$ such that $\big[(f+g)^i+h\big](f,g)\in E$.
\end{proof}

The following example demonstrates that in Theorem~\ref{thm:PrimeContainsCongruence}, Corollary~\ref{coro:TropBasisImpliesRankOnePrimesEnough}, and Corollary~\ref{coro: infiniteNull} the finite tropical basis hypothesis is necessary.

\begin{example}
Let $\base=\T$ and $\Mon=\N$ so $\base[\Mon]=\T[x]$. Let $E$ be the prime congruence on $\T[x]$ given by the matrix $\begin{pmatrix}
1&0\\
0&-1
\end{pmatrix}$.
Then $\wt{V}(E)=\R_{\geq0}\times\{0\}$. 
So, if we let $f(x)=x$ and $g(x)=1$ then $\wt{f}(w)=\wt{g}(w)$ for all $w\in\wt{V}(E)$. Now suppose there are some $i\in\Z_{\geq0}$ and $h\in\T[x]$ such that $[(1+x)^i+h](1,x)\in E$. Since $E$ is prime but $(1,x)\notin E$, this implies that $\left((1+x)^i+h,0\right)\in E$. But $E$ has trivial ideal-kernel, so this is impossible.
\exEnd
\end{example}

\section{Minimal primes containing a congruence}

Let $\base$ be a subsemifield of $\T$, let $\Gamma$ be the corresponding additive subgroup of $\R$, and let $\Mon$ be a toric monoid. 

We know from a straightforward generalization of \cite[Theorem 4.14]{JM17} that every prime congruence of $\base[\Mon]$ contains a prime with trivial ideal-kernel. In this section we show that the same is true for prime congruences of $\base[\Mon]/E$, when $E$ satisfies certain geometric conditions. In particular, it is true for the coordinate semirings of tropicalizations of varieties under very mild geometric assumptions. We start with some motivating examples of prime congruences with trivial ideal-kernel contained in a prime congruence with non-zero ideal-kernel.

\begin{example}\label{ex: noKerPrime1} Consider $A = \mathbb{B}[x, y, z]/E$, where $E = \left< (x + x^2y^2 + z^3, x^2z + x^2 + y)\right>$. Let $P$ be the prime congruence corresponding to the matrix $\begin{pmatrix} -\infty & -\infty & -\infty \end{pmatrix}$, i.e., the prime generated by $(x, 0)$, $(y, 0)$, and $(z, 0)$. Note that $P$ contains $E$. The primes with trivial ideal-kernel that contain $E$ and are contained in $P$ correspond to the following matrices:
   \begin{align*}
       &C_1 = \begin{pmatrix} \alpha & -1 & -1/3 \end{pmatrix}, \alpha \leq -1, \quad
       C_2 = \begin{pmatrix} -1 & 0 & 0 \\ 0 & -1 & -1/3 \end{pmatrix} \\ 
       &C_3 = \begin{pmatrix} -1 & -1 & \gamma \end{pmatrix}, \gamma < 1, \quad \quad \ \ \ 
       C_4 = \begin{pmatrix} 0 & 0 & -1 \\ -1 & -1 & 0 \end{pmatrix}
    \end{align*} \par\nopagebreak\vspace{-1.5\baselineskip}\mbox{}
    \exEnd
\end{example}

\begin{example}\label{ex: noKerPrime2} Consider $A = \mathbb{T}[x, y, z]/E$, where $E$ is the congruence generated by the bend relations of the polynomial $f = x^2 + txy + y^2 + x^2y^2$, i.e., $E = \left< \bend(f) \right>$. Let $P$ be the prime corresponding to the matrix $\begin{pmatrix} 1 & -\infty & -\infty \end{pmatrix}$, i.e., the prime generated by $(x, 0)$ and $(y, 0)$. Note that $P$ contains $E$.

A prime with trivial ideal-kernel that contains $E$ and is contained in $P$ corresponds to the matrix
$\begin{pmatrix} 0& -1 & -1 \\ 1 & 0 & -1\end{pmatrix}$. This can be written as $\begin{pmatrix} 0& v \\ 1 & \what{w}\end{pmatrix}$ where $v=(-1, -1)$ is a point on the recession of $V(E)$ that "points toward" $(-\infty,-\infty)$ and $\what{w}=(0,-1)$ is a point in the relative interior of a maximal cell of $V(E)$ whose closure contains $(-\infty,-\infty)$. The proof of Theorem~\ref{thm:resolving-primes} will proceed by finding similar $v$s and $\what{w}$s.

Furthermore, if $k$ is a field with valuation $v:k\to \T$ and $F\in k[x,y]$ is any polynomial whose tropicalization is $f=x^2 + txy + y^2 + x^2y^2$, then the above example works verbatim if we replace $E$ with $ \Bend(\trop\left<F\right>)$. \exEnd
\end{example}

We now state the main result of this section.

\begin{thm}\label{thm:resolving-primes}
Let $E$ be a congruence on $\base[\Mon]$ with a finite tropical basis. 
Suppose that $\wt{V}(E)$ is the closure of $\wt{V}(E)\cap(\R_{\geq0}\times N_{\R})$ in $\R_{\geq0}\times N_{\R}(\sigma)$. 
Then every minimal prime congruence of $\base[\Mon]/E$ has trivial ideal-kernel.
\end{thm}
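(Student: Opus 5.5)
The strategy is to show that any prime $P\supseteq E$ with nontrivial ideal-kernel (so $\tau:=\tau_P\neq\{0\}$) strictly contains another prime $P'\supseteq E$. Minimal primes over $E$ exist by Zorn's lemma, since a descending chain of primes has prime intersection. Hence every minimal prime must have $\tau_P=\{0\}$, that is, trivial intersection of its ideal-kernel with the terms. Because $P$ is prime, the quotient is cancellative, so an element of the ideal-kernel forces one of its terms into the ideal-kernel. It follows that $\tau_P=\{0\}$ gives trivial ideal-kernel.

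For the construction, Theorem~\ref{thm:PrimeContainsCongruence} applies because $E$ has a finite tropical basis. It gives a flag $\calc_\bullet=(\calc_0\leq\cdots\leq\calc_k)$ in $\R_{\geq0}\times N_{\R}/\tau$ with every $\calc_j\subseteq\wt{V}(E)$ and $P=P_{\calc_\bullet}$. Choose rows $w_0,\ldots,w_k$ with $w_j\in\relint\calc_j$.

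I aim to produce $P'$ with a defining matrix
$$\begin{pmatrix} v\\ \what{w}_0\\ \vdots\\ \what{w}_k\end{pmatrix}$$
whose rows lie in $\wt{V}(E)\cap(\R_{\geq0}\times N_{\R})$, modelled on Example~\ref{ex: noKerPrime2}. Here $v=(0,v')$ with $v'\in\relint\tau$, and $\what{w}_j$ is a lift of $w_j$, so that $\what{w}_j+\largeNum v\to w_j$ in the topology of $N_{\R}(\sigma)$. The first check is that $P'\subseteq P$, meaning $|f|_{P'}=|g|_{P'}$ implies $|f|_P=|g|_P$. Monomials $\chi^u$ with $u\notin\tau^\perp$ have $\angbra{v',u}<0$ and are dominated in the first row. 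Monomials with $u\in\tau^\perp$ have $\angbra{v',u}=0$, and their remaining rows reproduce the evaluation by $w_0,\ldots,w_k$ on $N_{\R}/\tau$. So $P$ is recovered from $P'$ by collapsing everything strictly below the top value of the first row to $0$, which gives $P'\subseteq P$. The inclusion is strict because $P'$ has trivial ideal-kernel.

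The second check is that $P'\supseteq E$. By the first direction of Theorem~\ref{thm:PrimeContainsCongruence}, it suffices that the flag spanned by these rows lies in $\wt{V}(E)$. The hypothesis that $\wt{V}(E)$ is the closure of its finite part is used exactly here. Since $\wt{V}(E)=\wt{V}(\calB)$ for a finite $\calB$, its finite part is a $\Gamma$-rational polyhedral fan support set $\frakV$ in $\R_{\geq0}\times N_{\R}$. Each $w_j$, and indeed each cone $\calc_j$, lies in the closure of $\frakV$. Refining $\frakV$ into finitely many polyhedral cones, a neighbourhood-of-flag argument (Theorem~\ref{thm:TheFilterHasFlagMeaning} together with Lemma~\ref{lemma:CuttingFlagByNbhd}, after shrinking $\calc_\bullet$) lets me assume that $\calc_k$ lies in the closure of a single cone $\Paula$ of $\frakV$.

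The key geometric input, and the main obstacle, is then a statement of the following type. If a cone $\calc_k$ in a boundary stratum lies in the closure of a polyhedral cone $\Paula\subseteq\R_{\geq0}\times N_{\R}$, then there exist $v\in\rec$-type directions of $\Paula$ in $\{0\}\times\relint\tau$ and points $\what{w}_j\in\Paula$ as above. These must be chosen uniformly, so that the cone generated by $v,\what{w}_0,\ldots,\what{w}_j$ lies in $\Paula$ and has the right dimension. This is essentially the polyhedron-closure result stated in the introduction (a point of $\cl_{N_{\R}(\Sigma)}\mathfrak{L}$ is a limit $\what{w}+\largeNum v$ with $v\in\rec\mathfrak{L}$). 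I would first prove that lemma for a single point by linear programming on the inequalities defining $\Paula$. I would then apply it to a generic point of $\relint\calc_k$, and use convexity of $\Paula$ to transfer it to the whole flag. The most delicate part is getting $v$ in $\relint\tau$ rather than merely in $\tau$; I would handle this by inducting on faces of $\tau$ if necessary. Convexity of $\Paula$ then puts every row combination in $\wt{V}(E)$, which completes the argument.
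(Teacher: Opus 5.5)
Your argument is correct and shares the paper's skeleton. Both use the flag from Theorem~\ref{thm:PrimeContainsCongruence}, a matrix whose rows are $v\in\{0\}\times\relint\tau$ followed by lifts $\what{w}_j$ of the $w_j$, and the same term-by-term check that $P'\subseteq P$ (the paper's three cases for $Q\subseteq P$, via Lemma~\ref{lemma:VandW}).

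Your ``key geometric input'' is exactly Lemma~\ref{lemma:PolyhedronClosureMeansEndOfRay}, so you can cite it rather than re-derive it by linear programming and induction on faces. That lemma shows two things:
\begin{itemize}
\item $\cl(\Paula)\cap(\R_{\geq0}\times N_{\R}/\tau)$ is either empty or equal to $(\id_{\R_{\geq0}}\times\pi_\tau)(\Paula)$;
\item \emph{any} $v\in\Paula\cap(\{0\}\times\relint\tau)$ works with \emph{any} lifts $\what{w}_j\in\Paula$.
\end{itemize}
So no generic-point or convexity transfer is needed. Linear independence of $v,\what{w}_0,\ldots,\what{w}_k$ is automatic, because the $w_j$ are independent modulo $\vspan\tau$ and $0\neq v\in\{0\}\times\tau$.

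The genuine differences are in the two remaining steps.

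\emph{Landing in a single cone.} You use that $\wtcalf_P$ is a prime filter. Lemma~\ref{lemma:ConeInFilterAndVariety}, applied to the finite tropical basis, puts an element of $\wtcalf_P$ inside $\wt{V}(E)\cap(\R_{\geq0}\times N_{\R}/\tau)=\bigcup_{\Paula}(\id_{\R_{\geq0}}\times\pi_\tau)(\Paula)$, the union being over cones meeting $\{0\}\times\relint\tau$. Primality selects one piece, and Theorem~\ref{thm:TheFilterHasFlagMeaning} with Lemma~\ref{lemma:CuttingFlagByNbhd} shrinks the flag into it. The paper does this by hand, inside Lemma~\ref{lemma:GeneralClosureImpliesAConeCondition}, using the upper hemicontinuous map $\Phi$ of Lemma~\ref{lemma:SetValuedFunctionProperties} and the simplex of coefficients of Lemma~\ref{lemma:SimplexOfCoefficients}.

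\emph{Proving $E\subseteq Q$.} You observe that all rows lie in one cone $\Paula\subseteq\wt{V}(E)$ and apply the easy direction of Theorem~\ref{thm:PrimeContainsCongruence}. The paper uses only that $v+\sum_i\eps_i\what{w}_i\in\wt{V}(E)$ on an open set of $\eps$. It then runs the initial-form machinery of Lemma~\ref{lemma:InitialFormsAddingVectors} and Corollary~\ref{coro:InitialFormsAddingManyVectors}, together with an affine-identity argument.

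Your route is noticeably shorter and reuses the filter correspondence already in hand. Its cost is checking that each $(\id_{\R_{\geq0}}\times\pi_\tau)(\Paula)$ is $\Gamma$-admissible (Fourier--Motzkin with integer multipliers), so that the filter applies. The paper's route needs no admissibility bookkeeping and works for the support of an arbitrary fan.

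In fact the paper's construction also puts $v,\what{\zeta}_0,\ldots,\what{\zeta}_k$ in a single cone $L$. These rows define the same $Q$, since they differ from the original rows by a triangular change with positive diagonal. So your shortcut for $E\subseteq Q$ would have applied there as well.
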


To prove Theorem~\ref{thm:resolving-primes} we consider a prime congruence $P$ of $\base[\Mon]$ containing $E$, then construct a prime $Q$ with trivial ideal-kernel, and show that $E\subseteq Q\subseteq P$. 
%\orange{To show the first inclusion $E\subseteq Q$ we first prove Lemma~\ref{lemma:InitialFormsAddingVectors} which is an analogue of \cite[Lemma 2.4.6]{MS}. We note that it is not enough to show the analogous statement for some small enough $\epsilon$, but understand exactly for what set of parameters this statements is true, as in Corollary~\ref{coro:InitialFormsAddingManyVectors} we apply this result iteratively.}

In order to show that $(f,g)\in E$ implies $(f,g)\in Q$, we consider the $Q$-leading terms of $f$ and $g$. 
Once we do so, we will prove Lemma~\ref{lemma:InitialFormsAddingVectors}, which is an analogue of \cite[Lemma 2.4.6]{MS}. However, it will not be enough for us to show the analogous statement for all small enough $\varepsilon$; we need to understand exactly for what set of parameters this statement is true so that, when we apply this result iteratively in Corollary~\ref{coro:InitialFormsAddingManyVectors} we get a result that is true for a sufficiently large set of parameters.

We first define a very general type of initial form. Let $\displaystyle f = \sum_{u\in \Mon} t^{a_u}\chi^u\in \base[\Mon]$. The support of $f$ is the set $\supp{f} = \{ u \in \Mon : a_u \neq -\infty\}$.
Given $w \in \R_{\geq0}\times N_{\R}(\sigma)$, we define the following initial form: $$ \whin_w(f) = \dsum_{\substack{u\in \supp{f} \\ \left< (a_u, u), w\right> \text{ is maximized}}} t^{a_u}\chi^u.$$ Note that, since we do not require $w\in\R_{\geq0}\times N_{\R}$, $\angbra{(a_u,u),w}$ may sometimes be $-\infty$. 
 
Our initial form is different from the definition that Maclagan and Rinc\'on given in \cite[Definition 3.1]{MR18}; their initial form $\mathrm{in}_w(f)$ is in $\mathbb{B}[\Mon]$. The relationship between the two definitions is $\mathrm{in}_w(f) = \pi_\mathbb{B} (\whin_w(f)),$ where $\pi_\mathbb{B}$ is the map that sends all non-zero coefficients of the polynomial to $1_\mathbb{B}$. %\orange{In particular, $\whin_w(f)$ is not an initial form in the Gr\"obner theoretic sense as we keep the coefficients of the maximized monomials, and we do not apply $\whin_w$ to an ideal, just a polynomial.}
In particular, $\whin_w(f)$ is not an initial form in the Gr\"obner theoretic sense as we keep the coefficients of the maximized monomials. In particular, we never apply $\whin_w$ to an ideal, only a polynomial.

We now define an initial form with respect to any prime congruence $P$ on $\base[\Mon]$, which recovers the previous definition if the prime is defined by a matrix that has a single row.
$$ \whin_P(f) = \dsum_{\substack{u\in \supp{f} \\ |t^{a_u}\chi^u|_P \text{ is maximized}}} t^{a_u}\chi^u.$$
Note that if $P$ is given by $\begin{pmatrix}w_0\\w_1\\\vdots\\w_k\end{pmatrix}$ then $\whin_P(f)=\whin_{w_k}(\whin_{w_{k-1}}(\cdots\whin_{w_0}(f)\cdots))$.

\begin{lemma}\label{lemma:InitialFormsAddingVectors}
Fix $f\in\base[\Mon]$ and $v\in\R_{\geq0}\times N_{\R}$. There is a finite set $\Xi\subset \Gamma\times\Mon$  and, for each $m\in\Xi$, a positive number $\calV_m$ such that the following holds. For any $w\in \R_{\geq0}\times N_{\R}$ and any real number $\largeNum$, if $\largeNum \calV_m>-\left(\wt{\whin_v(f)}(w)-\angbra{m,w}\right)$ for all $m\in\Xi$ then $\wt{f}(w+\largeNum v)=\wt{\whin_v(f)}(w)+\largeNum\wt{f}(v)$ and $\whin_{w+\largeNum v}(f)=\whin_{w}\left(\whin_v(f)\right)$. The same is true if we allow $v,w\in \R_{\geq0}\times N_{\R}(\sigma)$ so long as $v$ and $w$ are in the same stratum.
\end{lemma}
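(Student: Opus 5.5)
The argument is a direct comparison of monomial values. Write $f=\sum_{u\in\supp f} t^{a_u}\chi^u$ and, for $u\in\supp f$, set $m_u:=(a_u,u)\in\Gamma\times\Mon$; then $\wt{t^{a_u}\chi^u}(w)=\angbra{m_u,w}$ and this pairing is additive in $w$. Let $T_v\subseteq\supp f$ be the set of $u$ maximizing $\angbra{m_u,v}$, so that $\whin_v(f)=\sum_{u\in T_v}t^{a_u}\chi^u$ and $\angbra{m_u,v}=\wt{f}(v)$ for $u\in T_v$. I take $\Xi:=\{m_u\,:\,u\in\supp f\sdrop T_v\}$ and, for $m=m_u\in\Xi$, set $\calV_m:=\wt{f}(v)-\angbra{m_u,v}>0$. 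If $v$ lies in a boundary stratum, $\angbra{m_u,v}$ may be $-\infty$ for some $u\notin T_v$, giving $\calV_m=+\infty$. To stay in the statement's setting of positive real numbers, I will either replace such $\calV_m$ by $1$ and treat these $u$ separately, or drop them from $\Xi$. The case $f=0$ is trivial ($\Xi=\emptyset$).

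\textbf{Step 1 (terms of $\whin_v(f)$).} For $u\in T_v$ we have
\[
\angbra{m_u,w+\largeNum v}=\angbra{m_u,w}+\largeNum\wt{f}(v).
\]
So among these terms the maximum of $\angbra{m_u,w+\largeNum v}$ is $\wt{\whin_v(f)}(w)+\largeNum\wt{f}(v)$. It is attained exactly at those $u\in T_v$ that maximize $\angbra{m_u,w}$, i.e.\ at the support of $\whin_w(\whin_v(f))$.

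\textbf{Step 2 (the other terms are strictly smaller).} For $u\notin T_v$,
\[
\angbra{m_u,w+\largeNum v}=\angbra{m_u,w}+\largeNum\wt{f}(v)-\largeNum\calV_{m_u}.
\]
This is strictly less than $\wt{\whin_v(f)}(w)+\largeNum\wt{f}(v)$ exactly when $\largeNum\calV_{m_u}>\angbra{m_u,w}-\wt{\whin_v(f)}(w)$, which is the hypothesis. Together with Step 1, this gives both $\wt{f}(w+\largeNum v)=\wt{\whin_v(f)}(w)+\largeNum\wt f(v)$ and $\whin_{w+\largeNum v}(f)=\whin_w(\whin_v(f))$.

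\textbf{Step 3 (boundary strata and $-\infty$ values).} This is where care is needed. If $v,w$ lie in the same stratum $\R_{\geq0}\times N_{\R}/\tau$, then $\angbra{m_u,v}=-\infty$ exactly when $u\notin\tau^\perp$, and the same holds for $w$ and for $w+\largeNum v$. Such terms therefore contribute $-\infty$ everywhere, and they lie outside $T_v$ unless every term does. If every term is $-\infty$, both sides are $-\infty$ and both initial forms equal $f$.

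Otherwise $\wt f(v)$ is finite, and the terms with $u\notin\tau^\perp$ can never be maximal at $w+\largeNum v$, nor in $\whin_w(\whin_v(f))$. I therefore drop them from $\Xi$ and work on $\tau^\perp$, where all pairings are finite and Steps 1–2 apply verbatim. The conventions on $w+\largeNum v$ in $N_{\R}/\tau$ are as described earlier. The remaining point is to check that $\wt{\whin_v(f)}(w)$ is finite, or that the hypothesis is vacuous when it is not. If $\wt{\whin_v(f)}(w)=-\infty$, the hypothesis fails for any $\Xi\neq\emptyset$ with finite $\angbra{m,w}$, so that case is excluded or trivial.
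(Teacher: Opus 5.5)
Your proof is correct and is essentially the paper's argument: the same $\Xi$ (exponent vectors of the terms of $f$ not in $\whin_v(f)$), the same $\calV_m=\wt{f}(v)-\angbra{m,v}$, and the same three-way comparison of $\angbra{m_u,w+\largeNum v}$ with $\wt{\whin_v(f)}(w)+\largeNum\wt{f}(v)$. Your Step 3 is more careful than the paper, which leaves the boundary-stratum case implicit even though its $\calV_m$ would be $+\infty$ for $m=(a_u,u)$ with $u\notin\tau^\perp$; removing those terms from $\Xi$ fixes this, and your final worry is moot, since $T_v\subseteq\tau^\perp$ and $w\in\R_{\geq0}\times N_{\R}/\tau$ force $\wt{\whin_v(f)}(w)$ to be finite whenever $\wt{f}(v)$ is.
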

\begin{proof}
Write $f=\dsum_{u\in\Mon}t^{a_u}\chi^u$. Let $\Xi=\left\{(a_u,u)\;:\; u\in\supp(f)\,\&\, u\notin\supp\left(\whin_{v}(f)\right)\right\}$. For any $m=(a_u,u)\in\Xi$, we have $\wt{f}(v)>\angbra{m,v}$ because $u\in\supp(f)$ but $u\notin\supp\left(\whin_{v}(f)\right)$. We let $\calV_m:=\wt{f}(v)-\angbra{m,v}>0$.

Now fix $w\in\R_{\geq0}\times N_{\R}$ and suppose that $\largeNum\calV_m>-\left(\wt{\whin_v(f)}(w)-\angbra{m,w}\right)$ for all $m\in\Xi$. We will show that $\wt{f}(w+\largeNum v)=\wt{\whin_v(f)}(w)+\largeNum\wt{f}(v)$ and $\whin_{w+\largeNum v}(f)=\whin_{w}\left(\whin_v(f)\right)$ by evaluating each term of $f$ at $w+\largeNum v$. So consider $u\in\supp(f)$ and let $m=(a_u,u)$. 

If $u\notin\supp\left(\whin_v(f)\right)$ then $m=(a_u,u)\in\Xi$ and so 
$$\largeNum\left(\wt{f}(v)-\angbra{m,v}\right)>-\left(\wt{\whin_v(f)}(w)-\angbra{m,w}\right),$$
i.e., $\angbra{m,w+\largeNum v}<\wt{\whin_v(f)}(w)+\largeNum \wt{f}(v)$.

Say $u\in\supp\left(\whin_v(f)\right)$. We first consider the case when $u\notin \supp\left(\whin_w\left(\whin_v(f)\right)\right)$. Then $\angbra{m,w+\largeNum v}=\angbra{m,w}+\largeNum\wt{f}(v)<\wt{\whin_v(f)}(w)+\largeNum\wt{f}(v)$. Finally, if $u\in \supp\left(\whin_w\left(\whin_v(f)\right)\right)$, then $\angbra{m,w+\largeNum v}=\wt{\whin_v(f)}(w)+\largeNum\wt{f}(v)$.

Thus $\wt{f}(w+\largeNum v)=\max\{\angbra{m,w+\largeNum v}\;:\;u\in\supp(f)\,\&\,m=(a_u,u)\}=\wt{\whin_v(f)}(w)+\largeNum\wt{f}(v)$ and this maximum occurs at $m=(a_u,u)$ if and only if the term $t^{a_u}\chi^u$ occurs in $\whin_{w+\largeNum v}(f)=\whin_w\left(\whin_v(f)\right)$.
\end{proof}

\begin{coro}\label{coro:InitialFormsAddingManyVectors}

Fix $\xi_0,\xi_1,\ldots,\xi_k\in \R_{\geq0}\times N_{\R}$ and $f_1,\ldots,f_\ell\in \base[\Mon]$. Then there is a full-dimensional polyhedron $\frakQ$ in $\R^k$ such that, if $(\largeNum_{1},\largeNum_2,\ldots,\largeNum_{k-1},\largeNum_{k})$ is in the interior of $\frakQ$ then we have 
$$\whin_{\xi_0}\left(\whin_{\xi_{1}}\left(\cdots\whin_{\xi_{k-1}}\left(\whin_{\xi_k}(f_i)\right)\cdots\right)\right)=\whin_{\xi_0+\largeNum_{1} \xi_{1}+\cdots+\largeNum_{k-1} \xi_{k-1}+\largeNum_{k} \xi_{k}}(f_i)$$
for $i=1,\ldots,\ell$. 
Moreover, if  $0\ll\largeNum_{1}\ll\largeNum_{2}\ll\cdots\ll\largeNum_{k-1}\ll\largeNum_{k}$, then $(\largeNum_{1},\largeNum_2,\ldots,\largeNum_{k-1},\largeNum_{k})$ is in the interior of $\frakQ$.
\end{coro}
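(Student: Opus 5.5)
We prove the corollary by induction on $k$, peeling off the innermost initial form with Lemma~\ref{lemma:InitialFormsAddingVectors}, and we track the admissible parameters as a finite system of strict linear inequalities in $(\largeNum_1,\ldots,\largeNum_k)$. The case $k=0$ is trivial: take $\frakQ=\R^0$. For the inductive step, fix $i$ and put $g_i:=\whin_{\xi_k}(f_i)$. Apply the inductive hypothesis to $\xi_0,\ldots,\xi_{k-1}$ and the polynomials $g_1,\ldots,g_\ell$. This gives a full-dimensional polyhedron $\frakQ'\subseteq\R^{k-1}$ such that, for $(\largeNum_1,\ldots,\largeNum_{k-1})\in\operatorname{int}\frakQ'$, the iterated initial form $\whin_{\xi_0}(\cdots\whin_{\xi_{k-1}}(g_i)\cdots)$ equals $\whin_{w}(g_i)$, where $w=\xi_0+\largeNum_1\xi_1+\cdots+\largeNum_{k-1}\xi_{k-1}$.

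Next we apply Lemma~\ref{lemma:InitialFormsAddingVectors} to $f_i$ and $v=\xi_k$. It gives a finite set $\Xi_i$ and positive numbers $\calV_m$ such that $\whin_{w+\largeNum_k\xi_k}(f_i)=\whin_w(\whin_{\xi_k}(f_i))$ whenever
$$\largeNum_k\calV_m+\wt{g_i}(w)-\angbra{m,w}>0\quad\text{for all }m\in\Xi_i .$$
The key observation is that this condition is \emph{not} linear in $w$, because $\wt{g_i}(w)$ is a maximum over the terms of $g_i$. We fix this by shrinking the region. For $(\largeNum_1,\ldots,\largeNum_{k-1})$ in $\operatorname{int}\frakQ'$, the set $\whin_w(g_i)$ is constant, so we may pick a term $\mu_i$ of that constant initial form. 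Then $\wt{g_i}(w)=\angbra{\mu_i,w}$ on all of $\operatorname{int}\frakQ'$. Indeed, for $w$ in this open region the maximizing terms of $g_i$ at $w$ are exactly the terms of $\whin_w(g_i)$, and that set does not change.

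On $\operatorname{int}\frakQ'\times\R$ the condition therefore becomes the strict linear inequalities
$$\largeNum_k\calV_m+\angbra{\mu_i-m,\ \xi_0+\textstyle\sum_{j<k}\largeNum_j\xi_j}>0 .$$
Define $\frakQ$ as $\frakQ'\times\R$ intersected with the closed half-spaces given by these inequalities over all $i$ and all $m\in\Xi_i$. On $\operatorname{int}\frakQ$ all the inequalities hold strictly, since the interior of a finite intersection of closed half-spaces lies inside the open ones. So on $\operatorname{int}\frakQ$ the chain of equalities gives the desired formula.

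Full-dimensionality and the "moreover" clause follow together. Each $\calV_m>0$, so each new inequality holds once $\largeNum_k$ is large relative to $\largeNum_1,\ldots,\largeNum_{k-1}$. By induction the regime $0\ll\largeNum_1\ll\cdots\ll\largeNum_{k-1}$ lies in $\operatorname{int}\frakQ'$, and then $\largeNum_k\gg$ all of them lands in $\operatorname{int}\frakQ$, which in particular shows $\frakQ$ is nonempty with nonempty interior.

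The main obstacle is exactly the linearization step: making sure $\wt{g_i}(w)$ is affine in the parameters on the region from the inductive hypothesis. If constancy of $\whin_w(g_i)$ on $\operatorname{int}\frakQ'$ proves delicate, the fallback is to strengthen the inductive statement so it also asserts $\wt{g}(w)$ is given by a fixed term on $\operatorname{int}\frakQ$; this comes for free from the lemma's formula $\wt f(w+\largeNum v)=\wt{\whin_v(f)}(w)+\largeNum\wt f(v)$. One more technicality is the stratum condition when $\xi_j$ have $r$-coordinate $0$ or lie in boundary strata, but here all $\xi_j\in\R_{\geq0}\times N_{\R}$, so the lemma applies directly.
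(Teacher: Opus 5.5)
Your proof is correct and follows the paper's argument: induction on $k$, applying Lemma~\ref{lemma:InitialFormsAddingVectors} with $v=\xi_k$ to each $f_i$ and the inductive hypothesis to $g_i=\whin_{\xi_k}(f_i)$, then cutting out $\frakQ$ by the resulting half-spaces, with positivity of the $\calV_m$ giving full-dimensionality and the ``moreover'' clause. The only difference is minor: the paper makes $\wt{g_i}(w)$ affine in the parameters by strengthening the induction to carry the formula $\wt{f_i}(\xi_0+\sum_{j\geq1}\largeNum_j\xi_j)=\wt{h_{i,0}}(\xi_0)+\sum_{j\geq1}\largeNum_j\wt{h_{i,j}}(\xi_j)$, whereas you get the same affine expression $\angbra{\mu_i,w}$ directly by fixing a term of the constant iterated initial form, which is equivalent.
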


\begin{proof}

We will prove a slightly stronger statement: under the above hypotheses, if we define $h_{i,j}$ for $1\leq i\leq \ell$ and $0\leq j\leq k$ by $h_{i,k}=f_i$ and $h_{i,j-1}=\whin_{\xi_j}\left(\cdots\whin_{\xi_k}(f_i)\cdots\right)$
for $1\leq j\leq k$, then we also have 
$$\wt{f_i}\big(\xi_0+\largeNum_{1} \xi_{1}+\cdots+\largeNum_{k-1} \xi_{k-1}+\largeNum_{k} \xi_{k}\big)=\wt{h_{i,0}}(\xi_0)+\largeNum_1\wt{h_{i,1}}(\xi_1)+\largeNum_2\wt{h_{i,2}}(\xi_2)+\cdots+\largeNum_k\wt{h_{i,k}}(\xi_k).$$

We prove this by induction on $k$. The statement for $k=1$ is immediate from Lemma~\ref{lemma:InitialFormsAddingVectors}. 

Now assume the statement is true for $k-1$; we want to show it is true for $k$. We first consider the case $\ell=1$ which is where we have a single $f\in\base[\Mon]$. Let $g=\whin_{\xi_{k}}(f)$ and define $h_{k-1}:=g=\whin_{\xi_{k}}(f)$ and, for $1\leq j\leq k-1$, 
$$h_{j-1}:=\whin_{\xi_j}\left(\cdots \whin_{\xi_{k-1}}(g) \cdots\right)=\whin_{\xi_j}\left(\cdots \whin_{\xi_{k-1}}\left(\whin_{\xi_{k}}(f)\right) \cdots\right).$$
By the inductive hypothesis there is a full-dimensional polyhedron $\frakP$ in $\R^{k-1}$ such that, if $(\largeNum_1,\ldots,\largeNum_{k-1})$ is in the interior of $\frakP$ then 
$$\whin_{\xi_0}\left(\whin_{\xi_{1}}\left(\cdots\whin_{\xi_{k-1}}(g)\cdots\right)\right)=\whin_{\xi_0+\largeNum_{1} \xi_{1}+\cdots+\largeNum_{k-1} \xi_{k-1}}(g)$$
and
$$\wt{g}\big(\xi_0+\largeNum_{1} \xi_{1}+\cdots+\largeNum_{k-1} \xi_{k-1}\big)=
\wt{h_{0}}(\xi_0)+\largeNum_1\wt{h_{1}}(\xi_1)+\largeNum_2\wt{h_{2}}(\xi_2)+\cdots+\largeNum_{k-1}\wt{h_{k-1}}(\xi_{k-1})$$
and if $0\ll\largeNum_{1}\ll\largeNum_{2}\ll\cdots\ll\largeNum_{k-1}$ then $(\largeNum_1,\ldots,\largeNum_{k-1})$ is in the interior of $\frakP$.

By Lemma~\ref{lemma:InitialFormsAddingVectors} applied with $v=\xi_k$, there is a finite set $\Xi\subset \Gamma\times\Mon$ and, for each $m\in\Xi$, a positive number $\calV_m$ such that the following holds. For any $w\in \R_{\geq0}\times N_{\R}$, if $\largeNum_k \calV_m>-\left(\wt{\whin_{\xi_k}(f)}(w)-\angbra{m,w}\right)$ for all $m\in\Xi$ then $\wt{f}(w+\largeNum_k\xi_k)=\wt{\whin_{\xi_k}(f)}(w)+\largeNum_k\wt{f}(\xi_k)$ and $\whin_{w+\largeNum_k \xi_k}(f)=\whin_{w}\left(\whin_{\xi_k}(f)\right)$.
That is, 
if $\largeNum_k \calV_m>-\left(\wt{g}(w)-\angbra{m,w}\right)$ for all $m\in\Xi$ then $\wt{f}(w+\largeNum_k\xi_k)=\wt{g}(w)+\largeNum_k\wt{f}(\xi_k)$ and $\whin_{w+\largeNum_k \xi_k}(f)=\whin_{w}(g)$.

Suppose $(\largeNum_1,\ldots,\largeNum_{k-1})$ is in the interior of $\frakP$ and let $w=\xi_0+\largeNum_{1} \xi_{1}+\cdots+\largeNum_{k-1} \xi_{k-1}$. If 
\begin{align*}
\largeNum_k \calV_m&>-\left(\wt{g}(w)-\angbra{m,w}\right)\\
&=-\left(\wt{g}\left(\xi_0+\dsum_{j=1}^{k-1}\largeNum_{j} \xi_{j}\right)-\angbra{m,\xi_0+\dsum_{j=1}^{k-1}\largeNum_{j} \xi_{j}}\right)\\
&=-\left(\wt{h_{0}}(\xi_0)+\dsum_{j=1}^{k-1}\largeNum_j\wt{h_{j}}(\xi_j)-\left(\angbra{m,\xi_0}+\dsum_{j=1}^{k-1} \largeNum_j \angbra{m,\xi_j}\right)\right)\\
&=-\left(\left(\wt{h_{0}}(\xi_0)-\angbra{m,\xi_0}\right)+\dsum_{j=1}^{k-1}\largeNum_j\left(\wt{h_{j}}(\xi_j)-\angbra{m,\xi_j}\right)\right)
\end{align*}
for all $m\in\Xi$, then we have
\begin{align*}
\hspace{-1cm}\wt{f}(\xi_0+\largeNum_{1} \xi_{1}+\cdots+\largeNum_{k-1} \xi_{k-1} +\largeNum_k\xi_k)
&=\wt{f}(w+\largeNum_k\xi_k)
=\wt{g}(w)+\largeNum_k\wt{f}(\xi_k)\\
&=\wt{g}(\xi_0+\largeNum_{1} \xi_{1}+\cdots+\largeNum_{k-1} \xi_{k-1} +\largeNum_k\xi_k)+\largeNum_k\wt{f}(\xi_k)\\
&=\wt{h_{0}}(\xi_0)+\largeNum_1\wt{h_{1}}(\xi_1)+\largeNum_2\wt{h_{2}}(\xi_2)+\cdots+\largeNum_{k-1}\wt{h_{k-1}}(\xi_{k-1})+\largeNum_k\wt{f}(\xi_k)
\end{align*}
and
\begin{align*}
\whin_{\xi_0+\largeNum_{1} \xi_{1}+\cdots+\largeNum_{k-1} \xi_{k-1}+\largeNum_k \xi_k}(f)
&=\whin_{w+\largeNum_k \xi_k}(f)\\
&=\whin_{w}(g)
=\whin_{\xi_0+\largeNum_{1} \xi_{1}+\cdots+\largeNum_{k-1} \xi_{k-1}}(g)\\
&=\whin_{\xi_0}\left(\whin_{\xi_{1}}\left(\cdots\whin_{\xi_{k-1}}(g)\cdots\right)\right)\\
&=\whin_{\xi_0}\left(\whin_{\xi_{1}}\left(\cdots\whin_{\xi_{k-1}}\left(\whin_{\xi_k}(f)\right)\cdots\right)\right)
\end{align*}

Now let 
$$\frakQ:=\left\{
(\largeNum_1,\ldots,\largenum_k)\in\R^k\;:\;
\begin{array}{l}
(\largenum_1,\ldots,\largenum_{k-1})\in\frakP\;\&\; \\
\largeNum_k \calV_m
\geq
-\left(\left(\wt{h_{0}}(\xi_0)-\angbra{m,\xi_0}\right)+\dsum_{j=1}^{k-1}\largeNum_j\left(\wt{h_{j}}(\xi_j)-\angbra{m,\xi_j}\right)\right)\\
\multicolumn{1}{r}{\text{ for all }m\in\Xi}
\end{array}
\right\}$$
and
$$\frakQ^\circ:=\left\{
(\largeNum_1,\ldots,\largenum_k)\in\R^k\;:\;
\begin{array}{l}
(\largenum_1,\ldots,\largenum_{k-1})\text{ is in the interior of }\frakP\;\&\; \\
\largeNum_k \calV_m
>
-\left(\left(\wt{h_{0}}(\xi_0)-\angbra{m,\xi_0}\right)+\dsum_{j=1}^{k-1}\largeNum_j\left(\wt{h_{j}}(\xi_j)-\angbra{m,\xi_j}\right)\right)\\
\multicolumn{1}{r}{\text{ for all }m\in\Xi}
\end{array}
\right\}.$$
Note that $\frakQ$ is a polyhedron in $\R^k$ and, if $\frakQ^\circ$ is nonempty then $\frakQ$ is full-dimensional and $\frakQ^\circ$ is the interior of $\frakQ$. But if $0\ll\largeNum_{1}\ll\largeNum_{2}\ll\cdots\ll\largeNum_{k-1}\ll\largeNum_{k}$, then $(\largeNum_{1},\largeNum_2,\ldots,\largeNum_{k-1})$ is in the interior of $\frakP$ and so, because the $\calV_m$ are all positive, $(\largeNum_{1},\largeNum_2,\ldots,\largeNum_{k-1},\largeNum_{k})$ is in $\frakQ^\circ$. So $\frakQ^\circ$ is nonempty. This completes the proof for the case where $\ell=1$.

Now let $\ell$ be arbitrary. Given $f_1,\ldots,f_\ell\in\base[\Mon]$ let $g_i=\whin_{\xi_k}(f_i)$ for $i=1,\ldots,\ell$. By the inductive hypothesis, there is one polyhedron $\frakP$ that works for $g_1,\ldots,g_\ell$. Then, as above, we get $\frakQ_i$ and $\frakQ_i^\circ$ that work for $f_i$. Now let $\frakQ=\dcap_{i=1}^\ell\frakQ_i$ and $\frakQ^\circ=\dcap_{i=1}^\ell\frakQ_i^\circ$. 
Then $\frakQ$ is a polyhedron in $\R^k$ and, if $\frakQ^\circ$ is nonempty, then $\frakQ$ is full-dimensional and $\frakQ^\circ$ is the interior of $\frakQ$. But if $0\ll\largeNum_{1}\ll\largeNum_{2}\ll\cdots\ll\largeNum_{k-1}\ll\largeNum_{k}$, then $(\largeNum_{1},\largeNum_2,\ldots,\largeNum_{k-1},\largeNum_{k})$ is in each $\frakQ_i^\circ$ and so is in  $\frakQ^\circ$. Thus $\frakQ^\circ$ is nonempty.
\end{proof}

Having analyzed initial forms, we now turn to the hypothesis of Theorem~\ref{thm:resolving-primes} and so investigate closures in $\R_{\geq0}\times N_{\R}(\sigma)$ of subsets of $\R_{\geq0}\times N_{\R}$ and some corresponding polyhedral geometry.

\begin{lemma}\label{lemma:PolyhedronClosureMeansEndOfRay}
Let $\tau\leq\sigma$ and let $w_0,w_1,\ldots,w_k\in\R_{\geq0}\times(N_{\R}/\tau)$. If $L\subseteq \R_{\geq0}\times N_{\R}$ is a cone such that $w_i\in\cl_{\R_{\geq0}\times N_{\R}(\sigma)}(L)$ for $i=0,1,\ldots,k$, then there are $v,\what{w}_0,\what{w}_1,\ldots,\what{w}_k\in L$ such that $\dlim_{\largeNum\to\infty}\what{w}_i+\largeNum v=w_i$ for $i=0,1,\ldots,k$. Moreover, if we let $\pi_\tau:N_{\R}\to N_{\R}/\tau$ be the quotient map, then the sets $L\cap(\id_{\R_{\geq0}}\times\pi_\tau)^{-1}(w_i)$ and $L\cap[\{0_{\R}\}\times(\relint\tau)]$ are nonempty, $\what{w}_i$ can be any element of $L\cap(\id_{\R_{\geq0}}\times\pi_\tau)^{-1}(w_i)$, and $v$ can be any element of $L\cap[\{0_{\R}\}\times(\relint\tau)]$.
\end{lemma}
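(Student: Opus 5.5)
The plan is to use the explicit description of convergence in $N_{\R}(\sigma)$, together with Minkowski--Weyl and recession-cone arguments for the polyhedral cone $L$. I read ``cone'' as a polyhedral cone, as everywhere else in the paper.

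\textbf{Convergence criterion.} First I would record the following. A sequence $(r_n,x_n)\in\R_{\geq0}\times N_{\R}$ converges in $\R_{\geq0}\times N_{\R}(\sigma)$ to $(r,w)$ with $w\in N_{\R}/\tau$ if and only if three things hold: $r_n\to r$; $\angbra{x_n,u}\to\angbra{w,u}$ for all $u\in\Mon\cap\tau^\perp$; and $\angbra{x_n,u}\to-\infty$ for all $u\in\Mon\sdrop\tau^\perp$. This is immediate from the product topology on $\nbar{\R}^{\Mon}$. Since $\tau$ is a face of $\sigma$, the face $\sigma^\vee\cap\tau^\perp$ of $\sigma^\vee$ spans $\tau^\perp$. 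Hence the second condition is equivalent to $\pi_\tau(x_n)\to w$ in $N_{\R}/\vspan\tau$.

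\textbf{Nonemptiness of $L\cap(\id\times\pi_\tau)^{-1}(w_i)$.} Take a sequence in $L$ converging to $w_i$. By the criterion, its image under $\id\times\pi_\tau$ converges to $w_i$ in $\R_{\geq0}\times N_{\R}/\vspan\tau$. The image $(\id\times\pi_\tau)(L)$ is a linear image of a polyhedral cone, so it is polyhedral and in particular closed. Therefore $w_i$ lies in it, which gives the required $\what{w}_i$.

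\textbf{Nonemptiness of $L\cap[\{0\}\times\relint\tau]$.} This is the main obstacle. Fix one $i$ and a sequence $(r_n,x_n)\in L$ converging to $w_i$. Then $r_n$ and $\pi_\tau(x_n)$ are bounded. For each $u\in\Mon\sdrop\tau^\perp$ we have $\angbra{x_n,u}\to-\infty$, so $|x_n|\to\infty$. Passing to a subsequence, $(r_n,x_n)/|(r_n,x_n)|$ converges to a unit vector $d_u$. Because $L$ is a closed cone, $d_u\in L$. Boundedness of $r_n$ and of $\pi_\tau(x_n)$ gives $d_u\in\{0\}\times\vspan\tau$. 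For every $u'\in\Mon$, the values $\angbra{x_n,u'}$ are eventually bounded above, since each converges either to a real number or to $-\infty$. It follows that $\angbra{d_u,u'}\leq0$, so $d_u\in\sigma\cap\vspan\tau=\tau$. To get $\angbra{d_u,u}<0$ strictly, I would first choose $u$ with $\sigma\cap u^\perp=\tau$. Then I would refine the normalization, for instance normalizing by $-\angbra{x_n,u}$ or working in the finitely many Minkowski--Weyl generators of $L$, to ensure the limit pairs negatively with $u$. Since $\angbra{\cdot,u}<0$ on $\tau\sdrop\{0\}$ for such $u$ anyway, it suffices to show $d_u\neq0$, which holds after normalizing. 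Summing such $d_u$ over finitely many $u$ cutting out the facets of $\tau$, or simply taking the single $u$ with $\sigma\cap u^\perp=\tau$ and noting that a point of $\tau$ negative on all of $\Mon\sdrop\tau^\perp$ lies in $\relint\tau$, produces $v\in L\cap[\{0\}\times\relint\tau]$. Convexity of $L$ keeps the sum in $L$.

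\textbf{Conclusion.} Let $\what{w}_i\in L\cap(\id\times\pi_\tau)^{-1}(w_i)$ and $v\in L\cap[\{0\}\times\relint\tau]$ be arbitrary. Then $\what{w}_i+\largeNum v$ has constant $\R_{\geq0}$-coordinate and constant image $w_i$ in $N_{\R}/\vspan\tau$. For $u\in\Mon\sdrop\tau^\perp$ we have $\angbra{v,u}<0$, because $v\in\relint\tau$ and $u\in\sigma^\vee$ is not in $\tau^\perp$. Hence $\angbra{\what{w}_i+\largeNum v,u}\to-\infty$. By the criterion, $\what{w}_i+\largeNum v\to w_i$. Finally, $\what{w}_i+\largeNum v\in L$ because $L$ is a convex cone.
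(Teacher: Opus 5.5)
Your convergence criterion, your construction of $\what{w}_i$ from the closedness of $(\id_{\R_{\geq0}}\times\pi_\tau)(L)$, and your final limit computation are all fine; they match the paper's Claim 1 and its last step. The gap is in the step you yourself flagged: showing that $L\cap[\{0\}\times\relint\tau]$ is nonempty.

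Normalizing $(r_n,x_n)$ by its norm only yields a nonzero $d\in L\cap(\{0\}\times\tau)$, and this $d$ can lie on a proper face of $\tau$. For example, take $\Mon=\N^2$, so that $\sigma=\tau=\R_{\leq0}^2$ in the paper's sign convention. Let $L=\R_{\geq0}\times\R_{\leq0}^2$ and consider the sequence $(1,(-n^2,-n))$. It converges to the point $(1,\ast)$ of $\R_{\geq0}\times(N_{\R}/\tau)$, but its normalized limit is $(0,(-1,0))\in\{0\}\times\partial\tau$.

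None of your proposed repairs closes this.
\begin{itemize}
\item A $u$ with $\sigma\cap u^\perp=\tau$ lies in $\tau^\perp$. So $\angbra{\cdot,u}$ vanishes identically on $\tau$ rather than being negative on $\tau\sdrop\{0\}$. Also, such a $u$ is not among the $u\in\Mon\sdrop\tau^\perp$ along which $\angbra{x_n,u}\to-\infty$.
\item Normalizing by $-\angbra{x_n,u}$ can produce an unbounded sequence. In the example with $u=(0,1)$ you get $x_n/n=(-n,-1)$.
\item A single strict inequality $\angbra{d,u}<0$ does not place $d$ in $\relint\tau$ unless $\tau$ is a ray.
\item Summing would work if, for each generator $u$ of $\Mon$ outside $\tau^\perp$, you had some $d_u\in L\cap(\{0\}\times\tau)$ with $\angbra{d_u,u}<0$. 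But the norm-normalized limit does not depend on $u$, and nothing in your argument produces such $d_u$.
\end{itemize}

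The paper handles this step by duality. Set $L_0=\{x:(0,x)\in L\}$. The paper first shows that no $u\in-\tau^\vee\sdrop\tau^\perp$ lies in $L_0^\vee$; otherwise the closed set $\calH(a,u)\subseteq\R_{\geq0}\times N_{\R}(\tau)$ would contain $L$ but not $w_i$. Then, if $L_0\cap\relint\tau$ were empty, some $u\in\tau^\vee\sdrop\tau^\perp$ would support $L_0\cap\tau$. Together with $(L_0\cap\tau)^\vee=L_0^\vee+\tau^\vee$, this produces exactly such a forbidden element.

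If you prefer to stay with sequences, replace the normalization by Minkowski--Weyl.
\begin{itemize}
\item Let $Q=L\cap\{(r,x): r\leq R,\ \pi_\tau(x)\in B\}$, where $R$ and a box $B$ bound the convergent data $r_n$ and $\pi_\tau(x_n)$.
\item Since $r\geq0$ on $L$, the recession cone is $\rec(Q)=L\cap(\{0\}\times\vspan\tau)$.
\item Hence $(r_n,x_n)=p_n+(0,y_n)$ with $p_n$ bounded and $(0,y_n)\in L$, $y_n\in\vspan\tau$.
\item Then $\angbra{y_n,u}\to-\infty$ for every generator $u$ of $\Mon$ outside $\tau^\perp$, and $\angbra{y_n,u}=0$ for the generators in $\tau^\perp$.
\item So for large $n$, $y_n\in\sigma\cap\vspan\tau=\tau$, and $y_n$ is strictly negative on every $u\in\sigma^\vee\sdrop\tau^\perp$. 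Thus $y_n$ avoids every proper face of $\tau$, i.e.\ $y_n\in\relint\tau$.
\end{itemize}
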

\begin{proof}

Note that, by restricting our attention to the open set $\R_{\geq0}\times N_{\R}(\tau)\subseteq \R_{\geq0}\times N_{\R}(\sigma)$, we may assume without loss of generality that $\sigma=\tau$. 

For $\rho\leq\tau=\sigma$ we let $\pi_{\tau,\rho}:N_{\R}/\rho\to N_{\R}/\tau$ be the quotient map. We also write $\pi_{\tau}:=\pi_{\tau,\{0\}}:N_{\R}=N_{\R}/\{0\}\to N_{\R}/\tau$.

Let $w\in \R_{\geq0}\times(N_{\R}/\tau)$ and let $\{u_1,\ldots,u_r\}$ be a set of generators for $\Lattice\cap\tau^\vee$. For each neighborhood $U$ of $w\in \R_{\geq0}\times(N_{\R}/\tau)$ and each $m\in\R$ we let 
$$C(U,m):=\dcup_{\rho\leq\tau}\{(r,x)\in \R_{\geq0}\times N_{\R}/\rho\;:\;(\id_{\R_{\geq0}}\times\pi_{\tau,\rho})(r,x)\in U\ \&\  \angbra{u_j,x}<m\text{ for }u_j\in\rho^\perp\sdrop\tau^\perp\}.$$
The $C(U,m)$s form a neighborhood basis for the topology of $\R_{\geq0}\times N_{\R}(\tau)$ at $w$; see \cite[Remark 3.4]{Pay09}. We let 
\begin{align*}
\wt{C}(U,m)&:=C(U,m)\cap (\R_{\geq0}\times N_{\R})\\
&=\{(r,x)\in \R_{\geq0}\times N_{\R}\;:\;(\id_{\R_{\geq0}}\times\pi_{\tau})(r,x)\in U\ \&\  \angbra{u_j,x}<m\text{ for }u_j\notin\tau^\perp\}.
\end{align*}

We now prove a series of claims.

Claim 1: For each $i=0,1,\ldots,k$ there is a $\what{w}_i\in L$ such that $(\id_{\R_{\geq0}}\times\pi_\tau)(\what{w}_i)=w_i$. Otherwise, we can pick an $i$ for which this fails. Since $\id_{\R_{\geq0}}\times\pi_\tau$ is linear, $(\id_{\R_{\geq0}}\times\pi_\tau)(L)$ is a polyhedral cone in $\R_{\geq0}\times N_{\R}/\tau$. Thus $(\id_{\R_{\geq0}}\times\pi_\tau)(L)$ is a closed set of $\R_{\geq0}\times N_{\R}/\tau$ and, by hypothesis, $w_i\notin(\id_{\R_{\geq0}}\times\pi_\tau)(L)$. So there is a neighborhood $U$ of $w_i$ in $\R_{\geq0}\times (N_{\R}/\tau)$ that does not meet $\tau$. But then, for any $m$, $C(U,m)$ is a neighborhood of $w_i$ in $\R_{\geq0}\times N_{\R}(\tau)$ that does not meet $L$, contradicting $w_i\in\cl_{\R_{\geq0}\times N_{\R}(\tau)}(L)$. This proves Claim 1.

We now let $L_0:=\{v\in N_{\R}\;:\;(0,v)\in L\}$.

Claim 2: $L_0^\vee\cap(-\tau^\vee\sdrop\tau^\perp)=\emptyset$. Towards seeing this, note that for any $u\in\Lattice_\R$ we have $u\in L_0^\vee$ if and only if $L_0\subseteq u^\vee$ which, in turn, happens if and only if there is an $a\in\R$ such that $L\subseteq (a,u)^\vee$. 

Now fix $u\in-\tau^\vee\sdrop\tau^\perp$; we want to show that $u\notin L_0^\vee$. For any $a\in\R$ we set $\calH(a,u):=\{(r,x)\in\R_{\geq0}\times N_{\R}(\tau)\;:\;-ar+\angbra{-u,x}\geq0\},$ which is a well-defined closed subset of $\R_{\geq0}\times N_{\R}(\tau)$ because $-u$ is a continuous function on $N_{\R}(\tau)$. Since $\calH(a,u)\cap\big(\R_{\geq0}\times N_{\R}\big)=(a,u)^\vee$, 
we have that $u\in L_0^\vee$ if and only if there is an $a\in\R$ such that $L\subseteq\calH(a,u)$.

Fix any $i=0,1,\ldots,k$ and $a\in\R$. Since $-u \in \tau^\vee \sdrop \tau^\perp$ and $w_i\in \R_{\geq0}\times N_{\R}/\tau$ we have $\angbra{-(a,u),w_i}=-\infty\not\geq 0_{\R}$, so $w_i\notin \calH(a,u)$. Because $w_i\in\cl_{\R_{\geq0}\times N_{\R}(\tau)}(L)$ and $\calH(a,u)$ is closed in $\R_{\geq0}\times N_{\R}(\tau)$, this implies that $L\not\subseteq\calH(a,u)$. Since this is true for all $a$, we conclude that $u\notin L_0^\vee$. This completes the proof of Claim 2.

Claim 3: $L_0\cap\relint(\tau)\neq\emptyset$. If not then $L_0\cap\tau$ is contained in a proper face of $\tau$. 
That is, there is some $u\in\tau^\vee\sdrop\tau^\perp$ such that $L_0\cap\tau\subseteq u^\perp$. In particular, $L_0\cap\tau\subseteq -u^\vee$, so $-u\in(L_0\cap\tau)^\vee=L_0^\vee+\tau^\vee$. Thus, there exists $\wt{u}\in\tau^\vee$ such that $-u\in L_0^\vee+\wt{u}$, so $-(u+\wt{u})\in L_0^\vee$. But $u\in\tau^\vee\sdrop\tau^\perp$ and $\wt{u}\in\tau^\vee$, so $-(u+\wt{u})\in-\tau^\vee\sdrop\tau^\perp$, contradicting Claim 2. This proves Claim 3.

By Claim 1 we have $\what{w}_0,\what{w}_1,\ldots,\what{w}_k$ such that $(\id_{\R_{\geq0}}\times\pi_\tau)(\what{w}_i)=w_i$ for all $i$. By Claim 3, we can pick $v'\in L_0\cap\relint(\tau)$. Let $v=(0,v')\in L$. 
Since $v'\in\tau$, we have $(\id_{\R_{\geq0}}\times\pi_\tau)(v)=0$, so for all $\calN\in\R$, $(\id_{\R_{\geq0}}\times\pi_\tau)(\what{w}_i+\calN v)=w_i$. Because $v'\in\relint(\tau)$, for any $u\in\tau^\vee\sdrop\tau^\perp$ we have $\angbra{u,v'}<0$. 

For each neighborhood $U$ of $w_i$ in $\R_{\geq0}\times(N_{\R}/\tau)$ and $m\in\R$, the previous two sentences together with the definition of $\wt{C}(U,m)$ show us that $\what{w}_i+\calN v\in\wt{C}(U,m)$ for all $\calN\gg0$. That is, $\dlim_{\calN\to\infty}\what{w}_i+\calN v=w_i$.
\end{proof}

We deduce from this a corollary for closures of polyhedra in (usual) tropical toric varieties.

\begin{coro}\label{coro: toric_Application}
Let $\Sigma$ be a fan in $N_{\R}$ and let $w\in N_{\R}(\Sigma)\sdrop N_{\R}$. If $\mathfrak{L}\subseteq N_{\R}$ is a polyhedron such that $w\in\cl_{N_{\R}(\Sigma)}(\mathfrak{L})$, then there are $\what{w}\in \mathfrak{L}$ and $v\in\operatorname{rec}(\mathfrak{L})$ such that $\dlim_{\largeNum\to\infty}\what{w}+\largeNum v=w$.
\end{coro}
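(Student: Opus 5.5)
The plan is to deduce the statement from Lemma~\ref{lemma:PolyhedronClosureMeansEndOfRay} by homogenizing the polyhedron $\mathfrak{L}$. Since $w\in N_{\R}(\Sigma)$, we can pick a cone $\sigma\in\Sigma$ with $w\in N_{\R}(\sigma)$. Each $N_{\R}(\sigma)$ is an open subset of $N_{\R}(\Sigma)$, so $w\in\cl_{N_{\R}(\sigma)}(\mathfrak{L})$. Because $w\notin N_{\R}$, we have $w\in N_{\R}/\tau$ for some nonzero face $\tau\leq\sigma$. This reduces us to working inside the single tropical toric variety $N_{\R}(\sigma)$.

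Next, let $L:=\cl\{(r,rx)\,:\,r>0,\ x\in\mathfrak{L}\}\subseteq\R_{\geq0}\times N_{\R}$ be the homogenization cone of $\mathfrak{L}$. This is a polyhedral cone with $L\cap(\{1\}\times N_{\R})=\{1\}\times\mathfrak{L}$ and $L\cap(\{0\}\times N_{\R})=\{0\}\times\rec(\mathfrak{L})$; these are standard facts for nonempty polyhedra, and $\mathfrak{L}\neq\emptyset$ because $w$ lies in its closure. Set $w_0:=(1,w)\in\R_{\geq0}\times(N_{\R}/\tau)$.

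The main point to check is that $w_0\in\cl_{\R_{\geq0}\times N_{\R}(\sigma)}(L)$. I would argue this with the product topology: $\R_{\geq0}\times N_{\R}(\sigma)$ carries the product topology and $\{1\}\times\mathfrak{L}\subseteq L$. Any net or sequence in $\mathfrak{L}$ converging to $w$ therefore gives points $(1,x)\in L$ converging to $(1,w)$. Since $N_{\R}(\sigma)$ is first countable, sequences suffice, or one can argue directly with neighborhoods $U_1\times U_2$. I expect this step to be routine rather than a real obstacle. The only care needed is that the topology on $\R_{\geq0}\times N_{\R}(\sigma)$ used in Lemma~\ref{lemma:PolyhedronClosureMeansEndOfRay} is the product topology, which it is by construction.

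Now apply Lemma~\ref{lemma:PolyhedronClosureMeansEndOfRay} with $k=0$. It produces $\what{w}_0\in L\cap(\id\times\pi_\tau)^{-1}(w_0)$ and $v\in L\cap[\{0\}\times\relint\tau]$ with $\lim_{\largeNum\to\infty}\what{w}_0+\largeNum v=w_0$. Since $\what{w}_0$ maps to $w_0$, its first coordinate is $1$, so $\what{w}_0=(1,\what{w})$ with $\what{w}\in\mathfrak{L}$. Likewise $v=(0,v')$ with $v'\in\rec(\mathfrak{L})$. The convergence $(1,\what{w}+\largeNum v')\to(1,w)$ in the product space projects to $\what{w}+\largeNum v'\to w$ in $N_{\R}(\sigma)$, and hence in $N_{\R}(\Sigma)$. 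This gives the statement with $v'$ in the role of $v$.
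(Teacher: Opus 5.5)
Your proposal is correct and follows essentially the same route as the paper. Both arguments pass to $N_{\R}(\sigma)$, take the closed cone $L$ over $\mathfrak{L}$, and apply Lemma~\ref{lemma:PolyhedronClosureMeansEndOfRay} to $(1,w)$. The only difference is cosmetic: you read off the first coordinates $1$ and $0$ from the lemma's ``moreover'' clause, whereas the paper deduces $a=1$, $b=0$ from $\lim_{\largeNum\to\infty}(a+\largeNum b)=1$. You also spell out the routine check that $(1,w)$ lies in the closure of $L$, which the paper leaves implicit.
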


\begin{proof}

    Fix any $\sigma\in\Sigma$ with $w\in N_{\R}(\sigma)$. Then $w\in\cl_{N_{\R}(\sigma)}\mathfrak{L}$ so we can work entirely in $N_{\R}(\sigma)$.

    Let 
    $L\subseteq \R_{\geq0}\times N_{\R}$ be the closed cone over $\mathfrak{L}$. Applying Lemma~\ref{lemma:PolyhedronClosureMeansEndOfRay} to $L$ and $(1,w)$ we get that there are $(a,\what{w}),(b,v)\in L$ such that $\dlim_{\largeNum\to\infty}(a,\what{w})+\largeNum(b,v)=(1,w)$. Thus $\dlim_{\largeNum\to\infty}\what{w}+\largeNum v=w$ and, because $\dlim_{\largeNum\to\infty}a+\largeNum b=1$, we must have $a=1$ and $b=0$. Thus $\what{w}\in \mathfrak{L}$ and $v\in\operatorname{rec}(\mathfrak{L})$.
\end{proof}

\newcommand{\Bd}{\operatorname{Bd}}

\begin{remark}\label{rmk: closures of polyhedra}
While Corollary~\ref{coro: toric_Application} is not particularly surprising, we do not see any very quick proof of it. For example, one approach to proving this would be to reduce to the case where $\mathfrak{L}$ is full-dimensional and then consider cases as to whether $w$ is in the interior of $\cl_{N_{\R}(\Sigma)}\mathfrak{L}$ or not. If $w$ is in the interior then there is a basic open neighborhood of $w$ (see \cite[Remark 3.4]{Pay09} for the relevant neighborhood basis) that is completely contained in $\cl_{N_{\R}(\Sigma)}\mathfrak{L}$; using the structure of such a neighborhood it would then not be difficult to show that the relevant $\what{w}$ and $v$ exist inside this neighborhood. If $w$ is not in the interior, i.e., $w$ is in the boundary of $\cl_{N_{\R}(\Sigma)}\mathfrak{L}$, then one would hope to show that $w$ is in the closure of a proper face of $\mathfrak{L}$ and then use induction on dimension. However, in order to do this, one would need to show that $\Bd_{N_{\R}(\Sigma)} \left( \cl_{N_{\R}(\Sigma)} \mathfrak{L} \right) \subseteq \cl_{N_{\R}(\Sigma)} \left( \Bd_{N_{\R}}\mathfrak{L} \right)$ and, while intuitively clear, this does not seem to have a straightforward proof.

Ultimately, our proof takes a different, case-free, approach which allows us to prove slightly more; see Lemma~\ref{lemma:PolyhedronClosureMeansEndOfRay}. The proof takes about one page. %We bring this result to the attention of the community to save people time in not having to reproduce it or similar results from scratch.  
\end{remark}

\begin{lemma}\label{lemma:VandW}
Let $v,\what{w}\in\R_{\geq0}\times N_{\R}$ and $w\in\R_{\geq0}\times N_{\R}(\sigma)$ and suppose that $\dlim_{\largeNum\to\infty}\what{w}+\largeNum v=w$. Then, for any $u\in\R\times\Mon$,
\begin{enumerate}
\item\label{lemmaPart:infinite} $\angbra{u,w}=-\infty$ if and only if $\angbra{u,v}<0$.

\item\label{lemmaPart:finite} $\angbra{u,w}\neq-\infty$ if and only if $\angbra{u,v}=0$. If these occur, then $\angbra{u,\what{w}}=\angbra{u,w}$.
\end{enumerate}
\end{lemma}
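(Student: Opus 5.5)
The plan is to reduce everything to the behaviour of the scalar function $\largeNum\mapsto\angbra{u,\what{w}+\largeNum v}=\angbra{u,\what{w}}+\largeNum\angbra{u,v}$, using that pairing with $u$ is continuous on $\R_{\geq0}\times N_{\R}(\sigma)$. Write $u=(a,u')$ with $a\in\R$ and $u'\in\Mon$. The map $(r,x)\mapsto ar+\angbra{x,u'}$ from $\R_{\geq0}\times N_{\R}(\sigma)$ to $\nbar{\R}$ is continuous, where $\nbar{\R}$ has the topology transported from $\R_{\geq0}$ via $x\mapsto e^x$. Indeed, $x\mapsto\angbra{x,u'}$ is a coordinate projection of the product topology on $\Hom(\Mon,\nbar{\R})$, and adding the finite real number $ar$ is continuous on $\nbar{\R}$. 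Hence
$$\angbra{u,w}=\lim_{\largeNum\to\infty}\left(\angbra{u,\what{w}}+\largeNum\angbra{u,v}\right)\quad\text{in }\nbar{\R}.$$
Here $\what{w},v\in\R_{\geq0}\times N_{\R}$, so every term of the sequence is a finite real number.

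Next I split into three cases according to the sign of $\angbra{u,v}$.
\begin{itemize}
\item If $\angbra{u,v}<0$, the real numbers $\angbra{u,\what{w}}+\largeNum\angbra{u,v}$ tend to $-\infty$. This is the limit in $\nbar{\R}$, since $e^{x}\to0$, so $\angbra{u,w}=-\infty$.
\item If $\angbra{u,v}=0$, the sequence is constant, equal to $\angbra{u,\what{w}}\in\R$. Hausdorffness of $\nbar{\R}$ then gives $\angbra{u,w}=\angbra{u,\what{w}}\neq-\infty$.
\item If $\angbra{u,v}>0$, the sequence tends to $+\infty$, which has no limit in $\nbar{\R}$ (the image under $e^x$ is unbounded in $\R_{\geq0}$). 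This contradicts convergence to $w$, so the case cannot occur.
\end{itemize}

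The three cases are exhaustive, and the two possible ones give disjoint conclusions. Together they yield both biconditionals in \eqref{lemmaPart:infinite} and \eqref{lemmaPart:finite}, and the case $\angbra{u,v}=0$ also yields the equality $\angbra{u,\what{w}}=\angbra{u,w}$.

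The only delicate point is the continuity and limit step. For $u'\notin\Mon$ the pairing would not be defined on $N_{\R}(\sigma)$, which is why the hypothesis $u\in\R\times\Mon$ is needed. I also need that convergence in the product topology gives convergence of each coordinate, and that $+\infty$ is genuinely excluded. Both are immediate from the definition of the topology via $\Hom(\Mon,\nbar{\R})\subseteq\nbar{\R}^{\Mon}$, so I expect no real obstacle.
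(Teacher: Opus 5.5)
Your proposal is correct and follows essentially the same route as the paper. Both pass the limit through the pairing to get $\angbra{u,w}=\angbra{u,\what{w}}+\lim_{\largeNum\to\infty}\largeNum\angbra{u,v}$, rule out $\angbra{u,v}>0$ because $\angbra{u,w}\in\nbar{\R}$, and read off the cases $\angbra{u,v}<0$ and $\angbra{u,v}=0$; you additionally justify the continuity of $\angbra{u,\cdot}$ on $\R_{\geq0}\times N_{\R}(\sigma)$ and the exclusion of $+\infty$, which the paper leaves implicit.
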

\begin{proof}

For any $u\in\R\times \Mon$ we have 
$$\angbra{u,w}=\dlim_{\calN\to\infty}\left< u,\what{w}+\calN v\right> = \left< u, \what{w}\right> + \dlim_{\calN\to\infty}\calN \left< u,v\right>.$$
Since $\angbra{u,w}$ is either a finite number or $-\infty$, we must have $\angbra{u,v}=0$ or $\angbra{u,v}<0$. If $\angbra{u,v}=0$ then 
$$\angbra{u,w}=\left< u, \what{w}\right> + \dlim_{\calN\to\infty}\calN \left< u,v\right>=\angbra{u,\what{w}},$$
which is a finite number. If $\angbra{u,v}<0$ then 
$$\angbra{u,w}=\dlim_{\calN\to\infty}\left< u,\what{w}+\calN v\right> = \left< u, \what{w}\right> + \dlim_{\calN\to\infty}\calN \left< u,v\right>=-\infty.$$\par\nopagebreak\vspace{-1.5\baselineskip}\mbox{}
\end{proof}

%\orange{The following lemma is fairly straightforward.}
The following lemma is intuitively clear to experts, but we could not find the exact statement in the literature.

\newcommand{\prefan}[1]{\wt{\Pi}_{#1}}

\begin{lemma}\label{lemma:SetValuedFunctionProperties}
Let $\ph:\frakV\to\frakW$ be a linear map of (finite dimensional) real vector spaces. Let $\Sigma$ be a fan in $\frakV$ and define a set-valued function on $\frakW$ by 
\begin{align*}
\Phi(w)&=\left\{\rho\in\Sigma \;:\; \rho\cap\ph^{-1}(w) \neq\emptyset \right\}\\
&=\left\{\rho\in\Sigma \;:\; w\in\ph(\rho) \right\}.
\end{align*}
Then $\Phi$ has the following two properties.
\begin{enumerate}
\item There is a complete fan $\Pi$ in $\frakW$ such that $\Phi$ is constant on the relative interior of each cone in $\Pi$. In particular, the map $\Phi$ is piecewise constant.
\item For every $w\in\frakW$, there is a neighborhood $\frakN$ of $w$ such that, for all $\xi\in \frakN$, $\Phi(\xi)\subseteq\Phi(w)$. (This is a semicontinuity-type property\footnote{In fact, this property says that $\Phi$ is \emph{upper hemicontinuous} \cite[Definition 17.2]{AB} when the topology on $\Sigma$ is, for example, the discrete topology.}.)
\end{enumerate}
\end{lemma}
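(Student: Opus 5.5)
Since $\Sigma$ is finite, I will reduce everything to the cones $\ph(\rho)$ for $\rho\in\Sigma$. Each $\ph(\rho)$ is a polyhedral cone in $\frakW$, being the linear image of a polyhedral cone, and by definition $\Phi(w)=\{\rho\in\Sigma : w\in\ph(\rho)\}$. So $\Phi$ is determined by which of the finitely many cones $\ph(\rho)$ contain $w$. Both properties then become statements about this finite family of closed polyhedral cones in $\frakW$; nothing further about $\Sigma$ or $\ph$ is needed.

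\textbf{Property (1).} I will build a complete fan $\Pi$ that refines every $\ph(\rho)$. For each $\rho\in\Sigma$, write $\ph(\rho)$ as an intersection of finitely many closed half-spaces $\{y:\angbra{\ell,y}\geq0\}$, with linear forms $\ell$ on $\frakW$. If $\ph(\rho)$ is not full-dimensional, also include forms cutting out its linear span. Let $\mathcal{L}$ be the finite set of all these forms, together with their negatives.

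The hyperplane arrangement $\{\ell^\perp:\ell\in\mathcal{L}\}$ gives a complete fan $\Pi$. Its cones are the sets $\{y:\operatorname{sign}\angbra{\ell,y}\in S_\ell\text{ for all }\ell\}$ for closed sign conditions $S_\ell$. The relative interior of each cone of $\Pi$ is then exactly a set on which the sign vector $(\operatorname{sign}\angbra{\ell,y})_{\ell\in\mathcal{L}}$ is constant.

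Whether $y\in\ph(\rho)$ is determined by the signs $\angbra{\ell,y}$ for $\ell$ in the description of $\ph(\rho)$. Hence membership of $y$ in each $\ph(\rho)$ is constant on each relatively open cone of $\Pi$, and so $\Phi$ is constant there. Since these relatively open cones partition $\frakW$, $\Phi$ is piecewise constant.

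\textbf{Property (2).} Fix $w\in\frakW$. For each $\rho\notin\Phi(w)$ we have $w\notin\ph(\rho)$. Since $\ph(\rho)$ is closed, there is an open neighborhood $\frakN_\rho$ of $w$ disjoint from $\ph(\rho)$. Let $\frakN:=\dcap_{\rho\in\Sigma\sdrop\Phi(w)}\frakN_\rho$, which is open because $\Sigma$ is finite, or $\frakN:=\frakW$ if $\Phi(w)=\Sigma$.

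Now take $\xi\in\frakN$ and $\rho\in\Phi(\xi)$. Then $\xi\in\ph(\rho)$, so $\rho$ cannot lie outside $\Phi(w)$. Hence $\Phi(\xi)\subseteq\Phi(w)$.

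\textbf{Expected obstacle.} The only nonformal input is that the image of a polyhedral cone under a linear map is again a closed polyhedral cone. This holds because $\ph(\rho)$ is the conical hull of the images of the finitely many generators of $\rho$, and a finitely generated cone is closed and polyhedral by Minkowski–Weyl. I expect this citation to be the one step that needs care; everything else follows by finiteness of $\Sigma$.

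A secondary check is that the arrangement fan genuinely has constant sign vectors on relative interiors. This is standard: within a fixed cone, the forms vanishing identically on it are exactly those vanishing on its span, and all remaining forms have a strict constant sign on its relative interior.
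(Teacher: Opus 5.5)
Your proof is correct, but it takes a different route from the paper in both parts.

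For (1), the paper takes, for each $\rho\in\Sigma$, a fan $\wt{\Pi}_\rho$ subdividing $\ph(\rho)$. It extends this to a complete fan $\Pi_\rho$ by citing fan-completion theorems and lets $\Pi$ be the common refinement of the $\Pi_\rho$. For $w\in\relint\lambda$ with $\lambda\in\Pi$, it then decides whether $w\in\ph(\rho)$ by checking whether the smallest cone $\tau_\rho(\lambda)$ of $\Pi_\rho$ containing $\lambda$ lies in $\wt{\Pi}_\rho$. Your hyperplane-arrangement fan is complete by construction, so no completion theorem is needed. This is a real simplification: the cited completion results are for rational fans, whereas in the application the cones $\ph(\rho)$ are only $\Gamma$-admissible. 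One detail should be recorded. The paper's fans consist of strongly convex cones, and your arrangement cones are pointed because $\{0\}\in\Sigma$ with $\ph(\{0\})=\{0\}$, so the forms describing $\ph(\{0\})$ already force $\bigcap_{\ell\in\mathcal{L}}\ell^\perp=\{0\}$.

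For (2), the paper reuses $\Pi$. It takes $\frakN$ to be the open star of $w$ in $\Pi$. For $\xi\in\relint\lambda$ with $w\in\lambda$, it uses that $\tau_\rho(\theta)$ is a face of $\tau_\rho(\lambda)$, where $\theta$ is the smallest cone of $\Pi$ containing $w$, and that $\wt{\Pi}_\rho$ is closed under faces. Your argument needs only that each $\ph(\rho)$ is closed and $\Sigma$ is finite, so it is shorter and independent of (1). The paper's version produces a neighborhood adapted to $\Pi$, but Lemma~\ref{lemma:GeneralClosureImpliesAConeCondition} only uses that every neighborhood of $\valerie_i$ meets $\Opal$, so any neighborhood works there.

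Both proofs rest on the input you flagged: $\ph(\rho)$ is a closed polyhedral cone. Yours also adapts to the half-space version in Remark~\ref{rmk:HalfSpacesToo}: add the form defining the half-space to $\mathcal{L}$ and keep the arrangement cones on its nonnegative side.
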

\begin{proof}
Fix $\rho\in\Sigma$ and pick a fan $\prefan{\rho}$ that subdivides the cone $\ph(\rho)$ in $\frakW$; if $\ph(\rho)$ is strictly convex, we can take $\prefan{\rho}$ to be the fan of faces of $\ph(\rho)$. Let $\Pi_\rho$ be any completion of $\prefan{\rho}$ (see for the existence of completions of rational fans \cite[page 18]{Oda88}, \cite{EI06}, and \cite{Roh11}). For any $w\in\frakW$ there is a unique $\tau\in\Pi_\rho$ with $w\in\relint\tau$ and $\rho\cap\ph^{-1}(w)\neq\emptyset$ if and only if $\tau\in\prefan{\rho}$.

Now let $\Pi$ be the common refinement of the $\Pi_\rho$ for $\rho\in\Sigma$. That is, 
$$\Pi=\left\{\dcap_{\rho\in\Sigma}\tau_\rho\;:\; \tau_\rho\in\Pi_\rho\text{ for all }\rho\in\Sigma\right\}.$$
Since each $\Pi_\rho$ is complete, $\Pi$ is complete. For each $\lambda\in\Pi$ and $\rho\in\Sigma$ there is a unique smallest cone of $\Pi_\rho$ that contains $\lambda$; we call this cone $\tau_\rho(\lambda)$. Note that $\relint\lambda\subseteq\relint\tau_\rho(\lambda)$. So if $w\in\relint\lambda$ then $\rho\cap\ph^{-1}(w)\neq\emptyset$ if and only if $\tau_\rho(\lambda)\in\prefan{\rho}$.

Now fix $\lambda\in\Pi$ and suppose $w,v\in\relint\lambda$. Then 
\begin{align*}
\Phi(w)&=\left\{\rho\in\Sigma\;:\;\rho\cap\ph^{-1}(w)\neq\emptyset\right\}\\
&=\left\{\rho\in\Sigma\;:\;\tau_\rho(\lambda)\in\prefan{\rho}\right\}\\
&=\left\{\rho\in\Sigma\;:\;\rho\cap\ph^{-1}(v)\neq\emptyset\right\}\\
&=\Phi(v),
\end{align*}
proving (1).

Fix $w\in\frakW$ and let $\frakN=\dcup_{\substack{\lambda\in\Pi\\\lambda\ni w}}\left(\relint\lambda\right)$ be the open star  of $w$ in $\Pi$. Since $\Pi$ is complete, $\frakN$ is open. Let $\theta$ be the smallest cone of $\Pi$ with $w\in\theta$, so $w\in\relint\theta$.

Now fix $\xi\in\frakN$; we show that $\Phi(\xi)\subseteq\Phi(w)$. So there is a $\lambda\in\Pi$ with $w\in\lambda$ and $\xi\in\relint\lambda$. Since $\theta$ is the smallest cone of $\Pi$ containing $w$, $\theta$ must be a face of $\lambda$. In particular, for any $\rho$ we have that $\tau_\rho(\lambda)\in\Pi_\rho$ contains $\lambda$ and therefore contains $\theta$. Since $\tau_\rho(\theta)$ is the smallest cone of $\Pi_\rho$ that contains $\theta$, $\tau_\rho(\theta)$ must be a face of $\tau_\rho(\lambda)$. 

Now suppose $\rho\in\Phi(\xi)$; we want to show $\rho\in\Phi(w)$. By definition of $\Phi$, $\rho\cap\ph^{-1}(\xi)\neq\emptyset$, which we know is equivalent to $\tau_\rho(\lambda)\in\prefan{\rho}$. Since $\tau_\rho(\theta)$ is a face of $\tau_\rho(\lambda)$ and $\prefan{\rho}$ is a fan, $\tau_\rho(\theta)\in\prefan{\rho}$. Equivalently, $\rho\cap\ph^{-1}(w)\neq\emptyset$, so $\rho\in\Phi(w)$.
\end{proof}

\begin{remark}\label{rmk:HalfSpacesToo}
The above proof also works when  $\ph:\frakV\to\frakW$ is a linear map of (finite dimensional) half-spaces.
\end{remark}

In the proof of our next lemma, it will be convenient to have the following definition.

\begin{defi}
Let $\ph:X\to Y$ be an affine map of real vector spaces and let $\Pi$ be a polyhedral complex in $Y$. The \emph{pullback of $\Pi$ along $\ph$} is the polyhedral complex in $X$ given by 
$$\ph^*(\Pi):=\{\ph^{-1}(\mathfrak{Q})\;:\;\mathfrak{Q}\in\Pi\}.$$
\end{defi}

\begin{lemma}\label{lemma:SimplexOfCoefficients}
Let $\Pi$ be a polyhedral complex whose support is a convex cone $\lambda=\supp(\Pi)$. Suppose $w_0,\ldots,w_k\in\lambda$. Then there are a full-dimensional simplex $\simon$ in $\R^k$ of the form 
$$\simon=\{(\delta_1,\ldots,\delta_k)\;:\; 0\leq\delta_1\leq a_{1} \text{ and } 0\leq\delta_i\leq a_i\delta_{i-1} \text{ for } i=2,\ldots,k\}$$
for some positive constants $a_1,\ldots,a_k$ and a polyhedron $\paula\in\Pi$ such that, for all $(\delta_1,\ldots,\delta_k)\in\simon$, $w_0+\dsum_{i=1}^k\delta_iw_i$ is contained in $\paula$ and, if $\delta_k\neq0$, $w_0+\dsum_{i=1}^k\delta_iw_i \in \relint\paula$.
\end{lemma}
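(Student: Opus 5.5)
I plan to argue by induction on $k$, building $\simon$ one coordinate at a time, with the polyhedron $\paula$ chosen at the last step. The basic tool is the following observation about a polyhedral complex $\Pi$ with convex support $\lambda$. Given $p\in\lambda$ and a direction $w\in\lambda$, the segment $p+\delta w$ lies in $\lambda$ for all $\delta\geq0$, since $\lambda$ is a convex cone. The segment meets only finitely many polyhedra, and these polyhedra cover it. So there is some $\paula\in\Pi$ and some $\eps>0$ such that $p+\delta w\in\relint\paula$ for all $\delta\in(0,\eps]$. Indeed, each $\{\delta\in[0,\eps]: p+\delta w\in\relint\mathfrak{Q}\}$ is a relatively open convex subset of the line. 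These sets partition $[0,\eps]$, so for small enough $\eps$ one of them contains $(0,\eps]$. By closedness, $p\in\paula$ as well.

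To handle all the parameters uniformly, I will apply this observation not to points but to cones in parameter space. Consider the affine map $\ph_k:\R^k\to\frakV$ (the ambient space of $\lambda$) given by $(\delta_1,\ldots,\delta_k)\mapsto w_0+\sum_i\delta_iw_i$. On the orthant $\R_{\geq0}^k$ its image lies in $\lambda$. So the pullback $\ph_k^*(\Pi)$, intersected with $\R_{\geq0}^k$, is a polyhedral complex whose support is the whole orthant.

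I then apply the induction hypothesis to the polyhedral complex $\ph_k^*(\Pi)\cap\R_{\geq0}^k$ and to the points $0,e_1,\ldots,e_k$ of $\R_{\geq0}^k$. This is slightly circular, so the cleaner formulation is to prove the lemma directly by induction on $k$ for arbitrary $\Pi$ with convex-cone support, as follows.

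For $k=0$, take $\simon$ to be a point and $\paula$ any cell containing $w_0$. For the inductive step, apply the induction hypothesis to $w_0,\ldots,w_{k-1}$ and the complex $\Pi$. This gives a simplex $\simon'$ with constants $a_1,\ldots,a_{k-1}$ and a cell $\paula'$. Now fix $(\delta_1,\ldots,\delta_{k-1})$ in $\simon'$ and consider moving in direction $w_k$. Here I need $\eps$ to depend linearly on $\delta_{k-1}$ and not degenerate. I get this by working with the pulled-back complex $\ph_k^*(\Pi)$ on the orthant, which is a complex in $\R^k$ near the ray direction. The relevant sets $\{(\delta_1,\ldots,\delta_k): \ph_k(\delta)\in\relint\mathfrak{Q}\}$ are relatively open polyhedra, each a finite intersection of affine inequalities and equalities. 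Near the face $\{\delta_k=0\}$ of the simplex $\simon'\times\{0\}$, the region $\{0<\delta_k\leq a_k\delta_{k-1}\}$ is a cone-like sliver attached along that face. I choose $a_k$ small enough that this sliver lies inside a single open cell.

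That existence is the main obstacle. I must show that a single cell contains the entire sliver $\{(\delta',\delta_k):\delta'\in\simon',\ 0<\delta_k\leq a_k\delta_{k-1}\}$, not just a sliver over each fixed $\delta'$. My plan is to strengthen the inductive statement to require that, for $\delta_{k-1}>0$, the points lie in $\relint\paula'$ for a fixed $\paula'$. Then I restrict to the sub-complex $\Pi|_{\paula'}$: the cells of $\Pi$ containing $\paula'$ as a face are finitely many. Locally near $\relint\paula'$, the complex looks like the product of $\paula'$ with a fan $\Sigma$ in the normal directions, by the local product structure of polyhedral complexes. Moving from a point of $\relint\paula'$ by $\delta_kw_k$ with $\delta_k$ small relative to the distance to the boundary of $\paula'$ lands in the cell $\paula$ corresponding to the cone of $\Sigma$ whose relative interior contains the normal image of $w_k$. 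This cell is independent of the base point.

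That distance is bounded below by a positive multiple of $\delta_{k-1}$ on $\simon'$, by the nested shape of $\simon'$ and linearity. Choosing $a_k$ accordingly yields the new simplex, and all its points with $\delta_k>0$ lie in $\relint\paula$, giving the conclusion. Closedness gives membership in $\paula$ when $\delta_k=0$, because $\paula'$ is a face of $\paula$.
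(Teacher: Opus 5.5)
Your route is correct, and it differs from the paper's in how the whole new simplex is shown to land in $\paula$.

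\textbf{The paper's route.} It never analyzes $\Pi$ along all of $\relint\olive$. It applies the one-dimensional case only once, at the single point $\zeta_k=\dsum_{j\leq k}b_jw_j$. This is the image of the top vertex $(b_1,\ldots,b_k)$ of the previous simplex, and it lies in $\relint\olive$. This gives $b_{k+1}$ and a cell $\paula$ with $\zeta_k+tw_{k+1}\in\relint\paula$ for $0<t\leq b_{k+1}$. Since $\olive$ is the smallest cell containing $\zeta_k$, we get $\olive\subseteq\paula$. So the images $\zeta_0,\ldots,\zeta_{k+1}$ of \emph{all} vertices of the new simplex lie in $\paula$, with $\zeta_{k+1}\in\relint\paula$. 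Writing a point of $\simon$ in barycentric coordinates, the last coordinate is $\delta_{k+1}/b_{k+1}$. Convexity then gives containment in $\paula$, and containment in $\relint\paula$ when $\delta_{k+1}>0$.

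\textbf{Your route.} You use the star of $\paula'$: tangent cones at points of $\relint\paula'$ do not depend on the point. Hence the cell entered in direction $w_k$ is the same from every base point, and it is the same cell the paper picks. To this you add a uniform bound on how far you may move. This describes $\paula$ intrinsically, but it needs the local product structure plus a quantitative estimate. Your estimate (``bounded below by a positive multiple of $\delta_{k-1}$'') is really the paper's convexity computation in disguise. Each slack $c_j-\langle u_j,\cdot\rangle$ of a defining inequality of $\paula$ not tight on $\paula'$, composed with $\delta'\mapsto w_0+\dsum_{i<k}\delta_iw_i$, is affine on $\simon'$. It is nonnegative at every vertex and positive at the top vertex, whose barycentric coordinate is $\delta_{k-1}/b_{k-1}$. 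So it is at least a constant times $\delta_{k-1}$, and you need $\delta_k\langle u_j,w_k\rangle$ to stay below it. State it with these slacks rather than ``distance to the boundary of $\paula'$'', which is infinite when $\paula'$ is an affine subspace (for instance a point). The paper's argument is shorter because it only checks vertices.

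\textbf{Two small fixes.} First, the ``strengthening'' you propose is already part of the lemma's conclusion (the case $\delta_{k-1}\neq0$ gives $\relint\paula'$), so it is just the inductive hypothesis. Second, in your base case $k=0$ you must take the smallest cell containing $w_0$, so that $w_0\in\relint\paula'$. An arbitrary cell containing $w_0$ will not do, because your inductive step needs the base points to lie in $\relint\paula'$.
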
\begin{proof}
We prove this statement by induction on $k\geq1$.

For the base case, let $k=1$. Let $\ph$ be the map from $\R$ to the ambient space of $\Pi$ given by $\ph(t)=w_0+tw_1$. Since $w_0,w_1\in\lambda$ and $\lambda=\supp(\Pi)$ is a cone, the support of $\ph^*(\Pi)$ contains $\R_{\geq0}$. Now consider the polyhedra of $\ph^*(\Pi)$ that contain $0_{\R}$. Since the support of $\ph^*(\Pi)$ contains $\R_{\geq0}$, there must be one such polyhedron that also contains positive numbers; call this polyhedron $I$. Then $I$ is an interval with one endpoint a positive number that we call $\alpha$. Let $a_1=\alpha/2$. So $[0,a_1]$ is contained in $I$ and $(0,a_1]$ is contained in $\relint I$.

Let $\paula$ be the smallest polyhedron of $\Pi$ with $\ph^{-1}(\paula)=I$ and let $\simon=[0,a_1]$. Certainly, if $\delta_1\in\simon$ then $w_0+\delta_1w_1=\ph(\delta_1)\in\paula$. If $\delta_1\neq0$, then $I$ is the smallest polyhedron of $\ph^*(\Pi)$ containing $\delta_1$, so any polyhedron $\frakQ\in\Pi$ that contains $\ph(\delta_1)$ must have $\ph^{-1}(\frakQ)\supset I$; since $I$ is full-dimensional in $\R$, this means $\ph^{-1}(\frakQ)=I$. So, because $\paula$ is the smallest polyhedron of $\Pi$ with $\ph^{-1}(\paula)=I$, $\paula$ is the smallest polyhedron of $\Pi$ containing $\ph(\delta_1)$. In particular, $\ph(\delta_1)$ is not in any proper face of $\paula$, so $w_0+\delta_1w_1=\ph(\delta_1)\in\relint\paula$. This completes the proof of the base case.

Now suppose the result is true for $k$ and that we have $w_0,\ldots,w_k,w_{k+1}\in\lambda$. By the inductive hypothesis there are a full-dimensional simplex $\rachel$ in $\R^k$ of the form 
$$\rachel=\{(\delta_1,\ldots,\delta_k)\;:\; 0\leq\delta_1\leq a_{1} \text{ and } 0\leq\delta_i\leq a_i\delta_{i-1} \text{ for } i=2,\ldots,k\}$$
for some positive constants $a_1,\ldots,a_k$ and a polyhedron $\olive\in\Pi$ such that, for all $(\delta_1,\ldots,\delta_k)\in\rachel$, $w_0+\dsum_{i=1}^k\delta_iw_i$ is contained in $\olive$ and, if $\delta_k\neq0$, $w_0+\dsum_{i=1}^k\delta_iw_i \in \relint\olive$. Let $b_0=1$ and for $i=1,\ldots,k$ let $b_i=a_ib_{i-1}$, i.e., $b_i=\dprod_{j=1}^ia_j$. For $i=0,\ldots,k$ let $\zeta_i=\dsum_{j=0}^i b_jw_j$. Then $\zeta_i\in\olive$ for $i=0,\ldots,k$ and $\zeta_k\in\relint\olive$.

Applying the base case with $\zeta_k$ and $w_{k+1}$, we find that there is a positive real number $b_{k+1}$ and a $\paula\in\Pi$ such that, if $0\leq t\leq b_{k+1}$ then $\zeta_k+tw_{k+1}\in\paula$ and if $t\neq0$ then $\zeta_k+tw_{k+1}\in\relint\paula$. We let $\zeta_{k+1}:=\zeta_k+b_{k+1}w_{k+1}$, so $\zeta_{k+1}\in\relint\paula$. Now $\zeta_k=\zeta_k+0w_k\in\paula\cap\olive$ and so, because $\olive$ is the smallest polyhedron in $\Pi$ containing $\zeta_k$, $\olive\subseteq\paula\cap\olive$, i.e., $\olive\subseteq\paula$. Thus $\zeta_i\in\paula$ for $i=0,\ldots,k+1$.

Let $a_{k+1}:=\dfrac{b_{k+1}}{b_k}$ and let 
$$\simon=\{(\delta_1,\ldots,\delta_{k+1})\in\R^{k+1}\;:\; 0\leq\delta_1\leq a_{1} \text{ and } 0\leq\delta_i\leq a_i\delta_{i-1} \text{ for } i=2,\ldots,k+1\}.$$
Fix $(\delta_1,\ldots,\delta_{k+1})\in\simon$; we will show that $v:=w_0+\dsum_{i=1}^{k+1}\delta_iw_i\in\paula$. Let $r_0:=1-\dfrac{\delta_1}{b_1}$, $r_i:=\dfrac{\delta_i}{b_i}-\dfrac{\delta_{i+1}}{b_{i+1}}$ for $i=1,\ldots,k$, and $r_{k+1}=\dfrac{\delta_{k+1}}{b_{k+1}}$. Note that each $r_i$ is non-negative. For $r_{k+1}$ this is clear and for $r_0$ this follows from $\dfrac{\delta_1}{b_1}=\dfrac{\delta_1}{a_1}\leq 1$. For $i=1,\ldots,k$ we have $\dfrac{\delta_{i+1}}{a_{i+1}}\leq\delta_i$ and dividing through by $b_{i}$ gives $\dfrac{\delta_{i+1}}{b_{i+1}}=\dfrac{\delta_{i+1}}{a_{i+1}b_i}\leq\dfrac{\delta_i}{b_i}$, so $r_i\geq0$. 

Note that $\dsum_{i=0}^{k+1}r_i=1$ and, for $0<j\leq k+1$, $\dsum_{i=j}^{k+1}r_i=\dfrac{\delta_j}{b_j}$. So we have
\begin{align*}
\dsum_{i=0}^{k+1}r_i\zeta_i&=\dsum_{i=0}^{k+1} \left(r_i\dsum_{j=0}^i b_jw_j\right)
=\dsum_{0\leq j\leq i\leq k+1}r_ib_jw_j\\
&=\left(\dsum_{i=0}^{k+1}r_i\right)b_0w_0+\dsum_{j=1}^{k+1}\left(\dsum_{i=j}^{k+1}r_i\right)b_jw_j\\
&=w_0+\dsum_{j=1}^{k+1}\dfrac{\delta_j}{b_j}b_jw_j\\
&=w_0+\dsum_{j=1}^{k+1}\delta_jw_j=v.
\end{align*}
Thus $v$ is a convex combination of $\zeta_0,\ldots,\zeta_{k+1}\in\paula$ and, since $\paula$ is convex, $v\in\paula$, as claimed.

Continuing with $(\delta_1,\ldots,\delta_{k+1})\in\simon$, we now suppose $\delta_{k+1}>0$; we will show that $v\in\relint\paula$. Consider a nonzero dual vector $u$ and a real number $c$ with $\paula\subseteq\{\xi:\angbra{u, \xi}\leq c\}$; we will show that $v$ is not contained in the proper face $\paula\cap\{\xi:\angbra{u, \xi}=c\}$ of $\paula$. Since $\zeta_{k+1}\in\relint\paula$, $\angbra{u,\zeta_{k+1}}<c$. So 
$$\angbra{u,v}=\dsum_{i=0}^{k+1}r_i\angbra{u,\zeta_i} \leq \dsum_{i=0}^{k}r_ic + r_{k+1}\angbra{u,\zeta_{k+1}}<\dsum_{i=0}^{k+1}r_ic=c.$$
Thus $v$ is not contained in any proper face of $\paula$, so $v\in\relint\paula$.
\end{proof}

\begin{lemma}\label{lemma:GeneralClosureImpliesAConeCondition}
Let $X^\circ$ be the support of a fan in $\R_{\geq0}\times N_{\R}$ and let $X=\cl_{\R_{\geq0}\times N_{\R}(\sigma)}X^\circ$.
Let $\tau$ be a face of $\sigma$, let $\carl$ be a cone contained in $X\cap[\R_{\geq0}\times(N_{\R}/\tau)]$, and let $w_0,w_1,\ldots,w_k\in\carl$.
Then there are $v,\what{w}_0,\what{w}_1,\ldots,\what{w}_k\in\R_{\geq0}\times N_{\R}$ and an open set $U\subseteq(\R_{>0})^{k+1}$ such that
\begin{itemize}
\item $\dlim_{\largeNum\to\infty}\what{w}_i+\largeNum v=w_i$ for $i=0,1,\ldots,k$,
\item whenever $1\gg\eps_0\gg\eps_1\gg\cdots\gg\eps_k$, we have $(\eps_0,\eps_1,\ldots,\eps_k)\in U$, and
\item if $(\eps_0,\eps_1,\ldots,\eps_k)\in U$ then $v+\dsum_{i=0}^k\eps_i\what{w}_i\in X^\circ$.
\end{itemize}
\end{lemma}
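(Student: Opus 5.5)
The plan is to reduce to a single cone $L$ of the fan whose support is $X^\circ$, and then apply Lemma~\ref{lemma:PolyhedronClosureMeansEndOfRay} to $L$. The points $w_0,\ldots,w_k$ need not all lie in the closure of one cone, so I would first replace them by suitable positive combinations that do, and at the end undo that change linearly.

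\emph{Step 1: closures of cones in the stratum.} Write $X^\circ=L_1\cup\cdots\cup L_s$ with the $L_j$ cones of the fan. Then $X=\bigcup_j\cl(L_j)$, because the closure of a finite union is the union of the closures. I would show that each $\cl(L_j)\cap[\R_{\geq0}\times(N_{\R}/\tau)]$ is a polyhedral cone. Namely, I expect it to equal $(\id\times\pi_\tau)\big(L_j\cap(\R_{\geq0}\times(N_{\R}+\vspan\tau))\big)$ whenever $L_j$ meets $\{0\}\times\relint\tau$, and to be empty otherwise. One inclusion comes from Claims 1--3 in the proof of Lemma~\ref{lemma:PolyhedronClosureMeansEndOfRay}. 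The other comes from the fact used at the end of that proof: any point $\what{w}$ with $\what{w}+\largeNum v\in L_j$ and $v\in L_j\cap(\{0\}\times\relint\tau)$ satisfies $\what{w}+\largeNum v\to(\id\times\pi_\tau)(\what{w})$.

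\emph{Step 2: one cone for all the points.} Intersecting these polyhedral cones with $\carl$ and taking a common refinement gives a polyhedral complex $\Pi$ with support the convex cone $\carl$, each of whose cells lies in some $\cl(L_j)$. Lemma~\ref{lemma:SimplexOfCoefficients} then gives constants $b_0=1,b_1,\ldots,b_k>0$ with $b_i=a_ib_{i-1}$, and a single cell $\paula$, such that every $\zeta_i:=\sum_{j\le i}b_jw_j$ lies in $\paula\subseteq\cl(L)$ for one cone $L=L_j$. Now apply Lemma~\ref{lemma:PolyhedronClosureMeansEndOfRay} to $L$ and $\zeta_0,\ldots,\zeta_k$. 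This yields $v\in L\cap(\{0\}\times\relint\tau)$ and $\what{\zeta}_i\in L$ with $(\id\times\pi_\tau)(\what{\zeta}_i)=\zeta_i$.

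\emph{Step 3: undo the change of variables.} Set $\what{w}_0=\what{\zeta}_0$ and $\what{w}_i=(\what{\zeta}_i-\what{\zeta}_{i-1})/b_i$ for $i\ge1$. By linearity, $(\id\times\pi_\tau)(\what{w}_i)=w_i$. Since $v$ has $\angbra{u,v}<0$ for all $u\in\tau^\vee\sdrop\tau^\perp$, the neighborhood-basis argument at the end of the proof of Lemma~\ref{lemma:PolyhedronClosureMeansEndOfRay} gives $\what{w}_i+\largeNum v\to w_i$. That argument only needs the projection condition, not $\what{w}_i\in L$.

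Next I would rewrite the sum as
\[
v+\sum_i\eps_i\what{w}_i=v+\sum_{i=0}^{k}c_i\what{\zeta}_i,\qquad c_i=\frac{\eps_i}{b_i}-\frac{\eps_{i+1}}{b_{i+1}},\quad c_k=\frac{\eps_k}{b_k}.
\]
Then define $U=\{\eps\in(\R_{>0})^{k+1}: c_i>0\text{ for all }i\}$. This set is open, and it contains every tuple with $1\gg\eps_0\gg\cdots\gg\eps_k$. For $\eps\in U$ the vector above is a nonnegative combination of elements of the cone $L$, so it lies in $L\subseteq X^\circ$.

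The main obstacle is Step 1 together with the use of Lemma~\ref{lemma:SimplexOfCoefficients}. One must verify that the traces of the closures on the stratum are genuinely polyhedral, so that a common refinement $\Pi$ with convex support $\carl$ exists. One must also make sure that the single cell $\paula$ provided by Lemma~\ref{lemma:SimplexOfCoefficients} lies inside the closure of one $L_j$. If that description of the closure is awkward to establish, I would instead try to get polyhedrality of each $\cl(L_j)\cap[\R_{\geq0}\times(N_{\R}/\tau)]$ directly from Claim 3 and a face argument.
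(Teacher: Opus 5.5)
Your argument is correct, and its outer structure matches the paper's. Both use Lemma~\ref{lemma:SimplexOfCoefficients} to place the points $\zeta_i=\sum_{j\le i}b_jw_j$ in one cell, take lifts $\what{\zeta}_i$ in a single cone $L$ together with $v\in L\cap[\{0\}\times\relint\tau]$ via Lemma~\ref{lemma:PolyhedronClosureMeansEndOfRay}, telescope via $\what{w}_i=(\what{\zeta}_i-\what{\zeta}_{i-1})/b_i$, and use essentially the same $U$.

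The difference is how you find one cone $L$ whose closure contains all the $\zeta_i$. The paper uses the set-valued map $\Phi(w)=\{\rho\in\Sigma:\rho\cap(\id_{\R_{\geq0}}\times\pi_\tau)^{-1}(w)\neq\emptyset\}$. Lemma~\ref{lemma:SetValuedFunctionProperties} gives a complete fan on the whole stratum such that $\Phi$ is constant on the relative interior of each cone. The paper then chooses $L$ from the closure condition at one point $\omega\in\Opal$ and uses upper hemicontinuity of $\Phi$ to transfer $L$ to the vertices $\zeta_i$.

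You instead show directly that $\cl(L_j)\cap[\R_{\geq0}\times(N_{\R}/\tau)]$ is either empty or equal to $(\id_{\R_{\geq0}}\times\pi_\tau)(L_j)$. (Your $N_{\R}+\vspan\tau$ is just $N_{\R}$.) This is right by Lemma~\ref{lemma:PolyhedronClosureMeansEndOfRay} and its closing limit argument. You then refine so that each closed cell lies in a single trace. This avoids $\Phi$ and Lemma~\ref{lemma:SetValuedFunctionProperties} altogether, and it gives an explicit description of the closures of the cones.

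The refinement you flag as the main obstacle is routine. Take the arrangement of all hyperplanes occurring in inequality descriptions of the traces and of $\carl$. Relative-interior points of a cell have the same sign vector, so a cell lies in every trace that meets its relative interior. Some trace does meet it, because the traces cover $\carl$.

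Your version uses one thing the paper's does not: polyhedrality of $\carl$, needed to get a complex with support $\carl$. This holds in the application $\carl=\calc_k$. Otherwise you can refine the whole half-space instead and pick the trace through the point $\omega\in\relint\paula\cap\carl$ that Lemma~\ref{lemma:SimplexOfCoefficients} provides.

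Finally, you observe that $\what{w}_i+\largeNum v\to w_i$ needs only the argument of Lemma~\ref{lemma:PolyhedronClosureMeansEndOfRay}, not its statement, since the $\what{w}_i$ need not lie in $L$. This is more careful than the paper, which cites the lemma directly at that point.
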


\begin{proof}
Fix a fan structure $\Sigma$ on $X^\circ$ and let $\pi_\tau:N_{\R}\to N_{\R}/\tau$ be the quotient map. We define a set-valued function $\Phi$ on $\R_{\geq0}\times(N_{\R}/\tau)$ with values that are subsets of $\Sigma$ as follows. For any $w\in \R_{\geq0}\times(N_{\R}/\tau)$, 
\begin{align*}
\Phi(w)&=\left\{\rho\in\Sigma \;:\; \rho\cap(\id_{\R_{\geq0}}\times\pi_\tau)^{-1}(w) \neq\emptyset \right\}\\
&=\left\{\rho\in\Sigma \;:\; \text{for some } \what{w}\in\rho,\ (\id_{\R_{\geq0}}\times\pi_\tau)(\what{w})=w \right\}.
\end{align*}

Lemma~\ref{lemma:SetValuedFunctionProperties} and Remark~\ref{rmk:HalfSpacesToo} now tell us that 
\begin{enumerate}
\item there is a complete fan $\Pi$ in $\R_{\geq0}\times(N_{\R}/\tau)$ such that $\Phi$ is constant on the relative interior of each cone in $\Pi$ and
\item for every $w\in \R_{\geq0}\times(N_{\R}/\tau)$, there is a neighborhood $\frakN$ of $w$ such that, for all $\xi\in \frakN$, $\Phi(\xi)\subseteq\Phi(w)$.
\end{enumerate}
 
Since $\supp(\Pi)=\R_{\geq0}\times(N_{\R}/\tau)$ is a convex cone, Lemma~\ref{lemma:SimplexOfCoefficients} tells us that there is a full-dimensional simplex $\simon$ in $\R^{k}$ of the form 
$$\simon=\{(\delta_1,\ldots,\delta_k)\;:\; 0\leq\delta_1\leq a_{1} \text{ and } 0\leq\delta_i\leq a_i\delta_{i-1} \text{ for } i=2,\ldots,k\}$$
for some positive constants $a_1,\ldots,a_k$ and a cone $\paula\in\Pi$ such that, for all $(\delta_1,\ldots,\delta_k)\in\simon$, $w_0+\dsum_{i=1}^k\delta_iw_i\in\paula$ and, if $(\delta_1,\ldots,\delta_k)\in\relint\simon$ then $w_0+\dsum_{i=1}^k\delta_iw_i\in\relint\paula$.  Letting $\Opal=\left\{w_0+\dsum_{i=1}^k\delta_iw_i\;:\;(\delta_1,\ldots,\delta_k)\in\relint\simon\right\}$, we have that the value $\Phi(\omega)$ is independent of $\omega\in\Opal$; we call this value $\tara$. 

For $i=0,\ldots,k$ we let $b_i=\dprod_{j=1}^i a_j$; we have $b_0=1$. Now for $i=0,\ldots,k$ we let $\valerie_i=\dsum_{j=0}^i b_jw_j$.
Note that each $\valerie_i$ is in the closure of $\Opal$, i.e., every neighborhood of $\valerie_i$ meets 
$\Opal$.  So (2) tells us that $\tara\subseteq\Phi(\valerie_i)$.

Fix $\omega\in\Opal$. Since $w_0,w_1,\ldots,w_k\in\carl$, we also have 
$$\omega 
=w_0+\dsum_{i=1}^k\delta_iw_i\in\carl\subseteq X
=\cl_{\R_{\geq0}\times N_{\R}(\sigma)}(X^\circ)
=\cl_{\R_{\geq0}\times N_{\R}(\sigma)}(\supp(\Sigma))
=\dcup_{L\in\Sigma}\cl_{\R_{\geq0}\times N_{\R}(\sigma)}(L), 
$$
so we can fix an $L\in\Sigma$ with $\omega\in\cl_{\R_{\geq0}\times N_{\R}(\sigma)}(L)$. Lemma~\ref{lemma:PolyhedronClosureMeansEndOfRay} now tells us that $L\in\Phi(\omega)=\tara$ and $L\cap[\{0_{\R}\}\times(\relint\tau)]\neq\emptyset$. Fix $v\in L\cap[\{0_{\R}\}\times(\relint\tau)]$. 

For each $i=0,\ldots,k$ we have $L\in\tara\subseteq\Phi(\valerie_i)$ so there is a $\what{\valerie}_i\in L$ with $(\id_{\R_{\geq0}}\times \pi_\tau) \left(\what{\valerie}_i\right)=\valerie_i$. 
Let $\what{w}_0=\what{\valerie}_0$ and, for $i=1,\ldots,k$, $\what{w}_i=\dfrac{1}{b_i}\left(\what{\valerie}_i-\what{\valerie}_{i-1}\right)$. Since $(\id_{\R_{\geq0}}\times \pi_\tau)$ is linear, we have
$$
(\id_{\R_{\geq0}}\times \pi_\tau)\left(\what{w}_i\right)
=\dfrac{1}{b_i}\left(
(\id_{\R_{\geq0}}\times \pi_\tau)\left(\what{\valerie}_i\right) - (\id_{\R_{\geq0}}\times \pi_\tau)\left(\what{\valerie}_{i-1}\right)
\right)
=\dfrac{1}{b_i}\left(\valerie_i-\valerie_{i-1}\right)=w_i
$$
for $i=1,\ldots,k$ and $(\id_{\R_{\geq0}}\times \pi_\tau)\left(\what{w}_0\right)=(\id_{\R_{\geq0}}\times \pi_\tau)\left(\what{\valerie}_0\right)=\valerie_0=w_0$.
Now Lemma~\ref{lemma:PolyhedronClosureMeansEndOfRay} tells us that $\dlim_{\largeNum\to\infty}\what{w}_i+\largeNum v=w_i$ for $i=0,\ldots,k$.

Let 
$$U=\left\{(\eps_0,\eps_1,\ldots,\eps_k)\in\R^k\;:\;0<\eps_0<1 \text{ and } 0<\eps_i<a_i\eps_{i-1} \text{ for } i=1,\ldots,k\right\}.$$
Certainly, if $1\gg\eps_0\gg\eps_1\gg\cdots\gg\eps_k$ then $(\eps_0,\eps_1,\ldots,\eps_k)\in U$.

Say $(\eps_0,\eps_1,\ldots,\eps_k)\in U$. We claim that $\dfrac{\eps_i}{b_i}<\dfrac{\eps_{i-1}}{b_{i-1}}$ for all $i=1,\ldots,k$. Recall that $b_0=1$ and $b_i=a_ib_{i-1}$ for $i=1,\ldots,k$. So, for $i=1$, we have $\dfrac{\eps_1}{b_1}=\dfrac{\eps_1}{a_1}<\eps_0=\dfrac{\eps_0}{b_0}$. For $i>1$ we have $\dfrac{\eps_i}{a_i}<\eps_{i-1}$ so multiplying by $\dfrac{1}{b_{i-1}}$ we get  
$\dfrac{\eps_i}{b_i}=\dfrac{\eps_i}{a_ib_{i-1}}<\dfrac{\eps_{i-1}}{b_{i-1}}$.

Thus, the numbers $\dfrac{\eps_{i}}{b_{i}}-\dfrac{\eps_{i+1}}{b_{i+1}}$ for $i=0,\ldots,k-1$ and $\dfrac{\eps_k}{a_k}$ are all positive. We let $\what{\valerie}_{-1}=0$ so that $\what{w_0}=\dfrac{1}{b_0}\left(\what{\valerie}_0-\what{\valerie}_{-1}\right)$. Now, if $(\eps_0,\eps_1,\ldots,\eps_k)\in U$ then
\begin{align*}
v+\dsum_{i=0}^k\eps_i\what{w}_i&=v+\dsum_{i=0}^k\dfrac{\eps_i}{b_i}\left(\what{\valerie}_i-\what{\valerie}_{i-1}\right)\\
&=v+\dsum_{i=0}^k\dfrac{\eps_i}{b_i}\what{\valerie}_i-\dsum_{i=0}^k\dfrac{\eps_i}{b_i}\what{\valerie}_{i-1}\\
&=v+\dsum_{i=0}^k\dfrac{\eps_i}{b_i}\what{\valerie}_i-\dsum_{i=-1}^{k-1}\dfrac{\eps_{i+1}}{b_{i+1}}\what{\valerie}_{i}\\
&=v+\dsum_{i=0}^{k-1}\dfrac{\eps_i}{b_i}\what{\valerie}_i +\dfrac{\eps_k}{b_k}\what{\valerie}_k - \dfrac{\eps_{0}}{b_{0}}\what{\valerie}_{-1} -\dsum_{i=0}^{k-1}\dfrac{\eps_{i+1}}{b_{i+1}}\what{\valerie}_{i}\\
&=v+\dsum_{i=0}^{k-1}\left(\dfrac{\eps_i}{b_i}-\dfrac{\eps_{i+1}}{b_{i+1}}\right)\what{\valerie}_i +\dfrac{\eps_k}{b_k}\what{\valerie}_k-0\\
&\in L\subseteq X^\circ
\end{align*}
where the last line holds because $L$ is a cone and $v,\what{\valerie}_0,\what{\valerie}_1,\ldots,\what{\valerie}_k\in L$.
\end{proof}

\begin{proof}[Proof of Theorem~\ref{thm:resolving-primes}]
Because the prime congruences of $\base[\Mon]/E$ are in bijection with the prime congruences of $\base[\Mon]$ containing $E$, it suffices to show that, if $P$ is a prime congruence of $\base[\Mon]$ with $E\subseteq P$ then there is a prime congruence $Q$ on $\base[\Mon]$ such that $E\subseteq Q\subseteq P$ and $Q$ has trivial ideal-kernel. Let $\tau=\tau_P$.

Since $E\subseteq P$, Theorem~\ref{thm:PrimeContainsCongruence} tells us that we can write $P=P_{\calc_\bullet}$ for a flag of cones $\calc_\bullet=(\calc_{-1}\leq\calc_{0}\leq\cdots\leq\calc_k)$ contained in $\R_{\geq0}\times (N_{\R}/\tau)$ such that, for all $i$, $\calc_i\subseteq \wt{V}(E)$. For $i=0,1,\ldots,k$, pick $w_i\in\calc_i\sdrop\calc_{i-1}$. So $P$ is given by the matrix 
$\begin{pmatrix}w_0\\w_1\\\vdots\\w_k\end{pmatrix}$. 
 
Applying Lemma~\ref{lemma:GeneralClosureImpliesAConeCondition} with $X^\circ=\wt{V}(E)\cap(\R_{\geq0}\times N_{\R})$, $X=\wt{V}(E)$ and $\carl=\calc_k$, we get that there are 
$v,\what{w}_0,\what{w}_1,\ldots,\what{w}_k\in\R_{\geq0}\times N_{\R}$ and an open set $U\subseteq(\R_{>0})^{k+1}$ such that
\begin{itemize}
\item $\dlim_{\largeNum\to\infty}\what{w}_i+\largeNum v=w_i$ for $i=0,1,\ldots,k$,
\item whenever $1\gg\eps_0\gg\eps_1\gg\cdots\gg\eps_k$, we have $(\eps_0,\eps_1,\ldots,\eps_k)\in U$, and
\item if $(\eps_0,\eps_1,\ldots,\eps_k)\in U$ then $v+\dsum_{i=0}^k\eps_i\what{w}_i\in \wt{V}(E)$.
\end{itemize}
Let $Q$ be the prime congruence on $\base[\Mon]$ defined by the matrix 
$\begin{pmatrix}v\\\what{w}_0\\\what{w}_1\\\cdots\\\what{w}_k\end{pmatrix}$. 
Since $v,\what{w}_0,\what{w}_1\ldots,\what{w}_k\in \R_{\geq0}\times N_{\R}$, $Q$ has trivial ideal-kernel. Fix $(f,g)\in E$; we will show that $(f,g)\in Q$. Given $(\eps_0,\eps_1,\ldots,\eps_k)\in U$, let $\omega=v+\dsum_{i=0}^k \eps_i\what{w}_i$; since $\omega \in \wt{V}(E)$ and $(f,g)\in E$, we have $\wt{f}(\omega)=\wt{g}(\omega)$.

Since $Q$ is prime, $(f, g) \in Q$ if and only if $(\whin_{Q}(f), \whin_{Q}(g))\in Q.$ Recall that 
\begin{align*}
\whin_Q(f)&=\whin_{\what{w}_k}\left(
\whin_{\what{w}_{k-1}}\left(\cdots
\whin_{\what{w}_0}\left(
\whin_{v}(f)
\right)
\cdots\right)
\right) \\
\intertext{and}
\whin_Q(g)&=\whin_{\what{w}_k}\left(
\whin_{\what{w}_{k-1}}\left(\cdots
\whin_{\what{w}_0}\left(
\whin_{v}(g)
\right)
\cdots\right)
\right).
\end{align*}

Applying Corollary~\ref{coro:InitialFormsAddingManyVectors} with $f_1=f$, $f_2=g$, $\xi_j=\what{w}_{k-j}$ for $0\leq j\leq k$ and $\xi_{k+1}=v$, we get that there is a full-dimensional polyhedron $\frakQ$ in $\R^{k+1}$ such that, if $(\largeNum_{1},\largeNum_2,\ldots,\largeNum_{k},\largeNum_{k+1})$ is in the interior of $\frakQ$ then we have 
$$\whin_{\what{w}_k}\left(\whin_{\what{w}_{k-1}}\left(\cdots\whin_{\what{w}_0}\left(\whin_{v}(f)\right)\cdots\right)\right)=\whin_{\what{w}_k+\largeNum_{1} \what{w}_{k-1}+\cdots+\largeNum_{k} \what{w}_0+\largeNum_{k+1} v}(f)$$ 
and 
$$\whin_{\what{w}_k}\left(\whin_{\what{w}_{k-1}}\left(\cdots\whin_{\what{w}_0}\left(\whin_{v}(g)\right)\cdots\right)\right)=\whin_{\what{w}_k+\largeNum_{1} \what{w}_{k-1}+\cdots+\largeNum_{k} \what{w}_0+\largeNum_{k+1} v}(g).$$ 
Moreover, if  $0\ll\largeNum_{1}\ll\largeNum_{2}\ll\cdots\ll\largeNum_{k}\ll\largeNum_{k+1}$, then $(\largeNum_{1},\largeNum_2,\ldots,\largeNum_{k},\largeNum_{k+1})$ is in the interior of $\frakQ$.

Note that, if $a\in\R$ is positive and $\zeta\in\R_{\geq0}\times N_{\R}(\sigma)$ then $\whin_{a\zeta}(f)=\whin_\zeta(f)$. So if we let $\eps_k=\dfrac{1}{\largeNum_{k+1}}$ and $\eps_j=\dfrac{\largeNum_{k-j}}{\largeNum_{k+1}}$ for $0\leq j\leq k-1$ then 
$$\whin_{\what{w}_k+\largeNum_{1} \what{w}_{k-1}+\cdots+\largeNum_{k} \what{w}_0+\largeNum_{k+1} v}(f)=
\whin_{\eps_k\what{w}_k+\eps_{k-1} \what{w}_{k-1}+\cdots+\eps_0 \what{w}_0+ v}(f)=\whin_{\omega}(f)$$
and
$$\whin_{\what{w}_k+\largeNum_{1} \what{w}_{k-1}+\cdots+\largeNum_{k} \what{w}_0+\largeNum_{k+1} v}(g)=
\whin_{\eps_k\what{w}_k+\eps_{k-1} \what{w}_{k-1}+\cdots+\eps_0 \what{w}_0+ v}(g)=\whin_{\omega}(g)$$

Also, the condition of $0\ll\largeNum_1\ll\largeNum_2\ll\cdots\ll\largeNum_{k}\ll\largeNum_{k+1}$ is equivalent to $1\gg\eps_0\gg\eps_1\gg\cdots\gg\eps_k$ and, because $(\largeNum_{1},\largeNum_{2},\ldots,\largeNum_{k},\largeNum_{k+1})\mapsto\left(\dfrac{\largeNum_{k}}{\largeNum_{k+1}},\dfrac{\largeNum_{k-1}}{\largeNum_{k+1}},\ldots,\dfrac{\largeNum_{1}}{\largeNum_{k+1}},\dfrac{1}{\largeNum_{k+1}}\right)$ is a homeomorphism from $(\R_{>0})^{k+1}$ to itself, there is an open set $U'$ of vectors $(\eps_0,\eps_1,\ldots,\eps_k)$ for which these equalities are valid. Moreover, since $1\gg\eps_0\gg\eps_1\gg\cdots\gg\eps_k$ implies both $(\eps_0,\eps_1,\ldots,\eps_k)\in U$ and $(\eps_0,\eps_1,\ldots,\eps_k)\in U'$, the open set $U\cap U'$ is nonempty.

In summary, we have seen that there is a nonempty open set $U\cap U'$ of vectors $(\eps_0,\eps_1,\ldots,\eps_k)$ such that, letting $\omega=v+\dsum_{i=0}^k \eps_i\what{w}_i$, we have $\wt{f}(\omega)=\wt{g}(\omega)$, $\whin_Q(f)=\whin_{\omega}(f)$, and $\whin_Q(g)=\whin_{\omega}(g)$.

Let $m_f$ be any term occurring in $\whin_Q(f)=\whin_{\omega}(f)$ and let $m_g$ be any term occurring in $\whin_Q(g)=\whin_{\omega}(g)$. Then $m_f$ and $m_g$ are $Q$-leading terms of $f$ and $g$, respectively, so $(f,g)\in Q$ if and only if $(m_f,m_g)\in Q$. Also, the maxima in $\wt{f}(\omega)$ and $\wt{g}(\omega)$ occur at $\wt{m_f}(\omega)$ and $\wt{m_g}(\omega)$, so $\wt{m_f}(\omega)=\wt{m_g}(\omega)$. Thus, for any $(\eps_0,\ldots,\eps_k)\in U\cap U'$, we have 
$$\wt{m_f}(v) +  \dsum_{i=0}^k \eps_i \wt{m_f}(\what{w}_i) =\wt{m_f}(\omega)=\wt{m_g}(\omega)
= \wt{m_g}(v) +  \dsum_{i=0}^k \eps_i \wt{m_g}(\what{w}_i).$$
Since the left and right hand sides are affine functions of the variables $(\eps_0,\ldots,\eps_k)$ that agree on $U\cap U'$, and because the affine span of any nonempty open set in $\R^{k+1}$ is all of $\R^{k+1}$, these affine functions must agree on all of $\R^{k+1}$. Thus the constants and coefficients on both sides must agree, i.e., $m_f(v)=m_g(v)$ and $m_f(\what{w}_i)=m_g(\what{w}_i)$ for $0\leq i\leq k$. Since $Q$ is the prime congruence given by the matrix 
$\begin{pmatrix}v\\\what{w}_0\\\what{w}_1\\\cdots\\\what{w}_k\end{pmatrix}$, this shows that $(m_f,m_g)\in Q$. Thus $(f,g)\in Q$, finishing the proof that $E\subseteq Q$.

It is now enough to prove $Q\subseteq P$. For this, it suffices to show that, for all $m_1,m_2\in\Terms{\base[\Mon]}$, if $m_1\leq_{Q}m_2$ then $m_1\leq_P m_2$; see the sentence after Notation~\ref{notation:leqP}. Write $m_1=t^{a_1}\chi^{u_1}$ and $m_2=t^{a_2}\chi^{u_2}$ and suppose that $m_1\leq_{Q}m_2$. Note that, because $v$ is the first row of a matrix defining $Q$, this implies that $\angbra{(a_1,u_1),v}\leq\angbra{(a_2,v_2),v}$. We consider three cases.

\begin{itemize}
\item Case 1: $|m_2|_P=0_{\kappa(P)}$, i.e., $m_2\in\idker(P)$. Then, for any $i$, $\angbra{(a_2,u_2),w_i}=-\infty$ so $\angbra{(a_2,u_2),v}<0$ by Lemma~\ref{lemma:VandW}(\ref{lemmaPart:infinite}). Thus $\angbra{(a_1,u_1),v}\leq\angbra{(a_2,u_2),v}<0$ so Lemma~\ref{lemma:VandW}(\ref{lemmaPart:infinite}) tells us that $\angbra{(a_1,u_1),w_i}=-\infty$ for all $i$, i.e., $|m_1|_P=0_{\kappa(P)}$. We have $|m_1|_P=0_{\kappa(P)}\leq|m_2|_P$. 

\item Case 2: $|m_1|_P=0_{\kappa(P)}$. Since $0_{\kappa(P)}$ is the smallest element of $\kappa(P)$, we have $|m_1|_P=0_{\kappa(P)}\leq|m_2|_{P}$.

\item Case 3: $|m_1|_P$ and $|m_2|_P$ are both not $0_{\kappa(P)}$. Since, for all $i$,  $\angbra{(a_1,u_1),w_i}\neq-\infty$ and $\angbra{(a_2,u_2),w_i}\neq-\infty$, Lemma~\ref{lemma:VandW}(\ref{lemmaPart:finite}) tells us that $\angbra{(a_1,u_1),v}=0=\angbra{(a_2,u_2),v}$, $\angbra{(a_1,u_1),\what{w}_i}=\angbra{(a_1,u_1),w_i}$, and $\angbra{(a_2,u_2),\what{w}_i}=\angbra{(a_2,u_2),w_i}$. So, because $Q$ is given by the matrix 
$\begin{pmatrix}v\\\what{w}_0\\\what{w}_1\\\vdots\\\what{w}_k\end{pmatrix}$
and $P$ is given by the matrix 
$\begin{pmatrix}w_0\\w_1\\\vdots\\w_k\end{pmatrix}$, the fact that $m_1\leq_{Q}m_2$ implies that $m_1\leq_P m_2$.
\end{itemize}
\par\nopagebreak\vspace{-1.5\baselineskip}\mbox{}
\end{proof}

\begin{remark}\label{rmk:forTropicalizedCases}
Intuitively, Theorem~\ref{thm:resolving-primes} says that, if no component of $\wt{V}(E)$ is contained in the toric boundary, then every minimal prime congruence of $\base[\Mon]/E$ has trivial kernel. 
In particular, suppose $K$ is a field with valuation $v:K\to\base$, let $X_\sigma$ denote the toric variety $\Spec K[\Mon]$, and let $T$ be the big torus in $X_\sigma$. 

Then, by Remark~\ref{rmk: Gubler-recession},
$$\wt{V}(E)=\cl_{\R_{\geq0}\times N_{\R}(\sigma)}\Big(  \R_{\geq0} \cdot \big( \{1\} \times \trop(Y) \big)  \Big).$$
Now, if $Y=V(I)$ has no component contained in the toric boundary $X_\sigma\sdrop T$, then by \cite[Theorem 6.2.18]{MS} we have 
$$\trop(Y)=\trop\Big(\cl_{X_\sigma}(Y\cap T)\Big)=\cl_{N_{\R}(\sigma)}\Big(\trop(Y\cap T)\Big)=\cl_{N_{\R}(\sigma)}\Big(\trop(Y)\cap N_{\R}\Big)$$ 
and so 
$\wt{V}(E)$ is the closure of $\wt{V}(E)\cap(\R_{\geq0}\times N_{\R})$ in $\R_{\geq0}\times N_{\R}(\sigma)$. Thus every minimal prime congruence of $\base[\Mon]/E=\base[\Mon]/\Bend(\trop I)$ has trivial ideal-kernel.
\end{remark}

\begin{remark}\label{rmk: Gubler-recession}
    Let $J \subseteq K[\Mon]$, where $K$ is a field with valuation $v:K\to\T$ and $\Mon$ and toric monoid. Let $I = \trop J \subseteq \T[\Mon]$ and so $V(I)=V(\Bend I) = \trop V(J)$. Then $\wt{V}(\Bend I) = \cl_{\R_{\geq0}\times N_{\R}(\sigma)}(\R_{\geq0} \cdot (\{1\} \times V(I))).$ 

    To see this, first note that $\wt{V}(\Bend I)=\Trop_W(V(J))$ where $\Trop_W(V(J))$ is as in \cite[Definition 8.3]{Gub11} and so we can assume that $V(J)$ is irreducible, since both closure and tropicalization respect finite unions.
    Next we can assume that $V(I) \cap N_{\R} \neq \emptyset$, since if $V(I) \cap N_{\R} = \emptyset$, then we can replace $N_{\R}(\sigma)$ with $N_\R(\sigma/\tau)$, where $\tau$ is the smallest face of $\sigma$ such that $V(I) \cap N_\R/\tau \neq \emptyset$.
    Now we note that $\wt{V}(\Bend I) \supseteq\cl_{\R_{\geq0}\times N_{\R}(\sigma)}(\R_{\geq0} \cdot (\{1\} \times V(I)))$ because $\wt{V}(\Bend I)$ is closed. On the other hand, 
\begin{align*}
    \wt{V}(\Bend I) &= \cl_{\R_{\geq0}\times N_{\R}(\sigma)}\left(\wt{V}(\Bend I) \cap (\R_{\geq0}\times N_\R)\right)\\
    &= \cl_{\R_{\geq0}\times N_{\R}(\sigma)}\bigg( \cl_{\R_{\geq0}\times N_{\R}} \Big( \R_{\geq0} \cdot \big(\{1\} \times (V(I)\cap N_\R) \big) \Big)  \bigg) \\
    &= \cl_{\R_{\geq0}\times N_{\R}(\sigma)}\Big(\R_{\geq0} \cdot \big(\{1\} \times (V(I)\cap N_\R) \big)  \Big) \\
    &\subseteq \cl_{\R_{\geq0}\times N_{\R}(\sigma)}\Big(\R_{\geq0} \cdot \big(\{1\} \times V(I)\big)\Big),
\end{align*}
    where the second equality follows from \cite[Corollary 11.13]{Gub11}.
\end{remark}

\begin{remark}\label{rmk:errorInMR}
    We suspect that, for any tropical ideal $I$ such that $V(I)=\cl_{N_{\R}(\sigma)}(V(I)\cap N_{\R})$, every minimal prime congruence of $\base[\Mon]/\Bend(\trop I)$ has trivial ideal kernel. 
    The proof in this case would rely on a statement corresponding to Remark~\ref{rmk: Gubler-recession} for tropical ideals. Such a statement is \cite[Proposition 2.12]{MR20}. However, the authors recently found a flaw in the proof of that result. 
\end{remark}

\begin{prop}\label{prop:RadicalOfDiagonalIsQCIfPrimesCanBeResolved}
Let $A$ be an additively idempotent semiring. If each prime congruence of $A$ contains a prime congruence with trivial ideal-kernel, then $A/\sqrt{\Delta}$ is cancellative.
\end{prop}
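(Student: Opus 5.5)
We prove cancellativity of $A/\sqrt{\Delta}$ directly from its description as an intersection of primes. Fix $a,b,c\in A$ with $\bar a\neq 0$ in $A/\sqrt{\Delta}$ and $(ab,ac)\in\sqrt{\Delta}$; we must show $(b,c)\in\sqrt{\Delta}$. By definition $\sqrt{\Delta}$ is the intersection of all prime congruences of $A$, so it suffices to show $(b,c)\in P$ for every prime $P$ of $A$.

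The first step is to reduce to primes with trivial ideal-kernel. Let $\mathcal{Q}$ denote the set of prime congruences of $A$ with trivial ideal-kernel. The hypothesis says that every prime $P$ contains some $Q\in\mathcal{Q}$. Hence
$$\sqrt{\Delta}=\bigcap_{P\text{ prime}}P=\bigcap_{Q\in\mathcal{Q}}Q,$$
since intersecting over the smaller family $\mathcal{Q}$ can only enlarge the intersection, while each $P$ contains some $Q$. So it suffices to show $(b,c)\in Q$ for each $Q\in\mathcal{Q}$.

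The second step uses cancellativity of $A/Q$. Fix $Q\in\mathcal{Q}$. Since $Q\supseteq\sqrt{\Delta}$, we have $(ab,ac)\in Q$, i.e.\ $\bar a\bar b=\bar a\bar c$ in $A/Q$. Because $Q$ has trivial ideal-kernel, $(a,0)\notin Q$, since otherwise $a=0$ in $A$. But $\bar a\neq 0$ in $A/\sqrt{\Delta}$ forces $a\neq0$ in $A$. Thus $\bar a\neq 0$ in $A/Q$. As $A/Q$ is cancellative, $\bar b=\bar c$ in $A/Q$, i.e.\ $(b,c)\in Q$. Intersecting over $\mathcal{Q}$ gives $(b,c)\in\sqrt{\Delta}$, which is the required cancellation.

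The only subtle point is the role of trivial ideal-kernel. Without it, $a$ could lie in the ideal-kernel of some prime $P$ even though $a\notin$ the ideal-kernel of $\sqrt{\Delta}$, and then cancellation in $A/P$ would say nothing about $(b,c)$. The hypothesis resolves this: for a prime $Q$ with trivial ideal-kernel, $a\neq0$ in $A$ already implies $\bar a\neq0$ in $A/Q$. Note also that $\sqrt{\Delta}$ itself has trivial ideal-kernel, since it is an intersection of such primes; this is consistent with the argument above.
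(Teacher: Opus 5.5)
Your proposal is correct and follows essentially the same route as the paper: write $\sqrt{\Delta}$ as the intersection of the primes with trivial ideal-kernel, then cancel $a$ in each such quotient, using that $a\neq 0$ stays nonzero there. You are slightly more careful than the paper in two places. You explicitly justify $\sqrt{\Delta}=\bigcap_{Q\in\mathcal{Q}}Q$, and you start from $\bar a\neq 0$ in $A/\sqrt{\Delta}$ rather than just $a\neq 0$ in $A$.
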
\begin{proof}

By hypothesis, we can write $\sqrt{\Delta}=\dcap_{i\in I}P_i$ where each $P_i$ is a prime congruence of $A$ with trivial ideal-kernel. Consider $g\neq0$ in $A$; we will show that the image of $g$ is cancellative in $A/\sqrt{\Delta}$. 
So suppose $gf_1$ is equal to $gf_2$ modulo $\sqrt{\Delta}$; we want to show $(f_1,f_2)\in\sqrt{\Delta}$. Thus, for each $i\in I$, $|g|_{P_i}|f_1|_{P_i}=|g|_{P_i}|f_2|_{P_i}$. Since $P_i$ has trivial ideal-kernel, $|g|_{P_i}\neq0_{\kappa(P)}$, so $|f_1|_{P_i}=|f_2|_{P_i}$, i.e., $(f_1,f_2)\in P_i$, because $P_i$ is prime. Thus $(f_1,f_2)\in\dcap_{i\in I}P_i=\sqrt{\Delta}$, as desired.
\end{proof}

\begin{coro}\label{coro:ClosureConditionImpliesRadicalQC}
Let $\base$ be a sub-semifield of $\T$, let $\Mon$ be a toric monoid, and let $E$ be a congruence on $\base[\Mon]$ with a finite tropical basis. Suppose that $\wt{V}(E)$ is the closure of $\wt{V}(E)\cap(\R_{\geq0}\times N_{\R})$ in $\R_{\geq0}\times N_{\R}(\sigma)$. Then $\base[\Mon]/\sqrt{E}$ is cancellative.
\end{coro}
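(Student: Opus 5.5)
The plan is to apply Proposition~\ref{prop:RadicalOfDiagonalIsQCIfPrimesCanBeResolved} to the semiring $A:=\base[\Mon]/E$. Prime congruences of $A$ correspond bijectively to prime congruences of $\base[\Mon]$ containing $E$, and this correspondence preserves inclusions. Under it, the diagonal $\Delta_A$ corresponds to $E$. Hence $\sqrt{\Delta_A}$ is the image of $\sqrt{E}$, and $A/\sqrt{\Delta_A}\cong\base[\Mon]/\sqrt{E}$. So it suffices to check the hypothesis of Proposition~\ref{prop:RadicalOfDiagonalIsQCIfPrimesCanBeResolved} for $A$: every prime congruence of $A$ contains a prime congruence of $A$ with trivial ideal-kernel.

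We also need the correspondence to preserve trivial ideal-kernels in the right direction. A prime $Q\supseteq E$ on $\base[\Mon]$ with trivial ideal-kernel induces a prime $\nbar{Q}$ on $A$. If $(\nbar{a},\nbar{0})\in\nbar{Q}$, then $(a,0)\in Q$, so $a=0$ in $\base[\Mon]$ and hence $\nbar{a}=\nbar{0}$. Thus $\nbar{Q}$ has trivial ideal-kernel.

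Now fix a prime $P$ on $\base[\Mon]$ with $E\subseteq P$. The proof of Theorem~\ref{thm:resolving-primes} constructs exactly a prime $Q$ with trivial ideal-kernel and $E\subseteq Q\subseteq P$; its hypotheses are the finite tropical basis and the closure condition, both of which we are given. I will cite that construction directly, rather than only the statement about minimal primes. Alternatively, one can argue from the statement about minimal primes. Every prime of $A$ contains a minimal prime, by Zorn's lemma: a descending chain of primes has a prime intersection, since totally ordered and cancellative quotients are preserved, as in \cite{JM17}. Theorem~\ref{thm:resolving-primes} then says that this minimal prime has trivial ideal-kernel.

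Applying Proposition~\ref{prop:RadicalOfDiagonalIsQCIfPrimesCanBeResolved} then shows that $A/\sqrt{\Delta_A}\cong\base[\Mon]/\sqrt{E}$ is cancellative. The only step needing care is the bookkeeping that radicals and ideal-kernels pass correctly through the quotient map. Citing the constructive form of the proof of Theorem~\ref{thm:resolving-primes} avoids the Zorn argument entirely, so I expect no real obstacle.
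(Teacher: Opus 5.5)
Your proposal is correct and is essentially the paper's argument. The paper derives this corollary directly from Theorem~\ref{thm:resolving-primes} and Proposition~\ref{prop:RadicalOfDiagonalIsQCIfPrimesCanBeResolved}; you have made explicit the passage to $A=\base[\Mon]/E$ (primes of $A$ versus primes of $\base[\Mon]$ containing $E$, with trivial ideal-kernels preserved) that the paper leaves implicit, and citing the construction $E\subseteq Q\subseteq P$ from the proof of Theorem~\ref{thm:resolving-primes} indeed verifies the proposition's hypothesis without any Zorn's lemma argument.
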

\begin{proof}
This follows from Theorem~\ref{thm:resolving-primes} and Proposition~\ref{prop:RadicalOfDiagonalIsQCIfPrimesCanBeResolved}.
\end{proof}

\section{Convex Piecewise-Linear Functions}\label{app:CPL}

In this section we provide a geometric proof of Corollary~\ref{coro:ClosureConditionImpliesRadicalQC} using the convex piece-wise linear functions on the variety of the congruence $E$. This proof does not tell us anything about the minimal primes over $E$ or how to get them.

This geometric approach allows us to view the quotient by the radical of a congruence as the functions on the variety of said congruence. 

In this section $\Lattice$ is a finitely generated free abelian group and $\Lattice_\R $ denotes the vector space $ \Lattice\otimes_{\Z}\R$, $N:=\Lattice^*=\Hom(\Lattice,\Z)$ and $N_\R=\Hom(\Lattice,\R)\cong N\otimes_{\Z}\R$. The cone $\sigma$ is a strongly convex rational polyhedral cone and $ \Mon=\Mon_{\sigma}$, the toric monoid corresponding to $\sigma$. We use $\tau$ to denote a face of $\sigma$.

\begin{defi}
A function $\ph:\R_{\geq0}\times N_{\R}\to \R\cup\{-\infty\}$ is called $(\Gamma,\Z)$-linear if it is of the form $\ph(r,x)=r\gamma+\angbra{x,u}$ for some $\gamma\in\Gamma$ and $u\in\Lattice$. 
\end{defi}

Note that such a $\ph$ extends continuously to a point $(r,x)\in\R_{\geq0}\times (N_{\R}/\tau)$ if and only if $u\in \Mon_{\tau}:=\tau^\vee\cap \Lattice$. Thus, when we consider the corresponding functions on $\R_{\geq0}\times N_{\R}(\sigma)$, we only allow $u\in\Mon$.

\begin{defi}
A function $\ph:\R_{\geq0}\times N_{\R}(\sigma)\to \R\cup\{-\infty\}$ is called $(\Gamma,\Z)$-linear if it is of the form $\ph(r,x)=r\gamma+\angbra{x,u}$ for some $\gamma\in\Gamma$ and $u\in\Mon$. A \emph{convex, piecewise-$(\Gamma,\Z)$-linear function} on $\R_{\geq0}\times N_{\R}(\sigma)$ is a convex piecewise-linear function $\R_{\geq0}\times N_{\R}(\sigma)\to\R\cup\{-\infty\}$ such that each of the linear functions used is $(\Gamma,\Z)$-linear. We let $\CPL(\R_{\geq0}\times N_{\R}(\sigma))=\CPL_{\Gamma}(\R_{\geq0}\times N_{\R}(\sigma))$ denote the semiring of such functions, where the operations are given by taking the maximum of two functions and taking the (real valued) sum. 
\end{defi}

Thus $\Phi:\R_{\geq0}\times N_{\R}(\sigma)\to\R\cup\{-\infty\}$ is in $\CPL_{\Gamma}(\R_{\geq0}\times N_{\R}(\sigma))$ exactly if it can be written as $\Phi=\max\{\ph_1,\ldots,\ph_n\}$ with each of $\ph_1,\ldots\ph_n$ being $(\Gamma,\Z)$-linear, i.e., if it is of the form $\Phi=\wt{f}$ for some $f\in \base[\Mon]$; see \cite[Exercise 1.9.4]{MS} and the introduction of \cite{TW24}\footnote{Here, as in Section~\ref{sec:Filters}, if $f=\dsum_{u\in\Mon} f_u\chi^u$, then
$\wt{f}(r,x)=\displaystyle\max_{u\in\Mon}\left(r\log(f_u)+_{\R}\angbra{x,u}\right)$ for all $(r,x)\in\R_{\geq0}\times N_{\R}(\sigma)$.}.

If $(r,x)\in \R_{\geq0}\times N_{\R}(\sigma)$ has $x\in N_{\R}/\tau$, then $\tau$ is the smallest face $\rho$ of $\sigma$ such that $(r,x)\in \R_{\geq0}\times N_{\R}(\rho)$. This motivates the following definition and shows that it agrees with the previous ones.

\begin{defi}
Let $X$ be any subset of $\R_{\geq0}\times N_{\R}(\sigma)$ and let $\tau$ be the smallest face of $\sigma$ such that $X\subseteq\R_{\geq0}\times N_{\R}(\tau)$. A \emph{convex, piecewise-$(\Gamma,\Z)$-linear function} on $X$ is a function $X\to \R\cup\{-\infty\}$ that is the restriction of some $\Phi\in\CPL_{\Gamma}(\R_{\geq0}\times N_{\R}(\tau))$. We let $\CPL_\Gamma(X)=\CPL(X)$ denote the semiring of such functions.
\end{defi}

In particular, in the above situation there is a surjective evaluation map $\base[\Mon_{\tau}]\to\CPL_{\Gamma}(X)$.

The following lemma allows us to make connections between congruences on $\base[\Mon]$ and $\base[\Mon_\tau]$ and consequently between the convex, piecewise-$(\Gamma,\Z)$-linear functions on their varieties.
\begin{lemma}
\label{lemma: extensionToMoreMonomials}
    Let $E$ be a congruence on $\base[\Mon]$ and let $\tau$ be a face of $\sigma$. Then the push-forward of $E$ along the inclusion $\base[\Mon]\into\base[\Mon_\tau]$ consists of pairs of the form $m\alpha$, where $m$ is a monomial of $\base[\Mon_\tau]$ and $\alpha\in E$.  
\end{lemma}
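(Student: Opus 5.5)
The plan is to compute the pushforward directly. The inclusion $\iota:\base[\Mon]\into\base[\Mon_\tau]$ makes $\base[\Mon_\tau]$ the localization of $\base[\Mon]$ at the multiplicative set of monomials $\chi^u$ with $u\in\Mon\cap\tau^\perp$. This holds because $\Mon_\tau=\Mon+\Z(\Mon\cap\tau^\perp)$: given $u\in\tau^\vee\cap\Lattice$, choose $u_0\in\Mon\cap\relint(\tau^\perp\cap\sigma^\vee)$, which is a relative interior point of the face $\tau^\perp\cap\sigma^\vee$ of $\sigma^\vee$. Then $u+Nu_0\in\sigma^\vee$ for $N\gg0$. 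This is the standard fact that $\tau^\vee=\sigma^\vee+\R\,(\tau^\perp\cap\sigma^\vee)$, obtained by scaling, rationality, and the saturation of $\Mon$. So every monomial $m'$ of $\base[\Mon_\tau]$ can be written as $m\cdot\chi^{-u}$ with $m$ a monomial of $\base[\Mon]$ and $u\in\Mon\cap\tau^\perp$.

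Let $E'$ be the set of pairs $m\alpha=(mf,mg)$ with $m$ a monomial of $\base[\Mon_\tau]$ and $\alpha=(f,g)\in E$. The pushforward is the smallest congruence containing $\iota(E)$. Clearly $E'$ is contained in it, because congruences are closed under multiplication by elements of $\base[\Mon_\tau]$. So it suffices to show that $E'$ is itself a congruence on $\base[\Mon_\tau]$. The key tool is the rewriting above: given finitely many pairs $m_j\alpha_j\in E'$, write $m_j=\mu_j\chi^{-u_j}$ with $\mu_j$ a monomial of $\base[\Mon]$ and $u_j\in\Mon\cap\tau^\perp$. Put $u=\sum_j u_j$. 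Then $m_j\alpha_j=\chi^{-u}\beta_j$, where $\beta_j=\big(\mu_j\chi^{u-u_j}\big)\alpha_j\in E$, since $E$ is closed under multiplication by elements of $\base[\Mon]$. So any finite family of pairs in $E'$ can be written with a common monomial factor $\chi^{-u}$.

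With that in hand, the congruence axioms follow from those of $E$. Reflexivity: $(h,h)=\chi^{-u}(\chi^u h,\chi^u h)$, choosing $u$ so that $\chi^u h\in\base[\Mon]$ (clear denominators termwise and add up the exponents). Symmetry is immediate. For transitivity, take $(a,b),(b,c)\in E'$ and write them as $\chi^{-u}\beta_1$ and $\chi^{-u}\beta_2$. Since $\chi^u$ is a unit, $\beta_1=(\chi^u a,\chi^u b)$ and $\beta_2=(\chi^u b,\chi^u c)$ in $E$, so $(\chi^u a,\chi^u c)\in E$. For addition, $\chi^{-u}\beta_1+\chi^{-u}\beta_2=\chi^{-u}(\beta_1+\beta_2)$. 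For multiplication by $h\in\base[\Mon_\tau]$, write $h=\chi^{-u'}h_0$ with $h_0\in\base[\Mon]$, so $h\chi^{-u}\beta=\chi^{-u-u'}(h_0\beta)$.

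The main obstacle is the combinatorial fact $\Mon_\tau=\Mon-(\Mon\cap\tau^\perp)$. The rest is formal bookkeeping.
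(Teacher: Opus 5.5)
Your proof is correct, and its core idea is the same as the paper's: clear denominators with a monomial that is a unit in $\base[\Mon_\tau]$. The packaging differs. The paper invokes \cite[Lemma 2.4]{MR14} to describe the generated congruence as the transitive closure of the set $U$ of pairs $m(a,b)+(h,h)$. It rewrites each such pair as $\mu^{-n}$ times an element of $E$, and then says only that ``a similar argument'' shows $U$ is already transitively closed. You instead check directly that the set $E'$ of monomial multiples of pairs in $E$ is itself a congruence. Putting finitely many pairs over a common denominator $\chi^{-u}$ turns each axiom, including transitivity, into a one-line consequence of the same axiom for $E$. So your version needs no citation and spells out the step the paper leaves implicit.

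Your choice of denominator is also the right one, and here your version is better than the paper's. The paper takes $\mu\in\relint(\sigma^\vee)\cap\Lattice$. When $\tau\neq\{0\}$, such a $\mu$ pairs strictly negatively with every nonzero vector of $\tau$, so $-\mu\notin\tau^\vee$ and $\mu^{-n}$ is not a monomial of $\base[\Mon_\tau]$. Your $u_0$ lies in the relative interior of the face $\tau^\perp\cap\sigma^\vee$, so it satisfies $\sigma\cap u_0^\perp=\tau$. That is what makes both $\chi^{-u_0}\in\base[\Mon_\tau]$ and $u+Nu_0\in\sigma^\vee$ for $N\gg0$ hold at once.
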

\begin{proof}
    By \cite[Lemma 2.4]{MR14}, the congruence $E'$ on $\base[\Mon_\tau]$ generated by $E$ is equal to the transitive closure of the set $U$ of all pairs of the form $m(a,b)+(h,h)$ with $(a,b)\in E$, $m,h\in\base[\Mon_\tau]$ and m is a monomial. Since $h\in\base[\Mon_\tau]$ there are $h'\in\base[\Mon]$ and $m'$ a monomial in $\base[\Mon_\tau]$ such that $h=m'h'$.

    Pick any $\mu\in(\relint(\sigma^\vee))\cap\Lattice\subseteq\Mon$, considered as a monomial of $\base[\Mon]$. Then, for any sufficiently large $n$ we have $m\cdot\mu^n,m'\cdot\mu^n\in\base[\Mon]$. So $m\mu^n(a,b)\in E$ and $\mu^n(h,h)=m'\mu^n(h',h')\in E$. Since $\mu^{-n}$ is a monomial of $\base[\Mon_\tau]$, we see that we can write every $m(a,b)+(h,h)=\mu^{-n}(m\mu^n(a,b)+\mu^n(h,h))$ in $U$ as a monomial of $\base[\Mon_\tau]$ times a pair in $E$.

    A similar argument shows that $U$ is already transitively closed, so $E'=U$.
\end{proof}

\begin{coro}\label{coro:cong_variety_of_pushforward}
Let $E$ be a congruence on $\base[\Mon]$, let $\tau$ be a face of $\sigma$, and let $E_\tau$ be the push-forward of $E$ along the inclusion $\base[\Mon]\into\base[\Mon_\tau]$. Then $\wt{V}(E_\tau) = \wt{V}(E) \cap \big(\R_{\geq0}\times N_{\R}(\tau)\big)$.
\end{coro}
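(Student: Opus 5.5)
The proof is a direct double inclusion, using Lemma~\ref{lemma: extensionToMoreMonomials} to describe $E_\tau$ concretely. We regard $\R_{\geq0}\times N_{\R}(\tau)$ as an open subset of $\R_{\geq0}\times N_{\R}(\sigma)$ via the inclusion $N_{\R}(\tau)\into N_{\R}(\sigma)$. Under this identification, for $f\in\base[\Mon]$ the function $\wt{f}$ on $\R_{\geq0}\times N_{\R}(\tau)$, with $f$ viewed in $\base[\Mon_\tau]$, agrees with the restriction of $\wt{f}$ on $\R_{\geq0}\times N_{\R}(\sigma)$. This is because a point of $N_{\R}(\tau)$ is a monoid map on $\Mon_\tau$, and its image in $N_{\R}(\sigma)$ is its restriction to $\Mon\subseteq\Mon_\tau$. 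We will check this compatibility first, since it is just unwinding the definitions of $\ph_w$ and the stratification.

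For the inclusion $\wt{V}(E_\tau)\subseteq\wt{V}(E)\cap(\R_{\geq0}\times N_{\R}(\tau))$, we note that $E\subseteq E_\tau$ via the inclusion of semirings. Hence any $w\in\wt{V}(E_\tau)$ satisfies $\wt{f}(w)=\wt{g}(w)$ for all $(f,g)\in E$, and by the compatibility above this is the same equation in $\R_{\geq0}\times N_{\R}(\sigma)$.

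For the reverse inclusion, let $w\in\wt{V}(E)\cap(\R_{\geq0}\times N_{\R}(\tau))$ and take any pair in $E_\tau$. By Lemma~\ref{lemma: extensionToMoreMonomials} it has the form $m(f,g)=(mf,mg)$ with $m$ a monomial of $\base[\Mon_\tau]$ and $(f,g)\in E$. Since $m$ is a single term, $\wt{mf}(w)=\wt{m}(w)+\wt{f}(w)$, with the convention that $-\infty$ absorbs, and similarly $\wt{mg}(w)=\wt{m}(w)+\wt{g}(w)$. Here $\wt{m}(w)\in\R\cup\{-\infty\}$ is well defined because $m$ has exponent in $\Mon_\tau$ and $w$ lies in $\R_{\geq0}\times N_{\R}(\tau)$. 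Since $\wt{f}(w)=\wt{g}(w)$, we get $\wt{mf}(w)=\wt{mg}(w)$, so $w\in\wt{V}(E_\tau)$.

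The only step requiring care is the multiplicativity $\wt{mf}=\wt{m}+\wt{f}$ when some pairings are $-\infty$, which holds because addition in $\nbar{\R}$ distributes over max. The rest is bookkeeping, so the main obstacle is just the identification of evaluations on the open stratum union $N_{\R}(\tau)$.
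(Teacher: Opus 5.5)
Your proposal is correct and is essentially the paper's argument. The paper also reduces to checking, for $w\in\R_{\geq0}\times N_{\R}(\tau)$, that $w\in\wt{V}(E)$ if and only if $w\in\wt{V}(E_\tau)$, and simply calls this straightforward given Lemma~\ref{lemma: extensionToMoreMonomials}; you have supplied the omitted details, namely $E\subseteq E_\tau$ for one inclusion and $\wt{mf}=\wt{m}+\wt{f}$ for a monomial $m$ of $\base[\Mon_\tau]$ for the other.
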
\begin{proof}
It suffices to show that, for any $w\in \R_{\geq0}\times N_{\R}(\tau)$, $w\in \wt{V}(E)$ if and only if $w\in\wt{V}(E_\tau)$. Given Lemma~\ref{lemma: extensionToMoreMonomials}, it is straightforward to check this.
\end{proof}

\begin{proposition}\label{prop: quotientByRadofBendIsFunctions}
Let $E$ be a congruence on $\base[\Mon]$ with a finite tropical basis. Then the evaluation map induces an injection
$$\frac{\base[\Mon]}{\sqrt{E}}\into\CoordinateFunctionSemiring\left(\wt{V}(E)\right).$$
Moreover, if $\tau$ is the smallest face of $\sigma$ such that $\wt{V}(E)\subseteq \R_{\geq0}\times N_{\R}(\tau)$ and we let $E_{\tau}$ be the push-forward of $E$ along the inclusion $\base[\Mon]\into\base[\Mon_{\tau}]$, then the evaluation map induces an isomorphism 
$$\frac{\base[\Mon_\tau]}{\sqrt{E_\tau}}\cong\CoordinateFunctionSemiring\left(\wt{V}(E)\right).$$
\end{proposition}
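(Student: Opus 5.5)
The evaluation map $\base[\Mon]\to\CPL(\wt{V}(E))$ sends $f$ to the restriction of $\wt{f}$ to $\wt{V}(E)$. I will show that its congruence-kernel is exactly $\sqrt{E}$; then it factors through an injection from $\base[\Mon]/\sqrt{E}$.

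The kernel is the set of pairs $(f,g)$ with $\wt{f}(w)=\wt{g}(w)$ for all $w\in\wt{V}(E)$. For each $w\in\wt{V}(E)$, agreement at $w$ (and hence along the whole ray $\R_{\geq0}w$, by homogeneity) is the same as $(f,g)\in P_w$. Indeed, $P_w$ is given by the one-row matrix $w$, so $|f|_{P_w}=|g|_{P_w}$ exactly when $\wt{f}(w)=\wt{g}(w)$; the case $w=0$ is trivial. Hence the kernel equals $\dcap_{w\in\wt{V}(E)}P_w$. By Corollary~\ref{coro:TropBasisImpliesRankOnePrimesEnough} this intersection is $\sqrt{E}$, which gives the injection. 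I also need to check the compatibility of the operations: max corresponds to $+$ and real sum corresponds to $\cdot$, since $\wt{fg}=\wt{f}+\wt{g}$ and $\wt{f+g}=\max(\wt{f},\wt{g})$. This makes the map a semiring homomorphism.

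For the second statement I pass to $\base[\Mon_\tau]$. By Corollary~\ref{coro:cong_variety_of_pushforward}, $\wt{V}(E_\tau)=\wt{V}(E)\cap(\R_{\geq0}\times N_{\R}(\tau))$, which equals $\wt{V}(E)$ by the choice of $\tau$. Next, $E_\tau$ has a finite tropical basis. If $(f_i,g_i)$ is a finite tropical basis for $E$, then the same pairs, viewed in $\base[\Mon_\tau]$, cut out $\wt{V}(E)\cap(\R_{\geq0}\times N_{\R}(\tau))=\wt{V}(E_\tau)$, again by Corollary~\ref{coro:cong_variety_of_pushforward}; this uses Lemma~\ref{lemma: extensionToMoreMonomials}, since pairs $m\alpha$ vanish wherever $\alpha$ does. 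Applying the first part to $E_\tau$ on $\base[\Mon_\tau]$ (whose toric monoid corresponds to the cone $\tau$) gives an injection $\base[\Mon_\tau]/\sqrt{E_\tau}\into\CPL(\wt{V}(E))$. Surjectivity is built into the definition: $\CPL(X)$ for $X=\wt{V}(E)$ consists of restrictions of elements of $\CPL_\Gamma(\R_{\geq0}\times N_{\R}(\tau))$, and every such function has the form $\wt{f}$ with $f\in\base[\Mon_\tau]$. So the evaluation map from $\base[\Mon_\tau]$ is surjective, and we obtain the isomorphism.

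One subtlety is that $\tau$ is defined as the smallest face with $\wt{V}(E)\subseteq\R_{\geq0}\times N_{\R}(\tau)$. I must make sure this matches the $\tau$ used in the definition of $\CPL(X)$, and that the first part's "$\CPL(\wt{V}(E))$" is the same semiring; the two definitions coincide since the $\tau$'s agree. I expect the main obstacle to be the check that $E_\tau$ inherits a finite tropical basis and that $\wt{V}(E_\tau)$ is computed correctly at boundary points. Both rest on Lemma~\ref{lemma: extensionToMoreMonomials}: multiplying by a monomial $m$ of $\base[\Mon_\tau]$ does not create new disagreements on $\R_{\geq0}\times N_{\R}(\tau)$, because $\wt{m}$ is finite there. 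Everything else is formal.
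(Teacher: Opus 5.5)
Your proposal is correct and follows the paper's proof essentially step for step. The congruence-kernel of evaluation is $\dcap_{w\in\wt{V}(E)}P_w=\sqrt{E}$ by Corollary~\ref{coro:TropBasisImpliesRankOnePrimesEnough}. For the isomorphism you pass to $\base[\Mon_\tau]$ via Corollary~\ref{coro:cong_variety_of_pushforward}, note that a tropical basis of $E$ is one for $E_\tau$, and get surjectivity from the definition of $\CPL$.

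One small quibble: calling the case $w=0$ trivial is too quick, and the same goes for any cone vertex. At such a point $\wt{f}(w)\in\{0,-\infty\}$ records whether $f$ has a term that is finite on that stratum. The one-row-matrix description of $P_w$ still covers this case, so nothing breaks.
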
\begin{proof}

Let $\operatorname{ev}_{\wt{V}(E)}:\base[\Mon]\to\CPL(\wt{V}(E))$ be the evaluation map.

We can view $\CoordinateFunctionSemiring\left(\wt{V}(E)\right)$ as a subsemiring of 
$\dprod_{x\in\wt{V}(E)}\T$, so the congruence-kernel of $\operatorname{ev}_{\wt{V}(E)}$ is the intersection of the congruence kernels of the evaluations maps $\operatorname{ev}_w:\base[\Mon]\to\T$ given by evaluation at $w$ for $w\in\wt{V}(E)$. But the congruence kernel of $\operatorname{ev}_w$ is $P_w$, so the congruence kernel of $\operatorname{ev}_{\wt{V}(E)}$ is $\dcap_{w\in\wt{V}(E)}P_w=\sqrt{E}$ by Corollary~\ref{coro:TropBasisImpliesRankOnePrimesEnough}. Thus $\operatorname{ev}_{\wt{V}(E)}$ induces an injective homomorphism $\dfrac{\base[\Mon]}{\sqrt{E}}\into\CoordinateFunctionSemiring\left(\wt{V}(E)\right)$,
completing the proof of the first part of the proposition.

By Corollary~\ref{coro:cong_variety_of_pushforward}, $\wt{V}(E)=\wt{V}(E_\tau)$. Moreover, any tropical basis of $E$ is also a tropical basis of $E_\tau$. So we may assume without loss of generality that $\tau=\sigma$.

In this case, we know that the evaluation map $\operatorname{ev}_{\wt{V}(E)}:\base[\Mon]\to\CPL(\wt{V}(E))$ is surjective. 
On the other hand, the first part of the proposition shows that $\operatorname{ev}_{\wt{V}(E)}$ has kernel $\sqrt{E}$. Thus $\operatorname{ev}_{\wt{V}(E)}$ induces an isomorphism $\dfrac{\base[\Mon]}{\sqrt{E}}\cong\CoordinateFunctionSemiring\left(\wt{V}(E)\right)$.
\end{proof}

The following two corollaries are immediate consequences of Proposition~\ref{prop: quotientByRadofBendIsFunctions}

\begin{coro}\label{coro:quotientIsPLFunctionsWithoutPushforward}
Let $E$ be a congruence on $\base[\Mon]$ with a finite tropical basis. Suppose that there is no smaller face $\tau\lneq\sigma$ such that $\R_{\geq0}\times N_{\R}(\tau)$ contains $\wt{V}(E)$.  Then the evaluation map induces an isomorphism 
$$\frac{\base[\Mon]}{\sqrt{E}}\cong\CoordinateFunctionSemiring\left(\wt{V}(E)\right).$$
\end{coro}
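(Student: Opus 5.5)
This corollary should be read off directly from the second part of Proposition~\ref{prop: quotientByRadofBendIsFunctions}. The plan is to show that the face $\tau$ appearing there is $\sigma$ itself under the present hypothesis, so that $\Mon_\tau=\Mon$ and $E_\tau=E$, and then to invoke the isomorphism $\base[\Mon_\tau]/\sqrt{E_\tau}\cong\CPL(\wt{V}(E))$.

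First we check that the smallest face $\tau$ of $\sigma$ with $\wt{V}(E)\subseteq\R_{\geq0}\times N_{\R}(\tau)$ is well defined. The open subsets $\R_{\geq0}\times N_{\R}(\rho)$ for $\rho\leq\sigma$ are closed under intersection, since $N_{\R}(\rho_1)\cap N_{\R}(\rho_2)=N_{\R}(\rho_1\cap\rho_2)$. This holds because $N_{\R}(\rho)$ is the union of the strata $N_{\R}/\rho'$ for $\rho'\leq\rho$, and the faces contained in both $\rho_1$ and $\rho_2$ are exactly the faces of $\rho_1\cap\rho_2$. Hence there is a unique minimal such face.

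By hypothesis no proper face $\tau\lneq\sigma$ has $\R_{\geq0}\times N_{\R}(\tau)\supseteq\wt{V}(E)$, so this minimal face is $\sigma$. Then $\Mon_\sigma=\sigma^\vee\cap\Lattice=\Mon$, and the push-forward of $E$ along the identity inclusion $\base[\Mon]\to\base[\Mon_\sigma]$ is just $E$.

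Since $E$ has a finite tropical basis, Proposition~\ref{prop: quotientByRadofBendIsFunctions} applies. It gives that the evaluation map induces an isomorphism $\base[\Mon]/\sqrt{E}\cong\CPL(\wt{V}(E))$, which is the claim. There is no real obstacle; the only point needing care is the identification of the minimal face, which the hypothesis supplies verbatim.
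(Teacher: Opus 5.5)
Your proposal is correct and follows the paper's route. The paper calls this corollary an immediate consequence of the second part of Proposition~\ref{prop: quotientByRadofBendIsFunctions} with $\tau=\sigma$, so that $\Mon_\tau=\Mon$ and $E_\tau=E$. Your extra check that the minimal face is well defined is a harmless addition that the paper takes for granted.
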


\begin{coro}
Let $E$ be a congruence on $\base[x_1^{\pm1},\ldots,x_n^{\pm1}]$ with a finite tropical basis. Then the evaluation map induces an isomorphism 
$$\frac{\base[x_1^{\pm1},\ldots,x_n^{\pm1}]}{\sqrt{E}}\cong\CoordinateFunctionSemiring\left(\wt{V}(E)\right).$$
\end{coro}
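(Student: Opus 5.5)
This follows from Proposition~\ref{prop: quotientByRadofBendIsFunctions}. The only task is to check that, for $\base[x_1^{\pm1},\ldots,x_n^{\pm1}]$, the push-forward step in that proposition is trivial. I would realize $\base[x_1^{\pm1},\ldots,x_n^{\pm1}]$ as $\base[\Mon]$ with $\Lattice=\Z^n$ and $\sigma=\{0\}$. Then $\sigma^\vee=\Lattice_\R$ and $\Mon=\sigma^\vee\cap\Lattice=\Z^n$. In this case $N_{\R}(\sigma)=N_{\R}$, since $\{0\}$ has only itself as a face and so there is only one stratum.

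Next I would apply Corollary~\ref{coro:quotientIsPLFunctionsWithoutPushforward}. Its hypothesis asks that there be no face $\tau\lneq\sigma$ with $\R_{\geq0}\times N_{\R}(\tau)\supseteq\wt{V}(E)$. This holds vacuously, because $\sigma=\{0\}$ has no proper faces. Equivalently, in Proposition~\ref{prop: quotientByRadofBendIsFunctions} the smallest face $\tau$ with $\wt{V}(E)\subseteq\R_{\geq0}\times N_{\R}(\tau)$ is $\tau=\sigma$. Then $\Mon_\tau=\Mon$ and the push-forward $E_\tau$ is just $E$.

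The conclusion of the corollary is then exactly the isomorphism $\base[x_1^{\pm1},\ldots,x_n^{\pm1}]/\sqrt{E}\cong\CPL(\wt{V}(E))$ induced by evaluation. I do not expect a real obstacle. The only points to get right are the identification of the Laurent polynomial semiring with the toric semiring for the zero cone, and the degenerate case $\wt{V}(E)=\emptyset$. In that case $\tau$ is still $\sigma$ by the vacuity argument above, so the proposition applies as stated.
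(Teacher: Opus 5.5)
Your proposal is correct and matches the paper, which obtains this corollary as an immediate consequence of Proposition~\ref{prop: quotientByRadofBendIsFunctions} (equivalently, of Corollary~\ref{coro:quotientIsPLFunctionsWithoutPushforward}) by taking $\sigma=\{0\}$. Then the only face is $\tau=\sigma$, so $\Mon_\tau=\Mon=\Z^n$ and $E_\tau=E$. Your check of the degenerate case $\wt{V}(E)=\emptyset$ is a harmless extra.
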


On the other hand, we can also use Proposition~\ref{prop: quotientByRadofBendIsFunctions} to give another proof of Theorem~\ref{thm-intro-rad}, which we restate as Corollary~\ref{coro: IntroThmAsAppendixCoro}

\begin{coro}\label{coro: IntroThmAsAppendixCoro}
Let $\base$ be a sub-semifield of $\T$, let $\Mon$ be the toric monoid corresponding to the cone $\sigma$, and let $E$ be a congruence on $\base[\Mon]$ with a finite tropical basis. Suppose that $\wt{V}(E)$ is the closure of $\wt{V}(E)\cap(\R_{\geq0}\times N_{\R})$ in $\R_{\geq0}\times N_{\R}(\sigma)$. Then $\base[\Mon]/\sqrt{E}$ is cancellative.
\end{coro}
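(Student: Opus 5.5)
We give a geometric proof that goes through Proposition~\ref{prop: quotientByRadofBendIsFunctions}, with no appeal to minimal primes. Let $\tau$ be the smallest face of $\sigma$ such that $\wt{V}(E)\subseteq\R_{\geq0}\times N_{\R}(\tau)$.

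First we show that the closure hypothesis forces $\tau=\sigma$, provided $\wt{V}(E)\cap(\R_{\geq0}\times N_{\R})$ is nonempty. Since $\wt{V}(E)\cap(\R_{\geq0}\times N_{\R})$ is contained in $\R_{\geq0}\times N_{\R}(\{0\})\subseteq\R_{\geq0}\times N_{\R}(\rho)$ for every face $\rho$, we get $\tau=\{0\}$ at the level of the set of faces actually needed. More precisely, the relevant $\tau$ is the smallest face of $\sigma$ whose open chart contains $\wt{V}(E)$. The degenerate case is $\wt{V}(E)\subseteq\{(0,0)\}$-type sets, for instance when $E$ is everything. In that case $\sqrt{E}$ is either all of $A\times A$ (the quotient is the zero semiring, which is vacuously cancellative) or handled directly.

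Rather than fuss with $\tau$, we argue directly with the injection from the first part of Proposition~\ref{prop: quotientByRadofBendIsFunctions}:
$$\base[\Mon]/\sqrt{E}\into\CPL(\wt{V}(E)),$$
which is given by $f\mapsto \wt{f}|_{\wt{V}(E)}$. Cancellativity of the image then suffices.

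\textbf{Key step.} Suppose $f,g,h\in\base[\Mon]$ with $\wt{h}|_{\wt V(E)}\not\equiv-\infty$, that is, $h\notin$ the zero class, and suppose $\wt{h}+\wt{f}=\wt{h}+\wt{g}$ on $\wt{V}(E)$. Set $X^\circ=\wt{V}(E)\cap(\R_{\geq0}\times N_{\R})$. On $X^\circ$ every monomial takes finite values, so $\wt{h}$ is finite on $X^\circ$ whenever $h\neq0$ as a polynomial. Real subtraction then gives $\wt{f}=\wt{g}$ on $X^\circ$.

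We must make sure that $h$ being nonzero modulo $\sqrt{E}$ forces $h\neq 0$ as a polynomial. This is clear. We also need $X^\circ\neq\emptyset$. If $X^\circ=\emptyset$, the hypothesis gives $\wt{V}(E)=\emptyset$, so every function on $\wt V(E)$ is equal and the quotient is zero. (In fact $(0,0)\in\wt V(E)$ always lies in $\R_{\geq0}\times N_{\R}$, so $X^\circ\ne\emptyset$.)

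Finally, $\wt{f}$ and $\wt{g}$ are continuous maps $\R_{\geq0}\times N_{\R}(\sigma)\to\R\cup\{-\infty\}$. This uses the topology on $\nbar{\R}$ and the fact that each term $r\gamma+\angbra{x,u}$ with $u\in\Mon$ is continuous, and max is continuous. Their equality on $X^\circ$ therefore extends to the closure of $X^\circ$, which by hypothesis is $\wt{V}(E)$. Hence $\wt f=\wt g$ on $\wt V(E)$, and injectivity gives $(f,g)\in\sqrt{E}$.

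\textbf{Main obstacle.} The step I expect to need care is the continuity claim. We must check that $\wt{f}$ extends continuously across boundary strata, including points where the value is $-\infty$. This follows from the definition of the topology via $x\mapsto e^x$ on $\nbar{\R}$. Everything else is formal.
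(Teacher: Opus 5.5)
Your argument is correct and is essentially the paper's proof. Both use the injection $\base[\Mon]/\sqrt{E}\into\CPL(\wt{V}(E))$ from Proposition~\ref{prop: quotientByRadofBendIsFunctions}, then continuity plus the closure hypothesis to pass to $\wt{V}(E)\cap(\R_{\geq0}\times N_{\R})$, and finally the fact that $\wt{h}$ is real-valued there whenever $h\neq0$. The only difference is that you argue elementwise rather than factoring through the injective restriction $\CPL(\wt{V}(E))\into\CPL\big(\wt{V}(E)\cap(\R_{\geq0}\times N_{\R})\big)$.

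You should delete the abandoned first paragraph about $\tau$ and the parenthetical claim that $(0,0)\in\wt{V}(E)$ always. Both are false in general: for $E=\angbra{(x,1)}$ on $\T[x]$ the relevant face is $\{0\}\neq\sigma$, and $(0,0)\notin\wt{V}(E)$ whenever $E$ has nontrivial ideal-kernel. Nothing in your argument relies on them.
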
\begin{proof}
By Proposition~\ref{prop: quotientByRadofBendIsFunctions}, we have an injective homomorphism $\dfrac{\base[\Mon]}{\sqrt{E}}\into\CPL(\wt{V}(E))$.

Now consider the restriction map $\CPL\left(\wt{V}(E)\right)\to \CPL\left(\wt{V}(E)\cap(\R_{\geq0}\times N_{\R})\right)$. Since $\wt{V}(E)$ is the closure in $\R_{\geq0}\times N_{\R}(\sigma)$ of $\wt{V}(E)\cap(\R_{\geq0}\times N_{\R})$ and all functions in $\CPL\left(\wt{V}(E)\right)$ are continuous, the restriction map $\CPL\left(\wt{V}(E)\right)\to \CPL\left(\wt{V}(E)\cap(\R_{\geq0}\times N_{\R})\right)$ is injective.

We now have an injective homomorphism $$\dfrac{\base[\Mon]}{\sqrt{E}}\into\CPL\left(\wt{V}(E)\right)\into\CPL\left(\wt{V}(E)\cap(\R_{\geq0}\times N_{\R})\right).$$ Since all elements of $\CPL\left(\wt{V}(E)\cap(\R_{\geq0}\times N_{\R})\right)$ are real-valued functions except for the zero element of $\CPL\left(\wt{V}(E)\cap(\R_{\geq0}\times N_{\R})\right)$, i.e., the function that is identically $-\infty$, they are cancellative. Thus $\CPL\left(\wt{V}(E)\cap(\R_{\geq0}\times N_{\R})\right)$ is cancellative and so is $\dfrac{\base[\Mon]}{\sqrt{E}}$. %is cancellative as well. 
\end{proof}

We can also leverage these to get results for classical congruence varieties.

We define convex piecewise $(\Gamma,\Z)$-affine functions on $N_{\R}(\sigma)$ (or $X\subseteq N_{\R}(\sigma)$) in direct analogy with convex piecewise $(\Gamma,\Z)$-linear functions on $\R_{\geq0}\times N_{\R}(\sigma)$. Here a function $N_{\R}(\sigma)\to\R\cup\{-\infty\}$ is $(\Gamma,\Z)$ affine if it is of the form $x\mapsto\gamma+\angbra{x,u}$ for some $u\in\Mon$. We let $\CPA(X)=\CPA_\Gamma(X)$ denote the \emph{semiring of convex piecewise $(\Gamma,\Z)$-affine functions} on $X$.

\begin{coro}\label{coro: quotientByRadofBendIsAffineFunctions}
Let $E$ be a congruence on $\base[\Mon]$ with a finite tropical basis. Suppose that the recession $\rec(V(E))$ of $V(E)$ is equal to $V((\pi_{\B})_*(E))$, where $\pi_{\B}:\base[\Mon]\to\B[\Mon]$ is the semiring homomorphism induced by the unique homomorphism $\base\to\B$. 
Then the evaluation map $\base[\Mon]\to\CPA\big(V(E)\big)$ induces an injection
$$\dfrac{\base[\Mon]}{\sqrt{E}}\into\CPA\big(V(E)\big).$$

Moreover, if there is no proper face $\tau$ of $\sigma$ such that $V(E)\subseteq N_{\R}(\tau)$. 
Then the evaluation map $\base[\Mon]\to\CPA\big(V(E)\big)$ induces an isomorphism
$$\dfrac{\base[\Mon]}{\sqrt{E}}\cong\CPA\big(V(E)\big).$$
\end{coro}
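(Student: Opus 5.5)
The plan is to compare $\wt{V}(E)$ with the cone over $V(E)$, transfer Proposition~\ref{prop: quotientByRadofBendIsFunctions} from $\CPL(\wt{V}(E))$ to $\CPA(V(E))$, and read off the isomorphism from Corollary~\ref{coro:quotientIsPLFunctionsWithoutPushforward}.

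\textbf{Step 1: identify $\wt{V}(E)$ geometrically.} Split $\R_{\geq0}\times N_{\R}(\sigma)$ into the slices $r>0$ and $r=0$. For $r>0$, homogeneity gives $\wt{f}(r,x)=r\,f(x/r)$, since $\angbra{x,u}$ scales linearly and $-\infty$ stays $-\infty$. Hence
\[
\wt{V}(E)\cap\{r>0\}=\R_{>0}\cdot\big(\{1\}\times V(E)\big).
\]
On the slice $r=0$, the function $\wt{f}(0,x)$ is just the evaluation of $\pi_{\B}(f)$ at $x$, because every nonzero coefficient contributes $0\cdot\log(f_u)=0$. So
\[
\wt{V}(E)\cap\{r=0\}=\{0\}\times V\big((\pi_{\B})_*(E)\big).
\]
By hypothesis this equals $\{0\}\times\rec(V(E))$. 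One should check that the pushforward of $E$ along $\pi_\B$, as a generated congruence, has the same variety as the set $\pi_\B(E)$; this holds because varieties of generated congruences agree with those of their generators.

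\textbf{Step 2: build the comparison map.} Define the dehomogenization map
\[
D:\CPL(\wt{V}(E))\to\CPA(V(E)),\qquad \Phi\mapsto\big(x\mapsto\Phi(1,x)\big).
\]
It is a semiring homomorphism, and it is compatible with evaluation, since $\wt{f}(1,x)=f(x)$. So $\operatorname{ev}_{V(E)}=D\circ\operatorname{ev}_{\wt{V}(E)}$. By Proposition~\ref{prop: quotientByRadofBendIsFunctions}, $\operatorname{ev}_{\wt{V}(E)}$ has congruence-kernel $\sqrt{E}$. The injectivity claim therefore reduces to showing that $D$ is injective on the image of $\base[\Mon]$.

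\textbf{Step 3: injectivity of $D$ (the main obstacle).} Suppose $f(x)=g(x)$ on $V(E)$. Homogeneity then gives $\wt{f}=\wt{g}$ on the $r>0$ part of $\wt{V}(E)$, and it remains to get agreement on $\{0\}\times\rec(V(E))$. Take $v\in\rec(V(E))$ and $y\in V(E)$ with $y+\largeNum v\in V(E)$ for all $\largeNum\geq0$. Then
\[
\wt{f}(0,v)=\lim_{\largeNum\to\infty}\tfrac{1}{\largeNum}\,f(y+\largeNum v),
\]
and the same holds for $g$. The right-hand sides agree for $f$ and $g$, so $\wt{f}(0,v)=\wt{g}(0,v)$.

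The delicate point is making this limit valid when $v$ lies in a boundary stratum, or when values are $-\infty$. I would handle it by working stratum by stratum, as in the pushforward Corollary~\ref{coro:cong_variety_of_pushforward}, and using the definition of $\rec$ for polyhedral sets. The limit formula $\lim \tfrac1\largeNum \max_u(a_u+\angbra{y+\largeNum v,u})=\max_{u}\angbra{v,u}$, with the max over $\supp$, is the routine core. This also explains why the hypothesis is needed: without it, the $r=0$ slice could be larger than what $V(E)$ detects.

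\textbf{Step 4: surjectivity.} Assume no proper face $\tau$ has $V(E)\subseteq N_{\R}(\tau)$. By Step 1, the $r=0$ slice is the recession, which lies in the closure of the $r>0$ cone. So no proper face $\tau$ has $\wt{V}(E)\subseteq\R_{\geq0}\times N_{\R}(\tau)$, and Corollary~\ref{coro:quotientIsPLFunctionsWithoutPushforward} gives $\base[\Mon]/\sqrt{E}\cong\CPL(\wt{V}(E))$. Every element of $\CPA(V(E))$ is $x\mapsto\max_u(\gamma_u+\angbra{x,u})$, which is $D$ applied to $\wt{f}$ for the corresponding $f$. Hence $\operatorname{ev}_{V(E)}$ is surjective, and combined with the injectivity from Step 3 it is an isomorphism.
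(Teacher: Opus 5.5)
Your proof is correct and is essentially the paper's argument: your dehomogenization $D$ is the paper's restriction map $r$. Your Steps 1 and 3 are an explicit, sequential version of the paper's observation that the hypothesis makes $\wt{V}(E)$ the closure of $\R_{\geq0}\cdot(\{1\}\times V(E))$, so that continuity of $\wt{f},\wt{g}$ forces injectivity. The boundary subtlety you flag disappears once $\rec$ is read stratum by stratum: choose $y$ in the same stratum $N_{\R}/\tau$ as $v$, and your limit computation is then exactly the torus one. In Step 4 the appeal to Corollary~\ref{coro:quotientIsPLFunctionsWithoutPushforward} is unnecessary: with $\tau=\sigma$, surjectivity of $\operatorname{ev}_{V(E)}$ follows directly from the definition of $\CPA$, and injectivity is your first part.
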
\begin{proof}

By identifying $V(E)$ with $\{1\}\times V(E)\subseteq \wt{V}(E)$, we get a restriction map $r:\CPL(\wt{V}(E))\to\CPA(V(E))$. By definition, this map is surjective.

Note that the condition that $\rec(V(E))=V((\pi_{\B})_*(E))$ is equivalent to saying that $\wt{V}(E)$ is the closure in $\R_{\geq0}\times N_{\R}(\sigma)$ of $\R_{\geq0}\cdot (\{1\}\times V(E))$. Since any $\ph\in\CPL(\wt{V}(E))$ is continuous, we get that the restriction map $r:\CPL(\wt{V}(E))\to\CPA(V(E))$ is injective. Thus $r$ is an isomorphism.

Since $\operatorname{ev}_{V(E)}$ factors as $\base[\Mon]\toup{\operatorname{ev}_{\wt{V}(E)}}\CPL\left(\wt{V}(E)\right)\toup{r}\CPA\big(V(E)\big)$ and Proposition~\ref{prop: quotientByRadofBendIsFunctions} tells us that $\operatorname{ev}_{\wt{V}(E)}$ induces an injection $\dfrac{\base[\Mon]}{\sqrt{E}}\into\CPL(\wt{V}(E))$, we now see that $\operatorname{ev}_{V(E)}$ induces an injection $\dfrac{\base[\Mon]}{\sqrt{E}}\into\CPA(V(E))$.

Now suppose $V(E)$ is not contained in $N_{\R}(\tau)$ for any $\tau\lneq\sigma$. We get that $\wt{V}(E)$ is not contained in $\R_{\geq0}\times N_{\R}(\tau)$ for any $\tau\lneq\sigma$, so Corollary~\ref{coro:quotientIsPLFunctionsWithoutPushforward} tells us that the evaluation map $\operatorname{ev}_{\wt{V}(E)}$ induces an isomorphism $\base[\Mon]/\sqrt{E}\cong\CPL\left(\wt{V}(E)\right)$. Since we already know that $r$ is an isomorphism, we conclude that the map induced by $\operatorname{ev}_{V(E)}$ is an isomorphism $\dfrac{\base[\Mon]}{\sqrt{E}}\cong\CPA(\wt{V}(E))$

Since $\operatorname{ev}_{V(E)}$ factors as $\base[\Mon]\toup{\operatorname{ev}_{\wt{V}(E)}}\CPL\left(\wt{V}(E)\right)\toup{r}\CPA\big(V(E)\big)$, we now see that $\operatorname{ev}_{V(E)}$ induces an isomorphism $\dfrac{\base[\Mon]}{\sqrt{E}}\cong\CPA(V(E))$.
\end{proof}

\begin{coro}
Let $E$ be a congruence on $\base[x_1^{\pm1},\ldots,x_n^{\pm1}]$ with a finite tropical basis. Suppose that $\rec(V(E)) = V((\pi_{\B})_*(E))$. 
Then the evaluation map $\base[x_1^{\pm1},\ldots,x_n^{\pm1}]\to\CPA\big(V(E)\big)$ induces an isomorphism
$$\dfrac{\base[x_1^{\pm1},\ldots,x_n^{\pm1}]}{\sqrt{E}}\cong\CPA\big(V(E)\big).$$
\end{coro}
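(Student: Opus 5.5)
This is the special case $\Mon=\Z^n$, i.e.\ $\sigma=\{0\}$, of Corollary~\ref{coro: quotientByRadofBendIsAffineFunctions}, so I would verify its two hypotheses and then quote it.

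First, $E$ has a finite tropical basis by assumption. Second, the recession condition $\rec(V(E))=V((\pi_{\B})_*(E))$ is assumed verbatim in the statement. This yields the injection $\base[x_1^{\pm1},\ldots,x_n^{\pm1}]/\sqrt{E}\into\CPA(V(E))$.

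For the isomorphism part I need that $V(E)$ is not contained in $N_{\R}(\tau)$ for any proper face $\tau\lneq\sigma$. Since $\sigma=\{0\}$ has no proper faces, this holds vacuously, and the second half of Corollary~\ref{coro: quotientByRadofBendIsAffineFunctions} gives the isomorphism. Equivalently, the restriction map $\CPL(\wt{V}(E))\to\CPA(V(E))$ is an isomorphism, and evaluation $\base[\Mon]\to\CPL(\wt{V}(E))$ is surjective with kernel $\sqrt{E}$ by Proposition~\ref{prop: quotientByRadofBendIsFunctions}, again because there is no smaller face.

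The one point needing care is the degenerate case $V(E)=\emptyset$, where "no proper face" and surjectivity could become subtle. Here $\CPA(\emptyset)$ is the trivial semiring. Also $\sqrt{E}$ is everything, since no prime contains $E$ by Theorem~\ref{thm:PrimeContainsCongruence}: the recession hypothesis forces $\wt{V}(E)\subseteq\{0\}\times N_{\R}$ to be just $\{(0,0)\}$ or empty. So the isomorphism still holds, trivially. Apart from this case I expect no real obstacle, since the statement is a direct specialization.
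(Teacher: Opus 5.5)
Your main argument is the paper's own proof: the paper derives this corollary immediately from Corollary~\ref{coro: quotientByRadofBendIsAffineFunctions} with $\sigma=\{0\}$, where the ``no proper face'' condition holds vacuously. Your extra paragraph on the degenerate case $V(E)=\emptyset$, however, contains a false step.

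You are right that $\wt{V}(E)$ is then $\{(0,0)\}$ or $\emptyset$. But when $\wt{V}(E)=\{(0,0)\}$ it is not true that no prime contains $E$. The flag consisting only of $\calc_{-1}=\{0\}$ lies in $\wt{V}(E)$. Its prime is the kernel $P_{\B}$ of the homomorphism $\base[x_1^{\pm1},\ldots,x_n^{\pm1}]\to\B$ sending every nonzero element to $1$, equivalently the kernel of evaluation at $(0,0)$. Since $\wt{f}(0,0)$ is $0$ or $-\infty$ according as $f\neq0$ or $f=0$, the condition $(0,0)\in\wt{V}(E)$ says exactly that $E\subseteq P_{\B}$. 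Hence $\sqrt{E}=P_{\B}$ and the quotient is $\B$. Meanwhile $\CPA(\emptyset)$ is the one-element semiring, so the isomorphism fails rather than holding trivially.

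This subcase really occurs under the hypothesis as written. On $\T[x^{\pm1}]$ take $E=\langle (1,t),(1+x,1),(1+x^{-1},1)\rangle$, whose generators form a finite tropical basis. Then $\wt{V}(E)=\{(0,0)\}$ and $V(E)=\emptyset$. Also $V((\pi_{\B})_*(E))=\{0\}$, which equals $\rec(\emptyset)$ under the paper's stated convention $\rec(\emptyset)=\{0_{N_{\R}}\}$.

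The same oversight is in the paper's proof of Corollary~\ref{coro: quotientByRadofBendIsAffineFunctions}. There the recession hypothesis is claimed to be equivalent to $\wt{V}(E)$ being the closure of $\R_{\geq0}\cdot(\{1\}\times V(E))$, and this fails when $V(E)=\emptyset$. So the edge case must be excluded, either by assuming $V(E)\neq\emptyset$ or by reading $\rec(\emptyset)=\emptyset$, as the polyhedral-set formula for $\rec$ gives. Under the latter reading the hypothesis forces $\wt{V}(E)=\emptyset$, no prime contains $E$, and your trivial argument is correct.
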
\begin{proof}
This follows immediately from Corollary~\ref{coro: quotientByRadofBendIsAffineFunctions}.
\end{proof}

\begin{coro}\label{coro: functions-know-radical}
Let $J\subseteq k[\Mon]$ be an ideal such that there is no affine toric variety that is a proper toric subvariety of $X(\sigma)$ and which contains $V(J)$\footnote{In particular, this is true if $V(J)$ meets every boundary divisor.}.
Let $I=\trop(J)\subseteq \base[\Mon]$. Then the evaluation map $\base[\Mon]\to\CPA\big(V(\Bend(I))\big)$ induces an isomorphism
$$\dfrac{\base[\Mon]}{\sqrt{\Bend(I)}}\cong\CPA\big(V(\Bend I)\big).$$
\end{coro}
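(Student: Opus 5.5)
The plan is to deduce this from Corollary~\ref{coro: quotientByRadofBendIsAffineFunctions} applied to $E=\Bend(I)$. Three hypotheses need checking: that $E$ has a finite tropical basis, that $\rec(V(E))=V((\pi_{\B})_*(E))$, and that $V(E)$ is not contained in $N_{\R}(\tau)$ for any proper face $\tau\lneq\sigma$. After that the isomorphism $\base[\Mon]/\sqrt{\Bend(I)}\cong\CPA(V(\Bend I))$ is exactly the second conclusion of that corollary.

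For the finite tropical basis, I would use that $I=\trop(J)$ is a realizable tropical ideal. The example following Definition~\ref{def: finiteTropBasis} then produces a finite tropical basis $\calb_0\cup\calb_1$ for $\Bend(I)$, using \cite[Theorem 5.9]{MR18} for $I$ and for $\pi_{\B}(I)$.

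For the recession condition, the proof of Corollary~\ref{coro: quotientByRadofBendIsAffineFunctions} notes that it is equivalent to $\wt{V}(E)=\cl_{\R_{\geq0}\times N_{\R}(\sigma)}\big(\R_{\geq0}\cdot(\{1\}\times V(E))\big)$. This is exactly the content of Remark~\ref{rmk: Gubler-recession} for $I=\trop J$, which rests on the identification $\wt{V}(\Bend I)=\Trop_W(V(J))$ and \cite[Corollary 11.13]{Gub11}. I would quote that remark. One point to watch is that Remark~\ref{rmk: Gubler-recession} passes to a smaller stratum when $V(I)\cap N_{\R}=\emptyset$; that reduction only changes the ambient space and does not affect the equality of closures, so it can be kept as is.

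The remaining hypothesis is where I expect the real work. The goal is to translate ``no proper affine toric subvariety of $X(\sigma)$ contains $V(J)$'' into ``$V(\Bend I)=\trop V(J)\not\subseteq N_{\R}(\tau)$ for $\tau\lneq\sigma$.'' The affine toric subvarieties $X(\tau)\subseteq X(\sigma)$ for faces $\tau\leq\sigma$ are open, and their tropicalizations are the open pieces $N_{\R}(\tau)=\bigsqcup_{\rho\leq\tau}N_{\R}/\rho$. By hypothesis, for each $\tau\lneq\sigma$ there is a point $y\in V(J)$ not in $X(\tau)$. Such a $y$ lies in some orbit $O(\rho)$ with $\rho\not\leq\tau$. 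Since tropicalization is compatible with the orbit decomposition (valuations of $K$-points, after extending the field so that every point of the orbit tropicalizes), $\trop(y)\in N_{\R}/\rho$, and so $\trop(y)\notin N_{\R}(\tau)$. Hence $V(E)\not\subseteq N_{\R}(\tau)$. If needed, one can pass to a valued field extension so that $V(J)$ has points in every relevant orbit; this does not change $\trop$.

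With all three hypotheses verified, Corollary~\ref{coro: quotientByRadofBendIsAffineFunctions} gives the claimed isomorphism.
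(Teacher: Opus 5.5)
Your proposal is correct and follows essentially the same route as the paper, which applies Corollary~\ref{coro: quotientByRadofBendIsAffineFunctions} to $E=\Bend(I)$, uses Remark~\ref{rmk: Gubler-recession} for the recession condition, and uses the hypothesis on $V(J)$ to rule out $V(\Bend I)\subseteq N_{\R}(\tau)$ for proper faces $\tau$. You also spell out two points the paper leaves implicit: that $\Bend(I)$ has a finite tropical basis (via the example after Definition~\ref{def: finiteTropBasis}), and the orbit argument showing that the tropicalization of a point of $V(J)$ outside $X(\tau)$ lies outside $N_{\R}(\tau)$.
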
\begin{proof}

Remark~\ref{rmk: Gubler-recession} guarantees that $\rec(V(\Bend I))=V((\pi_{\B})_*(\Bend I))$. Since the hypothesis on $V(J)$ guarantees that there is no proper face $\tau$ of $\sigma$ such that $V(\Bend I)\subseteq N_{\R}(\tau)$, the desired result follows from Corollary~\ref{coro: quotientByRadofBendIsAffineFunctions}.
\end{proof}

\begin{coro}\label{coro: functions-know-radical-torus}
Let $I\subseteq\base[x_1^{\pm1},\ldots,x_n^{\pm}]$ be the tropicalization of any ideal of $k[x_1^{\pm1},\ldots,x_n^{\pm1}]$. 
Then the evaluation map $\base[x_1^{\pm1},\ldots,x_n^{\pm}]\to\CPA\big(V(\Bend(I))\big)$ induces an isomorphism
$$\dfrac{\base[x_1^{\pm1},\ldots,x_n^{\pm}]}{\sqrt{\Bend(I)}}\cong\CPA\big(V(\Bend(I))\big).$$ 
\end{coro}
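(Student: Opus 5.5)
This should be a direct specialization of Corollary~\ref{coro: functions-know-radical} to the case $\Mon=\Z^n$, i.e.\ $\sigma=\{0\}$. That corollary needs two inputs. First, $I=\trop(J)$ for an ideal $J\subseteq k[\Mon]$; this holds by hypothesis. Second, $V(J)$ must not be contained in any affine toric variety that is a proper toric subvariety of $X(\sigma)$.

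When $\sigma=\{0\}$, the only face of $\sigma$ is $\sigma$ itself, so there is no proper face $\tau\lneq\sigma$. Likewise $X(\sigma)$ is the torus and has no proper toric subvariety that is an affine toric variety of the required kind. So the hypothesis on $V(J)$ holds vacuously.

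For the record, here is what the cited argument uses. Remark~\ref{rmk: Gubler-recession} gives $\rec(V(\Bend I))=V((\pi_{\B})_*(\Bend I))$ for tropicalized ideals. Also, $\Bend(I)$ has a finite tropical basis, since $I$ is a realizable tropical ideal and the example following Definition~\ref{def: finiteTropBasis} applies. With these, Corollary~\ref{coro: quotientByRadofBendIsAffineFunctions} (or its Laurent version, the corollary immediately preceding Corollary~\ref{coro: functions-know-radical}) yields the isomorphism
\[
\base[x_1^{\pm1},\ldots,x_n^{\pm1}]/\sqrt{\Bend(I)}\cong\CPA\big(V(\Bend(I))\big).
\]

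I do not expect a real obstacle. The one point to check is the convention for an empty $V(J)$, for instance when $J$ is the unit ideal. In that case both sides of the isomorphism should degenerate consistently. Alternatively, the finite-tropical-basis example in the paper is stated over $\T$, and for a general $\base$ one may need to note that the same argument goes through. I would handle either issue by invoking the Laurent corollary directly, whose only hypothesis is the recession condition.
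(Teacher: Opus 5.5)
Your proposal is correct and matches how the paper intends this corollary, which it states without a separate proof: it is the case $\sigma=\{0\}$ of Corollary~\ref{coro: functions-know-radical}, where the toric-subvariety hypothesis is vacuous, or equivalently the Laurent-polynomial corollary combined with Remark~\ref{rmk: Gubler-recession}. One small correction: that Laurent corollary also assumes $\Bend(I)$ has a finite tropical basis, so invoking it directly does not remove that hypothesis; you have, however, already supplied it via the example following Definition~\ref{def: finiteTropBasis}, whose argument works verbatim over any sub-semifield $\base\subseteq\T$.
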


\begin{example}\label{ex: inf-trop-basis}
    Consider the prime congruences $P_1$ and $P_2$ on $\T[x]$ with defining matrices $\begin{pmatrix} 1 & 0 \end{pmatrix}$ and $\begin{pmatrix} 1 & 0 \\ 0 & -1 \end{pmatrix}$ respectively. The prime $P_2$ has no finite tropical basis by \cite[Example 6.3.4 and Example 7.1.9]{M16}. Note that $V(P_1) = V(P_2) = \{ 0_\R\}$, however, $\sqrt{P_1} \neq \sqrt{P_2}$ as prime congruences are also radical by \cite[Proposition 3.12]{JM17} and $P_1 \neq P_2$. \exEnd 
\end{example}

\appendix
\section{}\label{app:Eu-Ja}

\newcommand{\Rat}{\operatorname{Rat}}

In a series of papers \cite{SN24}, \cite{Son24}, \cite{Son23a}, \cite{Son23b} J.\ Song, et.\ al., defines the semifield of tropical rational functions on an abstract tropical curve, which is then extended in \cite{AKS25} to more general polyhedral complexes. They show that, if $\Pi\subseteq\R^n$ is a polyhedral complex in $\R^n$, then the tropical function semifield $\Rat(|\Pi|)$ can be found as follows. Let $R$ be the semiring of tropical Laurent polynomial functions on $\R^n$ (by \cite[Proposition~5.5]{JM17}, this is $\T[x_1^{\pm1}, \ldots, x_n^{\pm1}]/\sqrt{\Delta}$), and let $F$ be the semifield of fractions of $R$. Then $\Rat(|\Pi|)$ is the quotient of $F$ determined by restricting functions on $\R^n$ to $|\Pi|$. That is, $\Rat(|\Pi|)=F/E(|\Pi|)$ where, for any $V\subseteq\R^n$, $E(V)= \{(f,g) \in F^2 \,:\, f(w) = g(w), \forall w \in V\}$.

J.\ Song proves structure results for 
%$F/E(V)$ 
$\Rat(V)$ 
and establishes certain relations to corresponding tropical curves. She shows that, for a tropical curve $\Gamma$, the geometric structure of $\Gamma$ as a tropical curve is determined by the $\T$-algebra structure of $\text{Rat}(\Gamma)$. 

Our results 
%\green{in this paper}
allow us to relate Song's rational function semifield to bend congruences.

\begin{prop}
Let $k$ be a valued field, let $I\subseteq k[x_1^{\pm1},\ldots,x_n^{\pm1}]$ be an ideal, Let $X=V(I)$, $Y=\Trop(X)$, and $\operatorname{Rat}(Y)$ denote the tropical rational function semifield of $Y$ as per \cite{SN24}. Let $C=\Bend(\trop I)$. Then $\operatorname{Rat}(Y)\cong \Frac\left(\T[x_1^{\pm1},\ldots,x_n^{\pm1}]\big/\sqrt{C}\right)$, i.e., $$\operatorname{Rat}(\Trop V(I))\cong \Frac\left(\T[x_1^{\pm1},\ldots,x_n^{\pm1}]\Big/\sqrt{\Bend(\trop I)}\right).$$
\end{prop}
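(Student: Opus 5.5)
The plan is to go through the function-semiring description of $\T[x_1^{\pm1},\ldots,x_n^{\pm1}]/\sqrt{C}$ and then compare fraction semifields.

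First, by Corollary~\ref{coro: functions-know-radical-torus} (with $\base=\T$), the evaluation map induces an isomorphism
$$\T[x_1^{\pm1},\ldots,x_n^{\pm1}]\big/\sqrt{C}\;\cong\;\CPA\big(V(C)\big),$$
and $V(C)=V(\trop I)=\Trop(X)=Y$. Since $V(C)\subseteq\R^n$, Corollary~\ref{coro:ClosureConditionImpliesRadicalQC} shows the quotient is cancellative, so its total semiring of fractions is a semifield. Every nonzero element of $\CPA(Y)$ is real valued on $Y$, which gives the same conclusion directly.

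Next, I would describe $\operatorname{Rat}(Y)$ as in the discussion above. Let $R=\T[x_1^{\pm1},\ldots,x_n^{\pm1}]/\sqrt{\Delta}$ be the semiring of tropical Laurent polynomial functions on $\R^n$, and let $F=\Frac(R)$. Then $\operatorname{Rat}(Y)=F/E(Y)$. Every element of $F$ is a quotient $p/q$ of Laurent polynomial functions with $q\neq0$, and such a $q$ is real valued on all of $\R^n$. Hence $\operatorname{Rat}(Y)$ is the semifield of functions on $Y$ of the form $p|_Y-q|_Y$, where $p|_Y,q|_Y\in\CPA(Y)$ and $q\ne-\infty$, with the usual identification when two such functions agree on $Y$.

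Now define $\Phi:\Frac(\CPA(Y))\to\operatorname{Rat}(Y)$ by $a/b\mapsto a-b$, computed pointwise on $Y$. This is well defined: if $a/b=a'/b'$, then $ab'=a'b$ in $\CPA(Y)$, and since $b,b'$ are finite on $Y$ we get $a-b=a'-b'$ pointwise. It is a homomorphism because max and sum are compatible with the standard fraction operations. It is surjective, since every element of $\operatorname{Rat}(Y)$ is the restriction of some $p/q$ and $(p|_Y)/(q|_Y)$ maps to it. For injectivity, suppose $a-b=a'-b'$ pointwise on $Y$. Then $a+b'=a'+b$ pointwise, which means $ab'=a'b$ in $\CPA(Y)$, so the two fractions are equal.

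The main thing to get right is matching the conventions, in particular that $\operatorname{Rat}(Y)$ as defined in \cite{SN24} really is this quotient $F/E(Y)$; I take this from the stated reduction. The remaining care is the degenerate case where $Y$ is empty or where the zero function causes trouble. In both descriptions, nonzero elements are exactly the functions that are finite everywhere on $Y$, so no extra identifications arise. Combining the two isomorphisms gives $\operatorname{Rat}(Y)\cong\Frac\big(\T[x_1^{\pm1},\ldots,x_n^{\pm1}]/\sqrt{C}\big)$.
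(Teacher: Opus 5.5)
Your proposal is correct. Its key input is the same as the paper's: Corollary~\ref{coro: functions-know-radical-torus}, i.e.\ that $\sqrt{C}$ is exactly the congruence-kernel of the evaluation map $\T[x_1^{\pm1},\ldots,x_n^{\pm1}]\to\CPA(Y)$.

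The difference is in how $\operatorname{Rat}(Y)$ is brought in. The paper cites the remark between Lemmas 3.28 and 3.29 of \cite{SN24} for $\operatorname{Rat}(Y)=\Frac\big(\T[x_1^{\pm1},\ldots,x_n^{\pm1}]/E_{\pm}(Y)\big)$, where $E_{\pm}(Y)$ is the congruence of pairs agreeing on $Y$. It then simply observes that $E_{\pm}(Y)=\sqrt{C}$. You instead start from the description $\operatorname{Rat}(Y)=F/E(Y)$ and verify by hand, via $a/b\mapsto a-b$, that $F/E(Y)\cong\Frac(\CPA(Y))$. That is precisely the content of the cited remark: restricting to $Y$ commutes with passing to fractions, because nonzero Laurent polynomial functions are finite everywhere.

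Your version is more self-contained at the cost of a few lines; the paper's is a two-line reduction to the literature. Your checks of well-definedness, the homomorphism property, surjectivity and injectivity are all sound. The cancellativity you need already holds for $\CPA(Y)$ directly, so the appeal to Corollary~\ref{coro:ClosureConditionImpliesRadicalQC} is unnecessary but harmless.
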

\begin{proof}

Let $E_{\pm}(Y):=\{(f,g)\in (\T[x_1^{\pm1},\ldots,x_n^{\pm1}])^2 \,:\, f(y)=g(y) \text{ for all }y\in Y\}$. Then, by the comment between Lemmas 3.28 and 3.29 of \cite{SN24}, we have 
$\Rat(Y)=\Frac\left(\T[x_1^{\pm1},\ldots,x_n^{\pm1}]/E_{\pm}(Y)\right)$.
But by Corollary~\ref{coro: functions-know-radical-torus}, $E_{\pm}(Y)$, being the congruence kernel of the evaluation map $\T[x_1^{\pm1},\ldots,x_n^{\pm1}]\to\CPA(Y)$, is equal to $\sqrt{C}$.
\end{proof}

%------------------------------------------------------------------

%====================================================================
%\newpage
%\bibliography{schemes}\bibliographystyle{alpha}
\bibliographystyle{alpha}

\end{document}